   \newtheorem{theorem}[subsubsection]{Theorem}
      \newtheorem*{theorem*}{Theorem}
   \newtheorem{proposition}[subsubsection]{Proposition}
   \newtheorem{lemma}[subsubsection]{Lemma}
   \newtheorem{corollary}[subsubsection]{Corollary}
   \newtheorem*{conjecture*}{Conjecture}
\theoremstyle{definition}
          \newtheorem*{exercise*}{Exercise}
   \newtheorem{example}[subsubsection]{Example}
   \newtheorem*{example*}{Example}
   \newtheorem{definition}[subsubsection]{Definition}
   \newtheorem*{definition*}{Definition}
   \newtheorem{remark}[subsubsection]{Remark}
   \newtheorem{notation}[subsubsection]{Notation}
\newcommand{\DD}{{\mathbb{D}}}
\newcommand{\CC}{{\mathbb{C}}}
\newcommand{\QQ}{{\mathbb{Q}}}
\newcommand{\NN}{{\mathbb{N}}}
\newcommand{\PP}{{\mathbb{P}}}
\newcommand{\ZZ}{{\mathbb{Z}}}
\newcommand{\GG}{{\mathbb{G}}}
\renewcommand{\AA}{{\mathbb{A}}}
\def\hatcO{{\widehat\cO}}
\def\hatcJ{{\widehat\cJ}}
\def\ket{_{\rm k\acute et}}
\renewcommand{\cD}{{\mathcal D}}
\newcommand{\cF}{{\mathcal F}}
\renewcommand{\cH}{{\mathcal H}}
\newcommand{\cI}{{\mathcal I}}
\newcommand{\cJ}{{\mathcal J}}
\renewcommand{\cL}{{\mathcal L}}
\newcommand{\cM}{{\mathcal M}}
\newcommand{\cN}{{\mathcal N}}
\newcommand{\cO}{{\mathcal O}}
\newcommand{\cT}{{\mathcal T}}
\newcommand{\cZ}{{\mathcal Z}}
\newcommand{\oNN}{{\overline\NN}}
\newcommand{\oJ}{{\overline{J}}}
\newcommand{\oM}{{\overline{M}}}
\newcommand{\cs}{{\rm cs}}
\def\<{\langle}
\def\>{\rangle}
\newcommand{\Spec}{\operatorname{Spec}}
\newcommand{\Hom}{{\operatorname{Hom}}}
\newcommand{\chara}{\operatorname{char}}
\newcommand{\das}{\dashrightarrow}
\newcommand{\ocM}{\overline{{\mathcal M}}}
\newcommand{\oX}{{\overline{X}}}
\newcommand{\op}{{\overline{p}}}
\newcommand{\inn}{{\operatorname{in}}}
\def\:{{\colon}}
\def\.{{,\dots,}}
\def\dim{{\rm dim}}
\def\inv{{\rm inv}}
\def\Inv{{\rm Inv}}
\newcommand{\double}{\genfrac..{0pt}1
{\raise -1pt\hbox{$\scriptstyle\longrightarrow$}}{\raise 3pt\hbox
{$\scriptstyle\longrightarrow$}}}
\renewcommand{\setminus}{\smallsetminus}
\def\sat{{\rm sat}}
\def\nor{{\rm nor}}
\def\int{{\rm int}}
\def\tototi{\mathbin{\mathop{\otimes}\limits^{\raise-1pt\hbox
{$\scriptscriptstyle {\rm L}$}}}}
\def\indlim{\mathop{\vrule width0pt height7pt depth
4pt\smash{\lim\limits_{\raise 1pt\hbox to 14.5pt
{\rightarrowfill}}}}}
\def\projlim{\mathop{\vrule width0pt height7pt depth
4pt\smash{\lim\limits_{\raise 1pt\hbox to 14.5pt
{\leftarrowfill}}}}}
\newcommand\displaceamount{3pt}
\newcommand{\doubledown}{\ar@<\displaceamount>[d]\ar@<-\displaceamount>[d]}
\newcommand{\doubleup}{\ar@<\displaceamount>[u]\ar@<-\displaceamount>[u]}
\newcommand{\doubleright}{\ar@<\displaceamount>[r]\ar@<-\displaceamount>[r]}
\newcommand{\tor}{{\operatorname{tor}}}
\newcommand{\res}{{\operatorname{res}}}
\newcommand{\rk}{\operatorname{rk}}
\def\into{\hookrightarrow}
\def\onto{\twoheadrightarrow}
\def\gp{\text{gp}}
\newcommand{\logord}{{\operatorname{logord}}}
\newcommand{\Dord}{{\cD\!\operatorname{ord}}}
\newcommand{\Der}{{\cD er}}
\newcommand{\ord}{{\operatorname{ord}}}
\newcommand{\Log}{{\mathbf{Log}}}
\def\supp{{\operatorname{supp}}}
\begin{document}
\title[Principalization]{Principalization of ideals on toroidal orbifolds}

\author[D. Abramovich]{Dan Abramovich}
\address{Department of Mathematics, Box 1917, Brown University,
Providence, RI, 02912, U.S.A}
\email{abrmovic@math.brown.edu}
\author[M. Temkin]{Michael Temkin}
\address{Einstein Institute of Mathematics\\
               The Hebrew University of Jerusalem\\
                Giv'at Ram, Jerusalem, 91904, Israel}
\email{temkin@math.huji.ac.il}

\author[J. W{\l}odarczyk] {Jaros{\l}aw W{\l}odarczyk}
\address{Department of Mathematics, Purdue University\\
150 N. University Street,\\ West Lafayette, IN 47907-2067}
\thanks{This research is supported by  BSF grant 2014365}

\date{\today}


\begin{abstract}
Given an ideal $\cI$ on a variety $X$ with toroidal singularities, we produce a modification $X' \to X$, functorial for toroidal  morphisms, making the ideal monomial
on a toroidal stack $X'$. We do this by adapting the methods of \cite{Wlodarczyk}, discarding steps which become redundant.

We deduce functorial resolution of singularities for varieties with logarithmic structures. This is the first step in our program to apply logarithmic desingularization to a morphism $Z \to B$, aiming to prove functorial semistable reduction theorems.
\end{abstract}
\maketitle
\setcounter{tocdepth}{1}


\section{Introduction}\label{Sec:intro}

\subsection{Declaration of  principles}
This paper opens a large project aiming to develop a new generation of desingularization algorithms, in characteristic 0, that apply to \emph{morphisms} $Z \to B$.  This paper is focused on the case where the base $B$ is a point, namely desingularization of \emph{varieties}.
Much of this introduction serves to introduce the entire project, in particular explaining why we believe a new generation of desingularization algorithms in characteristic 0 is needed.

\subsubsection{Classical principles} Desingularization, or resolution of singularities, of a variety $Z$  is  a proper birational morphism $Z' \to Z$ where $Z'$ is nonsingular. In characteristic 0 its existence   was  proven  by Hironaka \cite{Hironaka}.

As detailed in Section \ref{Sec:summary}, our approach shares, in modified form, a number of principles with the approach of Hironaka's paper and the many that followed it: most classical desingularization algorithms are based on \emph{embedded desingularization}, namely resolving singularities of a subvariety $Z$ embedded in a smooth variety $X$ by successively modifying \emph{admissible centers -} carefully chosen loci in $X$ lying over the singular locus of $Z$.  Embedded desingularization of a subvariety of a smooth variety $X$ is reduced to the problem of \emph{principalization of a coherent ideal sheaf $\cI \subset \cO_X$}, namely producing a modification $X' \to X$ of a  smooth  variety $X$ so that the resulting ideal $\cI \cO_X'$ is the ideal of a normal crossings divisor $D \subset X'$. Principalization of an ideal sheaf is achieved inductively via \emph{order reduction} of the ideal, and
finally one reduces the order of an ideal sheaf using induction on the dimension of the ambient variety $X$ by restricting to a smooth \emph{hypersurface of maximal contact} $H \subset X$.

This classical approach requires working locally and making a number of choices, which need to be reconciled. The paper \cite{Wlodarczyk}, and in a different manner \cite{Bierstone-Milman-funct}, reconcile these choices by producing a desingularization {\em functor}, which is necessarily independent of choices and glues on overlaps of open patches: whenever $X_1 \to X$ is a smooth morphism, the desingularization $X_1' \to X_1$ is the pullback of the desingularization  $X' \to X$. We call this the {\em functoriality principle}.

These classical principles are reviewed and reinterpreted in Section \ref{varsec}.

\subsubsection{Extended functoriality and toroidal orbifolds}\label{Sec:extended-funct}
One fundamental aspect of our approach is a significant strengthening of the functoriality principle. As we explain in Section \ref{morsec}, when keeping desingularization of \emph{morphisms} in mind, it is natural to insist that all operations be compatible with suitable base-change morphisms, namely \emph{logarithmically smooth morphisms} in the sense of \cite{Kato-log}. In classical terms, logarithmically smooth varieties are \emph{toroidal embeddings} \cite{KKMS}, loosely speaking ``varieties which locally look like toric varieties". In characteristic 0, logarithmically smooth morphisms between logarithmically smooth varieties are \emph{toroidal morphisms} \cite{AK}, namely those that locally look like dominant torus-equivariant morphisms of toric varieties.

Accordingly we adopt the \emph{extended functoriality principle: our basic resolution and principalization algorithms must be functorial for logarithmically smooth morphisms.} This includes, in particular, \emph{toroidal blowings up} and \emph{extracting roots of monomials.}

This principle forces us to take a departure from previous algorithms:

\begin{enumerate}
\item We can no longer work with a smooth ambient variety $X$ - \emph{we must allow $X$ to have toroidal singularities.}
\item We cannot use only blowings up of smooth centers as our basic operations. Instead we use a class of modifications, called \emph{Kummer blowings up}, which are stable under logarithmically  smooth base change. These involve taking roots of monomials, in particular:
\item We can no longer work only with varieties $X$ - \emph{we must allow $X$ to be a Deligne--Mumford stack.}
\end{enumerate}

Item (3) is probably the hardest to accept in resolution of singularities. We argue in Section  \ref{examkumsec} that it  arises naturally from the extended functoriality principle. We do restrict the type of stacks we work with to the minimum necessary:

\begin{definition}
Throughout this paper, a \emph{toroidal orbifold} $(X,U)$ is a Deligne--Mumford stack $X$ with finite diagonalizable inertia with a simple toroidal embedding $U\subset X$, where $U$ is a scheme. In other words, it is a simple toroidal orbifold in the sense of \cite{ATW-destackification} with the additional condition that $U$ is a scheme.
\end{definition}

\begin{remark}\label{localy toric}As any orbifold structure can be described in terms of an atlas of compatible charts over the coarse moduli space, one can circumvent the language of Deligne--Mumford stacks.
This will be worked out in detail in \cite{ATW-kummer-varieties}.
\end{remark}

%

\subsection{Statements of main results}

\subsubsection{Logarithmic principalization}
Here is the main result of the paper.

\begin{theorem}[Principalization]\label{Th:principalization}
Let $(X, U)$ be a toroidal orbifold and $\cI$ an ideal sheaf.
There is a sequence  $X_n \to X_{n-1} \to \dots\to X_0 = X$ of  Kummer blowings up on toroidal orbifolds, all supported over the vanishing locus $V(\cI)$, such that $\cI \cO_{X_n}$ is an invertible monomial ideal. The process is functorial for logarithmically smooth  base change morphisms  $Y\to X$, in the sense that the sequence of  Kummer blowings up for $Y$ is the saturated pullback of the sequence for $X$, with trivial blowings up removed.
\end{theorem}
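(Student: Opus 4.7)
The plan is to adapt Wlodarczyk's principalization algorithm to the toroidal-orbifold setting, replacing ordinary derivations, smooth morphisms, and smooth blowings up throughout by their logarithmic and Kummer counterparts. First I would factor the given ideal as $\cI = \cM\cdot\cJ$, where $\cM$ is the largest monomial ideal (with respect to the toroidal structure on $(X,U)$) containing $\cI$ and $\cJ$ is the non-monomial residue. A purely combinatorial sequence of Kummer blowings up on the toroidal stack $X$ principalizes any monomial ideal, so the essential work is to reduce the principalization of $\cJ$ to the monomial case.

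For $\cJ$ one introduces the \emph{logarithmic maximal order} $\maxord^{\log}\cJ$, defined by iterated application of the sheaf of logarithmic derivations — which remains locally free on a toroidal orbifold, unlike the sheaf of ordinary derivations. The main inductive loop is \emph{log order reduction}: produce a sequence of Kummer blowings up, supported over the locus of maximal log order, after which $\maxord^{\log}\cJ$ strictly decreases. This step is carried out by induction on the dimension of the ambient orbifold via a \emph{logarithmic hypersurface of maximal contact} $H$ and an appropriate coefficient ideal $\cC(\cJ)$ on $H$: principalizing $\cC(\cJ)$ on the lower-dimensional $H$ pulls back to an order-dropping sequence for $\cJ$ on $X$, with dimension zero as the trivial base case. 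Iterating, $\cJ$ eventually becomes the unit ideal, at which point the total transform of $\cI$ is monomial and is principalized by the combinatorial step of the first paragraph.

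The main obstacle is the \emph{extended functoriality principle}: every choice (the monomial/non-monomial split, the log order, the hypersurface of maximal contact, the centers) must be canonical so that the output sequence commutes with every logarithmically smooth morphism $Y\to X$, including Kummer covers that extract roots of monomials and are invisible to the classical algorithm. The key observation is that allowing centers to be Kummer — rather than insisting on smooth centers in a smooth ambient — absorbs precisely the non-canonical choices that classically forced auxiliary cleanup steps; this is what the authors call \textit{discarding steps which become redundant}. One then verifies log-smooth invariance of each ingredient: logarithmic derivations pull back log-smoothly, the monomial saturation $\cM$ is manifestly log-functorial, and the coefficient-ideal construction commutes with log-smooth base change. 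Compatibility with saturated pullback (and dropping of trivial blowings up) is inherited center-by-center from these local invariances, giving the full log-functoriality asserted in the theorem.
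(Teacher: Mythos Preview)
Your outline follows the paper's strategy in spirit but has two genuine gaps.

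First, the factorization $\cI = \cM\cdot\cJ$ at the outset is not available. Your description of $\cM$ as ``the largest monomial ideal containing $\cI$'' is the unit ideal; what you want is the monomial saturation $\cM(\cI)$, the \emph{smallest} monomial ideal containing $\cI$. But even then, $\cI = \cM(\cI)\cdot\cI^{cln}$ only makes sense once $\cM(\cI)$ is \emph{invertible}, i.e.\ once $\cI$ is balanced. The paper handles this by an \emph{Initial Cleaning} step: one blows up the Kummer center $\cM(\cI)^{1/a}$ (a single logarithmically \'etale modification), after which the controlled transform is clean and the splitting becomes available. Your plan to ``principalize $\cM$ combinatorially first, then work on $\cJ$'' presupposes a decomposition that does not yet exist; the paper explicitly warns (Remark~2.9.1(3)) that before cleaning the splitting is meaningless.

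Second, and more seriously, you have no mechanism for reconciling different choices of maximal contact hypersurface. You write that ``the hypersurface of maximal contact must be canonical,'' but it is not: maximal contacts exist only \'etale-locally and are far from unique. Allowing Kummer centers does eliminate the companion-ideal and monomial-step bookkeeping of the classical algorithm (this is what the paper means by ``discarding redundant steps''), but it does \emph{not} eliminate the ambiguity in the choice of $H$. The paper resolves this exactly as in \cite{Wlodarczyk}: one first replaces $(\cI,a)$ by its \emph{homogenization} $\cH(\cI,a)$, an equivalent marked ideal for which any two maximal contacts are exchanged by an \'etale-local automorphism preserving the ideal (Lemma~\ref{le: homo}). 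Only then does the coefficient-ideal descent to $H$ produce a sequence independent of the choice of $H$, and only then can the local constructions be glued and shown functorial. Without homogenization your inductive step does not yield a well-defined global sequence, and functoriality for logarithmically smooth morphisms cannot be established.
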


See Section \ref{Sec:order-reduction} for proof. We briefly review the notion of \emph{Kummer blowing up} in Section \ref{Sec:Kummer-review}; details are given in \cite[Section 5.4]{ATW-destackification}. We note that in each stage the toroidal structure is enriched by the exceptional locus, and the modification is not necessarily representable.

\emph{Monomial ideals} on a toroidal orbifold $(X,U)$, namely those which locally correspond to toric ideals, are introduced in Section \ref{Sec:monomial-ideals}. \emph{Invertible} monomial ideals are the correct generalization of ideals of normal crossings divisors.

The \emph{saturated pullback} in the statement is the closure of $U_Y \times_{X} X_n$ in the normalization of the usual pullback $Y\times_X X_n$. It forms the pullback in the category of fine and saturated logarithmic {\em stacks}. Accordingly, in this paper all logarithmic structures are fine and saturated, and all pullbacks are taken in the saturated sense.

The precise meaning of functoriality with  ``trivial blowings up removed" is spelled out in Theorem \ref{Th:order-reduction} below.

\subsubsection{Non-embedded logarithmic desingularization}
Classical ideas reducing resolution to principalization apply in the logarithmic setting. In Section \ref{Sec:nonembedded-proof} we obtain the following:

\begin{theorem}[Non-embedded desingularization]\label{Th:nonembedded}
\begin{enumerate}
\item {\bf Existence:}
Let $Z$ be a logarithmic Deligne--Mumford stack of finite type over $k$ and assume that $Z$ is generically logarithmically smooth and locally equidimensional. Then there is a modification  $\cF(Z)\:Z_{res} \to  Z$  which is an  isomorphism over the logarithmically smooth locus of $Z$ and $Z_{res}$ is logarithmically smooth.
\item {\bf Functoriality:}
The process assigning to $Z$ the modification $\cF(Z)$ is functorial for logarithmically smooth  morphisms:   Given a logarithmically smooth  $Y\to Z$, with $\cF(Y)\:Y_{res} \to  Y$ we have  $Y_{res} = Y \times_Z Z_{res}$, the logarithmic pullback taken in the fs category.
\end{enumerate}
\end{theorem}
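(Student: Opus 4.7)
The plan is to derive Theorem \ref{Th:nonembedded} from the principalization Theorem \ref{Th:principalization} by the classical device of embedding $Z$ locally in a log smooth ambient and taking a suitable strict transform.

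\emph{Step 1 (Local embedding).} Étale locally on $Z$, choose a fine log chart $P\to\cO_Z$. Since $Z$ is of finite type, after passing to an étale cover one can realize $Z$ as a closed substack of a toroidal orbifold $X = V\times\Spec k[P]$, where $V$ is smooth and the log structure on $X$ is pulled back from $\Spec k[P]$. The embedding $Z\hookrightarrow X$ is then a closed immersion into a log smooth ambient.

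\emph{Step 2 (Principalize and extract the strict transform).} Apply Theorem \ref{Th:principalization} to the defining ideal sheaf $\cI_Z\subset\cO_X$, producing a sequence of Kummer blowings up $X_n\to X$ supported over $Z$ after which $\cI_Z\cO_{X_n}$ is invertible monomial. Define $Z_{res}$ to be the strict (birational) transform of $Z$ in $X_n$, obtained by dividing out exceptional monomial factors along the sequence. Because $Z$ is generically log smooth and locally equidimensional, the principalization eventually reduces the log order of the non-monomial part of $\cI_Z$ to $1$; at that stage the strict transform is locally cut out by equations with nonvanishing log differentials and is therefore log smooth in $X_n$. Subsequent Kummer blowings up in the sequence either act trivially on the strict transform or are centered in loci transverse to it, preserving its log smoothness, and the resulting modification $Z_{res}\to Z$ is an isomorphism over the log smooth locus of $Z$.

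\emph{Step 3 (Gluing and functoriality).} The local construction must be shown to be independent of the choice of embedding, so as to glue to a global $Z_{res}$. Given two local embeddings $Z\hookrightarrow X_1$ and $Z\hookrightarrow X_2$, one compares them via the common ambient $X_1\times X_2$, whose projections to $X_1, X_2$ are log smooth, and invokes the functoriality of principalization (taken as a saturated pullback in the fs category) to identify the two strict transforms. The functoriality statement of Theorem \ref{Th:nonembedded}(2) then follows: a log smooth $Y\to Z$ pulls back a local embedding $Z\hookrightarrow X$ to a local embedding $Y\hookrightarrow X_Y := X\times_Z Y$ along a log smooth morphism $X_Y\to X$, and the functoriality of principalization yields $Y_{res} = Y\times_Z Z_{res}$.

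The main obstacle is Step 2: verifying that the strict transform under the Kummer principalization of $\cI_Z$ is genuinely log smooth and is preserved by the remaining steps of the principalization sequence. In the classical smooth-ambient case this follows from order reduction together with the fact that blowings up with centers disjoint from or transverse to the strict transform preserve its smoothness; in our setting one must check the analogous statements for Kummer root extractions. A secondary difficulty lies in Step 3: two local embeddings of $Z$ are not directly related by a log smooth morphism, so the comparison must pass through a product ambient, and the interplay between the product log structure and the saturated fibered product demands care.
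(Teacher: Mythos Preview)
Your overall architecture matches the paper---embed locally, principalize $\cI_Z$, take a strict transform, then reconcile embeddings---but Step~2 has a genuine gap and Step~3 uses the wrong devices.

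In Step~2, the claim that once the log order of the clean part of $\cI_Z$ reaches $1$ the strict transform is ``cut out by equations with nonvanishing log differentials and is therefore log smooth'' is false in codimension $>1$: an ideal of log order $1$ need not define a toroidal suborbifold. The paper's mechanism is different and relies on the internal structure of the algorithm. One first arranges in Step~1 that $Z$ has \emph{constant codimension} $d$ in $X$ (this is precisely where local equidimensionality enters, and you omit it). The principalization then proceeds via iterated hypersurfaces of maximal contact $H^{(1)}\supset\cdots\supset H^{(d)}$, and at the stage $i$ when the $d$-th restriction occurs one shows that the strict transform $Z_i$ \emph{equals} $H^{(d)}$: one inclusion because $H^{(d)}$ is an iterated maximal contact for $(\cI_i,1)$, the other because the generic points of $Z_i$ all have codimension $d$. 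Since $H^{(d)}$ is a toroidal suborbifold by construction, $Z_i$ is log smooth. There are then no ``subsequent'' blowings up to worry about: the next center is $H^{(d)}=Z_i$ itself, which principalizes $\cI_i$ in one step.

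In Step~3, the product ambient $X_1\times X_2$ does not work as stated: functoriality of principalization compares $\cI$ on $X$ with its pullback $\cI\cO_Y$ along a log smooth $Y\to X$, but pulling $\cI_Z\subset\cO_{X_1}$ back along the projection $X_1\times X_2\to X_1$ gives the ideal of $Z\times X_2$, not of the diagonal copy of $Z$. The paper instead proves directly that two strict closed embeddings of $Z$ into toroidal varieties of the \emph{same} dimension are \'etale-locally isomorphic (Theorem~\ref{embth}), and compares different embedding dimensions via the Re-embedding Principle (Proposition~\ref{Prop:product}). Likewise, your $X_Y:=X\times_Z Y$ is not defined, since $Z\hookrightarrow X$ is a closed immersion; the correct statement (Lemma~\ref{emblem2}) is that a log smooth $Y\to Z$ extends, locally, to a log smooth $X'\to X$ with $Y=Z\times_X X'$.
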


See \cite[\S3.1.1]{ATW-destackification} for general references on logarithmic DM stacks, which here are always assumed to carry a fine and saturated logarithmic structure.  A {\em modification} of such stacks is a proper morphism inducing an isomorphism of dense open substacks, see Example \ref{Ex:stack-modification}.

Functoriality implies, in particular, that loci where $Z$ is smooth with normal crossings divisor, or even where $Z$ is toroidal, are untouched. Functoriality for logarithmically smooth  morphisms does not hold in the classical algorithms, and the task of maintaining the normal crossings locus has been considered an important challenge. See \cite[Section~12]{Bierstone-Milman}, \cite[Theorem~1.5]{B-M-except-I}, \cite[Theorem~1.4]{BDSMV} for results on \emph{simple} normal crossings and further discussion.\footnote{We emphasize that pinch points in the divisor do not induce fine and saturated logarithmic structures, hence do not contradict  Theorem \ref{Th:nonembedded}, see also \cite[Appendix A.7]{Temkin-embedded}.}

If one is willing to weaken the extended functoriality requirement one can resolve the toroidal singularities and, using \cite[Theorem 1.2]{Bergh} or \cite[Section 4]{ATW-destackification}, remove the stack structure which arise in Theorem \ref{Th:nonembedded}. To illustrate this we show in Theorem \ref{resolution} how to deduce the simplest form of non-functorial resolution of singularities directly from Theorem \ref{Th:principalization}. In Section \ref{Sec:to-smooth} we apply similar methods with greater care and show how to deduce  resolution of singularities, \emph{functorially for smooth morphisms,} from Theorem \ref{Th:nonembedded}.


\subsection{Technical prerequisites} Our work requires foundations in logarithmic, or toroidal, geometry and in algebraic stacks. This is admittedly a significant requirement. To sweeten the bitter pill,  much of the technique is developed in the companion paper \cite{ATW-destackification} and used here as a ``black box".

Toroidal embeddings are introduced in \cite[Chapter 2]{KKMS} - roughly speaking these are open embeddings $U \subset X$ which \'etale locally look like the embedding of a torus as dense orbit in a toric variety. Toroidal morphisms are introduced in \cite{AK} - these are the morphisms which locally look like dominant torus equivariant morphisms of toric varieties.
We freely use the equivalent language of logarithmically smooth schemes and logarithmically smooth morphisms between them, because it gives us access to a vast arsenal of tools. Basic references for logarithmic geometry are \cite{Kato-log,Kato-toric,Ogus-logbook}.

Toroidal geometry is a natural part of resolution of singularities - it was recognized from the outset that toroidal singularities can be combinatorially resolved. One can ask for a partial resolution procedure reducing an arbitrary singularity to a toroidal one, which our Theorem \ref{Th:nonembedded} provides, though at this point we cannot claim the result is simpler than the classical approach. However there are several points where our procedure enjoys a surprising efficiency, see the discussion in Section \ref{varsec},  the examples in Section \ref{Sec:summary} and Proposition \ref{Absolute}. In particular, the procedure   significantly  simplifies the analysis of  monomial factors of an ideal. Other aspects where logarithmic structures become natural are discussed in Section \ref{varsec}.

Algebraic stacks arise in our work as an outcome of the need to take roots of monomials, see Section \ref{examkumsec}. Basic references for stacks are \cite{Deligne-Mumford,LMB,stacks,Olsson-book}.

As we noted in Remark~\ref{localy toric}(1), one can describe the algorithm on the level of coarse spaces with an additional structure, see \cite{ATW-kummer-varieties}. This allows to use a more elementary language, but requires an extension of the class of toroidal varieties.

The paper \cite{ATW-destackification} also introduces the modifications used in our work, namely Kummer blowings up, reviewed in Section \ref{Sec:Kummer-review}. These are the normalized blowings up of generalized ideals of the form $(x_1,\ldots, x_r, m_1^{1/d},\ldots,m_k^{1/d})$ where $x_i$ are \emph{regular parameters}, $m_j$ are \emph{monomials}, and $d$ is an integer.

\subsection{Functorial desingularization of varieties, reinterpreted}\label{varsec} In this section we review the state of the art of desingularization of varieties in characteristic 0, re-interpreting some of the methods in view of logarithmic geometry, and highlighting similarities and differences with our resolution algorithm.

\subsubsection{Hironaka's principalization}
The general framework and the first construction was suggested by Hironaka in \cite{Hironaka}: one locally embeds a variety $Z$ into a smooth variety $X=X_0$ and deduces desingularization of $Z$ from a principalization of the ideal $I=I_Z\subset\cO_X$ of $Z$. \emph{Principalization} is short for  ``making an ideal sheaf locally principal and monomial". This is a sequence of blowings up $f_i\:X_{i+1}\to X_i$ of subvarieties $V_i \subset X_i$ with $0\le i\le n-1$ such that the following conditions hold:

(1) The centers $V_i$ are smooth and have simple normal crossings with the \emph{boundary divisors} $E_i$, which are iteratively defined by $E_0=\emptyset$  and $E_{i+1}=f_i^{-1}(V_i)\cup f^{-1}(E_i)$. In particular, this implies that each $X_i$ is smooth and each $E_i$ has simple normal crossings.

(2) The pullback $I_n=I\cO_{X_n}$ is an invertible monomial ideal, i.e. it defines a divisor supported on $E_n$.

As indicated above, the smooth blowings up  are replaced in our work by Kummer blowings up.

\subsubsection{The role of the boundary - classical and present viewpoints}
One obvious role of the boundary divisor is that it ``absorbs'' $I$  at the end of principalization. In addition, it introduces canonical local parameters $t_1\.t_s\in\cO_x$ corresponding to the components of $E$ through $x\in X$. These are known as {\em exceptional} parameters, as opposed to the non-exceptional or {\em ordinary} or simply {\em regular} parameters. One profits from this in that possible choices of regular families of parameters at $x$ are reduced to those containing the exceptional ones. The price one has to pay is that the centers must have simple normal crossings with the boundary.

Classically, one also encodes the history of the process in the order of appearance of boundary components, \emph{but we do not}. In classical algorithms the history is  used in the monomial stage of the algorithm, see Section \ref{princsec} below. In our work the monomial stage is immediate and requires no history.

In our work the main role of the boundary divisor $E$ is that it enriches $X$ with the structure of a logarithmically smooth variety $(X,M_X)$, or equivalently a toroidal variety $(X,U)$, where $U=X\setminus E$ and $M_X=\cO_{U}^\times\cap\cO_X$.

The same role appears in a more subtle way when deducing resolution from principalization: at some stage $i\leq n$ the variety $Z_i$ is  resolved, and moreover is transversal to $E_i$, and hence the exceptional divisor $E_i|_{Z_i}$ of $Z_i\to Z$ has simple normal crossings. This shows that, in fact, the algorithm solves a stronger problem: it resolves the logarithmic scheme $Z$ with trivial logarithmic structure into  a toroidal scheme $Z_i$, with structure determined by  the divisor $E_i$.

\begin{remark}\label{excrem}
Divisors also appear in de Jong's method (see Section \ref{morsec}). We claim that, again, their real role is to define  logarithmic structures; this time, these structures make morphisms logarithmically smooth. This point of view is emphasized in \cite[Theorems 2.1, 2.4, 3.5]{Illusie-Temkin}.
\end{remark}

\subsubsection{Algorithms and smooth functoriality}
Hironaka proved the existence of resolution, but later work refined this to canonical  algorithms, depending only on $X$. See, e.g. \cite{Villamayor, Bierstone-Milman, Wlodarczyk, Kollar, Bierstone-Milman-funct}. The descriptions of the algorithms vary, for example, they may or may not use invariants, equivalence of resolution data, tuning of ideals. Nevertheless, a number of them produce the same desingularization, and it seems that these algorithms  differ mainly in issues of eliminating the old boundary divisor.

In fact, not only these algorithms are canonical, but they possess the following stronger \emph{functoriality} property: for any smooth morphism $Y\to Z$ the obtained desingularizations $Y_\res\to Y$ and $Z_\res\to Z$ are compatible, in particular, $Y_\res=Z_\res\times_ZY$. This property was first emphasized in \cite{Wlodarczyk}, both as guiding principle and as proof technique. Since then it plays an important role in desingularization theory and its applications. For example, it allows to extend the desingularization to stacks and formal schemes, see \cite[Section 5]{Temkin-qe}.

\subsubsection{Construction of principalization and associated invariants}\label{princsec}
A primary invariant of the algorithm is the order of an ideal at a point $x\in X$, and the algorithm runs by reducing its maximum $d$ on $X$. Order reduction is done by using only {\em $(I,d)$-admissible} centers $V$, where $\ord(I)=d$ along $V$, and applying the \emph{controlled} or $d$-transform: dividing the pullback of $I$ by the $d$-th power of the exceptional divisor. This principle is in force in this paper, in modified form, see Section \ref{derivsec} below.

In classical algorithms, it is important to separate the order-$d$ locus from the old boundary and for this one uses the secondary invariant $s$ counting the number of old components of $E$ through $x$. Our work  requires neither this separation nor the additional invariant.

The main engine of the algorithm is induction on the dimension $n=\dim(X)$ achieved by restricting onto a {\em hypersurface of maximal contact} $H$ and replacing $I$ by a {\em coefficient ideal} $C_H$. Roughly speaking, if $H=V(t)$ and $I$ is generated by elements $\sum_{i=0}^dc_{ij}t^i$ then $C_H$ is generated by the weighted coefficients $c_{ij}^{d!/(d-i)}$, and order reduction of $(I,d)$ is equivalent to that of $(C_H(I),d!)$. In particular, the main invariant of the classical algorithm is a string $(d_n,s_n,d_{n-1},s_{n-1}\.d_1,s_1)$ ordered lexicographically. Here $d_{n-i}$ is the order of the iterated coefficient ideal after restricting $i$ times to a hypersurface of maximal contact, and $s_i$ is the corresponding invariant counting old boundary components. In our algorithm, this simplifies to a string $(d_n,d_{n-1}\.d_1)$.

We arrive at another point where our algorithm becomes simpler. The above scheme has one complication: the order $d_{n-1}$ of $C_H(I)$ can exceed $d!$. Thus, for induction to work one has to establish order reduction of arbitrary {\em marked ideals} $(I,d)$, whose maximal order may exceed $d$. In this case, the order can increase after blowing up and applying $d$-transform, but this is only caused by the {\em monomial part} $M(I)$ of $I$, which is the maximal invertible monomial ideal dividing $I$. Namely, factoring $I$ as $M(I)N(I)$, one has that the order of $N(I)$ does not increase.

Thus one has to reduce the order of $N(I)$. In classical algorithms this works well as long as $\ord(N(I))\ge d$. For $1\le\ord(N(I))\le d-1$ one has to pass to a {\em companion ideal}, which is a weighted sum of $M(I)$ and $N(I)$, and in the purely monomial case $I=M(I)$ one uses a different purely combinatorial algorithm with an auxiliary invariant of its own.

In this paper one immediately reduces to the case where $N(I)$ is \emph{clean}, not contained in any nontrivial monomial ideal. It then suffices to reduce the order of $N(I)$ below $d$, companion ideals are unnecessary and no special attention is needed for invertible monomial ideals.

\subsubsection{Derivations, logarithmic derivations, and orders}\label{derivsec}
The  derivation ideals $\cD_X^l(I)$ of $I$, where $\cD_X^l$ stands for the sheaf of $l$-th order derivations, play a crucial role in the classical principalization construction as they control almost all critical parts of the algorithm, namely:

(1) They determine the \emph{order} $d=\ord_x(I)$  as the minimal $l>0$ such that $\cD_X^l(I)_x=\cO_x$.

(2) They determine all maximal contact hypersurfaces at $x\in X$ as the zero sets $H=V(t)$, where $t$ is a parameter in $\cD_X^{d-1}(I)_x$.

(3) They determine the coefficient ideal $C(I)=\sum_{l=0}^{d-1}(\cD_X^l(I))^{d!/(d-l)}$ and its restriction $C_H=C(I)_H$.

In many respects, \emph{logarithmic derivations} $\cD^l_{X,E}$ are more convenient to work with. For example, the formulas for their transform under blowings up are simpler, see \cite[Lemma~3.1]{Bierstone-Milman-funct}. Furthermore, in the approach of Biertstone and Milman with equivalence of marked ideals, it is critical to use logarithmic coefficient ideal, see \cite[Remark~3.13]{Bierstone-Milman-funct}.

The only reason why one has to also use ordinary derivations is that they determine the order, while the sheaf of logarithmic derivations $\cD_{X,E}$ does not have this feature. For example, $\cD_{X,E}(I)=I$ for any monomial ideal $I$, in particular the order of a monomial ideal is infinite. This paper views this issue as a virtue, accounting for one of the bigger changes in the algorithm: logarithmic derivatives detect the monomial part of an ideal, which is principalized by a single blowing up. Once this is done, logarithmic derivatives detect the \emph{logarithmic order} of an ideal, and provide the analogue of properties (1), (2) and (3) above.
\begin{remark}
The fact that monomials must have infinite order in our algorithm is dictated by  Extended Functoriality with respect to Kummer coverings. Indeed, if $m$ is a monomial, then the covering $X[m^{1/d}]\to X$ must respect invariants of the algorithm, and hence $\logord(m)=\logord(m^{1/d})$ for any $d$.
\end{remark}

\subsection{Resolution of morphisms}\label{morsec}
The subject of resolution of singularities of morphisms has not been explored as much as resolution of varieties. The classical cases are semistable reduction of schemes  over a one-dimensional base in characteristic 0, see \cite{KKMS}, and semistable reduction of relative curves, see  \cite{dejong-curves}. The general case is  studied in \cite{AK, Karu-semistable, ADK}, \cite{Cutkosky-local, Cutkosky-m32, C-K-mn2, Cutkosky-m, Cutkosky-toroidalization, Ahmadian}, \cite[Section~3]{Illusie-Temkin}, and \cite{Temkin-p-alteration}.

\subsubsection{The problem of resolution of morphisms}
In vague terms, resolution of a morphism $f\:Z\to B$ aims to find an \emph{alteration} $B'\to B$ and a \emph{modification} $Z'\to Z^\mathrm{pr}$ of the proper transform of $Z$ such that the morphism $f'\:Z'\to B'$ is ``as nice as possible'', see \cite[Section 0.1]{AK}. It is a fact of life that one cannot expect $f'$ to be smooth in general, so a suitable weakening of smoothness is required.

Classically, one wanted $f'$ to be ``semistable" in a strong sense, with all fibers locally isomorphic to normal crossings divisors. An example of Karu \cite[Example 2.1.12]{Karu-thesis} shows that this is impossible in general.

\subsubsection{Brief summary of past results}
\paragraph{\it Curves} Semistable reduction when $\dim~B = 1, \dim~Z=2$ in mixed characteristics is classical and can be found in \cite{Deligne-Mumford}. De Jong  \cite{dejong-curves} deduced the case $\dim~Z = \dim~B+1$ with $f$ proper using properness of moduli spaces. The properness assumption was removed in \cite{Temkin-stable}.

\paragraph{\it One-dimensional base} The case where $\dim~B = 1$ in characteristic 0 is treated in \cite{KKMS}. It relies on resolution of singularities: one can assume that the base is a trait $B=\Spec(R)$ with $R$ a discrete valuation ring. In characteristic 0 it  follows from Hironaka's theorem applied to the closed fiber that $f$ can be replaced by a logarithmically smooth morphism.  A subsequent combinatorial simplification leads to semistable reduction.

This method does not readily extend to higher dimensional base, or higher rank valuation rings, as Karu's example demonstrates. In addition it is not compatible with ramified base change, because Hironaka's resolution leads to smooth, rather than logarithmically smooth, schemes.

\paragraph{\it Fibrations by curves} De Jong's method using fibrations by curves directly gives semistable reduction by \emph{alterations} of both base and total space, see \cite{dejong-curves}. In characteristic 0 one can carefully take quotients along the way. This was carried out in \cite{Abramovich-deJong} for varieties. Resolution of morphisms in characteristic 0 is proven in \cite{AK, Karu-semistable, ADK, Illusie-Temkin}. One obtains  modifications $B' \to B$ and $Z' \to Z$ such that $Z' \to B'$ is logarithmically smooth. Base change of combinatorial nature allows one to further improve it.

The main weakness of the fibration method is that it is severely non-canonical.  In particular, the general fiber is modified even if it is smooth.

\paragraph{\it Cutkosky's work}  The work \cite{Cutkosky-local, Cutkosky-m32, C-K-mn2, Cutkosky-m, Cutkosky-toroidalization, Ahmadian} led by Cutkosky aims at gaining further control on the modifications $B' \to B$ and $Z' \to Z$ above. Most of the results obtained in this line of work are about \emph{monomialization}, namely achieving the result \emph{locally at a valuation} of $Z$ and $B$ using only blowings up of nonsingular centers on both $Z$ and $B$. The stronger \emph{toroidalization} result is obtained when $Z$ is a threefold in \cite{Cutkosky-toroidalization, Ahmadian}. Here the modifications   $B' \to B$ and $Z' \to Z$ are sequences of blowings up of nonsingular centers.

\subsubsection{Resolution of morphisms and logarithmic smoothness}

Every single one of the references in Section \ref{morsec} relies on  toroidal, or equivalently logarithmically smooth, formalism. We believe that there is decisive accumulated evidence  for using \emph{logarithmical smoothness} of $f'$, or \emph{toroidality} with respect to an appropriate choice of divisors, as the right replacement of smoothness for desingularization of morphisms.

Here are some  points in favor of logarithmic smoothness:
\begin{enumerate}
\item Logarithmically smooth morphisms have a base-change property analogous to smooth morphisms.
If $Z \to B$ is logarithmically smooth, $B'\to B$  a logarithmic morphism with $B'$ logarithmically smooth, and $Z'\to B'$ the saturated pullback, then the morphism $Z'\to B'$, and hence $Z'$, is also logarithmically smooth.

\item Logarithmic smoothness has further  strong functorial properties similar to smoothness: it is closed under products and compositions.

\item Singularities in logarithmically smooth morphisms are of purely combinatorial nature, so further improvement of such morphisms (when possible) is a combinatorial problem.

\item In characteristic zero, one can transform a morphism $Z\to B$ into a logarithmically smooth morphisms using only modifications $B'\to B$ and $Z'\to Z$, not requiring an alteration.
\end{enumerate}
	
\subsubsection{Extended Smoothness and extended functoriality}\label{smoothprinc}
The following principle is foundational for this paper: once a good extension of smoothness, such as logarithmic smoothness, is found, it is worth working out the methods entirely in the generalized context. If needed and if possible, one can attempt to reduce to the usual notion of smoothness at the very end of a desingularization algorithm. We find this more compelling than to attempt to go back and forth between the two notions of smoothness during the process.

As a corollary of Extended Smoothness, the Functoriality Principle is naturally replaced by the Extended Functoriality Principle, introduced in Section \ref{Sec:extended-funct}.

\tableofcontents

\section{Principles of the principalization functor}
\addtocontents{toc}
{\noindent We give a broad outline of the principalization process, with examples. We prove Theorem \ref{Th:principalization}
using the techniques developed in Sections \ref{Sec:summary}-\ref{Sec:coefficient}.}
\label{Sec:summary}

{Before discussing examples we must introduce \emph{monomial blowings up} and the \emph{cleaning up} process.

\subsection{Monomial ideals}\label{Sec:monomial-ideals}
Let $(X,\cM)$ be a toroidal orbifold, with $\alpha: \cM\to \cO_X$ the logarithmic structure. An ideal sheaf $\cI$ is said to be \emph{monomial} if there is an ideal $J\subseteq \cM$  such that $\cI = \alpha_X(J) \cO_X$. By convention, $0$ is the monomial ideal corresponding to $J=\emptyset$. Note that $J = \alpha^{-1} (\cI)$. The monoid ideal $J$ is uniquely determined by its image $\oJ \subseteq \ocM$ in the characteristic sheaf of monoids.  The notion of monomial ideal coincides with the notion of \emph{canonical ideal sheaf} of \cite[p. 82-83]{KKMS}.

\begin{proposition} \label{Prop:monomial-blowup}

(1) The normalized blowing up $\pi: X' \to X$ of a monomial ideal $\cI$ is a logarithmically smooth morphism.

(2) The stack $X'$ is a toroidal orbifold.

(3) $\cI \cO_{X'}$ is invertible.

(4) If $Y \to X$ is logarithmically smooth and $Y'\to Y$ is the normalized blowing up of $\cI\cO_Y$, then $Y' = Y\times_X X'$, the logarithmic pullback taken in the fs category.
\end{proposition}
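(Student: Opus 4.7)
\emph{Proof plan.} The whole proposition is combinatorial in nature, so the strategy is to reduce each claim to the toric situation and then invoke known toric geometry. A toroidal orbifold $(X,\cM)$ admits, strict étale locally, charts of the form $X \to [\Spec k[P]/G]$ with $P$ a sharp fs monoid and $G$ a diagonalizable group acting through a grading of $P$; after passing to such a chart, a monomial ideal $\cI$ corresponds to a monoid ideal $J \subseteq P$ and to the ideal $I_J = (x^p : p \in J) \subset k[P]$. Thus it suffices to verify (1)--(3) for the normalized blowing up of $I_J$ on $\Spec k[P]$ (the diagonalizable group action being preserved throughout).

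For (1)--(3), classical toric geometry identifies the normalized blowing up of $I_J$ with the toric variety associated to the subdivision $\Sigma_J$ of the cone $\sigma_P$ by the Newton polyhedron of $J$; equivalently, it is the Kato-fan subdivision of $\Spec P$ determined by $J$. This is a toric (hence logarithmically smooth) morphism, which gives (1). The exceptional locus is supported over $V(\cI) \subset X \setminus U$, so $U$ still sits as a scheme-theoretic open inside $X'$; the inertia of $X'$ remains diagonalizable because toric (subdivision) morphisms preserve diagonalizability of stabilizers; hence $(X', U)$ is again a toroidal orbifold, giving (2). On each cone of $\Sigma_J$ some generator $p \in J$ becomes minimal, so $I_J \cO_{X'}$ is locally generated by a single monomial, giving (3).

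For (4), the saturated base-change statement, I would use the fact that a logarithmically smooth morphism $Y \to X$ is strict étale locally a pullback of a toric morphism $[\Spec k[Q]/H] \to [\Spec k[P]/G]$ induced by a Kummer/integral map $P \to Q$; the ideal $\cI \cO_Y$ corresponds to the monoid ideal $JQ \subseteq Q$. The Newton polyhedron subdivision $\Sigma_{JQ}$ of $\sigma_Q$ is exactly the pullback of $\Sigma_J$ under the map of cones $\sigma_Q \to \sigma_P$, because forming the Newton polyhedron commutes with homomorphisms of fs monoids. Toric pullback along a homomorphism of fs monoids corresponds exactly to the saturated (fs) fiber product on the geometric side, so the normalized blowing up of $\cI \cO_Y$ coincides with $Y \times_X^{\mathrm{fs}} X'$.

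\emph{Main obstacle.} The only delicate point I foresee is part (4): the ordinary scheme-theoretic fiber product $Y \times_X X'$ can disagree with the normalized blowing up of $\cI\cO_Y$, precisely because the underlying morphism of monoid algebras need not be flat and the underlying schemes need not be normal after pullback. The saturation/normalization is exactly what repairs this, and one has to argue carefully that forming fs pullback commutes with the toric subdivision and with normalization. This is precisely the content worked out for Kummer blowings up in \cite[Section 5.4]{ATW-destackification}, and I would cite it to conclude.
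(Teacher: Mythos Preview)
Your treatment of (1)--(3) is essentially that of the paper: both reduce \'etale-locally to the toric situation and appeal to the classical fact that blowing up a torus-invariant ideal corresponds to a fan subdivision (the paper simply cites \cite[Theorem~10*, p.~93]{KKMS}). For (3) the paper is even shorter: invertibility of $\cI\cO_{X'}$ is automatic from the universal property of blowing up and survives normalization, so no Newton-polyhedron analysis is needed. For (2) the paper's argument that stabilizers remain diagonalizable is the one-liner that $\pi$ is \emph{representable}, hence stabilizers of $X'$ inject into those of $X$; your statement that ``toric morphisms preserve diagonalizability'' is the same observation phrased less precisely.

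Part (4) is where the approaches diverge. The paper does \emph{not} argue combinatorially via compatibility of Newton subdivisions; instead it uses universal properties twice. First, $Y'\to X$ factors through $Bl_\cI X$ (universal property) and then through its normalization $X'$ (since $Y'$ is normal), yielding $Y'\to Y\times_X X'$. Second, the fs product $Y\times_X X'$ is normal because $X'\to X$ is logarithmically smooth (this is where (1) is used), and $\cI$ pulls back to an invertible ideal on it, so the universal property of $Y'$ produces a map back $Y\times_X X'\to Y'$. The two maps are inverse since they agree over a dense open. This is shorter and avoids any fan computation.

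Your proposed argument can be made to work, but the sentence ``forming the Newton polyhedron commutes with homomorphisms of fs monoids'' is doing real work and is not obviously true as stated: for a general monoid map $P\to Q$ the subdivision $\Sigma_{JQ}$ need only refine the pullback of $\Sigma_J$, and one must then argue that the induced toric morphism is an isomorphism after normalization/saturation. That is exactly the subtlety you flag as the main obstacle, and the citation to \cite[Section~5.4]{ATW-destackification} is not quite on point---that section treats Kummer centers, whereas here $\cI$ is an ordinary monomial ideal and the universal-property argument is both available and simpler.
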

See Section \ref{Sec:monomial-blowup}.
We note that even if $X$ is regular, $X'$ is in general singular. With our formalism we do not care, since it is a toroidal orbifold. Hence the price we pay and the advantage we attain.
}

\subsection{Reduction to clean ideals}
This section generalizes \cite{Kollar-toroidal}, allowing for singular toroidal stacks and spelling out functoriality.

An ideal sheaf $\cI$ on a toroidal variety is said to be \emph{clean} if its zero scheme $V(\cI)$ contains no logarithmic strata (generalizing \emph{toroidally resolved} ideals \cite[Definition 3]{Kollar-toroidal}.) This property is compatible with \'etale covers and hence extends to toroidal orbifolds. We show in Theorem \ref{Th:monomial-part} that there is a functorial minimal monomial ideal $\cM(\cI)$ containing $\cI$. (Compare \cite[Theorem 17]{Kollar-toroidal}.)

\begin{proposition}[Cleaning up - see Proposition \ref{Absolute}]\label{Prop:cleaning} Given an ideal $\cI$ on a toroidal orbifold $X$, the normalized blowing up $\pi: X' \to X$ of the monomial ideal $\cM(\cI)$ is a   toroidal orbifold, the morphism $\pi$ is \emph{logarithmically smooth}, and $\cM(\cI\cO_{X'}) = \cM(\cI)\cO_{X'}$. In particular we have a unique functorial factorization $\cI \cO_{X'}= \cI' \cL$, where $\cI'$ is {\em clean} and $\cL = \cM(\cI)\cO_{X'}$ is an invertible monomial ideal sheaf. If $Y \to X$ is logarithmically smooth and $Y'\to Y$ the normalized blowing up of $\cM(\cI \cO_Y)$,  then $Y' = Y \times_X X'$, the logarithmic pullback taken in the fs category.
\end{proposition}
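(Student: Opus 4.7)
The plan is to combine Proposition \ref{Prop:monomial-blowup} applied to the monomial ideal $\cM(\cI)$ with the functoriality of the monomial part $\cM(-)$ under logarithmically smooth morphisms, as guaranteed by Theorem \ref{Th:monomial-part}. Most of the assertions will then be formal consequences of these two black boxes together with the invertibility of $\cL$.

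First I would invoke Proposition \ref{Prop:monomial-blowup} with the monomial ideal $\cM(\cI)$. This immediately yields that $X'$ is a toroidal orbifold, that $\pi \colon X' \to X$ is logarithmically smooth, and that $\cL := \cM(\cI)\cO_{X'}$ is an invertible monomial ideal sheaf. The inclusion $\cI \subseteq \cM(\cI)$ pulls back to $\cI\cO_{X'} \subseteq \cL$, and since $\cL$ is invertible there is a unique ideal sheaf $\cI'$ with $\cI\cO_{X'} = \cI' \cdot \cL$ (locally, $\cL = (m)$ for a monomial $m$, and $\cI'$ is obtained by dividing local generators of $\cI\cO_{X'}$ by $m$). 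This gives the factorization and its uniqueness.

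The heart of the argument is to show $\cI'$ is clean, which is equivalent to $\cM(\cI\cO_{X'}) = \cL$. Since $\pi$ is logarithmically smooth, applying functoriality of the monomial part (Theorem \ref{Th:monomial-part}) along $\pi$ yields $\cM(\cI\cO_{X'}) = \cM(\cI)\cO_{X'} = \cL$. Combining with the small multiplicative identity $\cM(\cI' \cdot \cL) = \cM(\cI') \cdot \cL$, valid when $\cL$ is invertible monomial (locally, multiplication by $m$ sends monomial ideals to monomial ideals bijectively, and preserves containment), we obtain $\cM(\cI') \cdot \cL = \cL$, so $\cM(\cI') = \cO_{X'}$, i.e. $\cI'$ is clean. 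For functoriality of the whole construction, given a logarithmically smooth $Y \to X$, functoriality of $\cM$ yields $\cM(\cI\cO_Y) = \cM(\cI)\cO_Y$, so the normalized blowing up $Y' \to Y$ of the former agrees with that of the latter, which by Proposition \ref{Prop:monomial-blowup}(4) is precisely $Y \times_X X'$ in the fs category.

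The main obstacle is the identity $\cM(\cI\cO_{X'}) = \cM(\cI)\cO_{X'}$, which is the substantive content of ``functoriality for logarithmically smooth morphisms'' for the monomial part. Although stated as part of Theorem \ref{Th:monomial-part}, it is the place where one genuinely uses the structure of toroidal orbifolds: in a toric chart, monomial ideals correspond to monoid ideals in the characteristic $\ocM$, logarithmically smooth morphisms induce well-behaved maps on characteristic monoids, and the minimal monomial ideal containing a given ideal $\cI$ can be read off from the image of $\cI$ in the associated graded of a toric filtration. Once this local picture is established, both the cleanness of $\cI'$ and the multiplicative identity $\cM(\cI'\cL) = \cM(\cI')\cL$ follow transparently, completing the proof.
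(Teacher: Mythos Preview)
Your proposal is correct and follows essentially the same route as the paper: invoke Proposition~\ref{Prop:monomial-blowup} for the basic properties of the normalized monomial blowing up, then use Theorem~\ref{Th:monomial-part}(3) along the logarithmically smooth $\pi$ to get $\cM(\cI\cO_{X'})=\cM(\cI)\cO_{X'}=\cL$, and conclude cleanness from the multiplicative behavior of $\cM$ with respect to invertible monomial factors. The only cosmetic difference is that the paper packages the multiplicative identity $\cM(\cI'\cdot\cL)=\cM(\cI')\cdot\cL$ via Lemma~\ref{commute} (combined with $\cM=\cD_X^\infty$ from Theorem~\ref{Th:monomial-part}(2)), whereas you verify it directly in local coordinates; both are fine.
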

Compare {\cite[Lemma 19]{Kollar-toroidal}}. Given Proposition \ref{Prop:monomial-blowup} and Theorem \ref{Th:monomial-part}, the only part requiring proof is the fact that $\cI'$ is clean.  Note also that $\pi$ blows up all components of $X$ on which $\cI$ vanishes. In particular, $V(\cI')$ is nowhere dense and further principalization is done by modifications of $X'$.

In practice the inductive process will dictate blowing up the Kummer center $\cM(\cI)^{1/a}$ for an appropriate integer $a$. We treat  this  where the terms are defined, see Proposition \ref{Absolute} below.

\begin{example}\label{Ex:monomial}
Here and later we {use} the letters $u,v,w$ for generating monomials and $x,y,z$ for \emph{ordinary parameters}, namely elements of $\cO_{X}$ which reduce to regular parameters on logarithmic strata. In later examples ordinary parameters may turn into exceptional variables, which are monomials.

Consider the ideal $\cI_0= (u-v)$ on $\Spec \CC[u,v]$. Its monomial saturation is $\cM(\cI_0) = (u,v)$. The blowing up of $\cM(\cI_0)$ is the blown up plane. One chart is $X'_u = \Spec \CC[u,v/u]$ where $\cI\cO_{X'_u} = (u(1-v/u))$ with monomial part $\cL = \cM(\cI_0)\cO_{X'_u} = (u)$ and clean part $(1-v/u)$. The other chart is similar.
\end{example}

As noted in \cite{Kollar-toroidal}, this is as far as one can get by blowing up monomial ideals. The remaining task is to principalize clean ideals. For technical reasons it will be convenient to work with the class of slightly more general {\em balanced ideals} of the form $\cM\cdot\cI$, where $\cI$ is clean and $\cM$ is monomial and invertible.

\subsection{Easy example: blowing up a logarithmically admissible center} \label{easyexample}
We start with an easy case: Consider the ideal $\cJ = (u^2,x)$ on  $X = \Spec(\NN \to \CC[\NN][x]) = \Spec \CC[u,x]$, where the logarithmic structure is given by $1\in \NN$ mapping to $u\in \cO_X$.

As we will see in this example, an ordinary parameter may become the equation of an exceptional divisor, and thus monomial, after blowing up.

The ideal $\cJ$ is clean. Blowing up the ideal $\cJ$ itself introduces a modification $X' \to X$ with an exceptional divisor $E$. We use $E$ to enrich the logarithmic structure. There are two affine charts:
\begin{itemize}
\item The $x$-chart $X'_x:=\Spec \CC[x,u,v] / (vx = u^2)$, where $v = u^2  /x$. Here $x=0$ is the equation of the exceptional divisor, which is now a monomial.

The ideal $(x,u^2)$ is transformed into the ideal $(x)$, which is a principal monomial ideal. Note that {$X'_x$ is a \emph{singular} toric variety, in particular} toroidal orbifold.
\item The $u^2$-chart  $X'_{u^2} := \Spec \CC[u,y]$, where $y = x/u^2$.  Here $u=0$ is the equation of the exceptional divisor, and the ideal $(x,u^2)$ is transformed into the monomial ideal $(u^2)$.

Note that  $X'_{u^2}$ is again a {toric variety,} in this case nonsingular.
\end{itemize}
We thus obtained a toroidal orbifold $X'$ on which the ideal $\cJ\cO_{X'}$ is monomial.

Once again we see a glimpse of both the price and the advantage: $X'$ is singular, but it is a toroidal orbifold, so we do not need to desingularize it. The classical principalization procedure would require two blowings up. Were we to start with the ideal $(x,u^{100})$, our procedure would still principalize it in one step, whereas the classical principalization procedure would require 100 blowings up.

\subsection{Analysis of example: logarithmic order and maximal contact}
The same blowing up works for the clean  ideal $\cI_1:=\cJ^2$, as well as its unsaturated variant $\cI_2 := (u^4,x^2)$. But how do we know in all these cases to blow up $\cJ$?

The answer is that, when restricting the ideal $\cJ$ to the hypersurface $\{x=0\}$ we obtain the monomial ideal $(u^2)$, which hints that we should lift this ideal from $\{x=0\}$ to $X$, giving $(u^2,x)$. What distinguishes $x$ here is that it is an element of $\cJ$ which is an \emph{ordinary} parameter. Other ordinary parameters in $\cJ$, such as $x + u^2$, would do just as well.

This works for an ideal  of \emph{logarithmic order 1}, which contains an ordinary parameter, namely when restricting $\cJ$ to the stratum $u=0$ the resulting ideal $(x)$ defines a logarithmically smooth substack, see  \cite[\S5.1.1]{ATW-destackification}. The logarithmic order is defined in Section \ref{Sec:logord} below. Note that $\cI$ is clean if and only if its maximal logarithmic order is finite, see Corollary~\ref{11cor}.

For the clean ideals $\cI_1 = \cJ^2$ or $\cI_2$, which have logarithmic order $2$ on the stratum $u=0$, we would need to mimic the procedures of usual resolution of singularities, and pick an ordinary parameter inside {$\cD^{(\leq 2-1)}(\cI_j) = \cD^{(\leq 1)}(\cI_j)$. Here $\cD^{(\leq i)}$} denotes the sheaf of \emph{logarithmic} differential operators on $X$ of order $\leq i$.

In both cases it is crucial that the ideal $\cJ$ defines an \emph{admissible center} for $\cI_j$, in the sense that $\cI_j \subseteq \cJ^2 = \cJ^{\logord(\cI_j)}$, see Definition~\ref{Def:admissible}. In particular $\cI_j\cO_{X'}$ automatically factors out the invertible monomial ideal $(\cJ\cO_{X'})^2$, see \cite[Theorem~5.4.14]{ATW-destackification}. In the example we consider, $\cI_j\cO_{X'} = (\cJ\cO_{X'})^2$ is already an invertible monomial ideal, whereas in general  \emph{the other factor is automatically clean}, see Proposition \ref{Prop:admissible-blowup}, hence we can continue the process of principalization of a clean ideal.

\subsection{Example: blowing up a Kummer admissible center}\label{examkumsec}
A new phenomenon arises with the clean ideal $\cI_3 = (x^2, u)$, which again has logarithmic order 2.

Since $H = \{x=0\}$ is a hypersurface of maximal contact, and since $\cI_3\cO_H = (u)$ one might be tempted to  blow up $(x,u)$, but \emph{this ideal is not admissible}, as $\cI_3 \not\subseteq  (x,u)^2$; At least in this paper the whole procedure would collapse were we to allow  blowing  up $(x,u)$.

What could an admissible center look like? The fact that $\cI_{3}$ is invariant under the torus action $(x,u) \mapsto (t_1x,t_2u)$ implies that a functorial admissible center must be monomial in $x$ and $u$. Since $x$ is the only ordinary parameter of this form, the center is of the form $(x,u^\beta)$ or $(u^\beta)$. Admissibility implies that $u\in (x^2,xu^\beta,u^{2\beta})$ or $u\in (u^{2\beta})$, yielding $\beta \leq 1/2$. The power of $u$ must be fractional!

We are inevitably led to define the \emph{Kummer ideal sheaf} $(x,u^{1/2})$, which is evidently admissible: ${\cI_3 = (x^2,u)}  \subseteq (x,u^{1/2})^2 = (x^2,xu^{1/2},u)$, as well as its associated modification $X' \to X$. This is carried out in \cite[Sections~5.3--5.5]{ATW-destackification}.

In short, a Kummer ideal sheaf is a coherent ideal sheaf in the fine and saturated Kummer-\'etale topology of $X$, which is determined by an ideal sheaf on some Kummer covering of $X$, in this case the covering $w^2 = u$. To define  Kummer blowings up, we restrict to Kummer ideals of the form $(x_1,\ldots,x_k, m_1,\ldots,m_l)$ generated by ordinary parameters $x_i$ and Kummer monomials $m_j$.  The Kummer blowing up $X' \to X$ is a toroidal orbifold which is necessarily an algebraic stack, and on which the Kummer ideal sheaf is principalized.

Once again there are two affine charts:
\begin{itemize}
\item The $x$-chart $X'_{x}:=\Spec \CC[x,u,v] / (vx^2 = u)$, where $v = u  /x^2$.
Here $x=0$ is the equation of the exceptional divisor, which is monomial.

The ideal $(x^2,u)$ is transformed into the ideal $(x^2)$, which is a principal monomial ideal. Also the Kummer ideal $(x,u^{1/2})$ is transformed into the principal monomial ideal $(x)$. Note that {$X'_x$ is a toroidal orbifold, in fact a smooth} scheme.
\item The $u^{1/2}$-chart is the stack quotient $X'_{u^{1/2}} :=\left[\Spec \CC[w,y] \big/ \mu_2\right]$, where $y = x/w$ and the diagonalizable group $\mu_2 = \{\pm 1\}$ acts via $(w,y) \mapsto (-w,-y)$.

Here $w=0$ is the equation of the exceptional divisor, and the ideal $(x^2,u)$ is transformed into the principal monomial ideal $(u) = (w^2)$. The center is transformed into the principal monomial ideal $(w)$.

Outside $\{y=0\}$ this becomes the schematic quotient $$\Spec \CC[w,y,y^{-1}] / \mu_2 = \Spec \CC[y^2,y^{-2},wy] = \Spec \CC[v^{-1},v,x],$$ which is naturally an open subscheme in $X'_{x}$, allowing gluing of the two charts.

Note that  $X'_{u^{1/2}}$ is again a toroidal orbifold \emph{with respect to the logarithmic structure determined by $E$}, but that the stabilizer of $y=w=0$ \emph{does not act as a subgroup of the torus}. This means that the coarse moduli space is not toroidal in any natural manner, and in order to maintain the toroidal structure the stack structure must remain.
\end{itemize}

Once again, the classical principalization process would require two blowings up, and were we to start with the ideal $(x^{100},u)$, it would require 100 blowings up, compared to one  Kummer blowing up we need. Analogously, the Kummer blowing up of $\cI_3=(x^2,u)$ along  $(x,u^{1/3})$ is admissible, but not optimal, since it does not principalize $\cI_3=(x^2,u)$ in one step.

This example is quite general:

\begin{proposition}[See Proposition \ref{5}]\label{Prop:admissible-blowup} Let $X$ be a toroidal orbifold, $\cI$ a clean ideal with maximal logarithmic order $a$. Let $\cJ$ be a Kummer ideal which is $(\cI,a)$-admissible: $\cI \subseteq (\cJ^a)^\nor$. Let $X'\to X$ be the Kummer blowing up of $\cJ$, with  $\cJ' := \cJ\cO_{X'}$. Then
$\cI \cO_{X'} = \cI' {\cJ'}^a$, where  $\cI'$ is a \emph{clean} ideal with maximal logarithmic order $\leq a$, and ${\cJ'}$ an invertible monomial ideal.
\end{proposition}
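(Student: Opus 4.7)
The plan is to split the statement into three assertions: that $\cJ' = \cJ\cO_{X'}$ is invertible monomial, that a factorization $\cI\cO_{X'} = \cI'{\cJ'}^a$ exists, and that the residual factor $\cI'$ is clean with $\logord \leq a$. The first two are formal consequences of the Kummer blowing up formalism: invertibility is the defining property \cite[Section~5.4]{ATW-destackification}, and admissibility $\cI \subseteq (\cJ^a)^\nor$ is precisely the condition ensuring $\cI\cO_{X'} \subseteq {\cJ'}^a$ after normalized pullback, so $\cI' := \cI\cO_{X'}\cdot{\cJ'}^{-a}$ is well defined. This packaging is recorded in \cite[Theorem~5.4.14]{ATW-destackification} as cited in the excerpt preceding the statement.

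The substantive part is the third assertion. Here I would exploit Corollary~\ref{11cor}, which identifies cleanness with finiteness of the maximal logarithmic order: it suffices to establish the pointwise bound $\logord_{x'}(\cI') \leq a$ at every geometric point $x' \in X'$ lying over $x \in X$. This is the logarithmic analogue of the classical statement that the controlled transform along an admissible center does not increase the order. My strategy is to analyze how logarithmic derivations $\cD^{(\leq a-1)}$ transform: by the simple formulas for logarithmic derivations under toroidal blowings up (cf.\ \cite[Lemma~3.1]{Bierstone-Milman-funct}), together with the fact that dividing by the invertible monomial ideal ${\cJ'}^a$ removes exactly $a$ orders of vanishing along the exceptional divisor, one obtains that $\cD^{(\leq a-1)}_{X'}(\cI')$ fails to generate $\cO_{X'}$ at $x'$ whenever the analogous statement holds for $\cI$ at $x$. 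This yields the required pointwise bound.

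The main obstacle is that the Kummer nature of the center $\cJ$ makes $X' \to X$ genuinely toroidal only after passing to a Kummer cover $\tilde X \to X$ on which $\cJ$ is an ordinary ideal generated by regular parameters and monomials. I would carry out the derivation computation on the toroidal blowing up of $\tilde X$, where the standard logarithmic transformation formulas apply verbatim, and then descend along $\tilde X \to X$. Descent is legitimate because logarithmic order is Kummer-\'etale local and, as emphasized in the remark following Section~\ref{derivsec}, $\logord$ is by design insensitive to extraction of roots of monomials, so the bound descends to $X'$. With $\logord(\cI') \leq a$ in hand, Corollary~\ref{11cor} provides cleanness of $\cI'$, completing the proof.
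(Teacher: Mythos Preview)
Your overall architecture matches the paper's: the first two assertions are indeed handled by the Kummer blowing up formalism (specifically Lemma~\ref{3}(2),(3) and \cite[Theorem~5.4.14]{ATW-destackification}), and the substantive content is the order bound, which the paper also proves via the differential characterization of $\logord$ and a Kummer-local reduction. The paper packages the derivation computation as the chain Lemma~\ref{33} $\Rightarrow$ Corollary~\ref{4} $\Rightarrow$ Lemma~\ref{44}, and then Proposition~\ref{5} applies Lemma~\ref{44} with $i=a$: since $(\cI,a)$ is of maximal order one has $\cD_X^{(\le a)}(\cI)=(1)$, and the inclusion $\sigma^c(\cD_X^{(\le a)}(\cI),0)\subseteq\cD_{X'}^{(\le a)}(\sigma^c(\cI,a))$ forces $\cD_{X'}^{(\le a)}(\cI')=(1)$, whence $\logord(\cI')\le a$ and cleanness.

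However, the specific implication you write down is backwards and at the wrong degree. You assert that $\cD^{(\le a-1)}_{X'}(\cI')$ fails to generate at $x'$ whenever $\cD^{(\le a-1)}_X(\cI)$ fails at $x$; parsing ``whenever'' in the usual way, this says $\logord_x(\cI)\ge a\Rightarrow\logord_{x'}(\cI')\ge a$. That implication does not yield $\logord_{x'}(\cI')\le a$; it goes the wrong way. What is needed is the \emph{forward} inclusion at degree $a$: from $\cD_X^{(\le a)}(\cI)=(1)$ one must deduce $\cD_{X'}^{(\le a)}(\cI')=(1)$. This is exactly Lemma~\ref{44}, and its proof (via Lemma~\ref{33}) is where the Kummer-local reduction you describe actually enters. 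So your descent strategy is the right mechanism, but it must be used to establish $\cI_E^a\cdot\sigma^*(\cD_X^{(\le a)})\subseteq\cD_{X'}^{(\le a)}$ and hence the forward containment of derived ideals, not the reversed failure statement you wrote.
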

%
\subsection{Order reduction} \label{Sec:order-reduction} Proposition \ref{Prop:admissible-blowup}, and the fact that the maximal logarithmic order is an integer $\geq 0$, suggests that we should principalize a clean ideal by finding a sequence of admissible Kummer blowings up which reduces the order. Our inductive process requires restricting to a \emph{hypersurface of maximal contact}, with an ideal which is not necessarily clean. As in the classical algorithm we address this by proving order reduction for an arbitrary \emph{marked ideals} $(\cI,a)$. 

\begin{definition}[Admissible Kummer sequence]\label{Def:admissible-sequence}
Let $X$ be a toroidal orbifold and $(\cI,a)$ a \emph{marked ideal}, i.e. $\cI$ is an ideal and $a$ is a positive integer. An \emph{$(\cI,a)$-admissible Kummer sequence}  $(X_i, \cI_i,\cJ_i)$ consists of
\begin{itemize}
\item a sequence of toroidal orbifolds
\begin{equation}\label{Eq:admissible-Kummer-Sequence} X_n \to X_{n-1} \to \dots \to X_1 \to X_0 = X,\end{equation}
\item ideal sheaves $\cI_i$ for $i\leq n$,
\item  Kummer ideals $\cJ_i$ on $X_i$ for $i<n$,
\end{itemize}
such that for all $i<n$
\begin{itemize}
\item  $\cJ_i$ is $(\cI_i,a)$-admissible, see Definition \ref{Def:admissible}
\item $X_{i+1}$ is the $\cJ_i$-Kummer blowing up of $X_i$, with exceptional divisor $E_{i+1}$,
\item  $\cI_{i}\cO_{X_{i+1}} = \cI_{i+1} \cI_{E_{i+1}}^a$.
\end{itemize}
\end{definition}

\begin{theorem}[Order reduction]\label{Th:order-reduction}

\begin{enumerate} \item {\bf Existence:} Let $X$ be a toroidal orbifold with a marked ideal $(\cI,a)$. Then there is an $(\cI,a)$-admissible Kummer sequence  $(X_i, \cI_i,\cJ_i)$ such that $\cI_n$ has maximal logarithmic order $<a$.

\item{\bf Functoriality:}
The procedure assigning to $(X, \cI,a)$ the sequence $(X_i, \cI_i,\cJ_i)$ is  functorial for logarithmically smooth morphisms: if $X' \to X$ is a logarithmically smooth morphism, with associated sequence $(X'_j, \cI'_j,\cJ'_j), j=1,\ldots,n'$, then there is a strictly increasing function $i(j)$ such that
\begin{itemize}
\item  $X'_j = X'\times_X X_{i(j)}$,
\item $\cI'_j = \cI_{i(j)} \cO_{X'_j}$, and
\item $\cJ'_j = \cJ_{i(j)} \cO_{X'_j}$,
\end{itemize}
while for the remaining $i$, those not in the image of $i(j)$, we have that $\cJ_i \cO_{X_{n'}}$ is the unit ideal.
\end{enumerate}

\end{theorem}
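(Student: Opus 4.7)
The plan is to prove existence and functoriality simultaneously by induction on $\dim X$, following the Włodarczyk strategy transplanted to the logarithmic setting. The ambient moves are: (i) a cleaning step to reduce to marked ideals of a tractable form, (ii) the selection of a maximal contact hypersurface via logarithmic derivations, (iii) restriction to that hypersurface via a coefficient ideal, and (iv) an appeal to the inductive hypothesis in lower dimension.

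First I would set up the cleaning step. Given $(\cI,a)$, form the balanced ideal $\cM(\cI)\cdot \cI$ or equivalently factor $\cI = \cM(\cI)\cdot \cN(\cI)$ after the Kummer blowing up of Proposition \ref{Prop:cleaning} (using the appropriate root $\cM(\cI)^{1/a}$ when needed to guarantee admissibility with weight $a$). This makes the non-monomial factor clean and so of \emph{finite} maximal logarithmic order, by Corollary~\ref{11cor}. The monomial factor is automatically divided out by $\cI_{E}^a$, so the sequence stays $(\cI,a)$-admissible in the sense of Definition~\ref{Def:admissible-sequence}. This reduces the problem to order reduction of a clean marked ideal $(\cI',a)$.

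Next comes the inductive core. If the maximal logarithmic order of $\cI'$ is already $<a$, we are done. Otherwise, working locally at a point where $\logord(\cI')=a$, the sheaf of logarithmic derivations $\cD^{(\leq a-1)}(\cI')$ contains an ordinary parameter $t$ (the logarithmic analogue of property (1)-(2) from Section~\ref{derivsec}); let $H=V(t)$ be a hypersurface of maximal contact. Form the logarithmic coefficient ideal $C_H(\cI') = \sum_{l=0}^{a-1}\bigl(\cD^{(\leq l)}(\cI')\bigr)^{a!/(a-l)}\big|_H$ on $H$, which is a marked ideal $(C_H(\cI'),a!)$ on the toroidal orbifold $H$ of strictly smaller dimension. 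By the inductive hypothesis, there is a functorial $(C_H(\cI'),a!)$-admissible Kummer sequence on $H$ reducing its maximal logarithmic order below $a!$. The centers $\cJ_i^H\subset H$ lift to Kummer centers $\cJ_i$ on (the strict transform of) $X$, which are $(\cI',a)$-admissible by the standard compatibility between admissibility for an ideal and its coefficient ideal. After this sequence, the pullback of $\cI'$ has log order $<a$ along the transform of $H$, and using a homogenization argument (as in \cite{Wlodarczyk}, adapted to the log setting) together with cleanliness one concludes that the log order drops everywhere. Since the maximum log order is a non-negative integer, iterating this finitely many times achieves the desired order reduction.

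The main obstacle is functoriality, since the choice of $t$ and hence of $H$ is not canonical. I would address this exactly as in \cite{Wlodarczyk}: replace the marked ideal by its homogenization $\cH(\cI',a)$, whose coefficient ideal is independent of the choice of maximal contact hypersurface up to the equivalence relation that preserves admissible centers. This makes every step canonical, and hence stable under any logarithmically smooth base change $Y\to X$: pulling back commutes with the formation of $\cD^{(\leq l)}$ (because log derivations are log-smooth-local), with $\cM(\cdot)$ and cleaning (Proposition~\ref{Prop:cleaning}), with coefficient ideals, and with Kummer blowings up of admissible centers (Proposition~\ref{Prop:monomial-blowup}(4) and its Kummer analogue in \cite[\S5.3--5.5]{ATW-destackification}). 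The only subtlety is that a blowing up $\cJ_i$ on $X$ may become trivial on $Y$, because the corresponding locus may already be disjoint from $Y$ or may coincide with a log stratum of $Y$; this is precisely the phenomenon that the reindexing function $i(j)$ in the statement accounts for, and it is recorded by observing that $\cJ_i\cO_{X_{n'}}$ is then the unit ideal. Verifying this compatibility clause carefully for each of the four operations above — cleaning, log-derivative max contact, coefficient ideal, and Kummer blowup — is the technical core and the place where the logarithmic formalism pays off, since log smoothness of $Y\to X$ is exactly strong enough to make each operation commute with pullback.
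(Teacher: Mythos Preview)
Your overall architecture matches the paper's: induction on $\dim X$, initial cleaning via the Kummer blowing up of $\cM(\cI)^{1/a}$, homogenization for independence of the maximal contact, passage to the coefficient ideal on $H$, and the inductive call in lower dimension. The functoriality discussion is also on target.

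There is, however, a genuine gap in the inductive core. After cleaning, the resulting clean ideal $\cI'$ has some finite maximal logarithmic order $b$, but nothing forces $b=a$: in general $b>a$, so $(\cI',a)$ is \emph{not} of maximal order and the maximal-contact machinery does not apply to it directly. Your sentence ``working locally at a point where $\logord(\cI')=a$'' presumes $b=a$. The paper handles this by a secondary induction on $b$: one applies the maximal-order case to $(\cI',b)$ (not $(\cI',a)$), obtaining an $(\cI',b)$-admissible sequence that drops the maximal order of the clean part below $b$; since $b\ge a$, this sequence is automatically $(\cI',a)$-admissible (Corollary~\ref{admissibility reduced}), and one records the difference $b-a$ in the accumulated monomial factor. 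Iterating brings the maximal order of the clean part below $a$.

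Two smaller points. First, you are missing the Final Cleaning step: after Step~1 the transform is only \emph{balanced}, $\cI_r=\cM_r\cdot\cI_r^{cln}$ with $\logord(\cI_r^{cln})<a$ but $\cM_r$ possibly nontrivial, so $\cI_r$ itself may still have infinite log order; one more Kummer blowing up of $\cM_r^{1/a}$ is needed. Second, the role of homogenization is not to make ``the log order drop everywhere'' --- Theorem~\ref{Th:reductionthm} already gives that the pushforward $i_*(\tau)$ is an order reduction on all of $X$, not just along $H$ --- but solely to ensure the construction is independent of the choice of $H$, exactly as you (correctly) say in your functoriality paragraph.
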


See Section \ref{algsec}. Assuming Order Reduction  we immediately have:
\begin{proof}[Proof of the Principalization Theorem \ref{Th:principalization}]
As in the classical case, we apply the order reduction theorem to the marked ideal $(\cI,1)$, obtaining a sequence $X'\to X$, such that $\cI\cO_{X'}=\cM\cdot\cI'$. Here $\cI'$ is of maximal logarithmic order $<1$, hence  $\cI'=(1)$. The ideal $\cI\cO_{X'}=\cM$ is accumulated from the ideals $\cI_{E}$, split off the pullbacks of $\cI$ during the order reduction process. Hence it is an invertible monomial ideal, as required.
\end{proof}

\emph{It remains to prove Order Reduction.}
\subsection{Homogenization}
The key step is  to construct Order Reduction when $(\cI,a)$ is of maximal order.

A hypersurface of maximal contact $H$ exists locally at a point $p\in X$ when $X$ is a toroidal variety and $\cI$ has logarithmic order precisely $a$. Indeed, $\cD_X^{(\leq a)}(\cI) = (1)$ by Lemma \ref{11new}, hence there exists a local section $x$ of $\cD_X^{(\leq a-1)}(\cI)$ which is an ordinary  parameter at $p$, and $H=\{x=0\}$ is a maximal contact near $p$.

By design, hypersurfaces of maximal contact exist only \'etale-locally and are not unique, so any procedure depending on the choice of hypersurface of maximal contact requires proving that it is independent of choices and can be glued across local patches. To overcome this we follow in Section \ref{Sec:homogenization} the principle introduced in \cite{Wlodarczyk} and define the \emph{homogenization} $\cH(\cI,a)$ of a clean ideal $\cI$ of logarithmic order $a$. This is a clean ideal containing $\cI$ and of logarithmic order $a$, having the following key properties:

\begin{proposition}\label{homprop} Let $(\cI,a)$ be a marked ideal of maximal order on a toroidal orbifold $X$.
\begin{itemize}
\item  {{\rm (See Lemma \ref{Lem:homogenization-equivalent}.)}}
 Any   $(\cI,a)$-admissible Kummer sequence is also an $(\cH(\cI,a),a)$-admissible Kummer sequence, and vice versa.
 \item {{\rm (See Lemma \ref{le: homo}.)}} If $X$ is a toroidal variety, then for any two hypersurfaces of maximal contact $H_1,H_2$ for $\cH(\cI,a)$ at a point $p\in X$ there is an \'etale-local automorphism of $X$ preserving $\cH(\cI,a)$ and taking $H_1$ to $H_2$.
\item  {{\rm (See Lemma \ref{Lem:homogenization-functorial}.)}}
 {If $Y \to X$ is logarithmically smooth, then
 $$\cH(\cI,a) \cO_Y = \cH(\cI \cO_Y,a).$$}\end{itemize}
\end{proposition}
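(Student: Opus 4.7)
My plan is to first fix the definition of homogenization. Based on the classical W{\l}odarczyk construction transported to the logarithmic setting, I expect $\cH(\cI,a)$ to be a weighted sum of iterated logarithmic derivative ideals of $\cI$,
$$\cH(\cI,a) \;=\; \sum_{b=0}^{a-1}\bigl(\cD^{(\leq b)}_{X,E}(\cI)\bigr)^{c_b},$$
with integer exponents $c_b$ satisfying $c_b(a-b)\ge a$ (so that the maximal logarithmic order of $\cH(\cI,a)$ remains $a$) and $c_0=1$ (so that $\cI \subseteq \cH(\cI,a)$). Each of the three bullets will then be verified by exploiting this explicit form.

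For bullet~(1), one direction is immediate: $(\cH(\cI,a),a)$-admissibility implies $(\cI,a)$-admissibility because $\cI \subseteq \cH(\cI,a)$, and the controlled transform factor $\cI_E^{a}$ is identical in both cases. For the converse, suppose a Kummer center $\cJ$ is $(\cI,a)$-admissible, i.e.\ $\cI \subseteq (\cJ^{a})^{\nor}$. I would invoke the logarithmic/Kummer analog of the standard lowering-of-order estimate,
$$\cD^{(\leq b)}_{X,E}\bigl((\cJ^{a})^{\nor}\bigr) \;\subseteq\; (\cJ^{a-b})^{\nor},$$
raise to the $c_b$-th power, and sum over $b$ to obtain $\cH(\cI,a) \subseteq (\cJ^{a})^{\nor}$. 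Propagation through the whole Kummer sequence is then an induction on the step number, using that the controlled $a$-transform of $\cH(\cI,a)$ coincides with $\cH$ of the controlled $a$-transform of $\cI$, which in turn is a special case of bullet~(3) applied to the logarithmically smooth locus of the Kummer blowing up.

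Bullet~(3) reduces to two standard ingredients: logarithmic differential operators base-change correctly under logarithmically smooth morphisms, so $\cD^{(\leq b)}_{X,E}(\cI)\cdot \cO_Y = \cD^{(\leq b)}_{Y,E_Y}(\cI \cO_Y)$, and pullback commutes with ideal sums and powers. Combining termwise yields $\cH(\cI,a)\cO_Y = \cH(\cI\cO_Y, a)$; no subtlety from the fs saturation enters at this stage, since we are pulling back along a logarithmically smooth morphism.

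The hard part is bullet~(2). \'Etale-locally at $p$, choose logarithmic coordinates $(u_1,\dots,u_r,y_1,\dots,y_s)$ with the $u_i$ the monomial generators of the logarithmic structure and the $y_j$ ordinary parameters, and arrange that $H_1=\{y_1=0\}$, so $x_1=y_1$. Set $\delta := x_2-x_1$. Since $H_2$ is also a hypersurface of maximal contact for $\cH(\cI,a)$, both $x_1$ and $x_2$ lie in $\cD^{(\leq a-1)}_{X,E}(\cH(\cI,a))_p$, hence so does $\delta$, while simultaneously $\delta$ sits in the maximal ideal. I would then define the \'etale-local automorphism $\phi$ by $\phi(y_1)=y_1+\delta$, identity on the other $y_j$ and on the monoid $\ocM$; on completions it is realized as $\exp(\delta\,\partial_{y_1})$. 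Preservation $\phi^{*}\cH(\cI,a)=\cH(\cI,a)$ reduces to verifying
$$\delta^{k}\cdot\partial_{y_1}^{k}\bigl(\cH(\cI,a)\bigr) \;\subseteq\; \cH(\cI,a) \qquad (k\ge 1),$$
and this is exactly the inclusion the weighted definition of $\cH$ is engineered to guarantee: $\partial_{y_1}^{k}$ maps $\cD^{(\leq b)}(\cI)^{c_b}$ into $\cD^{(\leq b+k)}(\cI)^{c_b}$, multiplication by $\delta^{k}\in \cD^{(\leq a-1)}(\cH)^{k}\subseteq \cD^{(\leq a-1)}(\cI)^{k}$ supplies $k$ factors of an order-$(a-1)$ derivative, and the numerical condition $c_b(a-b)\ge a$ places the product into a later summand of $\cH$. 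I expect the main obstacle to be the careful bookkeeping in the logarithmic setting — separating derivations along monomial directions (which do not lower logarithmic order) from those along ordinary ones — and ensuring that the automorphism produced from the formal/completion construction genuinely descends to an \'etale automorphism of $X$, as required for compatibility with the Kummer-\'etale functoriality exploited elsewhere in the paper.
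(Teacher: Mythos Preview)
Your guessed formula for the homogenized ideal is not the one used here. The paper defines
\[
\cH(\cI,a)\;=\;\Bigl(\,\sum_{i=0}^{a-1}\cD_X^{(\le i)}(\cI)\cdot\cT(\cI)^i,\ a\Bigr),\qquad \cT(\cI):=\cD_X^{(\le a-1)}(\cI),
\]
a sum of \emph{products} of derivative ideals with powers of the top derivative ideal, not a sum of powers $(\cD^{(\le b)}\cI)^{c_b}$. Your own bookkeeping in bullet~(2), where multiplication by $\delta^k$ ``supplies $k$ factors of an order-$(a-1)$ derivative'', is precisely the mechanism that makes \emph{this} formula invariant under the Taylor shift; it does not match the formula you wrote down, and with your formula the inclusion $\delta^k\partial^k(\cH)\subseteq\cH$ fails in general.

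Even with the correct definition, your induction step for bullet~(1) has a gap. The assertion that $\sigma^c(\cH(\cI,a),a)=\cH(\sigma^c(\cI,a),a)$ is false: a Kummer blowing up is not logarithmically smooth, so bullet~(3) does not apply, and only the one-sided inclusion $\sigma^c(\cD^{(\le i)}(\cI),a-i)\subseteq\cD^{(\le i)}(\sigma^c(\cI,a))$ holds (Lemma~\ref{44}). The paper sidesteps this by working with the domination relation on marked ideals: one has $(\cD^{(\le i)}(\cI),a-i)\preccurlyeq(\cI,a)$ (Lemma~\ref{diff}), products of dominated marked ideals are dominated, and a sum of marked ideals all dominated by one summand is equivalent to that summand (Lemma~\ref{le: operations}). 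This gives $(\cH(\cI),a)\approx(\cI,a)$ directly, with the induction on the length of the sequence already packaged into those lemmas.

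For bullet~(2) your Taylor-expansion plan is the right idea, but the obstacle you flag --- descending the formal automorphism $\exp(\delta\,\partial_{y_1})$ to an honest \'etale one --- is real and you do not resolve it. The paper never produces an automorphism. Instead it builds two \'etale neighbourhoods $\phi_x,\phi_y\colon\overline{X}\to X$ by sending a neighbourhood $U$ to $\AA^n\times\AA_M$ via the coordinate systems $(x,x_2,\dots)$ and $(y,x_2,\dots)$ and taking $\overline{X}$ to be the relevant component of the fibre product $U\times_{\AA^n\times\AA_M}U$. The formal identity $\widehat\phi_y\circ\widehat\phi_x^{-1}=\exp(h\,\partial_x)$ with $h=y-x\in\cT(\cI)$ is then used only to verify $\phi_x^*(\cH\cI)=\phi_y^*(\cH\cI)$ on completions, which passes to stalks by faithful flatness. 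Your bullet~(3) is correct and matches the paper.
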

This guarantees that a \emph{functorial} procedure for $\cH(\cI,a)$ is independent of $H_i$, glues across patches, and applies to $\cI$. In the language of \cite[3.53]{Kollar}, the ideal $\cH(\cI,a)$ is MC-invariant. We may now replace $\cI$ by $\cH(\cI,a)$ and assume given a global hypersurface of maximal contact $H$.

\emph{It remains to prove order reduction for $(\cI,a)$ with given hypersurface of maximal contact $H$,  \emph{functorially} with respect to the data $(\cI,H)$.}

\subsection{Coefficient ideals}
Homogenization is not sufficient: by induction on dimension one can reduce the order of $\cI|_H:=\cI\cO_H$, but in general this does not imply that the order of $\cI$ itself is reduced, even in a neighborhood of $H$ (see Example~\ref{notenough}).

To correct this, we follow the principles of \cite{Villamayor,Villamayor-patching,Bierstone-Milman,Wlodarczyk} and introduce the \emph{coefficient ideal $C(\cI,a)$}, a clean ideal of order $a!$. The treatment here follows  \cite[Section 3.6]{Wlodarczyk}, and differs from the earlier cited treatments in that $C(\cI,a)$ is an ideal on $X$ and not on a hypersurface of maximal contact. In Lemma \ref{Lem:coefficient-functorial} we prove that the coefficient ideals are compatible with logarithmically smooth morphisms $Y\to X$, namely $C(\cI,a) \cO_Y = C(\cI \cO_Y,a)$, and hence can be used in functorial constructions.

Next we use the calculus of \emph{marked ideals} (Section \ref{Sec:marked-ideals}) to show that a sequence $X'\to X$ is $(\cI,a)$-admissible if and only if it is $(C(\cI,a),a!)$-admissible (Lemma \ref{Lem:C-equiv}). Finally, we show in Theorem~\ref{Hth} that the same equivalence persists when one restricts $(C(\cI,a),a!)$ onto $H$. This is the main property of the coefficient ideal, which implies the following:

\begin{theorem}\label{Th:reductionthm}  Let $(\cI,a)$ be a marked ideal of maximal order on a toroidal orbifold $X$ and let $i\:H\into X$ be a maximal contact hypersurface. Assume that $\tau\:H'\to H$ is an order reduction of $(C(\cI,a)|_H, a!)$, and let $i_*(\tau)\:X'\to X$ be the sequence obtained by blowing up the same centers (See Section \ref{Sec:pushforwards}). Then $i_*(\tau)$ is an order reduction of $(\cI,a)$.
\end{theorem}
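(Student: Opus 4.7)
The plan is to chain together two equivalences already established earlier in the paper. By Lemma~\ref{Lem:C-equiv} an arbitrary Kummer sequence on $X$ is $(\cI,a)$-admissible if and only if it is $(C(\cI,a),a!)$-admissible. By construction of the push-forward in Section~\ref{Sec:pushforwards}, every center of $i_*(\tau)$ lies in the successive strict transform of $H$; under this hypothesis Theorem~\ref{Hth} identifies $(C(\cI,a),a!)$-admissibility of $i_*(\tau)$ on $X$ with $(C(\cI,a)|_H,a!)$-admissibility of $\tau$ on $H$. Since the latter holds by hypothesis, chaining these two equivalences produces the $(\cI,a)$-admissibility of $i_*(\tau)$.

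To iterate this stage by stage I would proceed by induction on $i$ to verify that $H_i\subset X_i$ remains a hypersurface of maximal contact for $(\cI_i,a)$. Proposition~\ref{Prop:admissible-blowup} applied to the admissible center $\cJ_i\subset H_i$ gives the factorization $\cI_i\cO_{X_{i+1}}=\cI_{i+1}\cdot\cI_{E_{i+1}}^a$ with $\cI_{i+1}$ clean of logarithmic order $\leq a$; simultaneously the strict transform $H_{i+1}\subset X_{i+1}$ is the Kummer blowing up of $H_i$ along $\cJ_i$, and the standard transformation rule for $\cD_X^{(\leq a-1)}$ under admissible Kummer blowings up shows that $H_{i+1}$ is again a hypersurface of maximal contact for $\cI_{i+1}$. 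Consequently the locus $\{\logord(\cI_i)\geq a\}$ is contained in $H_i$ throughout the sequence, which is precisely what is needed to keep invoking Theorem~\ref{Hth}.

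For termination, by hypothesis $\tau$ reduces the maximal logarithmic order of $C(\cI,a)|_H$ below $a!$; by the same compatibility of the coefficient ideal with the marked transform that underlies Theorem~\ref{Hth}, this amounts to $\logord_p(C(\cI_n,a)|_{H_n})<a!$ at every $p\in H_n$. The defining sum formula for $C$ together with the bound $\logord_p(\cD_X^{(\leq l)}\cI)\geq\logord_p(\cI)-l$ shows that $\logord_p(C(\cI_n,a))\geq a!$ whenever $\logord_p(\cI_n)\geq a$, and restriction of an ideal to a smooth subscheme does not lower the order. Hence $\logord_p(\cI_n)<a$ at every $p\in H_n$, and by the maximal contact conclusion of the previous paragraph this extends to all of $X_n$, proving that $i_*(\tau)$ is an order reduction of $(\cI,a)$.

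The main obstacle is Theorem~\ref{Hth} itself: verifying that the marked ideal $(C(\cI,a),a!)$ restricts compatibly from $X$ to $H$ both at the level of admissibility and, inductively, at the level of the marked transform after each stage. In the classical smooth setting this is a by now standard Weierstrass-type calculation using logarithmic differential operators; in the present setting it must be carried out in the presence of stacky structure and fractional exceptional divisors, and one must verify that the coefficient ideal genuinely commutes with restriction to the strict transform of $H$ after each Kummer admissible blowing up, rather than only at the initial stage.
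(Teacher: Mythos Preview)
Your argument reaches the right conclusion, but you are working much harder than necessary, and the extra machinery in your second and third paragraphs is a detour around the paper's key observation.

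The paper's proof is essentially three lines. The crucial point, stated immediately after Definition~\ref{Def:order-reduction}, is that an order reduction of $(\cI,a)$ is precisely a \emph{maximal} $(\cI,a)$-admissible Kummer sequence: once $\supp(\sigma^c(\cI),a)=\emptyset$ there are no further non-trivial admissible centers, and conversely. With this reformulation, the theorem falls out at once: $\tau$ is a maximal $(C_X(\cI,a)|_H,a!)$-admissible sequence; Theorem~\ref{Hth} together with Corollary~\ref{Hcor} (which guarantees that every $(\cI,a)$-admissible sequence is automatically $H$-admissible) says that $H$-admissible $(\cI,a)$-admissible sequences on $X$ correspond bijectively to $(C_X(\cI,a)|_H,a!)$-admissible sequences on $H$; hence $\sigma=i_*(\tau)$ is maximal on the $X$ side, i.e.\ an order reduction.

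By contrast, you never use maximality. Your second paragraph, verifying inductively that $H_i$ remains a maximal contact for $(\cI_i,a)$, is not needed to invoke Theorem~\ref{Hth}: that theorem is already formulated for full sequences, not single blowings up, and $H$-admissibility of $i_*(\tau)$ is immediate from the pushforward construction. You only use this paragraph later, to confine $\supp(\cI_n,a)$ to $H_n$---but the paper gets this confinement for free from the maximality argument. Your third paragraph, comparing $\tau^c(C_H)$ with $C_{X_n}(\cI_n,a)|_{H_n}$ via Proposition~\ref{Fprop} and Lemma~\ref{restrictlem}, is correct but replaces a one-line maximality argument with an explicit order computation. Also, a small wrinkle: in your first paragraph the invocation of Lemma~\ref{Lem:C-equiv} is redundant, since Theorem~\ref{Hth} already gives the direct equivalence between $(\cI,a)$-admissibility on $X$ and $(C|_H,a!)$-admissibility on $H$, without passing through $(C,a!)$ on $X$.
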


It is this theorem that enables induction on the dimension of $X$ in the construction of order reduction.

\subsection{The algorithm}\label{algsec}
\begin{proof}[Proof of Order Reduction (Theorem \ref{Th:order-reduction})]
We apply induction on $n=\dim(X)$. As dimension is not invariant under logarithmically smooth morphisms we check as we go that the resulting procedure is a functor.

\paragraph{\bf Base case.} If $n=0$ then working locally we can assume that $X$ is a point. If $\cI=(1)$ then the order is already reduced and the algorithm does nothing. If $\cI=0$ then the algorithm blows up $\cM(\cI)=0$ and finishes with $X_1=\emptyset$. Assume now that $n>0$.

\paragraph{\bf Case A: \it The maximal order case.} Assume that $(\cI,a)$ is of maximal order. First, we replace $\cI$ by its homogenization $\cH(\cI,a)$. By Proposition~\ref{homprop} this is a functorial operation which preserves order reduction sequences.

Next, we claim that it suffices to work \'etale-locally on $X$. Indeed, assume $X_0\to X$ is an \'etale covering and $X_1=X_0\times_XX_0$. If we construct a \emph{functorial} order reduction $X'_i\to X_i$ for $\cI_i=\cI\cO_{X_i}$ with $i=0,1$, then they are automatically compatible with respect to both projections $X_1\to X_0$ and hence descend to an order reduction $X'\to X$ of $\cI$. Functoriality can be checked \'etale-locally and hence is preserved under this descent.

Working \'etale-locally we can guarantee existence of a global hypersurface $i\:H\into X$ of maximal contact. Let $C_H=C(\cI,a)|_H$ be the restriction on $H$ of the coefficient ideal. By the induction assumption, $(C_H,a!)$ possesses a functorial order reduction $\tau\:H'\to H$. Applying Theorem \ref{Th:reductionthm}, we define the order reduction $X'\to X$ of $(\cI,a)$ to be the pushforward $i_*(\tau)$ of $\tau$.

Thanks to the homogenization, this construction is independent of the choice of $H$ (Proposition~\ref{homprop}). The construction is functorial, because maximal contacts and coefficient ideals are compatible with logarithmically smooth morphisms.

\paragraph{\bf Case B: \it The general case.} The algorithm runs by reducing the maximal order of the clean part $\cI^{cln}$ of $\cI$, but we must first make sure the clean part is well-defined.

\subparagraph{\sc Step 0:} {\it Initial Cleaning.}
If $\cM(\cI)\neq (1)$ we construct a single, functorial  $(\cI,a)$-admissible \emph{Kummer} blowing up $\sigma_1:X_1\to X$ with the effect that $\cI\cO_{X_1} = \cI_1\cI_E^a$ with $\cI_1$  \emph{clean}.

A naive attempt would be to blow up the monomial ideal $\cM=\cM(\cI)$ from Proposition~\ref{Prop:cleaning}. However, $\cM$ is not $(\cI,a)$-admissible when $a>1$.

Instead we  define $\sigma_1:X_1\to X$ to be the \emph{Kummer} blowing up  along $\cM^{1/a}$, which is always $(\cI,a)$-admissible with  $\cI_1$  clean, see Proposition~\ref{mixed}(1). We note  that this blowing up may introduce a non-trivial orbifold structure even when $X$ is a variety.

The operation is functorial for arbitrary logarithmically smooth morphisms by Proposition~\ref{mixed}(2).



\subparagraph{\sc Step 1:} {\it Reducing the order of the clean part of $\cI$.} While the marked ideal $\cI_1$ is clean, this step produces a sequence $X_r \to X_1$ with ideals $\cI_j$ which are only \emph{balanced}, namely $\cM_j := \cM(\cI_j)$ is invertivle, so that $\cI_j = \cM_j \cdot\cI_j^{cln}$ with $\cI_j^{cln}$ a clean ideal, and finally the maximal logarithmic order of the clean ideal $\cI_r^{cln}$ is $<a$.

We apply induction on the maximal logarithmic order of our ideal $\cI_1=\cI_1^{cln}$. We emphasize that the \emph{maximal} logarithmic order of an ideal is not invariant under arbitrary logarithmically smooth morphisms, as one sees by considering the open embedding of $X \setminus V(\cI)$. As a result, if $Y \to X$ is logarithmically smooth, the procedure on $Y$ is the pullback of the procedure on $X$ \emph{with trivial steps removed}, as required in the theorem.

Let $b$ be the maximal logarithmic order of $\cI_1$. For the base case $b<a$, the order of the ideal is reduced and there is nothing to prove.

Consider the case $b\geq a$. By the maximal order case, Case A above, there is a functorial order reduction for $(\cI_1,b)$. This is a sequence $X_{r_1} \to X_1$ of $(\cI_1,b)$-admissible Kummer blowngs up such that $\cI_1 \cO_{X_j} = \cI_{E_{X_j}}^b \cI^{cln}_j$, with $E_{X_j}$ the exceptional divisor of $X_j \to X_1$, all the ideals $\cI_j^{cln}$ are clean. Finally the maximal logarithmic order of $\cI^{cln}_{r_1}$ is $<b$, hence we may apply induction.

Writing $\cI_j := \cI_{E_{X_j}}^{b-a} \cI^{cln}_j$, we have that $\cI_j$ is \emph{balanced} with clean part $\cI_j^{cln}$. As $b\geq a$ the sequence $X_{r_1} \to X_1$ is automatically $(\cI_1,a)$-admissible, see Corollary~\ref{admissibility reduced}. We have $\cI_1 \cO_{X_j} = \cI_{E_{X_j}}^a \cI_j$, and the maximal logarithmic order of \emph{the clean part} $\cI_j^{cln}$ of $\cI_j$ is $<b$.

By the inductive assumption there is an $(\cI^{cln}_{r_1},a)$-admissible Kummer sequence $X_r \to X_{r_1}$ so that  the maximal logarithmic order of the clean ideal $\cI_r^{cln}$ is $<a$. Thus the combined sequence $X_r \to X$ is $(\cI_{r_1},a)$-admissible and the maximal logarithmic order of the clean part  of $\cI_1 \cO_{X_r}$, that is, the ideal $\cI_r^{cln}$, is $<a$, as this step required.




Functoriality of this step follows from functoriality of the monomial part $\cM(\cI)$ of an ideal, and thus of the clean part $\cI^{cln} = \cI \cdot \cM(\cI)^{-1}$ when the monomial part is invertible.


\paragraph{\sc Step 2:} {\it Final Cleaning.} From the previous step we obtain a sequence $X_r\to X$ such that $\cI_r=\cM_r\cdot\cI_r^{cln}$ and the maximal order of $\cI_r^{cln}$ is at most $a-1$. As in Step 0, we get rid of $\cM_r$ by  defining $X_{r+1}$ to be the Kummer blowing up along $\cM_r^{1/a}$. Then the transform clears off the monomial part: $\cI_{r+1}=\cI_r^{cln}\cO_{X_{r+1}}$. The maximal order of $\cI_{r+1}$ and $\cI^{cln}_r$ are equal by Lemma~\ref{functorlem}(1) since the morphism $X_{r+1}\to X_r$ is logarithmically \'etale. Thus, $X_{r+1}\to X$ provides an order reduction of $(\cI,a)$.

Functoriality is, again, due to Proposition~\ref{mixed}(2).
\end{proof}

\begin{remark}\label{Rem:algorithm}
(1) We preferred to start the induction at $\dim(X)=0$ for clarity, but one could even use the induction base $X = \emptyset$: if $\dim(X)=0$ and $\cI$ is not of reduced order then the Initial Cleaning step resolves it.

(2) Step 0 is non-trivial if and only if $\cM(\cI)\neq (1)$, that is, $\cI$ is not clean. Thus the step is run if and only if the maximal logarithmic order is infinite, and it outputs an ideal of a finite logarithmic order.

(3) After Step 0 a canonical splitting $\cI=\cM\cdot\cI^{cln}$ with an invertible $\cM$ is defined, and it is maintained until the end of the algorithm. Before the Initial Cleaning, such a splitting does not make any sense, unless $\cI$ is clean. Indeed, even when $\cI=\cM$ is monomial it can happen that $\cM$ is not invertible on $X$ but is invertible on a non-empty open $X'\subset X$.

(4) Instead of performing one Final Cleaning in Step 2, one can blow up $\cM(\cI_j)^{1/a}$ after each Kummer blowing of of Step 1.
\end{remark}

\begin{remark} \label{Rem:speeding}
One might hope to make Step 1 more efficient by introducing \emph{weighted} Kummer blowings up, where different ordinary parameters might come with different weights. We postpone pursuing this.
\end{remark}

For the proof of Theorem \ref{Th:nonembedded} we note the following phenomenon, a property of our algorithm, which is manifest in the classical algorithms as well:

\begin{proposition}[Re-embedding principle]\label{Prop:product}
Let $\cI$ be an ideal on a toroidal orbifold $X$ with its principalization sequence $X' = X_n \to \cdots \to X_0 = X$ with centers $\cJ_i$. Let $Y = X \times \Spec k[x_1,\ldots,x_n]$, with $Y\to X$ strict, and with embedding $X \subset Y$ defined by $\cI_X = (x_1,\ldots,x_n)$.  Define inductively
\begin{itemize}
\item $\cJ_i^Y = \cI_{V(\cJ_i)\subset X_i}  \subset \cO_{Y_i}$,
\item $Y_{i+1}=$ the Kummer blowing up of $\cJ_i^Y$, and
\item $X_{i+1} \hookrightarrow Y_{i+1}$ the strict transform of $X_i$.
\end{itemize}
Then the principalization of the ideal $\cI^Y :=\cI\cO_Y+\cI_X$ consists of the resulting  Kummer sequence $Y' = Y_n \to \cdots \to Y_0 = Y$.
\end{proposition}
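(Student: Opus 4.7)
The plan is to induct on $n$. The base case $n=0$ is tautological since $Y=X$ and $\cI^Y=\cI$. For the inductive step I analyze the algorithm of Theorem \ref{Th:order-reduction} applied to the marked ideal $(\cI^Y,1)$, using $x_n$ as a global hypersurface of maximal contact and then appealing to Theorem \ref{Th:reductionthm} to reduce to the embedding $X\hookrightarrow X\times\AA^{n-1}$.

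First I would verify the algorithm's preliminaries. Because $Y=X\times\AA^n$ carries the pulled-back log structure, its log strata have the form $S\times\AA^n$ with $S$ a log stratum of $X$, and no such stratum is contained in $V(\cI^Y)\subseteq X\times\{0\}$; hence for $n\geq 1$ the ideal $\cI^Y$ is clean and the Initial Cleaning step is trivial. Since $x_n\in\cI^Y$ is an ordinary parameter, the maximal logarithmic order of $\cI^Y$ equals $1$, placing us in Case A with $a=1$. By Proposition \ref{homprop} I may replace $\cI^Y$ with its homogenization $\cH(\cI^Y,1)$ without disturbing admissible sequences; homogenization only enlarges the ideal, so $x_n$ still belongs to it and $H:=V(x_n)=X\times\AA^{n-1}$ remains a global maximal contact hypersurface. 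For $a=1$ the coefficient ideal is the ideal itself, giving
\[
C(\cI^Y,1)\cO_H \;=\; \cI^Y\cO_H \;=\; \cI\cO_H+(x_1,\dots,x_{n-1})\cO_H \;=\; \cI^H,
\]
and the analogous identity for the homogenization agrees with $\cI^H$ up to admissible equivalence. Theorem \ref{Th:reductionthm} then identifies the algorithm on $(Y,\cI^Y,1)$ with the pushforward through $i\colon H\hookrightarrow Y$ of the algorithm on $(H,\cI^H,1)$, and the induction hypothesis identifies the latter with the iterated strict transform construction of the proposition applied to the embedding $X\hookrightarrow H$.

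To finish I would match the pushforward of centers with the iterated construction on $Y$ in the statement: a center $\cJ_i\subseteq\cO_{X_i}$ pushed forward through $X_i\hookrightarrow H_i\hookrightarrow Y_i$ becomes the preimage of $\cJ_i$ in $\cO_{Y_i}$, which is precisely $\cI_{V(\cJ_i)\subset X_i}\cO_{Y_i}=\cJ_i^Y$; the strict transforms of $X_i$ through successive blowing ups agree because the Kummer blowing up of $Y_i$ along $\cJ_i^Y$ restricts to the Kummer blowing up of $X_i$ along $\cJ_i$. The main obstacle I foresee is precisely this compatibility verification, together with confirming that homogenization and coefficient-ideal formation commute with the restriction $\cI^Y\mapsto\cI^H$ well enough that the admissible sequences produced on the two sides of Theorem \ref{Th:reductionthm} match on the nose, so that the induction closes.
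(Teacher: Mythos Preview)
Your proposal is correct and follows essentially the same approach as the paper: the paper's one-sentence proof (``we may choose the first $n$ hypersurfaces of maximal contact for $(\cI^Y,1)$ on $Y$ to be $V(x_i)$, restricting to the ideal $\cI$ on $X$'') is precisely your induction unrolled. Note that for $a=1$ both the homogenization $\cH(\cI^Y,1)$ and the coefficient ideal $C(\cI^Y,1)$ reduce to $\cI^Y$ itself (the defining sums have only the $i=0$ term), so the compatibility concerns you flag in your last paragraph largely evaporate and the argument closes cleanly.
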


Indeed, we may choose the first $n$ hypersurfaces of maximal contact for $(\cI^Y,1)$ on $Y$ to be $V(x_i)$, restricting to the ideal $\cI$ on $X$.

\subsubsection{An invariant}
Following tradition, one  can define an invariant of marked ideals that controls the order reduction algorithm. We saw that the algorithm runs by implementing a series of embedded loops that reduce orders $a_i$ of iterated coefficient ideals on iterated maximal contacts $H_i$ of codimension $i$. Loosely speaking, the invariant simply reads off the status of the string $(a_0,a_{1},\dots)$ with the lexicographic order.
\begin{notation}
Let $\Inv$ denote the set of finite strings $(d_0\dots d_n)$, $d_i\in\QQ_{\ge 1}$ for $i<n$ and $d_n\in \QQ_{\ge 0}\cup\{\infty\}$. The empty string ``$()$" is allowed.
We provide $\Inv$ with the lexicographical order, where $(d_0\dots d_n)<(d_0\dots d_n d_{n+1})$
by convention.
\end{notation}

The invariant of $(\cI,a)$ is a function $\inv_{(\cI,a),k_0}\:|X|\to\Inv$. 
The input $k_0=k_0 ({(\cI,a)},p)$ is an auxiliary  integer attached to $p\in |X|$ indicating the minimal codimension where an iterated coefficient ideal might need Initial Cleaning.
One sets the initial values and then proceeds inductively. The initial settings are:
\begin{itemize}
\item $\inv_{(\cI,a),k_0}(p) = ()$ is the empty sequence if $(\cI,a)$ is of reduced order at $p$, i.e. $\logord_p(\cI)<a$.
\item $\inv_{(\cI,a),k_0}(p) =(\infty)$ if $p$ should be blown up at the Initial Cleaning stage, i.e. $k_0 = 0$ and $p$ is a point of the original orbifold $X$ such that $\logord_p(\cI)=\infty$.
\item $\inv_{(\cI,a),k_0}(p) =(\logord_p(\cI^{cln})/a)$ if $p$ should be blown up at the \emph{Final} Cleaning stage, i.e. $\logord_p(\cI)=\infty$ but $k_0>0$ and $\logord_p(\cI^{cln})<a.$
\end{itemize}
In the remaining cases, either $\cI$ is clean or $k_0>0$; however note that whenever $\cI$ is clean at $p$ we necessarily have $\cI^{cln} = \cI$ well-defined and thus $k_0>0$ anyway. Moreover $a \leq \logord_p(\cI^{cln}) <\infty$ since other cases were treated as initial cases above. Let $\logord_p(\cI^{cln}) =b$, with $H$ a maximal contact hypersurface for $(\cI^{cln},b)$ at $p$ and $C_H=C(\cI,b)|_H $.

In  the inductive procedure we set:
\begin{itemize}
\item  $\inv_{(\cI,a),k_0}(p) =(b/a , \inv_{(C_H,b!),k_0-1}(p))$.
\end{itemize}
the concatenated string.
%

\begin{remark}
(1) One can show that the invariant is compatible with logarithmically smooth functions $f\:Y\to X$, i.e.
  $\inv_{(\cI,a),k_0}\circ f=\inv_{(\cI\cO_Y,a),k_0}$. 

(2) If $\inv_p(\cI,a)=(d_0\dots d_n)$ then $n$ is the number of passages to maximal contacts one has to do in order to reduce the order at $p$. Each $d_j$ is the weighted order of the $j$-th restriction $\cI_{j}\subseteq\cO_{H_{j}}$. The order of $\cI_n$ can be infinite, resulting in either $d_n=\infty$ (for Initial Cleaning) or $d_n <1$ (for Final Cleaning).

(3) The Final Cleaning step is the analogue of the monomial step in the classical algorithm. Recall that in the classical algorithm the monomial step is not controlled by the order and forces one to introduce an additional auxiliary invariant involving an ordering of the boundary components. In our case, the step simplifies to a single blowing up, but we still have to use the auxiliary marker $k_0$ not related to the order.

(4)
Note that the equivalent ideals $(\cI,a)$ and $(\cI^n,na)$ have the same order reduction and hence the same invariant. This holds because we  normalized the orders.

(5) in terms of our invariants, a step in the algorithm results in a modification $\sigma$, a transformed ideal, as well as a new integer function $k_0$ which one uses as input for the next step.
\end{remark}

\subsection{From princialization to resolution}\label{Sec:resolution}
Let us first deduce the simplest form of non-functorial resolution of singularities:

\begin{theorem}[{\cite[Main Theorem I]{Hironaka}}]\label{resolution} Let $Z$ be a projective integral variety  over $k$. There is a resolution of singularities $Z' \to Z$ with normal crossings exceptional locus.
\end{theorem}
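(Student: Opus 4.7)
The plan is to reduce Theorem \ref{resolution} to Theorem \ref{Th:principalization} by the classical device of embedded resolution, followed by a cleanup step that removes the toroidal singularities and stack structure created along the way.

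First I would fix a projective embedding $Z \hookrightarrow X := \PP^N$, viewing $X$ as a toroidal orbifold with trivial logarithmic structure ($U = X$). Applying Theorem \ref{Th:principalization} to the ideal $\cI_Z \subset \cO_X$ produces a sequence of Kummer blowings up $\pi \colon X' \to X$ on toroidal orbifolds, all supported over $Z = V(\cI_Z)$, with $\cI_Z \cO_{X'}$ an invertible monomial ideal; in particular $\pi$ is an isomorphism over $X \setminus Z$.

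Next I would extract an embedded resolution of $Z$ from the algorithm driving the principalization. At each intermediate stage $i$ the total transform factors as $\cI_Z \cO_{X_i} = \cM_i \cdot \cI_i^{cln}$, where $\cM_i$ is an invertible monomial ideal accumulating the exceptional divisors so far, and $\cI_i^{cln}$ is the clean part. Following the classical strategy of reducing the maximum multiplicity of the strict transform one step at a time, I would iterate order reduction (Theorem \ref{Th:order-reduction}) with successively smaller markers, arriving at a stage $k$ at which the strict transform $\tilde Z \subset X_k$ of $Z$ is logarithmically smooth and meets the toroidal boundary of $X_k$ transversely. Equivalently, I would halt the algorithm at the first stage where the scheme-theoretic strict transform of $Z$ has maximum multiplicity one, so that $\tilde Z$ is a logarithmically smooth closed substack of the toroidal orbifold $X_k$, and hence a toroidal orbifold in its own right.

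Finally I would produce a smooth variety above $Z$ by resolving the remaining toroidal singularities of $\tilde Z$ combinatorially (classical toric resolution), and then removing the stack structure via destackification, using either \cite[Theorem 1.2]{Bergh} or \cite[Section 4]{ATW-destackification}. The resulting smooth projective variety $Z'$ carries a birational morphism $Z' \to \tilde Z \to Z$, and its exceptional locus is a normal crossings divisor, inherited from the toroidal boundary on $X_k$ and refined by the combinatorial resolution and destackification.

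The main obstacle is the middle step: arranging the iteration of order reduction so that the scheme-theoretic strict transform of $Z$ actually becomes logarithmically smooth at some finite stage, and identifying this log-smooth $\tilde Z$ with a closed substack cut out by the clean part of the controlled transform. This is the classical passage from principalization to embedded resolution, and requires verifying that admissible Kummer blowings up drop the maximum multiplicity of the strict transform of $Z$ in a controlled fashion, in analogy with the standard Hironaka--W{\l}odarczyk argument.
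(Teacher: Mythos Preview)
Your overall outline---embed in $\PP^N$, principalize $\cI_Z$, extract a toroidal strict transform, then clean up---matches the paper. The difference, and the place where you flag an obstacle, is the middle step, and here the paper's argument is much simpler than what you propose.

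You suggest iterating order reduction with decreasing markers and tracking the maximum multiplicity of the strict transform until it drops to one. This is unnecessary. The paper's observation is that once you run the full principalization sequence $X_n\to\dots\to X_0=X$ from Theorem~\ref{Th:principalization}, the ideal $\cI_Z\cO_{X_n}$ is invertible monomial, so the proper transform of $Z$ in $X_n$ is empty. Since $Z$ is integral, there is therefore a \emph{unique} index $r$ at which the center $\cJ_r$ of $X_{r+1}\to X_r$ contains the generic point of the proper transform $Z_r$; at that step $Z_r$ equals (a connected component of) $V(\cJ_r)$. But $V(\cJ_r)$ is by construction a toroidal suborbifold of $X_r$, so $Z_r$ is automatically a toroidal orbifold with respect to the logarithmic structure from $E_r$. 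No separate multiplicity-tracking argument is needed; the ``obstacle'' you identify dissolves.

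For the cleanup the paper proceeds in the order torification $\to$ coarse moduli space $\to$ toroidal desingularization, rather than your order (toroidal resolution $\to$ destackification). The paper's point is that torification, via \cite[\S3.4.1 and Theorem~4.1.3(i)]{ATW-destackification}, makes the coarse space $(Z_r^{\tor})_{\cs}$ itself a toroidal \emph{variety}, so one can then apply ordinary toroidal desingularization of varieties. Your order can also be made to work, but you would need to argue that destackifying a smooth orbifold yields a smooth variety with normal crossings boundary, which is a slightly different input.
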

\begin{proof}
Choosing a very ample line bundle on $Z$ we may assume given an embedding $Z \subseteq X := \PP^N$ with trivial logarithmic structure. Let $\cI  =\cI_Z$ be its ideal. Since the logarithmic structure is trivial it is clean.

By Theorem \ref{Th:principalization} there is a Kummer sequence  $X_n \to X_{n-1} \to \dots \to X_1 \to X_0 = X$ principalizing $\cI$. There is a unique $r$ where $X_{r+1} \to X_r$ blows up the proper transform $Z_r\subseteq X_r$ of $Z$, at which point $Z_r$ is necessarily a toroidal orbifold with respect to the logarithmic structure given by the exceptional locus $E_r$.

By \cite[\S3.4.1 and Theorem~4.1.3(i)]{ATW-destackification} the torification $Z_r^{tor} \to Z_r$ has the property that the coarse moduli space $Z_r^{tor} \to (Z_r^\tor)_{cs}$ is toroidal. Since $Z$ is a scheme the morphism  $Z_r^{tor} \to Z$ factors uniquely through a modification $(Z_r^\tor)_{cs} \to Z$. As $(Z_r^\tor)_{cs}$ is toroidal it has a toroidal desingularization $Z' \to (Z_r^\tor)_{cs}$, and the resulting morphism $Z'\to Z$ is a resolution of singularities. The support of the logarithmic structure is the exceptional locus $E$, which is necessarily normal crossings, since $(Z',E)$ is logarithmically smooth.
\end{proof}

To prove Theorem \ref{Th:nonembedded} more care is needed, {see Section \ref{Sec:nonembedded}}. First, $Z$ comes with a logarithmic structure. In general one cannot globally embed $Z$ in a {\em toroidal stack}. To address this we prove in Lemma \ref{emblem} that $Z$ can be \'etale-locally embedded in a toroidal variety, and in Lemma \ref{emblem2} we show that this is compatible with logarithmically smooth morphisms $Z' \to Z$. Theorem \ref{embth} shows that any two local embeddings in toroidal varieties \emph{of the same dimension} are \'etale-equivalent. Choosing such embeddings, and running our functorial principalization theorem on the ideal $\cI_Z$, one arrives at local modifications $Z'_\alpha \to Z$, where $Z'_\alpha$ are toroidal orbifolds, which can be patched together, giving in particular the existence of a toroidal desingularization $Z' \to Z$.

It remains to show that the desingularization is functorial, for which the only obstacle is the choice of embedding dimension: while one can choose the minimal embedding dimension, this is a global invariant which is not even compatible with restriction to open subsets! This is addressed, similarly to the classical situation, using the Re-embedding Principle, Proposition \ref{Prop:product}. For details see Section \ref{Sec:nonembedded-proof}.

\section{Basic notions}\label{basic}
\addtocontents{toc}{\noindent We recall logarithmic derivations, introduce logarithmic orders, and adapt marked ideals to our setup.}

\subsection{Logarithmic parameters}\label{logparamsec}

\subsubsection{Logarithmic stratification}
A toroidal variety $X$ possesses a natural {\em logarithmic stratification} by locally closed smooth subvarieties $X(i)$ such that $\rk(\oM_p^\gp)=i$ for any $p\in X(i)$. For example, if $X$ is a toric variety then this is the stratification by orbits of codimension $i$. Logarithmic stratifications are preserved by \'etale, in fact, even Kummer logarithmically \'etale, morphisms, hence this construction extends to arbitrary toroidal orbifolds.

\subsubsection{Parameters}
Assume that $X$ is a toroidal variety and $p\in X$ is a point, and let $S=s_p$ be the logarithmic stratum through $p$. By {\em logarithmic parameters} or {\em coordinates} at a point $p$ of $X$ we mean a family $x_1\.x_n\in\cO_{X,p}$ that reduces to a regular family of parameters of $\cO_{S,p}$ and a monoidal chart $u\:\oM_p\into\cO_{X,p}$. We will say that the parameters $x_i$ are {\em ordinary}, while the elements of $u(\oM_p\setminus\{0\})$ will be called {\em monomial parameters} and denoted $u^\alpha=u(\alpha)$.

Similarly, an element $x\in\cO_{X,p}$ is an {\em ordinary parameter} at $p$ if it reduces to a parameter {in $\cO_{S,p}$.} This happens if and only if $V(x)$ is logarithmically smooth at $p$.

\begin{remark}\label{formalrem}
(i) Once parameters are fixed we obtain a formal-local description of $X$ at $p$ via $$\hatcO_{X,p}=k(x)\llbracket\oM_p,x_1\.x_n\rrbracket.$$

(ii) Furthermore, thanks to the characteristic zero assumption, if $f\:Y\to X$ is a morphism of toroidal varieties and $q$ is closed in the fiber $f^{-1}(p)$ then $f$ is logarithmically smooth at $q$ if and only if $\oM_p\into\oM_q$ and the $\hatcO_{X,p}$-algebra $\hatcO_{Y,q}$ is of the form $$k(x)\llbracket\oM_q,x_1\.x_n,y_1\.y_m\rrbracket.$$ In fact, one can take any $y_1\.y_m\in\cO_{Y,q}$ whose images form a basis of $\Omega^1_{s_q/s_p,q}$, where $s_p$ and $s_q$ are the logarithmic strata through $p$ and $q$, respectively. Furthermore, after replacing $Y$ by a strictly \'etale neighborhood of $q$ the composed homomorphism $\oM_p\stackrel{u}\to\cO_{X,p}\to\cO_{Y,q}$ can be extended to a monoidal chart $u'\:\oM_q\to\cO_{Y,q}$, thus extending the coordinates $(x,u)$ at $p$ to coordinates $(x,y,u')$ at $q$.
\end{remark}

\subsection{Blowing up monomial ideals}\label{Sec:monomial-blowup}

\begin{proof}[{Proof of Proposition \ref{Prop:monomial-blowup}}] (1) The property of the morphism $\pi: X' \to X$ being logarithmically smooth is \'etale-local in $X$, hence we may replace $X$ by a representable \'etale cover. Since the blowing up $\pi: X' \to X$ is representable, we may apply  \cite[Theorem 10*, p. 93]{KKMS}, yielding the assertion.

(2) It follows from (1) that $X'$ is a toroidal stack. Since  $\pi: X' \to X$ is representable, the stabilizers of $X'$ embed in the stabilizers of $X$, therefore they are diagonalizable.

(3) The sheaf  $\cI \cO_{X'}$ is invertible by the universal property of blowings up.

(4) By the  universal property of blowings up the morphism $Y'\to X$ factors through $Bl_\cI X$. Since $Y'$ is normal this factors through $X'$, giving a representable morphism $Y' \to  Y\times_X X'$. On the other hand $X'\to X$ is logarithmically smooth and representable hence the fs product $Y\times_X X' \to Y$ is logarithmically smooth and representable. In particular $Y\times_X X'$ is normal. The ideal $\cI\cO_{Y\times_X X'}$ is invertible since it is pulled back from $X'$, giving a representable morphism $Y\times_X X' \to Y'$. Since the morphisms restrict to the identity outside the cosupport of $\cI$, their composition in either order is the identity, giving the claim.
\end{proof}

In fact, we will only blow up saturated monomial ideals. This does not really restrict the generality because of the following observation.

\begin{remark}\label{saturationrem}
(1) Recall that for a monomial ideal $\cI$ corresponding to an ideal $J\subseteq\ocM$, the {\em saturation} $\cI^\sat$ corresponds to the saturated ideal $J^\sat$ consisting of all elements $x\in\ocM$ such that $lx\in l J$ for some $l>0$. Inspecting monoidal charts it is easy to see that the \emph{normalized} blowing up $X'\to X$ of a monomial ideal $\cI$ on $X$ coincides with the \emph{normalized} blowing up of its saturation $\cI^\sat$, and, in addition, $\cI\cO_{X'}=\cI^\sat\cO_{X'}$.

(2) We will not need this fact, but note for the sake of completeness that by \cite[Lemma 5.1.3]{AT1} the normalized blowing up of a monomial ideal $\cI$ is in fact the usual blowing up of an ideal $(\cI^n)^\sat$ for a large enough $n$.
\end{remark}

\subsection{Logarithmic differential operators}\label{3.1}
\subsubsection{{Logarithmic derivations}}
Let $X=(X,D)$ be a toroidal $k$-variety. In the sheaf $\Der_k(X, \cO_X)$ of all $k$-derivations $\cO_X\to\cO_X$ consider the subsheaf $$\Der_k((X,D), \cO_X) = \Der_{\Log}(X, \cO_X)$$ of {\em logarithmic derivations}, i.e. derivations that take the ideal defining the toroidal divisor $D$ to itself. In other words, these are derivations preserving $D$, or tangent vectors on $X$ tangent to $D$. We call $\Der_k((X,D), \cO_X)$ the {\em logarithmic tangent sheaf} of $(X,D)$.

Furthermore, if $f\:Y\to X$ is an \'etale strict morphism then the isomorphism $\Der_k(Y,\cO_Y)=f^*\Der_k(X,\cO_X)$ induces an isomorphism $$\Der_k((Y,f^{-1}(D)),\cO_Y) = f^*\Der_k((X,D),\cO_X).$$ Therefore, the formation of logarithmic tangent sheaves extends to toroidal orbifolds. Lemma \ref{Lem:toroidal-logetale} below shows the stronger statement that logarithmic derivations commute with \emph{logarithmically} \'etale morphisms, a fact used in \cite{ATW-kummer-varieties}.

The following remark will not be used, so the reader can skip it.

\begin{remark}\label{Logrem}
The above ad hoc definition agrees with the more general theory of logarithmic schemes. First, the sheaf of logarithmic derivations $\Der_k((X,\cM), \cO_X)$ with values in $\cO_X$ can be naturally defined for any logarithmic scheme $(X,\cM)$, see \cite[IV Definition 1.1.1]{Ogus-logbook}. Second, $\Der_k((X,\cM), \cO_X) = \Der_{\Log}(X, \cO_X)$ is the relative tangent sheaf of $X$ over Olsson's stack $\Log$, giving an interpretation of logarithmic derivations in terms of ordinary ones. Furthermore, the latter definition makes sense even when $X$ is a logarithmic stack.
\end{remark}

For brevity, we denote the logarithmic tangent sheaf by $\cD^1_X$.
\subsubsection{Local description {of logarithmic derivations}}

\begin{lemma}\label{logderiv}
Assume that $X$ is a toroidal variety, $p\in X$ is a closed point, and $x_1\.x_n\in\cO_{X,p}$ and $u\:\oM_p\into\cO_{X,p}$ are logarithmic parameters at $p$. Then,

(1) For each $1\le i\le n$ there exists a unique element $\frac{\partial}{\partial x_i}\in\cD^1_{X,p}$ vanishing on $u(\oM_p)$ and satisfying $\frac{\partial}{\partial x_i}x_j=\delta_{ij}$.

(2) For each element $L$ of $N_p:=\Hom_\ZZ(\oM^\gp_p,\cO_{X,p})$ there exists a unique derivation ${\DD}_L\in\cD^1_{X,p}$ such that ${\DD}_L(u^\alpha)=L(\alpha)u^\alpha$ for any monomial coordinate and ${\DD}_L(x_i)=0$ for $1\le i\le n$.

(3) The construction of (2) provides an embedding ${\DD}\:N_p\into\cD^1_{X,p}$ and then
$$\cD^1_{X,p}={\DD}(N_p)\oplus\left(\oplus_{i=1}^n\cO_{X,p}\frac{\partial}{\partial x_i}\right).$$

(4) For any $\partial\in\cD^1_{X,_p}$ and a monomial $m\in M_p$ we have that $\partial m\in m\cO_{X,_p}$. In particular, logarithmic derivations preserve monomial ideals, including the ideals of logarithmic strata.
\end{lemma}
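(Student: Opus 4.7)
The plan is to prove (4) first, since once we know every logarithmic derivation sends each monomial $u^\alpha$ into $u^\alpha \cO_{X,p}$, parts (1)--(3) will follow by direct construction and a standard Leibniz bookkeeping. Throughout I will use the formal-local description $\hatcO_{X,p} = k(p)\llbracket \oM_p, x_1, \ldots, x_n \rrbracket$ from Remark \ref{formalrem} to motivate constructions, while verifying that everything makes sense already at the level of $\cO_{X,p}$.

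For (4), the key is the normality of $X$ at $p$. Since $X$ is a toroidal variety, $\cO_{X,p}$ is normal, hence equals the intersection $\bigcap_{D_i} \cO_{X,D_i}$ over the irreducible components of $D$ through $p$, each $\cO_{X,D_i}$ being a DVR with valuation $v_{D_i}$. The first task is to upgrade the hypothesis ``$\partial$ preserves the reduced ideal $I(D) = \bigcap_i I(D_i)$'' to ``$\partial$ preserves each height-one prime $P_i = I(D_i)$ separately''. This follows by localizing at $P_i$: the other components $D_j$ do not meet the generic point of $D_i$, so $I(D)\cdot \cO_{X,P_i} = P_i\cO_{X,P_i}$, and the extended derivation on $\cO_{X,P_i}$ preserves $P_i\cO_{X,P_i}$, which contracts back to $\partial(P_i) \subseteq P_i$. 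On the DVR $\cO_{X,D_i}$, writing $\partial(\pi) = \pi g$ for a uniformizer $\pi$ and iterating Leibniz gives $\partial(\pi^k) \in (\pi^k)$, whence $v_{D_i}(\partial(f)) \ge v_{D_i}(f)$ for every $f$. Setting $f = u^\alpha$ and intersecting across all $i$ via normality produces the desired inclusion $\partial(u^\alpha) \in u^\alpha \cO_{X,p}$.

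With (4) established, existence in (1) and (2) is a direct construction on generators. For $\partial/\partial x_i$, declare $\partial x_j/\partial x_i = \delta_{ij}$ and $\partial u^\alpha/\partial x_i = 0$, extended by Leibniz; the result is a logarithmic derivation because it kills every monomial. For $\DD_L$, declare $\DD_L(x_j) = 0$ and $\DD_L(u^\alpha) = L(\alpha) u^\alpha$; additivity of $L$ ensures the monoid-algebra relation $u^{\alpha+\beta} = u^\alpha u^\beta$ is respected under Leibniz, so the rule extends consistently to the full monoid algebra and then to $\cO_{X,p}$ by localization, and $\DD_L$ is logarithmic since it scales each monomial. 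Uniqueness in both parts follows because a $k$-derivation on $\cO_{X,p}$ is determined by its values on $x_j$ and on $u^\alpha$ for $\alpha$ in a generating set of $\oM_p$.

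For (3), injectivity of $L \mapsto \DD_L$ is immediate: if $\DD_L = 0$ then $L(\alpha)u^\alpha = 0$ in the integral domain $\cO_{X,p}$, forcing $L = 0$. For the decomposition, given any $\partial \in \cD^1_{X,p}$, set $c_i := \partial(x_i)$ and, using (4), define $L(\alpha) := \partial(u^\alpha)/u^\alpha$ on $\oM_p$. The Leibniz rule applied to $u^\alpha u^\beta = u^{\alpha+\beta}$ gives $L(\alpha+\beta) = L(\alpha) + L(\beta)$, so $L$ extends uniquely to an element of $N_p = \Hom_\ZZ(\oM^\gp_p, \cO_{X,p})$. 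Then $\partial - \DD_L - \sum_i c_i \partial/\partial x_i$ annihilates every $x_j$ and every $u^\alpha$, so it vanishes by uniqueness. I expect the main obstacle to be (4), and specifically the passage from preservation of $I(D)$ to preservation of each $I(D_i)$; this step uses the normality of $X$ and is essentially where the toroidal hypothesis enters.
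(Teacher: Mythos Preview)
Your argument is essentially correct and takes a genuinely different route from the paper. The paper simply observes that the logarithmic parameters induce an \'etale (in fact logarithmically \'etale, strict) morphism from a neighborhood of $p$ to the model $X_0=\Spec(k[x_1,\ldots,x_n][\oM_p])$, and then verifies (1)--(4) by direct inspection on $X_0$, where $\cD^1_{X_0}$ is visibly free on $\partial/\partial x_i$ and $u_j\,\partial/\partial u_j$. Your approach instead proves (4) first, intrinsically, via normality and the valuative criterion at the height-one primes supporting $D$, and then bootstraps (1)--(3) from (4). This is attractive because it explains \emph{why} logarithmic derivations preserve monomials without ever writing down a basis, and the decomposition in (3) then falls out of the Leibniz bookkeeping you describe.

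Two small points deserve tightening. First, your sentence ``$\cO_{X,p}$ equals the intersection $\bigcap_{D_i}\cO_{X,D_i}$'' is not literally true: a normal domain is the intersection of its localizations at \emph{all} height-one primes. What you actually need (and use) is that $\partial(u^\alpha)/u^\alpha$ is automatically regular at height-one primes not supporting $D$ because $u^\alpha$ is a unit there, so only the $D_i$ require checking; this is fine, but say it that way. Second, and more substantively, the phrase ``extends \ldots to $\cO_{X,p}$ by localization'' in your construction of $\DD_L$ is not correct: $\cO_{X,p}$ is not a localization of the subring $A_0$ generated by the $x_i$ and the monomials, it is \'etale over (a localization of) $A_0$. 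The fix is the standard one: since $\Omega^1_{\cO_{X,p}/A_0}=0$, any $k$-derivation $A_0\to\cO_{X,p}$ extends uniquely to a $k$-derivation $\cO_{X,p}\to\cO_{X,p}$; the same remark handles your uniqueness claims in (1) and (2). Once you insert this \'etale-extension step (which is exactly the mechanism the paper's reduction to $X_0$ is silently invoking), your proof goes through.
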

\begin{proof}
Locally at $p$ the coordinates induce a logarithmically smooth morphism to $X_0=\Spec(k[x_1\.x_n][\oM_p])$. This reduces the claim to the case of $X_0$, which can be done by a direct inspection. In particular, in this case $\cD^1_{X,p}$ is freely generated by $\frac{\partial}{\partial x_1},\ldots,\frac{\partial}{\partial x_n}$ and $u_1\frac{\partial}{\partial u_1},\ldots,u_l\frac{\partial}{\partial u_l}$ where $u_i$ form a basis for $\oM_p^\gp$. (In fact, since ${\rm char}(k)=0$ it suffices to take $u_i$ that form a basis of $\oM_p^\gp\otimes\QQ$.)
\end{proof}

\subsubsection{Basic properties} It follows from Lemma \ref{logderiv}(4) that without choices one can say about $\cD^1_{X,p}$ the following:
\label{logderlem}
\begin{enumerate}
\item A logarithmic $k$-derivation of $\cO_{X,p}$ preserves the ideal of the logarithmic stratum $S=s_p$ through $p$ and hence restricts to a $k$-derivation of $\cO_{S,p}$. This provides a surjective $\cO_{S,p}$-homomorphism $\cD^1_{X,p}\otimes_{\cO_{X,p}}\cO_{S,p}\to\cD^1_{S,p}$, which does not possess, however, a natural lift to a homomorphism of $\cO_{X,p}$-modules $\cD^1_{X,p}\to \oplus_i\cO_{X,p}\frac{\partial}{\partial x_i}$.

\item For any $\partial\in\cD^1_{X,p}$ and {any} monomial $m\in M_p$ the element $\frac{\partial m}m\in\cO_{X,p}$ is uniquely defined. By Leibnitz rule, sending $m$ to $\frac{\partial m}m$ one obtains a homomorphism of monoids $M_p\to(\cO_{X,p},+)$, whose extension to $M_p^\gp$ will be denoted $\phi_\partial$. In particular, a homomorphism $\phi\:\cD^1_{X,p}\to\Hom_\ZZ(M^\gp,\cO_{X,p})$ arises. So, any monoidal chart $u\:\oM_p\into M_p$ induces by composing $\phi$ with the dual of $u$ an epimorphism $\cD^1_{X,p}\to N_p$. To split the latter one also has to fix regular parameters at $p$.
\end{enumerate}

\begin{lemma}\label{tangentsheaf}
Assume that $X$ is a toroidal orbifold. Then the logarithmic tangent sheaf $\cD_X^1$ is locally free of rank $\dim(X)$.
\end{lemma}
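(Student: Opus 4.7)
The statement is étale-local on $X$, so I would begin by reducing to the case of a toroidal variety. Recall that a toroidal orbifold admits, étale-locally on the coarse space, a representable strict étale cover $X_0 \to X$ by a toroidal scheme, and by the discussion preceding the lemma (see in particular the remark that $\Der_k((Y,f^{-1}(D)),\cO_Y) = f^*\Der_k((X,D),\cO_X)$ for $f$ strict étale), the logarithmic tangent sheaf pulls back correctly under such covers. Hence local freeness and the rank can be verified on $X_0$, where $X_0$ is a toroidal variety.

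Now let $p \in X_0$ be a closed point, let $S = s_p$ be the logarithmic stratum through $p$, and choose logarithmic parameters $x_1,\ldots,x_n \in \cO_{X_0,p}$ and a monoidal chart $u\colon \oM_p \into \cO_{X_0,p}$ as in Section \ref{logparamsec}. By Lemma \ref{logderiv}(3), the stalk decomposes as a free module
\[
\cD^1_{X_0,p} \;=\; \DD(N_p) \,\oplus\, \bigoplus_{i=1}^n \cO_{X_0,p} \frac{\partial}{\partial x_i},
\]
where $N_p = \Hom_\ZZ(\oM_p^\gp, \cO_{X_0,p})$ is free of rank $\rk(\oM_p^\gp)$. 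Since the derivations $\DD_L$ for $L$ running through a $\ZZ$-basis of $N_p$ together with $\partial/\partial x_1,\ldots,\partial/\partial x_n$ provide a free basis in a Zariski neighborhood of $p$ (as the construction in Lemma \ref{logderiv} is done via a logarithmically smooth morphism to $\Spec(k[x_1,\ldots,x_n][\oM_p])$), we obtain local freeness of $\cD^1_{X_0}$ near $p$.

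It remains to identify the rank with $\dim(X)$. By definition of logarithmic parameters, the $x_i$ restrict to a regular system of parameters on $S$, so $n = \dim S$; on the other hand, the characteristic monoid has $\rk(\oM_p^\gp) = \codim_X(S)$, since $S$ is the locus where the monoid stays constant (equivalently, $S$ is the logarithmic stratum of rank $\rk(\oM_p^\gp)$). Adding these,
\[
\rk \cD^1_{X_0,p} \;=\; n + \rk(\oM_p^\gp) \;=\; \dim S + \codim_X S \;=\; \dim X.
\]
Since this holds at every closed point and $\dim X = \dim X_0$, we conclude.

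The only real content beyond bookkeeping is the étale-local reduction to the variety case; the rest is an immediate consequence of Lemma \ref{logderiv} together with the dimension identity coming from the logarithmic stratification. I do not expect any genuine obstacle: the stack case is handled by the fact, recorded in Section \ref{3.1}, that $\Der_{\Log}$ commutes with strict étale pullback, so the module structure descends from a representable étale chart.
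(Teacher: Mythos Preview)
Your proposal is correct and follows essentially the same approach as the paper: reduce to the scheme (toroidal variety) case via strict \'etale base change, then invoke Lemma~\ref{logderiv}(3). Your explicit rank computation $n + \rk(\oM_p^\gp) = \dim S + \codim_X S = \dim X$ merely unpacks what the paper leaves implicit.
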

\begin{proof}
The assertion is compatible with strict \'etale base change, reducing to the case of scheme, which follows from Lemma~\ref{logderiv}(3).
\end{proof}

\begin{remark}
The lemma also follows from the fact that  $\cD^1_X$ is dual to the logarithmic cotangent sheaf $\Omega^{\rm log}_{(X,D)/k}$, which is locally free of rank $\dim(X)$ by results of Kato.
\end{remark}

\subsubsection{{Higher order differential operators}}
Since $\chara k =0$, a nontrivial monomial ideal sheaf $\cI$ is stable under $\cD^1_X$, namely $\cD^1_X(\cI) = \cI$. This does not hold for the unit ideal sheaf (e.g. take $X=\Spec k[M]$), and to by-pass this inconvenience we will work with the sheaf $\cD_X^{(\leq 1)}$, which does stabilize every monomial ideal sheaf. It can be defined as the subsheaf of the total sheaf of differential operators generated as an $\cO_X$-module by $\cO_X $ and the logarithmic tangent sheaf $\cD_X^1$. We will also make use of the subsheaf $\cD_X^{(\leq n)}$ generated by the images of $(\cD_X^1)^{\otimes i}$ for $0\le i\le n$. In particular, we will consider the quasi-coherent algebra of logarithmic differential operators $\cD_X^{\infty}=\cup_i\cD_X^{(\leq i)}$.

\begin{remark}
As in Remark \ref{Logrem}, one can interpret $\cD_X^{(\leq i)}$ as follows: it is the sheaf of relative differential operators of $X/\Log$ of order at most $i$. Note that we use here that the characteristic is zero and hence the algebra of differential operators is generated by operators of order 1.
\end{remark}

\subsubsection{Functoriality}

\begin{lemma}\label{Lem:toroidal-logetale}
If $f:Y \to X$ is a logarithmically smooth morphism of toroidal orbifolds, then the natural homomorphisms $\cD_Y^{(\leq i)}\to f^*(\cD_X^{(\leq i)})$ and  $\cD_Y^\infty \to f^*(\cD_X^\infty)$ are surjections and the kernels act {on $f^{-1}(\cO_X)$ by 0.} Furthermore, if $f$ is logarithmically \'etale, for example, a toroidal modification or  an \'etale morphism, then all these maps are isomorphisms.
\end{lemma}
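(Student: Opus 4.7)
The plan is to work étale-locally on $Y$, since the statement is about maps of quasi-coherent sheaves and can be verified after passing to a strict étale cover. Fix a closed point $q\in f^{-1}(p)$ and invoke Remark \ref{formalrem}(ii): because $f$ is logarithmically smooth, one can find logarithmic coordinates $(x_1\.x_n,u)$ at $p$ and extend them to coordinates $(x_1\.x_n,y_1\.y_m,u')$ at $q$, so that $u'$ restricts to $u$ along the saturated inclusion $\oM_p^\gp\hookrightarrow\oM_q^\gp$. This local picture is the only one I will need.

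Using Lemma~\ref{logderiv}(3), the sheaf $\cD^1_X$ has local basis $\{\partial/\partial x_i\}\cup\{{\DD}_{L_k}\}$ with $L_k$ a $\ZZ$-basis of $N_p=\Hom_\ZZ(\oM_p^\gp,\cO_X)$, and likewise $\cD^1_Y$ has basis $\{\partial/\partial x_i,\partial/\partial y_j\}\cup\{{\DD}_{L'_\ell}\}$. The natural map $\cD_Y^1\to f^*\cD_X^1$ is obtained by restricting a logarithmic derivation to $f^{-1}\cO_X\subseteq\cO_Y$; in these bases it sends $\partial/\partial x_i\mapsto \partial/\partial x_i$, $\partial/\partial y_j\mapsto 0$, and ${\DD}_{L'_\ell}\mapsto {\DD}_{L'_\ell|_{\oM_p^\gp}}$ via the surjection $N_q\to N_p$ dual to the split injection $\oM_p^\gp\hookrightarrow\oM_q^\gp$ of finitely generated free abelian groups. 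Surjectivity is then immediate, and by Lemma~\ref{tangentsheaf} the kernel is the locally free rank $m+(\rk\oM_q^\gp-\rk\oM_p^\gp)$ relative logarithmic tangent sheaf.

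For higher orders, $\cD_X^{(\le i)}$ is generated as $\cO_X$-module by products of at most $i$ elements of $\cD_X^1$, and similarly for $Y$. Given such a product $\delta_1\cdots\delta_i$ on $X$, lift each $\delta_j$ to $\tilde\delta_j\in\cD_Y^1$ using the level-one surjection; then $\tilde\delta_1\cdots\tilde\delta_i\in\cD_Y^{(\le i)}$ maps to the given element, giving surjectivity at every level and hence on $\cD_Y^\infty\to f^*\cD_X^\infty$. That the kernel acts on $f^{-1}\cO_X$ by zero is now tautological: by construction the map is precisely the restriction of a differential operator on $\cO_Y$ to a differential operator on $f^{-1}\cO_X$ with values in $\cO_Y$, so any $D$ in the kernel satisfies $D(g)=0$ for every local section $g$ of $f^{-1}\cO_X$.

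Finally, if $f$ is logarithmically étale, then in the coordinates above the fiber has no ordinary directions ($m=0$) and $\oM_p^\gp=\oM_q^\gp$, so both the ordinary and the toric components of the kernel computed in paragraph two vanish; the level-one map is an isomorphism, and induction through products upgrades this to isomorphisms $\cD_Y^{(\le i)}\toisom f^*\cD_X^{(\le i)}$ for all $i$ and thus $\cD_Y^\infty\toisom f^*\cD_X^\infty$. The one place that deserves a careful sentence in the write-up is the compatibility of the local description of the natural map with the global one, but this is straightforward because restriction to $f^{-1}\cO_X$ is manifestly functorial in strict étale localization on both $X$ and $Y$.
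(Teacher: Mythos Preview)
Your approach mirrors the paper's: work \'etale-locally, invoke Remark~\ref{formalrem}(ii) for compatible coordinates, use Lemma~\ref{logderiv} to write down bases, and check everything on those bases. The structure is fine, but there is a genuine gap in the treatment of the monoidal part.

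You assert that $\oM_p^\gp\hookrightarrow\oM_q^\gp$ is a \emph{split} injection of free abelian groups, and that in the logarithmically \'etale case $\oM_p^\gp=\oM_q^\gp$. Neither holds in general. For a Kummer cover such as $\AA^1\to\AA^1$, $t\mapsto t^2$, the map on characteristic monoids is $\NN\xrightarrow{\times 2}\NN$, so on groupifications $\ZZ\xrightarrow{\times 2}\ZZ$: injective, not split, and not an equality, although the morphism is logarithmically \'etale. Your surjectivity argument for $N_q\to f^*N_p$ and your vanishing-of-the-toric-kernel argument both rest on these false claims.

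The fix is to use the characteristic-zero hypothesis, exactly as the paper does: since $\cO_Y$ is a $\QQ$-algebra, $\Hom_\ZZ(\oM^\gp,\cO_Y)=\Hom_\QQ(\oM^\gp\otimes\QQ,\cO_Y)$, and the injection $\oM_p^\gp\otimes\QQ\hookrightarrow\oM_q^\gp\otimes\QQ$ of $\QQ$-vector spaces \emph{is} split, with equality precisely when $f$ is logarithmically \'etale. Concretely, the paper extends a basis $u_1,\dots,u_r$ of $\oM_p^\gp$ to a basis $u_1,\dots,u_r,u'_1,\dots,u'_s$ of $\oM_q^\gp\otimes\QQ$ (not of $\oM_q^\gp$ itself) and uses the derivations $u'_j\frac{\partial}{\partial u'_j}$, which by the parenthetical remark in Lemma~\ref{logderiv} suffice in characteristic zero. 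Once you make this correction, your argument goes through and is essentially the paper's.
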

\begin{proof}
We can work \'etale-locally on $X$ and $Y$, so by Remark~\ref{formalrem} we can choose compatible coordinates $x_1\.x_n$ and $u\:\oM_p\to\cO_{X,p}$ at $p\in X$ and $x_1\.x_n,y_1\.y_m$ and $u'\:\oM_q\to\cO_{Y,q}$ at $q\in f^{-1}(p)$. Fix a basis $u_1\.u_r$ of $\oM_p^\gp$ and let $u_1\.u_r,u'_1\.u'_s$ be its extension to a basis of $\oM^\gp_q\otimes\QQ$. By Lemma~\ref{logderiv}, $\frac{\partial}{\partial x_1},\ldots,\frac{\partial}{\partial x_n},u_1\frac{\partial}{\partial u_1},\ldots,u_r\frac{\partial}{\partial u_r}$ form a basis of $\cD_{X,p}^1$ and $$\frac{\partial}{\partial x_1},\ldots,\frac{\partial}{\partial x_n},\frac{\partial}{\partial y_1},\ldots,\frac{\partial}{\partial y_m},u_1\frac{\partial}{\partial u_1},\ldots,u_r\frac{\partial}{\partial u_r},u'_1\frac{\partial}{\partial u'_1},\ldots,u'_s\frac{\partial}{\partial u'_s}$$ form a basis of $\cD_{Y,q}^1$. In addition, it is clear that $\frac{\partial}{\partial y_i}$ and $u'_j\frac{\partial}{\partial u'_j}$ vanish on $f^*\cO_{X,p}$, hence $f^*(\frac{\partial}{\partial x_i})=\frac{\partial}{\partial x_i}$ and $f^*(u_j\frac{\partial}{\partial u_j})=u_j\frac{\partial}{\partial u_j}$. Finally, note that if the morphism is logarithmically \'etale then $m=s=0$. The assertion for $i=1$ follows.

The case of $i=1$ implies that the maps {$\cD_Y^{(\leq i)} \to f^*(\cD_X^{(\leq i)})$} are surjective, and, since the kernel is the two-sided ideal generated by $\frac{\partial}{\partial y_i}$ and $u'_j\frac{\partial}{\partial u'_j}$, it vanishes on $f^*(\cO_{X,p})$. This proves the assertion for any finite $i$, and hence also for $\infty$.
\end{proof}

\begin{corollary}\label{Cor:logsmooth}
If $f\:Y\to X$ is a logarithmically smooth morphism of toroidal orbifolds and $\cI$ is an ideal on $X$ then $(\cD_X^{(\leq i)}\cI)\cO_Y=\cD_Y^{(\leq i)}(\cI\cO_Y)$ and $(\cD_X^{\infty}\cI)\cO_Y=\cD_Y^{\infty}(\cI\cO_Y)$.
\end{corollary}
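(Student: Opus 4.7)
The plan is to deduce the corollary directly from Lemma \ref{Lem:toroidal-logetale}, which provides the two tools we need: the surjectivity of $\cD_Y^{(\leq i)}\to f^*\cD_X^{(\leq i)}$, and the vanishing of the kernel on $f^{-1}(\cO_X)$. The work splits into showing the two opposing inclusions of ideals in $\cO_Y$, since working locally is harmless (both sides are quasi-coherent and compatible with \'etale-local statements on $X$ and $Y$).

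For the inclusion $(\cD_X^{(\leq i)}\cI)\cO_Y\subseteq\cD_Y^{(\leq i)}(\cI\cO_Y)$, a local section of $\cD_X^{(\leq i)}\cI$ has the form $\sum_k\partial_k(f_k)$ with $\partial_k\in\cD_X^{(\leq i)}$, $f_k\in\cI$. By Lemma~\ref{Lem:toroidal-logetale} we can lift each pulled-back operator $f^{-1}(\partial_k)\otimes 1\in f^*\cD_X^{(\leq i)}$ to some $\tilde\partial_k\in\cD_Y^{(\leq i)}$; the lemma's kernel assertion guarantees that $\tilde\partial_k$ and $f^{-1}(\partial_k)$ agree on $f^{-1}(\cO_X)$, so $f^{-1}(\partial_k(f_k))=\tilde\partial_k(f^{-1}(f_k))$ lies in $\cD_Y^{(\leq i)}(\cI\cO_Y)$, and then $\cO_Y$-linearity gives the inclusion.

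The reverse inclusion is obtained by induction on $i$. The base case $i=0$ is tautological since $\cD_X^{(\leq 0)}=\cO_X$. For the induction step I would use that $\cD_Y^{(\leq i)}$ is generated as an $\cO_Y$-module by $\cD_Y^{(\leq i-1)}$ together with products $\partial\cdot\sigma$ with $\partial\in\cD_Y^1$ and $\sigma\in\cD_Y^{(\leq i-1)}$, so it suffices to show that for $\partial\in\cD_Y^1$ and a local generator $h=a\cdot f^{-1}(g)$ of $\cD_Y^{(\leq i-1)}(\cI\cO_Y)$ with $g\in\cD_X^{(\leq i-1)}\cI$ one has $\partial(h)\in(\cD_X^{(\leq i)}\cI)\cO_Y$. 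Leibniz gives $\partial(h)=\partial(a)f^{-1}(g)+a\,\partial(f^{-1}(g))$; the first summand is already in $(\cD_X^{(\leq i-1)}\cI)\cO_Y$, and for the second one writes the image of $\partial$ in $f^*\cD_X^1$ as $\sum_k b_k\otimes f^{-1}(\partial_k')$, uses the kernel assertion to conclude $\partial(f^{-1}(g))=\sum_k b_k f^{-1}(\partial_k' g)$, and observes $\partial_k' g\in\cD_X^{(\leq 1)}\cD_X^{(\leq i-1)}\cI\subseteq\cD_X^{(\leq i)}\cI$.

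The $\infty$ statement is immediate, since both sides are unions over $i$ of the finite-order ideals. The one delicate point, which is where the assertion really rests, is the use of the Leibniz rule in the inductive step: the kernel of $\cD_Y^{(\leq i)}\to f^*\cD_X^{(\leq i)}$ does not annihilate arbitrary sections of $\cO_Y$, only those pulled back from $X$, so one must expand using Leibniz against the chosen $\cO_Y$-generators $a\cdot f^{-1}(g)$ rather than applying $\partial$ directly to an abstract element of $\cI\cO_Y$. This is where the hypothesis that $f$ is logarithmically \emph{smooth}, rather than merely logarithmic, enters via Lemma~\ref{Lem:toroidal-logetale}.
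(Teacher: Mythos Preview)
Your proof is correct and follows the same approach as the paper, which condenses the argument to the single sentence ``the ideals $(\cD_X^{(\leq i)}\cI)\cO_Y$ and $\cD_Y^{(\leq i)}(\cI\cO_Y)$ have the same sets of generators.'' Your careful unpacking via the two inclusions, with the Leibniz-based induction on $i$ for the reverse inclusion, is exactly what justifies that terse claim.
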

\begin{proof}
The ideals $(\cD_X^{(\leq i)}\cI)\cO_Y$ and $\cD_Y^{(\leq i)}(\cI\cO_Y)$ have the same sets of generators.
\end{proof}

Deriving ideals on toroidal orbifolds is compatible with restriction onto the logarithmic strata. Namely, {\S \ref{logderlem}(1)} immediately implies the following result for varieties, and the general case follows:

\begin{lemma}\label{diffonstrata}
Assume that $X$ is a toroidal orbifold, $S\into X$ is a logarithmic stratum, and $\cI$ is an ideal on $X$. Then for any $i\ge 0$ we have that $\cD_X^{(\leq i)}(\cI)|_{S}=\cD_S^{(\le i)}(\cI|_{S})$.
\end{lemma}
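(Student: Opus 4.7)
The plan is to reduce to the case where $X$ is a toroidal variety and then exploit the explicit basis for $\cD^1_X$ provided by Lemma \ref{logderiv}. Both sides of the asserted equality are \'etale-local on $X$, and by Lemma \ref{Lem:toroidal-logetale} each $\cD^{(\leq i)}$ pulls back correctly under strict \'etale morphisms (with strata pulling back to strata), so we may work on a representable \'etale presentation and assume $X$ is a toroidal variety. Fix a closed point $p\in S$ and logarithmic parameters $x_1,\ldots,x_n$ together with a monoidal chart $u\colon\ocM_p\to\cO_{X,p}$ as in Remark \ref{formalrem}; then $S$ is cut out locally by $I_S=(u^\alpha:\alpha\in\ocM_p\setminus\{0\})$ and the $x_i$ restrict to regular parameters on $S$.

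By Lemma \ref{logderiv}(3), $\cD^1_{X,p}$ admits the basis $\partial/\partial x_1,\ldots,\partial/\partial x_n,\DD_{L_1},\ldots,\DD_{L_r}$. The ordinary derivations $\partial/\partial x_i$ preserve $I_S$ and commute with the quotient map $\cO_{X,p}\to\cO_{S,p}$, restricting to the standard $\partial/\partial x_i$ on $S$. The monomial derivations $\DD_L$ also preserve $I_S$ and in fact act as $0$ modulo $I_S$: on $f=\sum c_\alpha(x)u^\alpha$ one has $\DD_L(f)=\sum c_\alpha(x)L(\alpha)u^\alpha$, and $L(0)=0$ kills the $\alpha=0$ term after restriction. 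These generators pairwise commute (immediate from their action on $x_j$ and $u^\alpha$), so any operator in $\cD^{(\leq i)}_{X,p}$ is an $\cO_{X,p}$-linear combination of ordered monomials in $\{\partial/\partial x_j,\DD_L\}$. The key observation is that every such monomial containing at least one $\DD_L$-factor sends every $f\in\cO_{X,p}$ into $I_S$, by the same $L(0)=0$ argument applied to each term of $\sum c_\alpha(x)u^\alpha$. In particular the restriction map $\cD^{(\leq i)}_{X,p}\otimes_{\cO_{X,p}}\cO_{S,p}\to\cD^{(\leq i)}_{S,p}$ is surjective and factors through the subalgebra of purely tangential operators.

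Given this, both inclusions are immediate. For $\cD^{(\leq i)}_S(\cI|_S)\subseteq\cD^{(\leq i)}_X(\cI)|_S$, lift a generator $\bar\partial\in\cD^{(\leq i)}_{S,p}$ to the same product $\partial$ of tangential operators in $\cD^{(\leq i)}_{X,p}$ and lift $\bar f\in\cI|_S$ to $f\in\cI$; then $\partial(f)|_S=\bar\partial(\bar f)$ because tangential operators commute with restriction to $S$. Conversely, given $\partial\in\cD^{(\leq i)}_{X,p}$ and $f\in\cI$, write $\partial=\partial_{\mathrm{tan}}+\partial_{\mathrm{nor}}$ where $\partial_{\mathrm{nor}}$ collects the monomials containing at least one $\DD_L$-factor; by the key observation $\partial_{\mathrm{nor}}(f)|_S=0$, so $\partial(f)|_S=\partial_{\mathrm{tan}}(f)|_S\in\cD^{(\leq i)}_S(\cI|_S)$. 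The main (mild) obstacle is the bookkeeping for the key observation, which relies on pairwise commutativity of the chosen basis of $\cD^1_{X,p}$ and on the characteristic-zero assumption ensuring $\cD^{(\leq i)}_X$ is generated by first-order operators, so no exotic higher-order operators fall outside the analysis.
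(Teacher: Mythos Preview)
Your proof is correct and follows essentially the same route as the paper. The paper's proof is a one-liner invoking \S\ref{logderlem}(1): every logarithmic derivation preserves the ideal of the stratum and hence descends to a derivation on $S$, the resulting map $\cD^1_{X,p}\otimes\cO_{S,p}\to\cD^1_{S,p}$ is surjective, and the claim for higher-order operators follows by composition. You have simply unpacked this same argument explicitly using the local basis from Lemma~\ref{logderiv}(3), making the kernel (the span of the $\DD_L$'s) and its trivial action modulo $I_S$ visible. The commutativity of the basis and the formal expansion $f=\sum c_\alpha(x)u^\alpha$ are conveniences rather than essential ingredients: the core point is that any product of first-order logarithmic derivations commutes with restriction to $S$, which follows directly from $\cD^1_X(I_S)\subseteq I_S$ without needing the explicit splitting into tangential and normal parts.
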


\subsection{The monomial saturation of an ideal}\label{monomsatursec}
This section generalizes the discussion of \cite[Proposition 15 and Theorem 17]{Kollar-toroidal}.

\begin{definition}
Let $\cI$ be an ideal sheaf on a toroidal orbifold. Define the \emph{monomial saturation of $\cI$} to be
$$\cM(\cI) := \bigcap_{\substack{\cN\supseteq \cI\\ \cN\text{ monomial}}}\cN.$$
\end{definition}

Clearly $\cM(\cI)$ is a monomial ideal containing $\cI$, and $\cI$ is monomial if and only if $\cM(\cI) = \cI$.

\begin{theorem}\label{Th:monomial-part} Let $\cI$ be an ideal sheaf on a toroidal orbifold $X$.
	\begin{enumerate}
		\item $\cI$ is monomial if and only if $\cD_X^{(\leq 1)} \cI = \cI$.
		\item $\cD_X^\infty \cI = \cM(\cI)$
		\item If $Y \to X$ is logarithmically smooth then $\cM( \cI \cO_Y) = \cM(\cI) \cO_Y$.
	\end{enumerate}
\end{theorem}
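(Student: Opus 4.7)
\medskip

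\noindent\textbf{Proof plan.} All three parts are linked: the substantive content lies in (1), from which (2) and (3) follow formally.

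For (1), the ``only if'' direction is immediate from Lemma \ref{logderiv}(4): logarithmic derivations carry monomial ideals into themselves, and since $\cD_X^{(\leq 1)}$ is generated as an $\cO_X$-module by $\cO_X$ together with $\cD_X^1$, one has $\cD_X^{(\leq 1)}\cI = \cI$ whenever $\cI$ is monomial. For the ``if'' direction, I would reduce step by step to a concrete calculation. Being monomial is \'etale-local, so I may assume $X$ is a toroidal variety and work at a closed point $p$; being monomial also descends from the completion $\hatO_{X,p}$ to $\cO_{X,p}$ (a monomial ideal is determined by its image in the characteristic sheaf), so it suffices to show that an ideal $\hatcI \subseteq \hatO_{X,p} = k(x)\llbracket \oM_p, x_1,\ldots,x_n\rrbracket$ stable under $\cD^1$ is generated by monomials.

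Fix coordinates as in Remark \ref{formalrem}, with $u_1,\ldots,u_r$ a $\QQ$-basis of $\oM_p^\gp$. By Lemma \ref{logderiv}(3) the relevant derivations are the Euler operators $E_i = u_i \partial/\partial u_i$ and the partials $\partial/\partial x_j$. The operators $E_i$ act diagonally with integer eigenvalues on the monomial basis $u^\alpha x^\beta$, inducing an $\oM_p^\gp$-grading. The key observation is that, in characteristic zero, an ideal stable under all $E_i$ is automatically homogeneous: if $f = \sum_\alpha f_\alpha \in \hatcI$ is the weight decomposition, then each $f_\alpha$ is obtained as a $k$-linear combination of the elements $E_1^{k_1}\cdots E_r^{k_r}(f)$ by Vandermonde, and hence lies in $\hatcI$. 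Each graded piece has the form $u^\alpha J_\alpha$ for an ideal $J_\alpha \subseteq k(x)\llbracket x_1,\ldots,x_n\rrbracket$, and stability under $\partial/\partial x_j$ passes to $J_\alpha$. A nonzero ideal of a regular local ring stable under all partials with respect to a regular system of parameters must be the unit ideal (differentiate an element of minimal order to produce a unit), so $J_\alpha$ is either $(0)$ or the whole ring. In the latter case $u^\alpha \in \hatcI$, showing that $\hatcI$ is generated by the monomials $u^\alpha$ for which $J_\alpha \neq 0$.

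For (2), the inclusion $\cD_X^\infty \cI \subseteq \cM(\cI)$ holds because $\cM(\cI)$ is monomial, hence stable under $\cD_X^\infty$ by (1) applied to $\cM(\cI)$, and it contains $\cI$. Conversely, $\cD_X^\infty \cI$ is by construction stable under $\cD_X^\infty$, in particular under $\cD_X^{(\leq 1)}$, so it is monomial by (1); since it contains $\cI$, minimality of $\cM(\cI)$ yields the reverse inclusion. For (3), Corollary \ref{Cor:logsmooth} supplies $(\cD_X^\infty \cI)\cO_Y = \cD_Y^\infty(\cI \cO_Y)$, and combining this with (2) applied on both $X$ and $Y$ gives $\cM(\cI)\cO_Y = \cM(\cI\cO_Y)$.

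The main obstacle is the ``if'' direction of (1), and inside it the bookkeeping of the weight decomposition in the formal local ring: justifying that the Vandermonde extraction of weight components makes sense for the possibly infinite sums appearing in $\hatO_{X,p}$, and confirming the descent from monomial-ness of $\hatcI$ to monomial-ness of $\cI$. Both are standard but deserve explicit care, whereas the rest of the argument is essentially an eigenspace computation followed by a one-line reduction for (2) and (3).
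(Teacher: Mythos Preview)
Your approach is essentially the paper's. For (1), the paper also reduces to the completion and uses the eigenspace decomposition under the commuting Euler operators $u_i\partial/\partial u_i$, followed by differentiation in the ordinary parameters to force each nonzero weight piece to contain its monomial. The obstacle you flag about extracting weight components from genuinely infinite sums is exactly the point where the paper does something you have not yet written down: it works modulo $P^N$, where $P$ is the ideal generated by $M^+$, so that only finitely many weights survive and the eigenspace (Vandermonde) splitting is a finite linear-algebra statement; one then lets $N\to\infty$. Your argument becomes complete once you insert this reduction.

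Part (2) is identical to the paper's. For (3), your route via (2) together with Corollary~\ref{Cor:logsmooth} is actually cleaner than the paper's own proof, which passes to completions and verifies by hand that the two ideals have the same monomial generators. Your argument bypasses that computation entirely and is preferable.
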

\begin{proof}
(1) We assume $\cD_X^{(\leq 1)} \cI = \cI$ and need to show that $\cI$ is monomial, the other direction being trivial.

Since the notion of monomial ideal is stable under \'etale base change it suffices to verify the equality on the strict henselization of $X$ at every geometric point. Let $p$ be a geometric point of $X$ and $\widehat X$ the completion at $p$. Since the completion of a local ring is faithfully flat it suffices to check the statement on  $\widehat X$; indeed if $J \subseteq \cM$ generates $\widehat\cI$ then it generates $\cI$. We may write $\widehat X = \Spec \widehat R$ where  $R = \bar k[x_1,\ldots,x_n][M]$, and $M$ is a sharp saturated monoid.

Write $M^+$ for the maximal ideal of $M$ and $P$ for the resulting ideal in $\widehat R$. For every $N>0$ we have $\cD_X^{(\leq 1)} P^N = P^N$, so we consider the induced action of the operators $\cD_X^{(\leq 1)}$ on  $\cI_N:= {(\widehat\cI + P^N) / P^N} \subseteq \widehat R/P^N$.

The operators $$1, u_1\frac{\partial}{\partial u_1},\ldots,u_l\frac{\partial}{\partial u_l}$$ commute and have distinct systems of eigenvalues on the subgroups $u \, \bar k[[x_1,\ldots,x_n]]$, for distinct  $u \in M\smallsetminus N(M^+)$. Hence $$\cI_N = \bigoplus_{u\in M\smallsetminus N(M^+)} \cI_N \cap u \, \bar k[[x_1,\ldots,x_n]] $$ splits as a direct sum.

Suppose $0 \neq u\,f(x_1,\ldots,x_n) \in \cI_N \cap u \, \bar k[[x_1,\ldots,x_n]] $  and $x_1^{r_1}\cdots x_n^{r_n} = \inn(f)$ is the initial term of $f$ with respect to some monomial order. As  $\chara k=0$, we have
$$\frac{\partial^{r_1}}{\partial x_1^{r_1}}\cdots\frac{\partial^{r_n}}{\partial x_n^{r_n}}(uf) = u \mu \in \cI_N$$ for some unit $\mu \in \bar k[[x_1,\ldots,x_n]]$,  hence $u \in \cI_N.$ It follows that $\cI_N$ is monomial, so $\widehat \cI$ is monomial, as required.

For the proof of part (3) below we note that this implies that $\widehat\cM(\cI)$ is described as follows: write the generators of $\widehat\cI$ as $f_j = \sum c_{j,\alpha}(x) u^\alpha$. Then $\widehat\cM(\cI) = (u^\alpha)_{c_{j,\alpha}(x) \neq 0}$.

(2) Since $\cD_X^{(\leq 1)} \cD_X^\infty \cI = \cD_X^\infty \cI$, the ideal $\cD_X^\infty \cI$  is a monomial ideal by (1). Since $\cI\subseteq\cD_X^\infty \cI$ we have $\cM(\cI) \subseteq \cM(\cD_X^\infty \cI) = \cD_X^\infty \cI.$ On the other hand $\cD_X^\infty \cI \subseteq\cD_X^\infty \cM(\cI) = \cM(\cI)$, giving the equality.

(3) We may pass to completions,  where we may write $\widehat X = \Spec \widehat R$ with  $R = \bar k[x_1,\ldots,x_n][M]$ and  $\widehat Y = \Spec \widehat S$ with  $S = \bar k[x_1,\ldots,x_n,x_{n+1},\ldots,x_m][P]$ and with $M^{gp} \to P^{gp}$ injective. Writing $f_j = \sum c_{j,\alpha}(x) u^\alpha$ for the generators of $\widehat \cI$ we have as in (1) above, {we obtain that the ideals} $\widehat{\cM(\cI) \cO_Y}= (u^\alpha)_{c_{j,\alpha}(x) \neq 0}\cO_Y$ and $\widehat{\cM(\cI \cO_Y)}= (u^\alpha)_{c_{j,\alpha}(x) \neq 0}$ have the same generators, as required.
\end{proof}

\subsection{Balanced ideals and  clean ideals} \label{Sec:logord}

\begin{definition}\label{Def:classify-ideals}
A nowhere zero ideal $\cI$ on a toroidal orbifold is
\begin{itemize}
\item {\it balanced} if the monomial ideal $\cM(\cI)$ is invertible.

\item {\it clean} if $\cM(\cI)=1$.
\end{itemize}
Given a balanced ideal $\cI$ we define its {\em clean part} $$\cI^{cln}:=(\cM(\cI))^{-1}\cI.$$
\end{definition}
In particular, $\cI$ factors as $\cI^{cln}\cdot \cM(\cI)$ and this is compatible with differentiation by the following lemma:

\begin{lemma} \label{commute}
Let $X$ be a toroidal orbifold, $\cN$ a monomial ideal and $\cI$ an arbitrary ideal. Then
$$\cD_X^{(\leq i)}(\cN\cdot\cI)=\cN\cdot\cD_X^{(\leq i)}(\cI).$$
\end{lemma}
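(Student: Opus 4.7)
The plan is to reduce to the principal monomial case and then induct on $i$, with the core input being the Leibniz computation together with the fact (from Lemma~\ref{logderiv}(4) and \S\ref{logderlem}(2)) that for any logarithmic derivation $\partial\in\cD^1_X$ and any monomial $m$ one has $\partial(m)\in m\cO_X$, so $\partial(m) = \phi_\partial(m)\cdot m$ with $\phi_\partial(m)\in\cO_X$.

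First I would reduce to a local statement. The claim is étale-local on $X$, so I may assume $X$ is a toroidal scheme and (by Theorem~\ref{Th:monomial-part}(3) restricted to strict étale covers, or directly by the monomial structure of $\cM$) that $\cN$ is generated as an $\cO_X$-module by monomials $m_\alpha$. Since both sides of the asserted equality are additive in $\cN$ in the obvious sense, it suffices to treat the principal case $\cN=(m)$ for a single monomial $m$.

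Next I would handle the base case $i=1$. For $\partial\in\cD_X^1$ and $f\in\cI$, Leibniz gives
\[
\partial(mf)\;=\;\phi_\partial(m)\,m\,f\;+\;m\,\partial(f)\;=\;m\bigl(\phi_\partial(m)f+\partial(f)\bigr),
\]
so $\partial(mf)\in m\cdot\cD_X^{(\leq 1)}(\cI)$, proving the inclusion $\cD_X^{(\leq 1)}(m\cI)\subseteq m\cdot\cD_X^{(\leq 1)}(\cI)$. Conversely, rearranging the same identity yields $m\,\partial(f)=\partial(mf)-\phi_\partial(m)\cdot mf$, and since $mf\in m\cI\subseteq \cD_X^{(\leq 1)}(m\cI)$ and the unit lies in $\cD_X^{(\leq 1)}$, we get $m\cdot\cD_X^{(\leq 1)}(\cI)\subseteq \cD_X^{(\leq 1)}(m\cI)$. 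The case $i=0$ is trivial since $\cD_X^{(\leq 0)}=\cO_X$.

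Finally I would induct on $i\ge 2$. Using the elementary identity $\cD_X^{(\leq i)}(\cJ)=\cD_X^{(\leq 1)}\bigl(\cD_X^{(\leq i-1)}(\cJ)\bigr)$ for any ideal $\cJ$ (which follows from the definition $\cD_X^{(\leq i)}=\sum_{k\le i}\operatorname{im}(\cD_X^1)^{\otimes k}$ together with $\cJ\subseteq \cD_X^{(\leq i-1)}(\cJ)$), and combining with the induction hypothesis and the base case, I obtain
\[
\cD_X^{(\leq i)}(m\cI)=\cD_X^{(\leq 1)}\bigl(m\cdot\cD_X^{(\leq i-1)}(\cI)\bigr)=m\cdot\cD_X^{(\leq 1)}\bigl(\cD_X^{(\leq i-1)}(\cI)\bigr)=m\cdot\cD_X^{(\leq i)}(\cI).
\]

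There is no serious obstacle here; the only point that requires some care is remembering that logarithmic derivations act on monomials \emph{multiplicatively} (i.e.\ $\partial m/m\in\cO_X$), rather than merely preserving the monomial ideal. It is this strengthening that makes scalar multiplication by a monomial commute with $\cD_X^{(\leq 1)}$ when applied to an arbitrary ideal, and then the induction on $i$ propagates the identity to all orders.
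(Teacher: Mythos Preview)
Your proof is correct and follows essentially the same approach as the paper's: both reduce \'etale-locally, use the Leibniz rule together with $\partial(m)\in m\cO_X$ to establish the case $i=1$, and then induct on $i$ via the identity $\cD_X^{(\leq i)}=\cD_X^{(\leq 1)}\circ\cD_X^{(\leq i-1)}$ (the paper writes it as $\cD_X^{(\leq i-1)}\circ\cD_X^{(\leq 1)}$, which is equivalent). Your explicit reduction to a principal monomial $(m)$ is a harmless organizational choice; the paper works directly with monomial generators $u\in\cN$, which amounts to the same thing.
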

\begin{proof}
Working \'etale-locally we can assume that $\cD_X^{(\leq 1)}(\cN \cI)$ is generated by elements $uf$ and $\nabla(uf)$, and $\cN\cD_X^{(\leq 1)}(\cI)$ is generated by elements $uf$ and {$u\nabla(f)$, where $\nabla$ is a logarithmic derivation, $u$ is a monomial in $\cN$, and $f$ is an element of $\cI$. Since $\nabla(u)\in(u)$ and $\nabla(uf)=u\nabla(f)+\nabla(u)f$, we obtain that $\nabla(uf)-u\nabla(f)\in(uf)$} and hence $\cD_X^{(\leq 1)}(\cN \cI)=\cN\cD_X^{(\leq 1)}(\cI)$. This implies that $$\cD_X^{(\leq i)}(\cN\cdot\cI)= \cD_X^{(\leq i-1)}\left(\cD_X^{(\leq 1)}(\cN\cdot\cI)\right)=\cD_X^{(\leq i-1)}\left(\cN\cdot\cD_X^{(\leq 1)} \cI\right),$$ and induction on $i$ gives the result.
\end{proof}

\subsection{The logarithmic order of an ideal} \label{Sec:logord}

\subsubsection{The order}
Note that the usual order of an ideal $\cI$ on a smooth variety $X$ (e.g., \cite[\S2.1]{Wlodarczyk}) is compatible with smooth morphisms. Therefore, the order descends to arbitrary DM (even Artin) stacks $X$: an ideal $\cI\subseteq\cO_X$ induces a map $\ord_\cI$ from the underlying topological space $|X|$ to $\oNN:=\NN\cup\{\infty\}$. The value at $p\in|X|$ will be denoted $\ord_p(\cI)$.

Let $\cI$ be an ideal on a toroidal orbifold $X$. By the {\it  logarithmic order} of $\cI$ at a point $p\in|X|$ we mean $$\logord_p(\cI)=\ord_p(\cI|_{s_p})$$ where $s_p$ is the logarithmic stratum through $p$. This defines a function $\logord_\cI\:|X|\to\oNN$, which coincides with $\ord_\cI$ when the logarithmic structure is trivial. Note that $\logord_p$ is a monomial order where we put infinite weights on monomial parameters and weight 1 on ordinary parameters. An element $x$ is an ordinary parameter at $p$ if and only if $\logord_p(x)=1$.

\subsubsection{Relation to differentials}
The following result indicates that $\logord_p$ is a logarithmic analogue of the classical order on smooth varieties.

\begin{lemma}\label{11new}
Let $X$ be a toroidal variety, $\cI$ an ideal, and $p\in X$ a point. Then $$\logord_p(\cI)=\min\{a\in\NN\mid {\cD_X^{(\leq a)}(\cI)}_p=\cO_{X,p}\},$$ where $\min(\emptyset)=\infty$ by convention.
\end{lemma}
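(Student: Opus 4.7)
The plan is to deduce the lemma from the classical characterization of order on smooth varieties via Lemma \ref{diffonstrata}. Recall that on a smooth variety $Y$ with ideal $\cJ$ and point $q$, one has the standard identity $\ord_q(\cJ) = \min\{a \mid \cD_Y^{(\leq a)}(\cJ)_q = \cO_{Y,q}\}$, valid in characteristic zero. Since $\logord_p(\cI) = \ord_p(\cI|_S)$ by definition, with $S := s_p$ a smooth logarithmic stratum, applying this identity to $\cI|_S$ gives
$$\logord_p(\cI) = \min\{a \mid \cD_S^{(\leq a)}(\cI|_S)_p = \cO_{S,p}\}.$$
Lemma \ref{diffonstrata} then identifies $\cD_S^{(\leq a)}(\cI|_S)$ with $\cD_X^{(\leq a)}(\cI)|_S$, so this quantity equals $\min\{a \mid \cD_X^{(\leq a)}(\cI)_p \cdot \cO_{S,p} = \cO_{S,p}\}$.

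The remaining step, which is the substantive content, is the equivalence
$$\cD_X^{(\leq a)}(\cI)_p \cdot \cO_{S,p} = \cO_{S,p} \quad\Longleftrightarrow\quad \cD_X^{(\leq a)}(\cI)_p = \cO_{X,p}.$$
The implication $\Leftarrow$ is immediate. For $\Rightarrow$, observe that the surjection $\cO_{X,p} \twoheadrightarrow \cO_{S,p}$ is the quotient by the monomial ideal cutting out $S$, which lies inside $\m_p$; it therefore induces an isomorphism on residue fields, so an element $f \in \cO_{X,p}$ is a unit if and only if its image in $\cO_{S,p}$ is a unit. Hence if $\cD_X^{(\leq a)}(\cI)_p \cdot \cO_{S,p}$ contains a unit, some lift in $\cD_X^{(\leq a)}(\cI)_p$ must itself be a unit, forcing $\cD_X^{(\leq a)}(\cI)_p = \cO_{X,p}$. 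The case $\logord_p(\cI) = \infty$ is handled uniformly: both minima are then over the empty set and equal $\infty$.

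No real obstacle appears; the argument is a transfer of the classical smooth-variety criterion along Lemma \ref{diffonstrata}, combined with the elementary fact that units lift across the surjection $\cO_{X,p} \twoheadrightarrow \cO_{S,p}$. The only point requiring a sanity check is that the logarithmic structure induced on the stratum $S$ is trivial, so that $\cD_S^{(\leq a)}$ in the statement of Lemma \ref{diffonstrata} coincides with the sheaf of ordinary differential operators used in the classical characterization of order; this is clear from the splitting of Lemma \ref{logderiv}(3), since the monomial derivations ${\DD}(N_p)$ restrict to zero on $\cO_{S,p}$, leaving only the ordinary derivations $\partial/\partial x_i$ on the smooth variety $S$.
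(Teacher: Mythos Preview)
Your proof is correct and follows exactly the approach of the paper, which simply says ``Thanks to Lemma~\ref{diffonstrata}, the statement reduces to the differential characterization of the classical order on the logarithmic strata.'' You have spelled out the details the paper leaves implicit, in particular the unit-lifting step across $\cO_{X,p}\twoheadrightarrow\cO_{S,p}$ and the verification that $\cD_S^{(\leq a)}$ is the ordinary sheaf of differential operators on the smooth stratum.
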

\begin{proof}
Thanks to Lemma \ref{diffonstrata}, the statements reduces to the differential characterization of the classical order on the logarithmic strata.
\end{proof}

\subsubsection{The infinite locus}
The classical order is infinite at $p$ if and only if $\cI_p=0$. The logarithmic analogue is more interesting:

\begin{lemma}\label{11inf}
Let $X$ be a toroidal orbifold, $\cI$ an ideal, and $p\in|X|$. Then $\logord_p(\cI)=\infty$ if and only if $p\in V(\cM(\cI))$.
\end{lemma}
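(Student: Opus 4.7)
The plan is to reduce to the toroidal variety case and then combine Lemma~\ref{11new} with Theorem~\ref{Th:monomial-part}(2). The reduction is harmless: the statement is étale-local on $X$ and a toroidal orbifold admits a strict étale cover by a toroidal variety, and each ingredient of the statement behaves well under strict étale base change. Indeed, the logarithmic stratification is preserved under strict étale morphisms, so $\logord_p(\cI)$ pulls back; the sheaves $\cD_X^{(\leq a)}$ pull back by Lemma~\ref{Lem:toroidal-logetale}; and the monomial saturation commutes with logarithmically smooth pullback by Theorem~\ref{Th:monomial-part}(3). I may therefore assume $X$ is a toroidal variety.

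Next, by Lemma~\ref{11new} the equality $\logord_p(\cI)=\infty$ is equivalent to
\[
\cD_X^{(\leq a)}(\cI)_p \neq \cO_{X,p} \quad \text{for every } a\in\NN,
\]
i.e.\ $p\in V(\cD_X^{(\leq a)}(\cI))$ for every $a\ge 0$. Since $\cD_X^\infty\cI=\bigcup_a \cD_X^{(\leq a)}(\cI)$ is an ascending union of coherent ideals, stalk-wise over the Noetherian local ring $\cO_{X,p}$ this chain stabilizes; hence $\cD_X^\infty(\cI)_p = \cO_{X,p}$ if and only if $\cD_X^{(\leq a)}(\cI)_p = \cO_{X,p}$ for some $a$. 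Equivalently,
\[
V(\cD_X^\infty\cI)=\bigcap_{a\ge 0} V\!\left(\cD_X^{(\leq a)}(\cI)\right).
\]

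Finally, Theorem~\ref{Th:monomial-part}(2) identifies $\cD_X^\infty\cI=\cM(\cI)$, so the two conditions coincide: $\logord_p(\cI)=\infty$ if and only if $p\in V(\cM(\cI))$. The only trivial point to mention separately is the degenerate case $\cI=0$, where $\cM(\cI)=0$ and both sides hold at every point. There is no real obstacle; the only subtlety is to record why the ascending union stabilizes stalk-wise, which is just Noetherianity of the étale-local rings of a toroidal variety.
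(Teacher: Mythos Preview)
Your proof is correct but takes a different route from the paper. The paper's argument is more direct and geometric: after reducing to varieties, it simply observes that $\cI|_{s_p}=0$ is equivalent to $\cM(\cI)_p\neq\cO_{X,p}$. This observation unpacks as follows: the ideal of the stratum $s_p$ is the monomial ideal generated by $M^+$, so $\cI|_{s_p}=0$ means $\cI_p\subseteq (M^+)_p$; since $(M^+)$ is monomial this is equivalent to $\cM(\cI)_p\subseteq (M^+)_p$, i.e.\ $\cM(\cI)_p$ is proper. Your approach instead routes through the differential characterization of $\logord_p$ (Lemma~\ref{11new}) and the identification $\cD_X^\infty\cI=\cM(\cI)$ (Theorem~\ref{Th:monomial-part}(2)), together with a Noetherian stabilization argument for the chain $\cD_X^{(\leq a)}(\cI)_p$. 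Your argument is more formal and makes the dependence on earlier results explicit, which is pedagogically useful; the paper's argument is shorter and emphasizes the geometric content, namely that a monomial ideal is proper at $p$ exactly when it is contained in the ideal of the stratum through $p$.
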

\begin{proof}
Working \'etale-locally we can assume that $X$ is a variety. Then the lemma follows from the observation that $\cI|_{s(p)} = 0$ if and only if $\cM(\cI)_p\neq\cO_{X,p}$.
\end{proof}

\begin{corollary}\label{11cor}
An ideal $\cI$ is clean if and only if its maximal logarithmic order is finite.
\end{corollary}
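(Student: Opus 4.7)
The plan is to deduce this corollary as a direct unwinding of Lemma~\ref{11inf} together with the definition of cleanliness.

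By definition, $\cI$ is clean precisely when $\cM(\cI) = \cO_X$, which is equivalent to the vanishing locus $V(\cM(\cI))$ being empty. On the other hand, the maximal logarithmic order of $\cI$ is finite exactly when $\logord_p(\cI) < \infty$ for every point $p \in |X|$ (both in the sense that no point attains the value $\infty$ and, since the log order is constructible, in the sense of a true finite bound once one argues on each stratum). Lemma~\ref{11inf} identifies the set of points where $\logord_p(\cI) = \infty$ with $V(\cM(\cI))$, so one has the chain of equivalences
\[
\cI \text{ clean} \iff \cM(\cI) = \cO_X \iff V(\cM(\cI)) = \emptyset \iff \logord_p(\cI) < \infty \text{ for all } p.
\]

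There is no real obstacle here: the entire content has been packaged into Lemma~\ref{11inf} and Definition~\ref{Def:classify-ideals}. The only subtlety worth noting in the write-up is to clarify that ``maximal logarithmic order finite'' means pointwise finiteness of $\logord_p(\cI)$; since by Lemma~\ref{11new} each finite value is bounded by the local number of generators of $\cD_X^{(\leq \bullet)}(\cI)_p = \cO_{X,p}$ and the log order is constant along logarithmic strata, pointwise finiteness automatically yields a genuine finite maximum on any quasi-compact chart.
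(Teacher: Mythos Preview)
Your proof is correct and follows the same route as the paper, which states the corollary without proof as an immediate consequence of Lemma~\ref{11inf} and Definition~\ref{Def:classify-ideals}. Your added remark on why pointwise finiteness yields an actual finite maximum is a reasonable clarification (and could be streamlined by citing upper semicontinuity via Lemma~\ref{Lem:supports}(2) on a noetherian space), though the paper evidently regards this as implicit in the finite-type setting.
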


\subsubsection{Functoriality}
Finally, we should check that the logarithmic order fits the Extended Functoriality Principle.

\begin{lemma}\label{ordfunctor}
Assume that $f\:X'\to X$ is a logarithmically smooth morphism between toroidal varieties, $\cI$ is an ideal on $X$, and $\cI'=\cI\cO_{X'}$. Then $$\logord_{\cI'}=\logord_\cI\circ f.$$
\end{lemma}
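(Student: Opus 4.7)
The plan is to reduce the statement about logarithmic orders to an elementary statement about ideals via the differential-operator characterization already proved in Lemma~\ref{11new}, then invoke compatibility of logarithmic differential operators with logarithmically smooth morphisms from Corollary~\ref{Cor:logsmooth}.

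Concretely, fix $p'\in |X'|$ and let $p=f(p')\in |X|$. By Lemma~\ref{11new},
$$\logord_p(\cI)=\min\{a\in\NN\mid \cD_X^{(\leq a)}(\cI)_p=\cO_{X,p}\}, \qquad \logord_{p'}(\cI')=\min\{a\in\NN\mid \cD_{X'}^{(\leq a)}(\cI')_{p'}=\cO_{X',p'}\},$$
with the conventions for $\infty$. By Corollary~\ref{Cor:logsmooth} applied to the logarithmically smooth morphism $f$, for every $a$ we have the equality of sheaves
$$\cD_{X'}^{(\leq a)}(\cI')=(\cD_X^{(\leq a)}\cI)\cO_{X'},$$
whose stalk at $p'$ reads $\cD_{X'}^{(\leq a)}(\cI')_{p'}=(\cD_X^{(\leq a)}\cI)_p\cdot\cO_{X',p'}$.

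It therefore suffices to show that for any ideal $J\subseteq\cO_{X,p}$, the equality $J=\cO_{X,p}$ holds if and only if $J\cdot\cO_{X',p'}=\cO_{X',p'}$. The ``only if'' direction is obvious. For the ``if'' direction, note that since $f(p')=p$, the induced ring map $\cO_{X,p}\to\cO_{X',p'}$ is a local homomorphism, so $\m_{X,p}\cO_{X',p'}\subseteq \m_{X',p'}$. Thus if $J\subsetneq\cO_{X,p}$ is proper it is contained in $\m_{X,p}$, and $J\cO_{X',p'}\subseteq\m_{X',p'}$ is proper as well. Applying this to $J=\cD_X^{(\leq a)}(\cI)_p$ shows that the defining condition for $\logord_p(\cI)\leq a$ is equivalent to that for $\logord_{p'}(\cI')\leq a$, and hence the two minima agree.

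No serious obstacle is expected: the whole lemma is essentially a formal consequence of Lemma~\ref{11new}, Corollary~\ref{Cor:logsmooth}, and the locality of a morphism of local rings. The only point deserving any attention is the transition between the sheaf-level identity of Corollary~\ref{Cor:logsmooth} and the stalk at $p'$, which is automatic, together with the trivial verification that an ideal is the unit ideal precisely when its extension along a local homomorphism of local rings is.
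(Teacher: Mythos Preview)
Your proof is correct but takes a different route from the paper's. The paper argues directly from the definition $\logord_p(\cI)=\ord_p(\cI|_{s_p})$: a logarithmically smooth morphism $f$ restricts to a smooth morphism between logarithmic strata $s_{p'}\to s_p$, and the classical order is known to be invariant under smooth pullback, so $\ord_{p'}(\cI'|_{s_{p'}})=\ord_p(\cI|_{s_p})$. Your argument instead goes through the differential characterization of Lemma~\ref{11new} together with Corollary~\ref{Cor:logsmooth}, reducing everything to the elementary observation that an ideal is the unit ideal if and only if its extension along a local homomorphism is. Both arguments are short; the paper's is more geometric and explains the result as an immediate consequence of how strata behave, while yours has the virtue of staying entirely within the $\cD^{(\le a)}$-machinery already built and not requiring any separate verification that $f$ restricts smoothly to strata.
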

\begin{proof}
Note that $f$ induces smooth morphisms between logarithmic strata. Therefore, the claim follows from compatibility of the classical order with smooth morphisms.
\end{proof}

\subsubsection{Differential logarithmic order}
This subsection will not be used in the sequel, so the reader can skip it. On can also define the {\it differential logarithmic order} at a point $p\in X$ by $$\Dord_p(\cI)=\min\{a\in\NN\mid {\cD_X^{(\leq a)}(\cI)}_p={\cD_X^\infty(\cI)_p}\}.$$
By Lemma~\ref{11new} it coincides with $\logord_p$ when the latter is finite, and it contains a finer information when $\logord_p$ is infinite. For example, by Lemma~\ref{commute}, $\Dord_p(\cM\cI)=\Dord_p(\cI)$ for any monomial ideal $\cM$. In addition, under monomial blowings up $\Dord$ can only drop, giving an additional control on the Initial Cleaning step. Probably, $\Dord_p$ can be useful for studying the complexity of our algorithm.

As another side of the same coin, $\Dord$ is not compatible with logarithmically \'etale morphisms, see an example below. So, it is not consistent with the Extended Functoriality and we prefer not to use it in the paper.

\begin{example}
Consider $\cI=(u+v)^3 \subset k[u,v]$ and the transformation $ k[u,v] \subset  k[u,w]$ given by $v=uw$, {with exceptional $(u)\subset k[u,w]$  and} $\cI' = u^3(1+w)^3$. Taking $p=(u,v)$ and $p' = (u,w)$ we have $\Dord_p(\cI)> 0$ whereas $\Dord_{p'}(\cI') = 0$ since $\cI'_{p'} = (u^3)$ is monomial.
\end{example}

\subsection{Marked ideals and their maximal contact}\label{Sec:marked-ideals}
Inspired by Hironaka \cite{Hironaka-idealistic}, and following Villamayor \cite{Villamayor} and Bierstone--Milman \cite{Bierstone-Milman-simple}, it is convenient to consider {\emph{marked ideals $(\cI,a)$,}} that is coherent ideals $\cI$ on toroidal varieties $X$  with associated integer weights $a$ which keep track of the singular locus of an ideal. In Section \ref{Sec:controlled-transform} below the marking $a$ dictates the transformation of a marked ideal by an admissible blowing up.

\begin{definition}
Given a marked ideal $(\cI,a)$ on a toroidal orbifold $X$ we define a closed subset $\supp(X,a)\subset |X|$ called the {\it logarithmic cosupport} or simply {\it cosupport} of $(\cI,a)$ by $$\supp(\cI,a)=\{p\in X\mid \logord_p(\cI)\geq a\}.$$
\end{definition}

\begin{definition}
A marked ideal $(\cI,a)$ on a toroidal orbifold $X$ is {\it of maximal order} if $\logord_p(\cI)\le a$ for any $p\in |X|$. This happens if and only if $\logord_p(\cI)=a$ for any $p\in\supp(\cI,a)$, and we allow the case when $\supp(\cI,a)=\emptyset$.
\end{definition}

As an example, if $\cI$ is an invertible ideal then the marked ideal $(\cI,1)$ is of maximal order if and only if $V(\cI)$ is a toroidal suborbifold.

\begin{lemma}\label{Lem:supports2} Let $\cJ\subseteq \cI$ be two ideals on a toroidal orbifold $X$.

(1) $(\cJ,a)$ is of maximal order if and only if $\cD_X^{(\leq a)}(\cJ)=1$.

(2) If $(\cJ,a)$ is of maximal order then $(\cI,a)$ is of maximal order.
\end{lemma}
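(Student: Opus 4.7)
The plan is to derive both parts directly from the differential characterization of the logarithmic order given in Lemma~\ref{11new}, after extending it from toroidal varieties to toroidal orbifolds. Since $\logord_p$ is defined via restriction to the logarithmic stratum $s_p$, and the logarithmic stratification descends from any strict \'etale cover, while by Lemma~\ref{Lem:toroidal-logetale} the sheaves $\cD_X^{(\leq a)}$ pull back isomorphically under strict \'etale morphisms, the identity
\[
\logord_p(\cJ)=\min\{a\in\NN\mid \cD_X^{(\leq a)}(\cJ)_p=\cO_{X,p}\}
\]
holds on any toroidal orbifold. In particular, $\logord_p(\cJ)\le a$ is equivalent to $\cD_X^{(\leq a)}(\cJ)_p=\cO_{X,p}$.

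For (1), by definition $(\cJ,a)$ is of maximal order if and only if $\logord_p(\cJ)\le a$ at every $p\in|X|$. By the observation above this is equivalent to $\cD_X^{(\leq a)}(\cJ)_p=\cO_{X,p}$ at every $p$, which in turn is equivalent to the global equality $\cD_X^{(\leq a)}(\cJ)=(1)$. This is precisely the asserted equivalence.

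For (2), the inclusion $\cJ\subseteq\cI$ gives $\cD_X^{(\leq a)}(\cJ)\subseteq\cD_X^{(\leq a)}(\cI)$, so if the left-hand side equals $(1)$ then so does the right-hand side. Applying (1) in both directions then yields that $(\cJ,a)$ of maximal order implies $(\cI,a)$ of maximal order. Alternatively one can argue pointwise: restricting to the logarithmic stratum $s_p$ the inclusion $\cJ|_{s_p}\subseteq\cI|_{s_p}$ forces $\ord_p(\cI|_{s_p})\le\ord_p(\cJ|_{s_p})$, i.e. $\logord_p(\cI)\le\logord_p(\cJ)\le a$.

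There is no substantive obstacle; the only point requiring a moment of care is the descent of Lemma~\ref{11new} from the variety case to orbifolds, which is immediate from the compatibility of both $\logord$ and $\cD_X^{(\leq a)}$ with strict \'etale covers.
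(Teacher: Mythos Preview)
Your proposal is correct and follows essentially the same approach as the paper: part (1) is deduced from the differential characterization of the logarithmic order in Lemma~\ref{11new}, and part (2) follows immediately. Your added care in noting that Lemma~\ref{11new} is stated for toroidal varieties and must be descended to orbifolds via strict \'etale covers is appropriate, though the paper leaves this implicit.
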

\begin{proof}
The first claim holds by Lemma \ref{11new}, and the second claim follows.
\end{proof}

The following lemma is the analogue of \cite[Lemma 3.2.2]{Wlodarczyk} and \cite[Lemma 3.3.3]{Wlodarczyk}, which in turn refer to \cite{Villamayor}:

\begin{lemma}\label{Lem:supports}
Let $(\cI,a)$ be a marked ideal on a toroidal orbifold $X$. Then

\begin{enumerate}
\item $\supp(\cD_X^{(\leq i)}(\cI),a-i)=\supp(\cI,a)$ for any $i<a$.
\item {$\supp(\cI,a)=V(\cD_X^{(\leq a-1)}(\cI))$} is closed.
\item If  $(\cI,a)$ is of maximal order then for $i<a$ the marked ideal $(\cD_X^{(\leq i)}(\cI),a-i)$  is also of maximal order.
\end{enumerate}
\end{lemma}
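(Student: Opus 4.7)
The whole lemma reduces, via Lemma \ref{11new}, to the single observation that the sheaves of differential operators compose in the expected way:
\[
\cD_X^{(\leq j)}\bigl(\cD_X^{(\leq i)}(\cI)\bigr) \;=\; \cD_X^{(\leq i+j)}(\cI)
\]
as ideals of $\cO_X$. First I would justify this identity: $\cD_X^{(\leq i+j)}$ is by definition the $\cO_X$-submodule of differential operators generated by iterated compositions of at most $i+j$ logarithmic derivations, so $\cD_X^{(\leq i+j)} = \cD_X^{(\leq j)}\!\cdot\!\cD_X^{(\leq i)}$ as subsheaves of $\Der_k^\infty$. Applying this to $\cI$ gives the $\subseteq$ direction. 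The reverse inclusion is immediate from the Leibniz rule: an operator of the form $D_1\circ D_2$ with $D_1$ of order $\le j$ and $D_2$ of order $\le i$ sends $\cI$ into $\cD_X^{(\leq j)}(\cD_X^{(\leq i)}(\cI))$, and every operator of order $\le i+j$ is a sum of such compositions.

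All three parts are stable under strict \'etale base change by Corollary \ref{Cor:logsmooth} and Lemma \ref{ordfunctor}, so I may (and shall) assume $X$ is a toroidal variety in order to apply Lemma \ref{11new} directly.

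For (1): $p\in\supp(\cI,a)$ means $\logord_p(\cI)\ge a$, which by Lemma \ref{11new} is equivalent to $\cD_X^{(\leq a-1)}(\cI)_p\neq\cO_{X,p}$. Using the composition identity with $i<a$ and $j=a-i-1$, this is in turn equivalent to $\cD_X^{(\leq a-i-1)}(\cD_X^{(\leq i)}(\cI))_p\neq\cO_{X,p}$, i.e.\ to $\logord_p(\cD_X^{(\leq i)}(\cI))\ge a-i$, which is $p\in\supp(\cD_X^{(\leq i)}(\cI),a-i)$.

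For (2): the same chain of equivalences gives $\supp(\cI,a)=V(\cD_X^{(\leq a-1)}(\cI))$, and the right-hand side is the vanishing locus of a coherent ideal sheaf, hence closed.

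For (3): if $(\cI,a)$ is of maximal order then $\logord_p(\cI)\le a$ for every $p\in|X|$, so by Lemma \ref{11new} we have $\cD_X^{(\leq a)}(\cI)=\cO_X$ globally. By the composition identity with $j=a-i$, this says $\cD_X^{(\leq a-i)}(\cD_X^{(\leq i)}(\cI))=\cO_X$, i.e.\ $\logord_p(\cD_X^{(\leq i)}(\cI))\le a-i$ for all $p$, which is precisely the maximal order condition for $(\cD_X^{(\leq i)}(\cI),a-i)$.

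The only step requiring any care is the composition identity $\cD_X^{(\leq j)}\!\cdot\!\cD_X^{(\leq i)}=\cD_X^{(\leq i+j)}$, and this is essentially built into the way the sheaves of logarithmic differential operators were set up in \S\ref{3.1} (in characteristic zero, the filtration is generated in degree one). Everything else is a direct translation between $\supp$, $\logord_p$, and $\cD_X^{(\leq \bullet)}$ via Lemma \ref{11new}.
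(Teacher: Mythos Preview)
Your proof is correct and follows essentially the same strategy as the paper's. The paper reduces to logarithmic strata via Lemma~\ref{diffonstrata} and then cites the classical (non-logarithmic) result, whereas you stay on $X$ and invoke Lemma~\ref{11new} together with the composition identity $\cD_X^{(\leq j)}\cdot\cD_X^{(\leq i)}=\cD_X^{(\leq i+j)}$; since Lemma~\ref{11new} is itself proved by restriction to strata, the two arguments differ only in where the reduction is cashed in.
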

\begin{proof}
It suffices to consider the case of varieties. Thanks to Lemma \ref{diffonstrata}, the statements can be checked in the structure sheaves of strata, where the result follows from the non-logarithmic situation.
\end{proof}

To simplify notation we will use in the sequel the following

\begin{definition}\label{Def:derivative-marked}
Given a marked ideal $(\cI,a)$ and $i\ge 0$ we set $$\cD^{(\leq i)}(\cI,a) = (\cD^{(\leq i)}(\cI), a-i).$$
\end{definition}

\begin{definition}
Let $(\cI,a)$ be of maximal order on a toroidal orbifold $X$. A \emph{hypersurface of maximal contact} to $(\cI,a)$ is a closed toroidal suborbifold $H\into X$ of pure codimension 1 such that its ideal $\cI_H$ is contained in $\cT(\cI,a):=  \cD_X^{(\leq a-1)}(\cI)$.
\end{definition}

Hypersurfaces of maximal contact exist \'etale-locally. Loosely speaking, they are given by ordinary parameters contained in $\cT(\cI,a)$. For example, if $X$ is a toroidal \emph{variety} and $p\in X$ is a point then $\cT(\cI,a)_p$ contains elements $x$ such that $\cD_X^1(x)_p$ contains a unit. Therefore $x$ is an ordinary parameter at $p$ and $H=V(x)$ is a maximal contact locally at $p$.

Note also that Lemma \ref{Lem:supports}(2) implies that $\supp(\cI,a)\subseteq H$. This fact can be strengthened in terms of admissibility, see Corollary~\ref{Hcor} below.

Finally, Lemma~\ref{ordfunctor} implies that various notions we have introduced in this section are compatible with logarithmically smooth morphisms:

\begin{lemma}\label{functorlem}
Assume that $f\:X'\to X$ is a logarithmically smooth morphism between toroidal orbifolds, $\cI$ is an ideal on $X$ with $\cI'=\cI\cO_{X'}$, and $a>0$. Then

(1) $\supp(\cI',a)=f^{-1}(\supp(\cI,a))$.

(2) If $f$ is surjective then $(\cI',a)$ is of maximal order if and only if $(\cI,a)$ is of maximal order.

(2) If $H$ is a hypersurface of maximal contact to $(\cI,a)$ then $H\times_XX'$ is a hypersurface of maximal contact to $(\cI',a)$.
\end{lemma}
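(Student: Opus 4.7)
The plan is to deduce parts (1) and (2) immediately from Lemma~\ref{ordfunctor}, and to establish part (3) by combining Corollary~\ref{Cor:logsmooth} with the local structure of logarithmically smooth morphisms recorded in Remark~\ref{formalrem}.

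For part (1), I would fix $p'\in|X'|$, set $p=f(p')$, and invoke Lemma~\ref{ordfunctor} to obtain $\logord_{p'}(\cI')=\logord_p(\cI)$. Then $p'\in\supp(\cI',a)$ means $\logord_{p'}(\cI')\ge a$, which is equivalent to $\logord_p(\cI)\ge a$, i.e.\ $p\in\supp(\cI,a)$, proving the set-theoretic equality. For part (2), the same formula shows that $(\cI,a)$ of maximal order always implies $(\cI',a)$ of maximal order (no surjectivity needed), since $\logord_{p'}(\cI')=\logord_{f(p')}(\cI)\le a$ for every $p'\in|X'|$. Conversely, assuming $f$ surjective and $(\cI',a)$ of maximal order, for any $p\in|X|$ we pick $p'\in f^{-1}(p)$ and conclude $\logord_p(\cI)=\logord_{p'}(\cI')\le a$.

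For part (3), the map $H\into X$ is a strict closed embedding, so the fs pullback $H':=H\times_XX'$ agrees with the ordinary fiber product and satisfies $\cI_{H'}=\cI_H\cO_{X'}$ as ideals on $X'$. The maximal-contact containment is then inherited via Corollary~\ref{Cor:logsmooth}:
$$\cI_{H'}=\cI_H\cO_{X'}\ \subseteq\ \cD_X^{(\leq a-1)}(\cI)\cO_{X'}\ =\ \cD_{X'}^{(\leq a-1)}(\cI')\ =\ \cT(\cI',a).$$
It remains to verify that $H'$ is a closed toroidal suborbifold of $X'$ of pure codimension one. This is the only step that is not purely formal: the question is whether ordinary parameters pull back to ordinary parameters along $f$. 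Working \'etale-locally on $X$, any hypersurface of maximal contact is cut out by a single ordinary parameter $x\in\cO_{X,p}$; by the local description of logarithmically smooth morphisms in Remark~\ref{formalrem}(ii), the coordinates at $p$ extend to coordinates at any $q\in f^{-1}(p)$ in which $x$ again appears as an ordinary parameter. Hence $H'=V(x)\subset X'$ is logarithmically smooth of pure codimension one, i.e.\ a toroidal suborbifold.

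I expect this last codimension-preservation check to be the only real obstacle, and it is entirely local; everything else is bookkeeping driven by the two functoriality results (Lemma~\ref{ordfunctor} and Corollary~\ref{Cor:logsmooth}) already established for logarithmically smooth morphisms.
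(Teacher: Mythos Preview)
Your proposal is correct and follows the same approach the paper intends: the paper simply asserts that Lemma~\ref{ordfunctor} implies the result and gives no further argument, so you are supplying precisely the details the paper omits. Your use of Corollary~\ref{Cor:logsmooth} and Remark~\ref{formalrem}(ii) for part~(3) is the natural way to unpack the claim---Lemma~\ref{ordfunctor} by itself only controls the logarithmic order and does not literally give you the containment $\cI_{H'}\subseteq\cT(\cI',a)$ or the toroidality of $H'$, so your elaboration is genuinely needed and not merely cosmetic.
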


\section{Admissibility of Kummer centers}
\addtocontents{toc}{\noindent Blowing up the monomial saturation makes an ideal balanced. We study the effect of an admissible modification on a clean ideal.}
We study the effect of blowing up Kummer centers (reviewed in Section \ref{Sec:Kummer-review}) admissible for a marked ideal (Section \ref{Sec:admissible-marked}). The special case of a monomial Kummer center  is discussed in Section \ref{Sec:monomial-saturation}.

\subsection{{Kummer monomials, Kummer ideals and Kummer blowings up}}\label{Sec:Kummer-review}
We briefly recall results and constructions of \cite[Sections 5.3--5.5]{ATW-destackification}, designed specifically to principalize clean ideals. The goal is to find a rigorous way to use roots of monomials in definitions of ideals and blowings up.

\subsubsection{The Kummer topology} The basic idea is very simple: refine the Zariski topology of a toroidal variety $X$ to the coarsest topology where elements of the form $m^{1/d}$ are defined locally. This naturally leads to the fs-Kummer-\'etale topology $X\ket$ on a toroidal orbifold $X$, and for brevity we will call it the {\em Kummer topology} of $X$. Objects of this topology are Kummer logarithmically \'etale morphisms, and coverings are surjective such morphisms, see \cite[\S5.3.5]{ATW-destackification}. Recall that this topology is generated by \'etale covers and Kummer \'etale covers of the form $X[m_1^{1/d}\.m_r^{1/d}]$, see \cite[Lemma~5.3.6]{ATW-destackification}. Working with elements of the form $m^{1/d}$ is very convenient on $X\ket$: each time such an element shows up we actually work on an $X[m^{1/d}]$-scheme $X'$ of $X\ket$. This $X'$ can be thought of as a local chart on which $m^{1/d}$ is defined.

\subsubsection{Kummer ideals} The presheaf $\cO_{X\ket}$ is in fact a sheaf of rings and we call its finitely generated ideals {\em Kummer ideals} of $X$. Kummer-locally these are usual coherent ideals. For example, if $y_1\.y_l$ are global functions on $X$ and $m_1\.m_r$ are global monomials on $X$ then $(y_1\.y_l,m_1^{1/d}\.m_r^{1/d})$ is a Kummer ideal on $X$ which becomes an ordinary globally generated ideal once restricted to the chart $X[m_1^{1/d}\.m_r^{1/d}]$.

\subsubsection{Kummer sheaves of logarithmic differential operators} Associating to objects $X'$ of $X\ket$ the sheaf $\cD^{(\le i)}_{X'}$ one obtains presheaves $\cD^{(\le i)}_{X\ket}$. It follows from Lemmas~\ref{tangentsheaf} and \ref{Lem:toroidal-logetale} that {each} $\cD^{(\le i)}_{X\ket}$ is a locally free sheaf. Moreover, if $X$ is a toroidal variety then $\cD^{(\le i)}_{X\ket}$ is locally free already for the Zariski topology of $X$. For any Kummer ideal $\cI$, applying $\cD^{(\le i)}_{X\ket}$ as a presheaf and sheafifying one obtains a Kummer ideal $\cD^{(\le i)}_{X\ket}(\cI)$.

\subsubsection{Ordinary ideals} Any ideal $\cI\subseteq\cO_X$ defines a Kummer ideal $\cI\ket$ on $X$ by assigning $\cI_{X'}:=\cI\cO_{X'}$ to objects $X'$ of $X\ket$. If a Kummer ideal is of the form $\cI\ket$ we will call it \emph{an ordinary ideal.} Moreover, for brevity we will write $\cI$ and $\cD^{(\le i)}_X$ instead of $\cI\ket$ and $\cD^{(\le i)}_{X\ket}$. This is safe since all operations agree: $(\cI+\cJ)\ket=\cI\ket+\cJ\ket$, $(\cI\cJ)\ket=\cI\ket\cJ\ket$, and $\cD^{(\le i)}_X(\cI)\ket=\cD^{(\le i)}_{X\ket}(\cI\ket)$.

\subsubsection{Monomial Kummer ideals}
A Kummer ideal $\cJ$ on a toroidal orbifold $(X,\cM_X)$ is called {\em monomial} if Kummer-locally it is of the form $(m^{1/d}_1\.m_r^{1/d})$. Such ideals provide a natural extension of monomial ideals. In particular, they correspond to ideals $J\subseteq\frac{1}{d}\cM_X$ for some $d$. It follows that each monomial Kummer ideal is of the form $\cN^{1/d}$, where $\cN$ is an ordinary monomial ideal.

\begin{definition}\label{Def:Kummer}
(1) A {\it Kummer center} on a toroidal orbifold $X$ is a Kummer ideal of the form $\cJ=\cI_H+\cN^{1/d}$, where $\cI_H$ is the ideal of a toroidal suborbifold $H$ and $\cN$ is a monomial ideal.

(2) The \emph{vanishing locus} of $\cJ$ is the locus where it is non-trivial, i.e. $V(\cJ)=H\cap V(\cN)$.
\end{definition}

The monomial Kummer ideal $\cN^{1/d}$ is determined by $\cJ$ as $$\cN^{1/d}=\alpha_{X\ket}(\alpha_{X\ket}^{-1}(\cJ))\cO_{X\ket}.$$ There might be many choices of $H$, for example, $(x,m^{1/2})=(x+m,m^{1/2})$. Note also that Kummer centers are the Kummer ideals that \'etale-locally are of the form $$(x_1,\ldots,x_k,m_1,\ldots,m_r),$$ where $m_i$ are Kummer monomials and $x_1\.x_k$ is a part of a regular family of parameters.


\subsubsection{Kummer blowings up}
Recall that the blowing up $X'\to X$ along Kummer center $\cJ$ is defined in \cite[Section 5.4]{ATW-destackification} as follows: locally on $X'$ there exists a Kummer $G$-covering $Y\to X$ such that $\cI=\cJ_Y$ is an ordinary ideal. {The logarithmic structure on $Y$ is associated to the open set $U_Y$, the preimage of $U_X$. Consider $Y'=Bl_{\cI}$, with its exceptional locus $E_Y$. Writing $U^0_{Y'}$ for the preimage of $U_Y$, we endow $Y'$ with the logarithmic structure associated to $U_{Y'}:=U^0_{Y'} \setminus E_Y$.} Then $Y'=Bl_{\cI}\to Y$ is a $G$-equivariant morphism of toroidal varieties.

The stack quotient $[Y'/G]$ is a toroidal orbifold and $E_Y$ descends to a Cartier divisor on $[Y'/G]$, thereby giving rise to a morphism $\phi\:[Y'/G]\to B\GG_m$. We define $X'=[Bl_\cJ(X)]$ to be the relative coarse space $[Y'/G]_{\cs/B\GG_m}$ of $[Y'/G]$. Note that $E_Y$ descends to a Cartier divisor $E$ on $X'$ because $\phi$ factors through $X'$. Therefore, $\cJ\cO_{X'\ket}$ is an invertible (ordinary) ideal with vanishing locus $E$. Naturally, we call $E$ the {\em exceptional divisor} of the Kummer blowing up $X'\to X$ and denote its ideal $\cI_E=f^{-1}(\cJ)$, where $f\:X'\to X$ is the Kummer blowing up.

It turns out that though $X'\to X$ does not have to be representable, it satisfies the usual universal property, and, in addition, $X'$ acquires a natural structure of a toroidal orbifold. Here is the summary of main properties of Kummer blowings up, see \cite[Theorems~5.4.5, 5.4.14 and Lemma~5.4.18]{ATW-destackification} for details:

\begin{theorem}\label{Kummertheorem}
Let $X=(X,U)$ be a toroidal orbifold with a permissible Kummer center $\cJ$. Consider the Kummer blowing up $f\:X'=[Bl_\cJ(X)]\to X$ and let $E$ be the exceptional divisor. Then

(1) $f$ is a modification inducing an isomorphism $X'\setminus E=X\setminus V(\cJ)$.

(2) $(X',f^{-1}(U)\setminus E)$ is a toroidal orbifold.

(3) $f$ is the universal morphism of toroidal orbifolds $h\:Z\to X$ such that $h^{-1}(\cJ)$ is an invertible monomial ideal.

(4) $f$ is compatible with logarithmically smooth morphisms $h\:Y\to X$. Namely, $[Bl_{h^{-1}\cJ}(Y)]=X'\times_XY$.
\end{theorem}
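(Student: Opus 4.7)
Since the theorem collects results proved in detail in \cite{ATW-destackification}, the plan is to reduce each of (1)--(4) to the local model constructed above, where $Y \to X$ is a Kummer $G$-cover making $\cJ_Y$ an ordinary ideal $\cI$, $Y' = Bl_\cI Y$ is an ordinary blowing up, and $X' = [Y'/G]_{\cs/B\GG_m}$ is the relative coarse space. The first task is therefore to verify that this local construction descends and glues into a well-defined morphism $f\colon X'\to X$ of stacks, independently of the choice of $Y$. The key point is that any two choices $Y_1,Y_2$ fit into a common $G_1\times G_2$-cover, and the blowings up $Bl_{\cI_i}Y_i$ together with their relative coarse spaces agree after passing to this cover because blowing up commutes with flat base change and the relative coarse space construction is itself compatible with flat base change; this is exactly the content of \cite[Theorem~5.4.5]{ATW-destackification}, which the plan is to invoke.

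For (1), the exceptional divisor $E$ descends from $E_Y \subset Y'$ because $E_Y$ is $G$-invariant and its image in $[Y'/G]$ is a Cartier divisor that factors through the relative coarse space by the universal property of $\phi$; over the open locus $X\setminus V(\cJ)$ the center $\cJ$ is trivial, $Y\to X$ is an isomorphism of the appropriate open loci, and $Bl_\cI Y$ agrees with $Y$, so $f$ restricts to an isomorphism there. For (2), the statement is local: on the chart $Y' = Bl_\cI Y$, since $\cI = \cJ_Y$ is generated by ordinary parameters and monomials (by the definition of a Kummer center, pulled back), $Y'$ is a toroidal variety with toroidal logarithmic structure enlarged by $E_Y$; the quotient $[Y'/G]$ is then a toroidal orbifold with finite diagonalizable inertia, and passing to the relative coarse space along the natural map to $B\GG_m$ only contracts the $\GG_m$-factor of the inertia coming from the Cartier divisor $E$, leaving a toroidal orbifold in our sense.

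For (3), the universal property is proved by checking it on the local chart and then descending. Given $h\colon Z \to X$ with $h^{-1}(\cJ)$ invertible monomial, one pulls back to $Y$: the pulled back map $Z_Y \to Y$ has $\cI\cO_{Z_Y}$ invertible, hence by the universal property of the ordinary blowing up factors uniquely through $Y'$; $G$-equivariance of this factorization (from uniqueness) descends to $[Y'/G]$, and the further factorization through the relative coarse space follows because $Z$, being a toroidal orbifold with $\cI_E$ on $Z$ a Cartier divisor, maps compatibly to $B\GG_m$ via that divisor, so the map to $[Y'/G]$ factors through $X'$ by the universal property of the relative coarse space. Finally (4) is the main ``black box'': for $h\colon Y\to X$ logarithmically smooth, one notes that (i) saturated base change of a Kummer $G$-cover is again a Kummer $G$-cover, (ii) base change of an ordinary blowing up along a flat (here logarithmically smooth) morphism is the blowing up of the pulled back ideal, and (iii) the relative coarse space is compatible with flat base change on the base stack. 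Combining these gives $[Bl_{h^{-1}\cJ}(Y)] = X'\times_X Y$, as in \cite[Lemma~5.4.18]{ATW-destackification}.

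The main obstacle is not any single step but the careful bookkeeping of logarithmic structures and stabilizers under the three-step construction (Kummer cover, blowing up, stack-quotient followed by relative coarse space), so that each of the four statements really descends through all three operations simultaneously; this is precisely why the entire argument is externalized to the companion paper, and the plan is to cite those results rather than redo them.
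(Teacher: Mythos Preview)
The paper does not prove this theorem: it is stated as a summary of results imported wholesale from the companion paper, with the pointer ``see \cite[Theorems~5.4.5, 5.4.14 and Lemma~5.4.18]{ATW-destackification} for details'' immediately preceding the statement, and no argument is given. Your proposal correctly identifies this and ultimately does the same thing---cite \cite{ATW-destackification}---so at that level the approaches coincide.

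Where you go beyond the paper is in sketching the intended arguments, and here one of your sketches has a genuine gap. In (4) you assert that ``base change of an ordinary blowing up along a flat (here logarithmically smooth) morphism is the blowing up of the pulled back ideal,'' implicitly treating logarithmically smooth as flat. This is false: a toroidal blowing up is logarithmically \'etale yet far from flat, and indeed the whole point of the extended functoriality principle is to handle such non-flat base changes. The compatibility in (4) therefore cannot be reduced to flat base change of ordinary blowings up; the actual mechanism (as one sees, for instance, in the proof of Proposition~\ref{Prop:monomial-blowup}(4) in this paper for the monomial case) runs through the universal property (3) combined with normality, not through flatness. Your sketch for (3) is also somewhat muddled---once you have a map $Z\to[Y'/G]$ you simply compose with $[Y'/G]\to X'$, no ``factorization through the relative coarse space by its universal property'' is needed for existence---though the overall shape is right. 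If you keep the sketches, (4) needs to be rewritten to avoid the flatness claim; if you simply cite the companion paper as this paper does, the proposal is fine.
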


\subsubsection{Kummer sequences, strict transforms and compatibility}
A \emph{Kummer sequence} is a sequence $\sigma\:X_n\to X_0=X$ of Kummer blowings up.

Assume that $X$ is a toroidal orbifold, $i\:H\into X$ is a closed toroidal suborbifold, and $\sigma_0\:X'\to X$ is a Kummer blowing up at center $\cJ$. The {\em strict transform} of $H$ is the closure $H'$ of $H\setminus V(\cJ)$ in $X'$. For a Kummer sequence $\sigma\:X_n\to X_0=X$ we obtain a sequence $\sigma|_H\:H_n\to H_0=H$ of (iterated) strict transforms, where $H_{i+1}$ is the strict transform of $H_i\into X_i$.

Kummer blowings up were defined in \cite{ATW-destackification} only for Kummer centers, so the following restriction naturally arises. We say that $\cJ$ and $\sigma_0$ are {\em $H$-compatible} if the restriction $\cJ|_H:=\cJ\cO_{H\ket}$ is a Kummer center on $H$. We say that a Kummer sequence $\sigma$ is {\em $H$-compatible} if each $X_{i+1}\to X_i$ is $H_i$-compatible.

It is proved in \cite[Lemma~5.4.16]{ATW-destackification} that if $\sigma_0$ is $H$-compatible then, similarly to classical blowings up, $H'$ is the Kummer blowing up of $H$ at $\cJ|_H$. This immediately implies a similar result for Kummer sequences: if $\sigma$ is $H$-compatible then $\sigma|_H\:H_n\to\dots\to H_0=H$ is a  Kummer sequence whose centers $\cJ_i|_{H_i}$ are restrictions of the centers $\cJ_i$ of $\sigma$.

\subsubsection{Pushforwards} See \cite[3.30.3]{Kollar} \label{Sec:pushforwards}.
Conversely, for any Kummer center $\cI$ on $H$ its preimage under $\cO_{X\ket}\onto \cO_{H\ket}$ is a Kummer center on $X$ that will be denoted $i_*(\cI)$. Indeed, this can be checked \'etale-locally on $X$, hence one can choose coordinates $(x_1\.x_n,M)$ so that $H=(x_1\.x_l)$ and $\cI=(x_{l+1}\.x_{l+r},m_1^{1/d}\.m_s^{1/d})$, and then one has that $i_*(\cI)=(x_1\.x_{l+r},m_1^{1/d}\.m_s^{1/d})$.

If $\tau\:H'\to H$ is the Kummer blowing up at $\cI$ then its {\em pushforward} $i_*(\tau)\:X'\to X$ to $X$ is defined to be the Kummer blowing up along $i_*(\cI)$. Since $H'$ is the strict transform of $H$, we have that $H'\into X'$ and hence Kummer blowings up of $H'$ can be pushed forward to $X'$, etc. By induction on length, for any Kummer sequence $\tau\:H_n\to\dots \to H_0=H$ we obtain an $H$-compatible pushforward sequence $i_*(\tau)\:X_n\to\dots\to X_0=X$, such that $i_*(\tau)|_H=\tau$ and the centers of $i_*(\tau)$ are pushforwards of the centers of $\tau$.

\subsubsection{$H$-admissibility}
We say that a Kummer sequence $\sigma\:X'\to X$ is {\em $H$-admissible} if all centers are supported on the strict transforms of $H$. This happens if and only if $\sigma=i_*(\sigma|_H)$ and is more restrictive than $H$-compatibility.

\subsection{Logarithmic derivations on a Kummer blowing up}
The differential $d\sigma: \cD_{X'}^i \to \sigma^* (\cD_X^i)$ is a homomorphism of locally free sheaves isomorphic over an open set, so we suppress its notation and view $\cD_{X'}^i$ as a subsheaf of $\sigma^* (\cD_X^i)$. We need to consider  the annihilator of the quotient.

\begin{lemma}\label{33} Let $X$ be a toroidal orbifold and let $\sigma: X'\to X$ be the Kummer blowing up at center $\cJ$. Then
sections of  $\cI_E\cdot \sigma^*(\cD_X^1)$ form logarithmic derivations on $X'$, that is
$\cI_E\cdot\sigma^*(\cD_X^1)\subseteq \cD_{X'}^1$. In general $\cI^i_E\cdot\sigma^*(\cD^{(\leq i)}_X)\subseteq \cD^{(\leq i)}_{X'}$.
\end{lemma}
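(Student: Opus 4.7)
The plan is to reduce the claim to a direct coordinate computation on the charts of an ordinary blowing up. Because the claim is local on $X'$, I will exploit the construction of the Kummer blowing up: Kummer-locally on $X$ there is a Kummer logarithmically \'etale cover $\tau\:Y\to X$ on which $\cJ$ pulls back to an ordinary ideal $\cJ_Y=(x_1\.x_k,u_1\.u_r)$ generated by ordinary parameters $x_i$ and monomial parameters $u_j$. Then $\sigma$ is obtained (after descent by a finite diagonalizable group) from the ordinary blowing up $\pi\:Y'\to Y$ of $\cJ_Y$, and both horizontal morphisms $Y\to X$ and $Y'\to X'$ are logarithmically \'etale. By Lemma~\ref{Lem:toroidal-logetale} the formation of $\cD^1$ and $\cD^{(\le i)}$ commutes with each of these morphisms, so it will suffice to prove the inclusions for $\pi$ in place of $\sigma$.

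For the case $i=1$, I will work in a standard affine chart of $\pi$ in which one generator $g\in\{x_i,u_j\}$ of $\cJ_Y$ defines the exceptional ideal $\cI_E=(g)$ and every other generator $g'$ is replaced by $g'/g$. Using the basis of $\cD^1_Y$ supplied by Lemma~\ref{logderiv}, I will read off the pullback of each basis element in the new coordinates. The key fact to check is that, regardless of which type of generator $g$ is, each new coordinate $g'/g$ either remains an ordinary parameter or (when $g$ and $g'$ are both monomials on $Y$) becomes a monomial on $Y'$ with respect to the log structure enriched by $E$; a direct calculation then shows that each basis element acquires at worst a pole of order one along $E$, and that multiplication by any local section $s\in\cI_E$ produces a regular logarithmic derivation on $Y'$.

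For general $i$ the argument will proceed by induction on $i$. The sheaf $\sigma^*(\cD_X^{(\le i)})$ is generated as an $\cO_{X'}$-module by compositions $\sigma^*(\partial_1)\cdots\sigma^*(\partial_k)$, $k\le i$, with $\partial_j\in\cD^1_X$. Setting $R:=\sigma^*(\partial_2\cdots\partial_k)$, and assuming inductively that $s^{k-1}R\in\cD_{X'}^{(\le k-1)}$ for any local section $s\in\cI_E$, the identity
\[
s^k\,\sigma^*(\partial_1)\,R \;=\; (s\,\sigma^*(\partial_1))\circ(s^{k-1}R) \;-\; (k-1)\,\sigma^*(\partial_1)(s)\cdot(s^{k-1}R)
\]
expresses the left hand side as the sum of a composition of $\cD^1_{X'}$ with $\cD_{X'}^{(\le k-1)}$ and an $\cO_{X'}$-multiple of $\cD_{X'}^{(\le k-1)}$, both of which lie in $\cD_{X'}^{(\le k)}$. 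Here one uses Step 1 for the first summand, and for the second summand one uses \S\ref{logderlem}(2): since $s\sigma^*(\partial_1)$ is a logarithmic derivation on $X'$ and $s$ generates the exceptional monomial ideal $\cI_E$, the element $(s\sigma^*(\partial_1))(s)$ lies in $\cI_E$, so the quotient $\sigma^*(\partial_1)(s)$ is a well-defined section of $\cO_{X'}$.

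The principal obstacle will be Step 1. The case $i=1$ is the heart of the computation and requires case analysis for the various types of generators of $\cJ_Y$, together with a careful bookkeeping identifying which new coordinates on $Y'$ are ordinary and which are monomial in the enriched log structure. Once Step 1 is in place, the induction in Step 2 is a formal consequence of the Leibniz rule combined with \S\ref{logderlem}(2).
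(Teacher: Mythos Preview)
Your proposal is correct. The reduction to the case where $\cJ$ is an ordinary ideal via a Kummer logarithmically \'etale cover is exactly what the paper does. For the key case $i=1$, however, the paper takes a different route: rather than a direct chart computation, it enriches the logarithmic structure on $X$ by the divisors $V(x_1),\dots,V(x_r)$ to obtain a new logarithmic scheme $X_0$, observes that the resulting blowup $X'_0\to X_0$ is then \emph{toroidal} (hence $\cD^1_{X'_0}=\sigma^*\cD^1_{X_0}$ on the nose), and reduces to checking that $\cI_E$ annihilates the torsion quotient $\sigma^*(\cD^1_X)/\cD^1_{X'}$ at the generic points $\eta_j$ of $E$, where this quotient coincides with $\sigma^*(\cD^1_X/\cD^1_{X_0})$ and is visibly killed by each $\sigma^*(x_i)$. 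The paper explicitly acknowledges your approach as valid (``a direct inspection based on the chain rule'') before presenting this alternative ``in the spirit of \cite{Schwartz}''. Your chart-by-chart calculation is more concrete but requires the case analysis you describe; the paper's argument trades that bookkeeping for the auxiliary enriched structure and a codimension-one localization. For general $i$, the paper simply says that in characteristic zero it suffices to treat $i=1$, whereas you spell out the Leibniz-type induction and the use of \S\ref{logderlem}(2) to see that $\sigma^*(\partial_1)(s)\in\cO_{X'}$; your treatment here is more explicit than the paper's.
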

\begin{proof}
We claim that the assertion is local with respect to the Kummer topology. Indeed, if $h\:Y\to X$ is a Kummer logarithmically \'etale covering then $\tau\:Y'=Y\times_XX'\to Y$ is the Kummer blowing up with center $\cJ_Y:=\cJ\cO_Y$, and $h'\:Y'\to X'$ is logarithmically \'etale. In particular, $h^*(\cD^{(\leq i)}_X)=\cD^{(\leq i)}_Y$ and $h'^*(\cD^{(\leq i)}_{X'})=\cD^{(\leq i)}_{Y'}$ by Corollary~\ref{Cor:logsmooth}. Hence it suffices to check the inclusion after pulling back to $Y'$, where it becomes $(\cI^i_E\cO_{Y'})\cdot\tau^*(\cD^{(\leq i)}_Y)\subseteq \cD^{(\leq i)}_{Y'}$. In addition, $\cI^i_E\cO_{Y'}=\cJ^i\cO_{Y'}=\cJ_Y^i\cO_{Y'}=\cI^i_{D}$, where $D$ is the exceptional divisor of $\tau$. Thus, it suffices to prove the lemma for $\tau$. In particular, taking $h$ fine enough we can assume that $\cJ$ is an ordinary ideal.

As in the classical case, the lemma can be reduced to a direct inspection based on the chain rule. We provide an alternative argument in the spirit of \cite{Schwartz}. Since we are working in characteristic 0 it suffices to consider the case $i=1$. Fixing $p\in X$ we may replace $X$ by its localization $X_p$.

Choose ordinary parameters $x_1,\ldots,x_n$ and a monoidal chart $u\:M=\oM_p\into\cO_{X}$ at $p\in X$ so that $\cJ = (x_1,\ldots,x_r,m_1,\ldots,m_s)$. If $r=0$ then $\sigma$ is toroidal, hence $\sigma^*(\cD_X^1)= \cD_{X'}^1$ and the result holds. Consider the case $r>0$. Write $\eta_j \in E$ for the generic points of the exceptional divisor $E$ of $\sigma$.

We denote $D_i = V(x_i)$ and $D_0=\sum_{i=1}^r D_i = V(x_1 \cdots x_r).$ Let $X_0$ be the  logarithmic scheme with underlying scheme $X$ defined by the smaller open subset $U_X \setminus D_0$, and let $X'_0$ be the resulting logarithmic structure on $X'$. By \cite[Lemma~5.4.20]{ATW-destackification}, the morphism $X'_0 \to X_0$ is toroidal, in particular logarithmically \'etale. Thus $\cD^1_{X'_0} = \sigma^*\cD^1_{X_0}$.

We  note that $\eta_j$ does not lie in the proper transform of any $D_i$, so $(\cD^1_{X'_0})_{\eta_j} = (\cD^1_{X'})_{\eta_j}$. Also it lies inside the $x_i$-chart, where  $\cI_E = (\sigma^*x_i)$. Hence for every $i,j$ we have $(\cI_E)_{\eta_j} = (\sigma^*x_i)_{\eta_j}$.

We have injective homomorphisms of locally free sheaves
$$\cD^1_{X'_0} \hookrightarrow \cD^1_{X'} \hookrightarrow\sigma^* \cD^1_X,$$
and our task is to show that $\cI_E$ annihilates $(\sigma^* \cD^1_X)/\cD^1_{X'}$. Since this is the torsion quotient of locally free sheaves of the same rank, its annihilation can be tested at the generic points   $\eta_j\in E$ of its support $E$, where $(\sigma^*\cD^1_{X_0})_{\eta_j} = (\cD^1_{X'_0})_{\eta_j} = (\cD^1_{X'})_{\eta_j}$.

At $p$ the quotient $\cD^1_X/\cD^1_{X_0}$ is generated by the elements $\frac{\partial}{\partial x_i} + \cD^1_{X_0}$, each annihilated by the corresponding $x_i$ since $x_i \frac{\partial}{\partial x_i} \in \cD^1_{X_0}$. It follows that $\left((\sigma^* \cD^1_X)/\cD^1_{X'}\right)_{\eta_j}=\left((\sigma^* \cD^1_X)/\sigma^* (\cD^1_{X_0})\right)_{\eta_j}$ is generated by the pullback elements $\nabla_i$, each  annihilated by $(\sigma^* x_i)_{\eta_j} = (\cI_E)_{\eta_j}$, as needed.
\end{proof}
\begin{corollary}\label{4}
Keeping assumptions of Lemma~\ref{33} and with $\cI$  an ideal sheaf,
$$\cI^i_E\cdot(\cD^{(\leq i)}_X(\cI)\cO_{X'})\subseteq \cD^{(\leq i)}_{X'}(\cI\cO_{X'}).$$
\end{corollary}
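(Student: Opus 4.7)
The plan is to reduce Corollary~\ref{4} directly to Lemma~\ref{33}, unpacking the definition of $\cD^{(\leq i)}_X(\cI)\cO_{X'}$.

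First I would observe that the statement is local on $X'$, so we may work with generators. Locally on $X$, the ideal $\cD^{(\leq i)}_X(\cI)$ is generated by expressions of the form $D(f)$ with $D$ a section of $\cD^{(\leq i)}_X$ and $f$ a section of $\cI$. Hence, $\cD^{(\leq i)}_X(\cI)\cO_{X'}$ is generated by the pullbacks $\sigma^*(D(f))$. Using the inclusion $\cD_{X'}^{(\leq i)}\hookrightarrow \sigma^*\cD_X^{(\leq i)}$ discussed at the start of the proof of Lemma~\ref{33} (which identifies $\sigma^*D$ with a ``meromorphic'' differential operator on $X'$ via the birational map $\sigma$), we have the compatibility
\[
\sigma^*(D(f)) = (\sigma^*D)(\sigma^*f).
\]

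Next, multiply by $\cI_E^i$. Given a section $a$ of $\cI_E^i$, Lemma~\ref{33} gives $a\cdot\sigma^*D\in \cD^{(\leq i)}_{X'}$. Since differential operators are $\cO_{X'}$-linear on the left, $a\cdot(\sigma^*D)(\sigma^*f) = (a\cdot\sigma^*D)(\sigma^*f)$, and the latter lies in $\cD^{(\leq i)}_{X'}(\cI\cO_{X'})$ because $\sigma^*f\in \cI\cO_{X'}$. Thus
\[
a\cdot \sigma^*(D(f)) \;=\; (a\cdot\sigma^*D)(\sigma^*f) \;\in\; \cD^{(\leq i)}_{X'}(\cI\cO_{X'}).
\]
Since $\cI_E^i\cdot\cD^{(\leq i)}_X(\cI)\cO_{X'}$ is generated, as an ideal, by such products $a\cdot\sigma^*(D(f))$, the desired inclusion follows.

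The only non-routine point is the interpretation $\sigma^*(D(f)) = (\sigma^*D)(\sigma^*f)$ on $X'$, but this is exactly the content of the embedding $\cD^1_{X'}\hookrightarrow \sigma^*\cD^1_X$ (and its iterations) that was set up at the beginning of Lemma~\ref{33}'s proof. No further geometric input is required, as the heavy lifting—verifying that $\cI_E^i$ tames the poles of $\sigma^*\cD^{(\leq i)}_X$ along the exceptional divisor—has already been done in Lemma~\ref{33}.
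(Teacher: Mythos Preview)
Your proposal is correct and follows essentially the same approach as the paper's own proof: the paper writes the equality $\cI^i_E\cdot\cD^{(\leq i)}_X(\cI)\cO_{X'}=\cI^i_E\cdot\sigma^*(\cD^{(\leq i)}_X)(\cI\cO_{X'})$ as ``obvious'' (this is your identity $\sigma^*(D(f))=(\sigma^*D)(\sigma^*f)$ applied to generators) and then invokes Lemma~\ref{33} for the inclusion into $\cD^{(\leq i)}_{X'}(\cI\cO_{X'})$. You have simply unpacked these two steps explicitly.
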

\begin{proof}
We have that $$\cI^i_E\cdot\cD^{(\leq i)}_X(\cI)\cO_{X'}=\cI^i_E\cdot\sigma^*(\cD^{(\leq i)}_X)(\cI\cO_{X'})\subseteq \cD^{(\leq i)}_{X'}(\cI\cO_{X'})$$ with the equality being obvious and the inclusion due to Lemma~\ref{33}.
\end{proof}

Lemma \ref{33} suggests the following notion of \emph{controlled transforms} of  sheaves of derivations:

\begin{notation}\label{Not:controlled}
For any $\cO_X$-submodule $\cF \subseteq  \cD^1_{X}$ we set
$$\sigma^c(\cF):=\cI_E(\sigma^*(\cF)) \subseteq \cD^1_{X'}.$$
Furthermore, for any $i>0$ we denote by $\cF^{(\le i)}$ the $\cO_X$-submodule of $\cD_X^{(\le i)}$ generated by the images of $\cF^{\otimes j}$ with $0\le j\le i$.
The most important particular cases are when $\cF=\cD_X^1$ or $\cF$ is generated by a single derivation.
\end{notation}

\subsection{Admissible modifications for marked ideals}\label{Sec:admissible-marked}

\subsubsection{Integral closure of ideals}
Recall that the {\em integral closure $I^\nor$} of an ideal $I$ in a ring $A$ consists of all elements $x\in A$ satisfying a weighted integral equation
\begin{equation}\label{inteq}
x^n+a_1x^{n-1}+\dots+a_n=0,\ \ \ a_i\in I^i.
\end{equation}
For example, $(I^n+J^n)^\nor$ contains the ideals $I^iJ^{n-i}$ for $0\le i\le n$. We use the notation $I^\nor$, indicating ``normalization", rather than $I^{\operatorname{int}}$, to avoid confusion with ``integralization" of logarithmic geometry.

For a Kummer ideal $\cJ$ on a toroidal orbifold $X$ we define the integral closure $\cJ^\nor$ by taking integral closure of ideals $\cJ(U)$ in rings $\cO_X(U)$ and sheafifying. (In fact, the sheafification is not needed but we do not explore this.) The ideal $\cJ^\nor$ on $X\ket$ need not be finitely generated, but this will not be an issue for us.

\begin{lemma}\label{Jnorlem}
Assume that $X$ is a toroidal orbifold, $\cJ$ is a Kummer center on $X$, and $a\ge 0$.

(1) Fix a presentation $\cJ=\cI+\cN$, where $\cN$ is a monomial Kummer ideal and $\cI$ is the ideal of a toroidal suborbifold. Then $$(\cJ^a)^\nor=\sum_{j=0}^a(\cN^j)^\sat\cdot\cI^{a-j}.$$

(2) For any $i$ with $0\le i\le a$
$$\cD_X^{(\le i)}((\cJ^a)^\nor)\subseteq(\cJ^{a-i})^\nor.$$
\end{lemma}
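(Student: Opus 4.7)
For part (1), the inclusion $\supseteq$ is direct. Given $f \in (\cN^j)^\sat$, by definition of monoidal saturation there exists $l \geq 1$ with $f^l \in \cN^{jl}$, so for $g \in \cI^{a-j}$ we have $(fg)^l = f^l g^l \in \cN^{jl} \cI^{l(a-j)} \subseteq \cJ^{la}$. The equation $T^l - (fg)^l = 0$ is an integral equation of $fg$ over $\cJ^a$, hence $fg \in (\cJ^a)^\nor$; summing over $j$ gives the desired inclusion.

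For the reverse inclusion in (1), the plan is to work formally at each point of $X$ and exploit a torus action. After passing to a sufficiently fine Kummer-\'etale neighborhood that trivializes the denominators appearing in $\cN$ (so $\cN$ becomes an ordinary monomial ideal), choose logarithmic parameters $(x_1, \dots, x_n; u\colon \oM_p \hookrightarrow \cO_{X,p})$ as in Remark~\ref{formalrem} so that $\cI = (x_1, \dots, x_k)$ and $\cN$ is supported in the $u$-factor. The torus $\GG_m^k$ acting by $x_i \mapsto t_i x_i$ (trivially on monomials) preserves $\cI$, $\cN$, and hence $\cJ^a$ and $(\cJ^a)^\nor$. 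This induces a $\ZZ^k$-grading on the formal completion, and every $\GG_m^k$-invariant ideal decomposes into weight spaces. A direct expansion of $\cJ^a = \sum_{j=0}^a \cI^{a-j}\cN^j$ shows the weight-$\alpha$ piece of $\cJ^a$ equals $x^\alpha \cdot \cN^{\max(0,\,a-|\alpha|)}$. For a weight-homogeneous $f = f_\alpha x^\alpha \in (\cJ^a)^\nor$, extracting the weight-$m\alpha$ part of its integral equation yields $f_\alpha^m + \sum_k b_k f_\alpha^{m-k} = 0$ with $b_k \in \cN^{k(a-|\alpha|)}$. Since the integral closure of a monomial ideal coincides with its monoidal saturation, $f_\alpha \in (\cN^{a-|\alpha|})^\sat$, so $f_\alpha x^\alpha \in (\cN^{a-|\alpha|})^\sat \cdot \cI^{\max(0,\,a-|\alpha|)}$ (with $\cN^0 = \cO_X$ absorbing the case $|\alpha|\geq a$). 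Summing weight components and descending to $X$ recovers the claim.

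For part (2), the plan is to use (1) together with the Leibniz rule. Write any element of $(\cJ^a)^\nor$ as a sum of products $f\cdot g$ with $f \in (\cN^j)^\sat$ and $g \in \cI^{a-j}$, and apply $\partial \in \cD_X^1$: $\partial(fg) = \partial(f)\,g + f\,\partial(g)$. Since $(\cN^j)^\sat$ is itself a monomial Kummer ideal, generated by the integer points of the Newton polytope of $\cN^j$, Lemma~\ref{logderiv}(4) gives $\partial(f) \in (\cN^j)^\sat$; and since $\cI$ is the ideal of a toroidal suborbifold generated locally by ordinary parameters, Leibniz yields $\partial(g) \in \cI^{a-j-1}$ when $a-j \geq 1$. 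Hence $\partial(fg) \in (\cN^j)^\sat \cdot \cI^{a-j-1}$, which lies in $(\cJ^{a-1})^\nor$ by (1). The boundary case $j=a$ is handled by $(\cN^a)^\sat \subseteq (\cN^{a-1})^\sat \subseteq (\cJ^{a-1})^\nor$. Thus $\cD_X^1((\cJ^a)^\nor) \subseteq (\cJ^{a-1})^\nor$, and iterating through the inductive generation of $\cD_X^{(\leq i)}$ produces $\cD_X^{(\leq i)}((\cJ^a)^\nor) \subseteq (\cJ^{a-i})^\nor$.

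The main obstacle is the weight-decomposition argument in part (1)$\subseteq$: one must verify that $(\cJ^a)^\nor$, defined by integral equations, is stable under the $\GG_m^k$-action at the formal-completion level; that weight extraction of an integral equation again produces an integral equation; and that the local conclusion descends back to $X$ through the Kummer-\'etale topology. Once this is in place, the remaining steps are formal combinatorics (integral closure of monomial ideals equals saturation) and standard bookkeeping.
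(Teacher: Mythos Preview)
Your argument is correct, and for part~(2) it coincides with the paper's: both reduce to (1) together with the facts that logarithmic derivations preserve monomial ideals and send $\cI^b$ to $\cI^{b-1}$.

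For part~(1) your route differs from the paper's, though the underlying idea is the same. You keep the original logarithmic structure and impose by hand the $\GG_m^k$-action on the ordinary generators $x_1,\dots,x_k$ of $\cI$, then decompose $(\cJ^a)^\nor$ into weight pieces and extract an integral equation for each homogeneous component. The paper instead \emph{enlarges the toroidal structure} by the divisor $V(x_1\cdots x_k)$, so that the characteristic monoid becomes $\oM_p\oplus\NN^k$; with respect to this structure $\cI$, $\cN$, and hence $\cJ$ are all monomial. One then invokes the known identity $(\cJ^a)^\nor=(\cJ^a)^\sat$ for monomial ideals (cited from \cite[Corollary~5.3.6]{AT1}) and computes the saturation of $\cJ^a$ in $\oM_p\oplus\NN^k$ directly, which visibly gives $\sum_j(\cN^j)^\sat\cI^{a-j}$. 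This is exactly your torus action repackaged: your $\GG_m^k$ is the torus of the $\NN^k$ summand, and the fact that integral closure of a graded ideal is graded is subsumed in the statement ``integral closure of a monomial ideal is its saturation''. The payoff of the paper's formulation is that the three points you flag as obstacles --- torus-stability of $(\cJ^a)^\nor$, weight-extraction of integral equations, and Kummer-\'etale descent --- are all absorbed into the single cited fact $\nor=\sat$ for monomial ideals, so no separate verification is needed. Your approach, by contrast, is more self-contained and makes the mechanism explicit.
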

\begin{proof}
The claims are Kummer-local, hence we can assume that $\cN$ is a usual monomial ideal and $\cI=(x_1\.x_n)$ for a family of ordinary parameters $x_1\.x_l$ at $p\in X$. Notice that (2) follows from (1) and the observation that $\cD_X^{(\le i)}$ preserves the monomial ideals $(\cN^j)^\sat$ and takes $\cI^b$ to $\cI^{b-i}$. To prove (1) we increase the toroidal structure by $V(x_1\dots x_n)$, obtaining a new characteristic monoid $\oM^{new}_p=\oM_p\oplus\NN^n$ for which $\cI$ and $\cJ$ are both monomial. A simple computation
 with monomials shows that $\cJ^\sat$ is the sum on the right hand side of the asserted equality, and it remains to recall that $\cJ^\sat=\cJ^\nor$ by \cite[Corollary~5.3.6]{AT1}.
\end{proof}

\subsubsection{Admissibility}
\begin{definition}\label{Def:admissible}
Let $(\cI,a)$ be a marked ideal on a toroidal orbifold $X$ and $\sigma\:X'\to X$ a Kummer blowing up with a center $\cJ$. If $\cI\subseteq(\cJ^a)^\nor$ we say that $\cJ$ is {\it admissible} for the marked ideal $(\cI,a)$ and $\sigma$ is an {\it $(\cI,a)$-admissible Kummer blowing up}.
\end{definition}

Observe that this notion of admissibility is equivalent to Hironaka's original definition \cite[I.2, Definition 6]{Hironaka} in the case of smooth varieties with the trivial logarithmic structure: the admissibility means that $V(\cJ)$ is regular and the order of $\cI$ along $V(\cJ)$ is at least $a$. Note that in the classical case, $\cJ^a$ is integrally closed. However in general, even when $\cJ$ is a saturated monomial ideal, $\cJ^a$ does not have to be saturated. The key property of admissible blowings up is part (3) of the following lemma.

\begin{lemma}\label{3}
Assume $(\cI,a)$ is a marked ideal on a toroidal orbifold $X$, $\sigma: X'\to X$ is the $(\cI,a)$-admissible Kummer blowing up at $\cJ$, and $\cI_E=\sigma^{-1}(\cJ)$. Then
\begin{enumerate}
\item $V(\cJ)\subseteq \supp(\cI,a)$.
\item $\sigma^{-1}((\cJ^a)^\nor)=\cI^a_E$.
\item $\cI\cO_{X'}\subseteq \cI^a_E$.
\end{enumerate}
\end{lemma}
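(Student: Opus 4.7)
The plan is to treat (2) as the key statement, derive (3) from (2) and admissibility, and prove (1) by a stratum analysis after passing to a Kummer cover.

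For (2), I would first observe that by the construction of the Kummer blowing up (Theorem \ref{Kummertheorem}), $\cI_E = \sigma^{-1}(\cJ)$ is an invertible ordinary ideal on $X'$, so $\sigma^{-1}(\cJ^a) = \cI_E^a$ is also invertible. Since $X'$ is a toroidal orbifold it is normal, and any invertible ideal on a normal DM stack is integrally closed; in particular $(\cI_E^a)^\nor = \cI_E^a$. The inclusion $\cI_E^a \subseteq \sigma^{-1}((\cJ^a)^\nor)$ is immediate from $\cJ^a \subseteq (\cJ^a)^\nor$. For the reverse inclusion, each local section $y$ of $(\cJ^a)^\nor$ satisfies an integral equation \eqref{inteq} with coefficients in powers of $\cJ^a$; pulling back via $\sigma$ yields an integral equation for $\sigma^*y$ with coefficients in powers of $\sigma^{-1}(\cJ^a) = \cI_E^a$, so $\sigma^*y \in (\cI_E^a)^\nor = \cI_E^a$. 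Part (3) is then an immediate consequence of admissibility combined with (2): $\cI\cO_{X'} = \sigma^{-1}(\cI) \subseteq \sigma^{-1}((\cJ^a)^\nor) = \cI_E^a$.

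For (1), I would reduce to an ordinary-ideal calculation by choosing a Kummer logarithmically \'etale cover $\pi\:Y\to X$ on which $\cJ\cO_Y = \cI_{H_Y}+\cN_Y$ becomes an ordinary center, with $\cN_Y$ an ordinary monomial ideal. Since $\pi$ is logarithmically smooth, Lemma \ref{ordfunctor} reduces the assertion to showing $\logord_q(\cI\cO_Y)\ge a$ for every $q\in Y$ lying over a point of $V(\cJ)=H\cap V(\cN)$. Applying Lemma \ref{Jnorlem}(1) and admissibility, $\cI\cO_Y \subseteq (\cJ_Y^a)^\nor = \sum_{j=0}^a (\cN_Y^j)^\sat\cdot\cI_{H_Y}^{a-j}$. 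Restrict to the logarithmic stratum $s_q$ through $q$: since the characteristic monoid is constant along $s_q$, any monomial of $\cN_Y$ vanishing at $q$ vanishes on all of $s_q$, so $(\cN_Y^j)^\sat|_{s_q}=0$ for $j\ge 1$. Only the $j=0$ term $\cI_{H_Y}^a|_{s_q}$ survives, and since $H_Y$ is toroidal, $\cI_{H_Y}|_{s_q}$ is the ideal of the smooth intersection $H_Y\cap s_q$ through $q$, of order $1$; its $a$-th power has order $a$. Hence $\logord_q(\cI\cO_Y)\ge a$, completing (1).

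The main obstacle is the Kummer nature of (1): the monomial part of $\cJ$ is only a Kummer monomial ideal on $X$, so the integrality decomposition in Lemma \ref{Jnorlem}(1) and the subsequent restriction to $s_q$ are cleanest after passing to an ordinary cover, with the result then descending via functoriality of $\logord$ under logarithmically smooth morphisms. The remaining technicality, in (2), is verifying that integrality lifts along $\sigma$, which is routine once normality of $X'$ is used to conclude that the invertible ideal $\cI_E^a$ is integrally closed.
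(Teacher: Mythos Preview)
Your arguments for (2) and (3) are correct and coincide with the paper's: the paper simply writes the chain $\cI^a_E=\sigma^{-1}(\cJ^a)\subseteq\sigma^{-1}((\cJ^a)^\nor)\subseteq(\cI^a_E)^\nor=\cI^a_E$, which is exactly what you spelled out, and then deduces (3) from (2) and admissibility just as you do.

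For (1) your stratum computation is also correct, but the paper takes a shorter route through the differential machinery already in place. Instead of passing to a Kummer cover and restricting to the stratum, the paper applies Lemma~\ref{Jnorlem}(2) directly: from $\cI\subseteq(\cJ^a)^\nor$ one obtains $\cD_X^{(\le a-1)}(\cI)\subseteq\cD_X^{(\le a-1)}((\cJ^a)^\nor)\subseteq\cJ^\nor$, and then Lemma~\ref{Lem:supports}(2) gives $V(\cJ)=V(\cJ^\nor)\subseteq V(\cD_X^{(\le a-1)}(\cI))=\supp(\cI,a)$. This avoids the cover and the explicit order computation on the stratum. Your approach has the virtue of being self-contained and making the order inequality visible at the level of generators, but it duplicates work already packaged into Lemma~\ref{Jnorlem}(2) (whose proof is itself a stratum-and-monomial calculation) and the differential characterization of $\supp$. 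Note also that Lemma~\ref{Jnorlem} is stated for Kummer centers on $X$ directly, so even in your approach the passage to the cover is not strictly necessary; you could apply part (1) on $X$ and argue that Kummer monomials of $\cN^{1/d}$ vanish on the stratum because their $d$-th powers do and the stratum is reduced.
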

\begin{proof}
(1) By Lemma \ref{Jnorlem}
$$\cD_X^{(\leq a-1)}(\cI)\subseteq \cD_X^{(\leq a-1)}((\cJ^a)^\nor)\subseteq \cJ^\nor.$$ Therefore on the level of supports we get: $$V(\cJ)=V(\cJ^\nor)\subseteq V(\cD_X^{(\le a-1)}(\cI))=\supp(\cI,a).$$

(2) We compare $\cI^a_E=\sigma^{-1}(\cJ^a)\subseteq\sigma^{-1}((\cJ^a)^\nor)\subseteq(\cI^a_E)^\nor=\cI^a_E.$

(3) By (2) we have $\cI\cO_{X'}\subseteq \sigma^{-1}((\cJ^a)^\nor)=\cI^a_E.$
\end{proof}

\subsubsection{The controlled transform of a marked ideal}\label{Sec:controlled-transform}
Lemma \ref{3}(3) enables us to make the following definition.

\begin{definition}\label{Def:controlled-transform}
By the {\it controlled transform} of the  marked ideal $(\cI,a)$ under the admissible Kummer blowing up $\sigma$ we mean the  ideal
$$\sigma^c(\cI,a):=\cI_E^{-a}(\cI\cO_{X'}).$$ The \emph{marked controlled transform} is $(\sigma^c(\cI),a) :=(\sigma^c(\cI,a),a)$. The logarithmic structure on $X'$ is enhanced by the exceptional  divisor  $E$, see \cite[\S4.1.1]{ATW-destackification}.

We extend this definition to an admissible Kummer sequence
\begin{equation} \label{Eq:admissible-sequence}\xymatrix{X'=: X_n \ar[r]^{\sigma_n} &X_{n-1} \ar[r]^{\sigma_{n-1}} & \dots \ar[r]^{\sigma_2} &X_{1} \ar[r]^(.4){\sigma_{1}} & X_0:=  X.}
\end{equation}
with centers $\cJ_i$ on $X_i$ admissible for the controlled transform $\cI_i = \sigma_i^c(\cI_{i-1},a),$ see Definition \ref{Def:admissible-sequence}. The logarithmic structure on $X_i$ is enhanced by the exceptional  divisor  $E_i$ of $\sigma_i$.

Writing $\sigma: X' \to X$ for the composite map, we denote
$\sigma^c(\cI,a) := \cI_n$. {
This can be unwound as follows. Write $\cI_E = \prod \cI_{E_i} \cO_{X'}$, and then in general $$\sigma^c(\cI,a) = \cI_E^{-a} (\cI \cO_{X'}).$$}
\end{definition}

\begin{example}
Assume that $H\into X$ is a closed toroidal suborbifold and $\cI_H$ is its ideal. Then a sequence is $H$-admissible if and only if it is $(\cI_H,1)$-admissible. This notion will be essential later in the case when $H$ is a maximal contact hypersurface.
\end{example}

\subsubsection{Derivatives and transformed ideals}
Note that marked ideals $(\cI,a)$ with  weight $a$ are transformed by the rule $\sigma^c(\cI,a)=\cI_E^{-a}(\cI\cO_{X'})$. From this perspective the derivatives of order $i$ transform as ``marked objects" of weight $-i$, see Notation \ref{Not:controlled}. This is not so surprising since the dual objects, the cotangent sheaves, are transformed as marked ideals with weight 1.

\begin{lemma}\label{44}
Keep assumptions of Lemma~\ref{3}, and let $i$ be an integer with $0\le i\le a$. Then $$\sigma^c\left(\cD^{(\leq i)}_{X}(\cI),a-i\right)\ \subseteq\ \cD^{(\leq i)}_{X'}(\sigma^c(\cI,a)).$$
\end{lemma}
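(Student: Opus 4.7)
The plan is to reduce the claim to Corollary~\ref{4} together with Lemma~\ref{commute}, using that $\cI_E$ is an invertible monomial ideal.

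First I would unwind the two sides via Definition~\ref{Def:controlled-transform}. The left-hand side is
\[
\sigma^c\bigl(\cD^{(\leq i)}_{X}(\cI),a-i\bigr)
 \;=\; \cI_E^{-(a-i)}\cdot\bigl(\cD^{(\leq i)}_{X}(\cI)\cO_{X'}\bigr),
\]
which makes sense because $\cI_E$ is invertible; here I use $0\le i\le a$, so only nonnegative powers of $\cI_E^{-1}$ appear after multiplication by $\cI_E^a$. Similarly, admissibility gives $\cI\cO_{X'}\subseteq\cI_E^a$ by Lemma~\ref{3}(3), so $\sigma^c(\cI,a)=\cI_E^{-a}(\cI\cO_{X'})$ is a genuine ideal.

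Multiplying the desired inclusion through by $\cI_E^a$, it becomes
\[
\cI_E^{i}\cdot\bigl(\cD^{(\leq i)}_{X}(\cI)\cO_{X'}\bigr)
 \;\subseteq\;
\cI_E^{a}\cdot\cD^{(\leq i)}_{X'}\!\bigl(\sigma^c(\cI,a)\bigr).
\]
Since $\cI_E^a$ is an invertible monomial ideal, Lemma~\ref{commute} applied on $X'$ yields
\[
\cI_E^{a}\cdot\cD^{(\leq i)}_{X'}\!\bigl(\sigma^c(\cI,a)\bigr)
 \;=\; \cD^{(\leq i)}_{X'}\!\bigl(\cI_E^{a}\cdot\sigma^c(\cI,a)\bigr)
 \;=\; \cD^{(\leq i)}_{X'}\!\bigl(\cI\cO_{X'}\bigr).
\]
Thus the required inclusion is equivalent to
\[
\cI_E^{i}\cdot\bigl(\cD^{(\leq i)}_{X}(\cI)\cO_{X'}\bigr)
 \;\subseteq\; \cD^{(\leq i)}_{X'}\!\bigl(\cI\cO_{X'}\bigr),
\]
which is exactly Corollary~\ref{4}.

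There is no real obstacle beyond bookkeeping: the only subtleties are (i) checking that the ``denominator'' $\cI_E^{a-i}$ may be cleared without loss thanks to invertibility of $\cI_E$, and (ii) applying Lemma~\ref{commute} to the \emph{monomial} factor $\cI_E^a$ (the ideal $\sigma^c(\cI,a)$ need not be monomial, so one must put the monomial factor on the outside before pulling it through the differential operators). Both are immediate from the setup, so the proof is essentially the chain of equalities above.
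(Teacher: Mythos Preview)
Your proof is correct and follows essentially the same approach as the paper's: both arguments multiply through by $\cI_E^a$, invoke Corollary~\ref{4} for the inclusion $\cI_E^{i}\bigl(\cD^{(\leq i)}_{X}(\cI)\cO_{X'}\bigr)\subseteq\cD^{(\leq i)}_{X'}(\cI\cO_{X'})$, and then use Lemma~\ref{commute} with the monomial ideal $\cI_E^a$ to pull the factor back out. The paper presents this as a single chain of displayed equalities, while you unwind the steps a bit more explicitly, but the content is identical.
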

\begin{proof}
Recall that $\cI_E$ is monomial. Using {Corollary \ref{4}} and Lemma \ref{commute} we have
\begin{align*}\cI_{E}^a\sigma^c\left(\cD_X^{(\leq i)}(\cI),a-i\right) &= \cI_E^{i}\left(\cD_X^{(\leq i)}(\cI)\cO_{X'}\right)\\
&\subseteq \cD_{X'}^{(\leq i)}(\cI\cO_{X'})= \cD_{X'}^{(\leq i)}(\cI_E^{a}\sigma^c(\cI,a))\\ &= \cI_E^{a} \cD_{X'}^{(\leq i)}(\sigma^c(\cI,a))
\end{align*}
So  $\sigma^c(\cD_X^{(\leq i)}(\cI),a-i)\subseteq \cD_{X'}^{(\leq i)}(\sigma^c(\cI,a))$, as needed
\end{proof}

In the opposite direction, one can at least embed $\sigma^c(\cD_X)^{(\leq i)}(\sigma^c(\cI,a))$ into a \emph{sum} of transforms of derivations of $\cI$.

\begin{lemma}\label{Flem}
Keep assumptions of Lemma~\ref{3}, and let $\cF\subseteq\cD^1_X$ be an $\cO_X$-submodule and $i$ an integer with $0\le i\le a-1$. Then $$\sigma^c(\cF)^{(\le i)}(\sigma^c(\cI,a))\ \ \subseteq\ \ \sum_{j=0}^i\cI_E^{j-a}\left(\cF^{(\le j)}(\cI)\cO_{X'}\right) =: \sum_{j=0}^i \sigma^c\left(\cF^{(\le j)}(\cI),a-j\right).$$
\end{lemma}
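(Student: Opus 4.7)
The proof will go by induction on $i$. The base $i=0$ is immediate: $\sigma^c(\cF)^{(\le 0)} = \cO_{X'}$ and $\cF^{(\le 0)}(\cI) = \cI$, so both sides equal $\sigma^c(\cI,a) = \cI_E^{-a}(\cI\cO_{X'})$. For the inductive step, it suffices to show that, for each $0\le j\le i-1$, a single operator $\nabla = e\,\sigma^*(\partial)$ with $e\in\cI_E$ and $\partial\in\cF$ sends $\cI_E^{j-a}(\cF^{(\le j)}(\cI)\cO_{X'})$ into $\cI_E^{j-a}(\cF^{(\le j)}(\cI)\cO_{X'}) + \cI_E^{(j+1)-a}(\cF^{(\le j+1)}(\cI)\cO_{X'})$, since compositions of such $\nabla$'s generate $\sigma^c(\cF)^{(\le i)}$.

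Working \'etale-locally on $X'$, I would pick a principal generator $\ell$ of $\cI_E$ and note that $\ell$ is a \emph{monomial} on $X'$ because $E$ belongs to the enhanced logarithmic boundary (Definition~\ref{Def:controlled-transform}, and Theorem~\ref{Kummertheorem}(2)). A typical element at level $j$ then has the form $\ell^{j-a}\cdot h$ with $h=\sum_r \alpha_r\,\sigma^*(h_r)$, where $\alpha_r\in\cO_{X'}$ and $h_r\in\cF^{(\le j)}(\cI)$.

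Applying Leibniz to $\nabla(\ell^{j-a}\,\alpha_r\,\sigma^*(h_r))$ produces three contributions, each controlled by one input. First, since $\nabla\in\cD^1_{X'}$ is a logarithmic derivation and $\ell$ is a monomial, Lemma~\ref{logderiv}(4) gives $\nabla(\ell)\in(\ell)$, hence $\nabla(\ell^{j-a}) = (j-a)\ell^{j-a-1}\nabla(\ell)\in\cI_E^{j-a}$; this contributes a summand in the $j$-th piece. Second, $\nabla(\alpha_r)\in\cO_{X'}$, so the coefficient-derivative term $\ell^{j-a}\nabla(\alpha_r)\sigma^*(h_r)$ stays in $\cI_E^{j-a}(\cF^{(\le j)}(\cI)\cO_{X'})$, again a $j$-th piece. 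Third, $\nabla$ acts on the pullback $\sigma^*(h_r)$ by $e\cdot\sigma^*(\partial h_r)$ (this is exactly the content of the embedding $\cI_E\sigma^*\cD^1_X\hookrightarrow\cD^1_{X'}$ of Lemma~\ref{33}); since $\partial\in\cF$ and $h_r\in\cF^{(\le j)}(\cI)$ imply $\partial h_r\in\cF^{(\le j+1)}(\cI)$, multiplying by $\ell^{j-a}\alpha_r$ lands this contribution in $\cI_E^{(j+1)-a}(\cF^{(\le j+1)}(\cI)\cO_{X'})$, the $(j+1)$-th piece. Summing closes the induction.

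The main technical point I expect to pause at is the bookkeeping of the fractional ideals $\cI_E^{j-a}$ for $j<a$: these are not honest ideals in $\cO_{X'}$ but invertible submodules of the sheaf of rational functions, and the inclusion itself lives inside $\cO_{X'}$ only after multiplying through by $\cI_E^a$ (which is legitimate by Lemma~\ref{3}(3)). Working with a principal local generator $\ell$ of $\cI_E$ sidesteps this cleanly, and the log-derivation identity $\nabla(\ell)\in(\ell)$ is the essential input that keeps the Leibniz calculation from losing a power of $\cI_E$ and thereby forcing us outside the asserted sum.
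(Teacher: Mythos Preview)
Your proposal is correct and follows essentially the same route as the paper's own proof: both argue by induction on $i$, reduce \'etale-locally to a principal generator of $\cI_E$, and use a single Leibniz expansion to show that one application of an operator from $\sigma^c(\cF)$ moves a term at level $j$ into the sum of levels $j$ and $j+1$. The only organizational difference is that the paper takes $i=1$ as the base case and writes the Leibniz computation as $y\partial(h/y^a)=\partial(h)/y^{a-1}-a\,\partial(y)\cdot h/y^a$ (using that the local generator $y$ of $\cI_E$ is pulled back from a generator of $\cJ$, so $\partial(y)\in\cO_{X'}$), whereas you start from the trivial base $i=0$ and handle the derivative of the exceptional generator via the logarithmic fact $\nabla(\ell)\in(\ell)$ from Lemma~\ref{logderiv}(4); these are two equivalent ways to control the same term.
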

Notice that the right hand side sums usual ideals without markings (not to be confused with the weighted sum in Section \ref{Sec:addmult} below). Note also that despite the fact that $\cF^{(\le j)}(\cI)\subseteq\cF^{(\le i)}(\cI)$, we have to take into account all terms because $\cF^{(\le j)}(\cI)$ is transformed with larger weight.
\begin{proof}
Set $\cF_\sigma=\sigma^c(\cF)$ for shortness. We can work \'etale-locally on $X$, so assume that $X=\Spec(A)$ and $\cF$ is generated by global derivations. Furthermore, it suffices to check the inclusion \'etale-locally on $X'$, so we can assume that $\cJ\cO_{X'}=(y)$ is principal and then $\cF_\sigma(\sigma^c(\cI,a))$ is generated by elements of the form $y\partial(h/y^a)$ with $h\in\cI$ and $\partial\in\Gamma(\cF)$. Since
$$y\partial(h/y^a)\ \ =\ \ \partial(h)/y^{a-1}-a\partial(y)\cdot h/y^a\ \ \in\ \  \sigma^c\left(\cF(\cI),a-1\right)\ +\ \sigma^c(\cI,a),$$
we obtain the claim for $i=1$. The general case follows by induction on $i$ since
\begin{align*}
\cF_\sigma^{(\le i+1)}(\sigma^c(\cI,a))\ \  & = \ \    \cF_\sigma^{(\le 1)}\left(\cF_\sigma^{(\le i)}(\sigma^c(\cI,a))\right)
\\ &  \subseteq\ \
\cF_\sigma^{(\le 1)} \left(\sum_{j=0}^i \sigma^c\left(\cF^{(\le j)}(\cI),a-j\right)\right)
\\ &  \subseteq \ \ \sum_{j=0}^i \sigma^c\left(\cF^{(\le j)}(\cI),a-j\right)\ \ + \ \   \sum_{j=0}^i\sigma^c\left(\cF^{(\le j+1)},a-j-1\right)
\\ & =\ \ \sum_{j=0}^{i+1} \sigma^c\left(\cF^{(\le j)}(\cI),a-j\right).
\end{align*}
\end{proof}

\subsubsection{The order does not increase!}
\begin{proposition} \label{5}
Keep assumptions of Lemma~\ref{3} and assume that $(\cI,a)$ is of maximal order. Then
\begin{enumerate}
\item $\cD_{X'}^{(\leq a)}(\sigma^c(\cI,a))=(1)$.

\item $(\sigma^c(\cI),a)$ is a  marked ideal of maximal order.

\item For any $p'\in|X'|$, and $p=\sigma(p')\in|X|$, we have $\logord_{p'}(\sigma^c(\cI,a))\leq \logord_p(\cI)$.
\end{enumerate}
\end{proposition}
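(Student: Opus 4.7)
The plan is to obtain all three statements from Lemma~\ref{44} (the comparison of derivative ideals under an admissible Kummer blowing up) together with the differential characterization of logarithmic order in Lemma~\ref{11new}. Statements (1) and (2) are essentially equivalent formulations of the same fact, and (3) will follow from (2) by a case split at the point $p' \in |X'|$.

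For (1), I would rewrite the maximal-order hypothesis on $(\cI,a)$ via Lemma~\ref{11new} as $\cD_X^{(\leq a)}(\cI) = (1)$, and then invoke Lemma~\ref{44} at the endpoint $i = a$. The right-hand side becomes $\cD_{X'}^{(\leq a)}(\sigma^c(\cI,a))$, while the left-hand side is the controlled transform $\sigma^c((1),0) = \cI_E^{0}\cdot(1)\cO_{X'} = (1)$. The only point to justify when applying Lemma~\ref{44} at $i=a$ is that $\sigma$ is admissible for the marked ideal $((1),0)$, which is automatic since $(1) = (\cJ^0)^\nor$. This immediately yields $\cD_{X'}^{(\leq a)}(\sigma^c(\cI,a)) = (1)$.

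For (2), I would translate (1) back through Lemma~\ref{11new} applied on $X'$: the equality $\cD_{X'}^{(\leq a)}(\sigma^c(\cI,a)) = (1)$ is exactly the pointwise condition $\logord_{p'}(\sigma^c(\cI,a)) \leq a$ for every $p' \in |X'|$, which is the definition of maximal order for $(\sigma^c(\cI),a)$.

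For (3), I would split on whether $p = \sigma(p')$ lies in $V(\cJ)$. If $p \notin V(\cJ)$, then by Theorem~\ref{Kummertheorem}(1) the map $\sigma$ is an isomorphism of toroidal orbifolds on a neighborhood of $p'$ (the enhanced boundary $E$ is disjoint from $p'$, so the logarithmic structure at $p'$ coincides with that at $p$), and $\cI_E = (1)$ there, so $\sigma^c(\cI,a) = \cI\cO_{X'}$ locally and $\logord_{p'}(\sigma^c(\cI,a)) = \logord_p(\cI)$. If $p \in V(\cJ)$, then Lemma~\ref{3}(1) gives $p \in \supp(\cI,a)$, so $\logord_p(\cI) \geq a$, and the maximal order hypothesis forces $\logord_p(\cI) = a$; combining this with the uniform bound from (2) gives $\logord_{p'}(\sigma^c(\cI,a)) \leq a = \logord_p(\cI)$. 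The main effort, absorbing the geometric content via Lemma~\ref{33}, goes into (1); once it is in hand, (2) and (3) follow formally.
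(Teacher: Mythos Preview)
Your proof is correct and follows essentially the same approach as the paper's: both derive (1) by applying Lemma~\ref{44} at $i=a$ to the unit ideal $\cD_X^{(\leq a)}(\cI)=(1)$, deduce (2) from (1) via the differential characterization of maximal order, and handle (3) by reducing to the case $p\in V(\cJ)\subseteq\supp(\cI,a)$ (Lemma~\ref{3}(1)) and invoking the bound from (2). Your write-up is just slightly more explicit about the trivial case $p\notin V(\cJ)$ and about why $\sigma^c((1),0)$ makes sense, but the argument is the same.
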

\begin{proof} {By assumption
$(\cD_X^{(\leq a)}(\cI))\cO_{X'}=(1).$ By Lemma \ref{44} we have $$(1) =  (\cD_X^{(\leq a)}(\cI))\cO_{X'} = \sigma^c(\cD_X^{(\leq a)}(\cI),0) \subseteq \cD^{(\leq a)}(\sigma^c(\cI,a)).$$} This proves (1), which implies (2). For (3) we may assume that $p\in C\subseteq \supp(\cI,a)$.
By (1) and Lemma~\ref{11new}, $$\logord_{p'}(\sigma^c(\cI,a))\leq a=
\logord_p(\cI).$$
\end{proof}


Part (2) of Proposition \ref{5}  allows us to define:

\begin{definition}\label{Def:order-reduction}
By an {\it order reduction} of a marked ideal $(\cI,a)$ of maximal order we mean an  $(\cI,a)$-admissible Kummer sequence \eqref{Eq:admissible-sequence}  such that $$\supp(\sigma^c(\cI),a)=\emptyset .$$
\end{definition}

Note that $\supp(\sigma^c(\cI),a)$ is empty if and only if there are no non-trivial $(\sigma^c(\cI),a)$-admissible Kummer blowings up. Thus an order reduction of $(\cI,a)$ is precisely a {\em maximal} $(\cI,a)$-admissible Kummer sequence.

If one uses the same centers but eliminates a smaller power of the exceptional divisor then the obtained ideal will still be balanced:

\begin{corollary}\label{admissibility reduced}
Let $X$ be a toroidal orbifold and let $(\cI,a)$ be a balanced ideal, not necessarily of maximal order. Let $b$ be the order of the clean factor $\cI^{cln}$ of $\cI$, and let $(\cI^{cln},b)$ be the associated ideal of maximal order. Assume $b\geq a$, and let $\sigma: X'\to X$ be the $(\cI^{cln},b)$-admissible Kummer blowing up at center $\cJ$. Then
\begin{enumerate}
\item $\cJ$ is admissible for $(\cI,a)$
\item $\sigma^c(\cI,a)$ is balanced.
\end{enumerate}
\end{corollary}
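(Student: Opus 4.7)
My plan is to reduce everything to the already-established behavior of admissible Kummer blowings up on clean ideals of maximal order, using the multiplicative factorization $\cI=\cM(\cI)\cdot\cI^{cln}$ and the strong commutation property of $\cD_X^{(\leq i)}$ with monomial factors (Lemma \ref{commute}, Theorem \ref{Th:monomial-part}).

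For part (1), the idea is entirely monotonicity-based. Since $\cI$ is balanced, $\cI=\cM(\cI)\cdot\cI^{cln}$ with $\cM(\cI)$ an invertible monomial ideal in $\cO_X$, and by admissibility of $\cJ$ for $(\cI^{cln},b)$ we have $\cI^{cln}\subseteq(\cJ^b)^\nor$. The containment $\cJ^b\subseteq\cJ^a$ (from $b\geq a$) gives $(\cJ^b)^\nor\subseteq(\cJ^a)^\nor$, so
\[
\cI=\cM(\cI)\cdot\cI^{cln}\subseteq(\cJ^b)^\nor\subseteq(\cJ^a)^\nor,
\]
which is exactly $(\cI,a)$-admissibility of $\cJ$.

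For part (2), I would unwind the two controlled transforms. Writing $\cI^{cln}\cO_{X'}=\cI_E^{b}\cdot\sigma^c(\cI^{cln},b)$ and using that pullback is a ring homomorphism yields
\[
\sigma^c(\cI,a)\;=\;\cI_E^{-a}\cdot\cI\cO_{X'}\;=\;\bigl(\cM(\cI)\cO_{X'}\cdot\cI_E^{\,b-a}\bigr)\cdot\sigma^c(\cI^{cln},b).
\]
Call the parenthesized factor $\cM'$: it is a product of invertible monomial ideals on $X'$ (using $b\geq a$ so that $\cI_E^{\,b-a}$ is a genuine ideal, not a fractional one), hence invertible monomial. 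The key input is that $\sigma^c(\cI^{cln},b)$ is \emph{clean}: Proposition \ref{5}(1) gives $\cD_{X'}^{(\leq b)}(\sigma^c(\cI^{cln},b))=(1)$, so by Lemma \ref{11new} the maximal logarithmic order of this ideal is $\leq b<\infty$, and Corollary \ref{11cor} delivers cleanness. Finally, combining Lemma \ref{commute} with Theorem \ref{Th:monomial-part}(2) gives $\cM(\cM'\cdot\cJ_1)=\cM'\cdot\cM(\cJ_1)$ for any invertible monomial $\cM'$ and any ideal $\cJ_1$, so
\[
\cM\bigl(\sigma^c(\cI,a)\bigr)\;=\;\cM'\cdot\cM\bigl(\sigma^c(\cI^{cln},b)\bigr)\;=\;\cM'\cdot(1)\;=\;\cM',
\]
which is invertible; hence $\sigma^c(\cI,a)$ is balanced, with clean part $\sigma^c(\cI^{cln},b)$.

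The only subtle point I anticipate is ensuring that $\cI_E^{\,b-a}$ really is a genuine ideal rather than a fractional symbol, which is why the hypothesis $b\geq a$ is essential in the factorization step; without it the putative decomposition would involve inverting $\cI_E$ and the monomial-saturation identity used at the end would not apply in $\cO_{X'}$. Everything else is bookkeeping built on Proposition \ref{5} and the commutation of $\cD_X^{(\leq i)}$ with monomial multipliers.
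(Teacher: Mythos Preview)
Your proof is correct and follows essentially the same approach as the paper. The paper's argument for (1) is the terse chain $(\cJ^a)^\nor\supseteq(\cJ^b)^\nor\supseteq \cI^{cln}\supseteq \cI$, and for (2) it performs the identical factorization $\sigma^c(\cI,a)=\bigl(\cI_E^{b-a}\,\cM(\cI)\cO_{X'}\bigr)\cdot\sigma^c(\cI^{cln},b)$ and concludes balancedness by citing Proposition~\ref{5}(2) directly for the cleanness of $\sigma^c(\cI^{cln},b)$; your version simply unpacks this last step via Lemma~\ref{commute} and Theorem~\ref{Th:monomial-part}(2), which is a welcome bit of explicitness but not a different idea.
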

\begin{proof}
(1) $(\cJ^a)^\nor\supseteq(\cJ^b)^\nor\supseteq \cI^{cln}\supseteq \cI$.

(2) We have that
\begin{align*} \sigma^c(\cI,a) &=\sigma^c(\cI^{cln}\cdot\cM(\cI),a)=\cI_E^{-a}(\cI^{cln}\cO_{X'})\cdot (\cM(\cI)\cO_{X'})\\&=\left(\cI_E^{b-a} (\cM(\cI)\cO_{X'})\right)\  \sigma^c(\cI^{cln},b).\end{align*}
The ideal $\sigma^c(\cI^{cln},b)$ is clean by Proposition~\ref{5}(2), and $\cM(\cI)\cO_{X'}$ is invertible and monomial by the assumption, hence the ideal $\sigma^c(\cI,a)$ is balanced.
\end{proof}

\begin{corollary}\label{derivative}
Let $(\cI,a)$ be a marked ideal of maximal order on a toroidal orbifold $X$, let $\sigma: X'\to X$ be the $(\cI,a)$-admissible Kummer blowing up at center $\cJ$, and let $0<i<a$. Then
\begin{enumerate}
\item $\cJ$ is admissible for {$(\cD_X^{(\leq i)}(\cI),a-i)$ and}
\item {$\sigma^c(\cD_X^{(\leq i)}(\cI),a-i)$ is a marked ideal of maximal order.}
\end{enumerate}
\end{corollary}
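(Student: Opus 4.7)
Both assertions follow quickly by combining lemmas already established, but it is worth spelling out the logical dependencies.

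\textbf{Part (1).} The plan is to reduce admissibility for $(\cD_X^{(\le i)}(\cI),a-i)$ to the given admissibility of $\cJ$ for $(\cI,a)$ via the behaviour of $\cD_X^{(\le i)}$ on integral closures of powers of $\cJ$. Since $\cJ$ is $(\cI,a)$-admissible, we have $\cI\subseteq(\cJ^a)^\nor$. Applying $\cD_X^{(\le i)}$ and using Lemma~\ref{Jnorlem}(2) with weight $a$, we obtain
\[
\cD_X^{(\le i)}(\cI)\ \subseteq\ \cD_X^{(\le i)}((\cJ^a)^\nor)\ \subseteq\ (\cJ^{a-i})^\nor,
\]
which is exactly the admissibility of $\cJ$ for the marked ideal $(\cD_X^{(\le i)}(\cI),a-i)$.

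\textbf{Part (2).} The strategy is to verify the two hypotheses of Proposition~\ref{5}(2) for the marked ideal $(\cD_X^{(\le i)}(\cI),a-i)$: maximal order and admissibility of $\sigma$. Maximal order is precisely Lemma~\ref{Lem:supports}(3), which states that if $(\cI,a)$ is of maximal order and $0\le i<a$, then $(\cD_X^{(\le i)}(\cI),a-i)$ is of maximal order. Admissibility of $\sigma$ for this marked ideal is part (1). Hence Proposition~\ref{5}(2) applies and yields that $\sigma^c(\cD_X^{(\le i)}(\cI),a-i)$ is a marked ideal of maximal order.

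\textbf{Expected obstacles.} There are essentially none: the corollary is a packaging of Lemma~\ref{Jnorlem}(2), Lemma~\ref{Lem:supports}(3), and Proposition~\ref{5}(2). The only point worth double-checking is the strict inequality $0<i<a$, which ensures both that $a-i\ge 1$ (so the marked ideal is meaningful) and that the derivative does not trivialise $(\cI,a)$'s maximal-order property, so that Lemma~\ref{Lem:supports}(3) may be applied.
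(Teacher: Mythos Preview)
Your proof is correct and follows essentially the same route as the paper: Lemma~\ref{Jnorlem}(2) for part~(1), and Lemma~\ref{Lem:supports}(3) combined with Proposition~\ref{5}(2) for part~(2). The paper's argument is identical in structure and in the lemmas invoked.
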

\begin{proof}
(1) 
Since $\cI\subseteq(\cJ^a)^\nor$, we obtain by Lemma~\ref{Jnorlem}(2) that $$\cD_X^{(\leq i)}(\cI)\subseteq \cD_X^{(\leq i)}((\cJ^a)^\nor)\subseteq(\cJ^{a-i})^\nor.$$

(2) By Lemma \ref{Lem:supports}(3), $(\cD_X^{(\leq i)}(\cI),a-i)$  is of maximal order, so $\sigma^c(\cD_X^{(\leq i)}(\cI),a-i)$ is of maximal order by Proposition~\ref{5}(2).
\end{proof}

We have seen that
$\sigma^c(\cD_X^{(\leq i)}(\cI),a-i) \subseteq \cD_{X'}^{(\leq i)}(\sigma^c(\cI,a)),$ so property (2) above is perhaps expected.

\subsection{Monomial Kummer blowings up and associated blowings up}\label{Sec:monomial-saturation}
%
%
%

\begin{definition}
Let $(\cI,a)$ be a marked ideal on a toroidal orbifold $X$. The {\it Kummer monomial blowing up associated to $(\cI,a)$} is the Kummer blowing up $\sigma_a:X'_a \to X$ with center $\cM(\cI)^{1/a}$.
\end{definition}
Note that when $a=1$ the modification $X'_1\to X$ is simply the \emph{normalized} blowing up of $\cM(\cI)$. In particular $\sigma_1:X'_1 \to X$ is logarithmically \'etale by Proposition \ref{Prop:monomial-blowup}. There is also an evident morphism $X'_a \to X'_1$ which is easily seen to be also logarithmically \'etale - it is a normalized root stack of a toroidal divisor. Therefore for any $a$ we have that $\sigma_a:X'_a \to X$ is logarithmically \'etale.

\begin{proposition}[{Compare \cite[Proposition 20]{Kollar-toroidal}}]\label{Absolute}\label{mixed}
Let $\sigma_a: X'_a\to X$ be the Kummer monomial blowing up associated to  $(\cI,a)$ on a toroidal orbifold $X$. Then

(1) The Kummer blowing up $\sigma_a$  is $(\cI,a)$-admissible, and the ideal $\sigma_a^c(\cI,a)$ is clean.

(2) If $Y\to X$ is logarithmically smooth and $Y'_a\to Y$ is the Kummer blowing up associated to $(\cI\cO_Y,a)$, then $Y'_a=Y\times_XX'_a$, the logarithmic pullback taken in the fs category.
\end{proposition}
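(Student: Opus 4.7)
The plan is to dispatch (1) in two substeps—admissibility, then cleanliness—and to obtain (2) formally from two prior functoriality results. The only real work is the cleanliness assertion, which I would attack by a Kummer-local inspection of generators.

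For admissibility in (1), write $\cJ = \cM(\cI)^{1/a}$ and compute $(\cJ^a)^\nor$. Kummer-locally, if $\cM(\cI) = (u^{\alpha_1}, \ldots, u^{\alpha_r})$, then $\cJ^a$ is generated by products $\prod_i (u^{\alpha_i/a})^{b_i}$ with $\sum_i b_i = a$, each of whose $a$-th power lies in $\cM(\cI)^a$. Each such generator therefore satisfies an integral equation over $\cM(\cI)$, giving $\cJ^a \subseteq \cM(\cI)^\nor$. Combined with the obvious inclusion $\cM(\cI) \subseteq \cJ^a$, one obtains $(\cJ^a)^\nor = \cM(\cI)^\nor \supseteq \cM(\cI) \supseteq \cI$, so $\cJ$ is $(\cI,a)$-admissible.

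For cleanliness I would exploit the explicit local description of $\cM(\cI)$ from the proof of Theorem \ref{Th:monomial-part}(1). After \'etale localization and completion, write generators of $\cI$ as $f_j = \sum_\alpha c_{j,\alpha}(x)\, u^\alpha$, so that $\cM(\cI) = (u^\alpha : c_{j,\alpha}(x) \neq 0)$ and the chosen generators $u^{\alpha_1}, \ldots, u^{\alpha_r}$ may be picked among these $u^\alpha$'s. On the chart of $X'_a$ dominating $u^{\alpha_1/a}$, one has $\cI_E = (u^{\alpha_1/a})$, hence $\cI_E^a = (u^{\alpha_1})$; moreover each ratio $u^{(\alpha_i - \alpha_1)/a}$ is regular, so $u^{\alpha_i} = (u^{(\alpha_i - \alpha_1)/a})^a \cdot u^{\alpha_1}$ and, more generally, $u^\alpha$ (whenever $u^\alpha \in \cM(\cI)$) is divisible by $u^{\alpha_1}$ in $\cO_{X'_a}$. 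Factoring, $f_j = u^{\alpha_1} g_j$ with $g_j = \sum_\alpha c_{j,\alpha}(x)\, u^{\alpha - \alpha_1}$, so $\sigma_a^c(\cI,a) = (g_j)$ on this chart. Since $u^{\alpha_1}$ is among the listed generators, some $c_{j_0, \alpha_1}(x)$ is nonzero, and hence $g_{j_0}$ contains the pure ordinary summand $c_{j_0, \alpha_1}(x)$, which cannot lie in any proper monomial ideal. Thus $\cM(\sigma_a^c(\cI,a)) = (1)$ on this chart, and cleanliness is global by covering.

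Part (2) is then formal. By Theorem \ref{Th:monomial-part}(3) one has $\cM(\cI\cO_Y) = \cM(\cI)\cO_Y$; the Kummer $a$-th root commutes with pullback tautologically, so the center $\cM(\cI\cO_Y)^{1/a}$ of the $Y$-blowing up is the pullback of the center $\cM(\cI)^{1/a}$ of $\sigma_a$. Compatibility of Kummer blowings up with logarithmically smooth base change (Theorem \ref{Kummertheorem}(4)) then yields $Y'_a = Y \times_X X'_a$ in the fs category. The hard part will be the cleanliness step, where one must carefully track the interplay between ordinary parameters $x$ and Kummer monomials on the chart after blowing up, and exploit the presence of $u^{\alpha_1}$ as a \emph{listed} generator (with nontrivial coefficient $c_{j_0,\alpha_1}(x)$) to ensure that a purely ordinary residue genuinely survives the division by $u^{\alpha_1}$.
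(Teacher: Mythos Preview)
Your admissibility argument and part (2) match the paper's (the paper's admissibility is terser: simply $\cI \subseteq \cM(\cI) \subseteq \cJ_a^a$, without computing $(\cJ_a^a)^\nor$).

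For cleanliness you take a genuinely different route, and there is a gap. You write ``$g_{j_0}$ contains the pure ordinary summand $c_{j_0,\alpha_1}(x)$, which cannot lie in any proper monomial ideal,'' and conclude $g_{j_0}$ itself avoids every proper monomial ideal. That inference is not valid as stated: a sum can lie in a monomial ideal even if one summand does not (e.g.\ $x + (u-x) = u$). What you need is that in the completed local ring at a point $p'$ of the chart the decomposition $\widehat{\cO}_{X'_a,p'} = \bigoplus_\beta k[[x']]\,v^\beta$ over the \emph{new} characteristic monoid is direct, and that your expansion $g_{j_0} = \sum_\alpha c_{j_0,\alpha}(x)\,u^{\alpha-\alpha_1}$ respects it. At points $p'$ where some $u^{\alpha-\alpha_1}$ become units (e.g.\ generic points of the exceptional divisor, where the new monoid has smaller rank than $\overline{M}_p$), several of your terms land in the same graded piece, and you must argue no cancellation occurs---for instance via linear independence of characters on the log stratum. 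This is fixable, but it is real additional work that your write-up suppresses.

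The paper sidesteps all of this with a one-line conceptual argument you are missing: the observation (recorded just before the proposition) that $\sigma_a\colon X'_a\to X$ is \emph{logarithmically \'etale} (it factors as the normalized blowing up of $\cM(\cI)$ followed by a root stack). Then Theorem~\ref{Th:monomial-part}(3) gives $\cM(\cI\cO_{X'_a}) = \cM(\cI)\cO_{X'_a} = \cI_E^a$ immediately, whence $\cM(\sigma_a^c(\cI,a)) = \cI_E^{-a}\cM(\cI\cO_{X'_a}) = (1)$ by Lemma~\ref{commute}. Your explicit chart computation, even once patched, re-proves a special case of the functoriality of $\cM$ that is already available.
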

\begin{proof} Write $\cJ_a := \cM(\cI)^{1/a}$ and $\cI_{E_a} = \cJ_a\cO_{X'_a}$.

(1)  The admissibility follows since $\cI\subseteq\cM(\cI)\subseteq\cJ = \cJ_a^a$.
But $\sigma_a$ is logarithmically \'etale, so $\cM(\cI\cO_{X'_a})=\cM(\cI) \cO_{X'_a}= \cI_E^a$ by Theorem \ref{Th:monomial-part}(3). Thus $\cM(\sigma_a^c(\cI,a)) = (1)$ as needed.


(2) Again by Theorem \ref{Th:monomial-part}(3) we have $\cM(\cI\cO_Y) = \cM(\cI)\cO_Y$. The result now follows from Theorem  \ref{Kummertheorem}(4).
\end{proof}

%
%
%
%

\section{Relations and operations with marked ideals}
\addtocontents{toc}{\noindent We adapt to our setup the notions of equivalence of marked ideals, sums and products of marked ideals, and homogenization of marked ideals.}

\subsection{Equivalence and domination of marked ideals}
In analogy with \cite{Wlodarczyk}, let us introduce the following domination and equivalence relations for marked ideals:

\begin{definition}
Let $(\cI,a)$ and $(\cJ,b)$ be marked ideals on a toroidal orbifold $X$

(1) We say that $(\cI,a)$ is {\em dominated} by $(\cJ,b)$ and write $$(\cI,a)\preccurlyeq (\cJ,b)$$ if any  $(\cJ,b)$-admissible Kummer sequence is also an $(\cI,a)$-admissible Kummer sequence.

(2) If both {$(\cI,a)\preccurlyeq (\cJ,b)$ and $(\cJ,b)\preccurlyeq (\cI,a)$}  then we say that the marked ideals are {\em equivalent} and write $(\cI,a)\approx(\cJ,b)$
\end{definition}

\begin{remark}
We only consider relations with respect to what is often called test sequences (of blowings up). {Hironaka was} first to introduce this sort of relations, though he considered a wider class of sequences, see \cite[Definition~1.3]{infinitely_near}. Bierstone and Milman also consider an analogous equivalence relation, though they extend the class of test morphisms even further, see \cite[\S1.2]{Bierstone-Milman-funct}.
\end{remark}

\begin{remark}
(1) The simplest examples of domination are as follows: (a) if $a\ge b$ then $(\cI,a)\preccurlyeq(\cI,b)$, (b) if $\cI\subseteq \cJ$ then $({\cI},a)\preccurlyeq ({\cJ},a)$.

(2) If $({\cI},a)\preccurlyeq ({\cJ},b)$ then automatically $\supp(\cI,a)\supseteq\supp(\cJ,b)$. More generally, for any $(\cJ,b)$-admissible Kummer sequence, we have that $\supp(\cI_i,a)\supseteq \supp(\cJ_i,b)$ for the sequence $(\cI_i, a), (\cJ_i, b)$ of controlled transforms (see Definition~\ref{Def:controlled-transform}).
\end{remark}

Here is a slightly less obvious example.

\begin{lemma}\label{Lem:scaling}
If $(\cI,a)$ is a marked ideal on a toroidal orbifold $X$ and $k>0$ is an integer then

(1) $({\cI},a)\approx ({\cI}^k,ka)$.

{(2) $\sigma^c({\cI}^k,ka)=\sigma^c({\cI},a)^k$ for any $(\cI,a)$-admissible Kummer sequence $\sigma\:X'\to X$.}
\end{lemma}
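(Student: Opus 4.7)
My plan is to prove both parts simultaneously by induction on the length of the admissible sequence, with the key input being a single-step computation comparing admissibility and controlled transforms for $(\cI,a)$ versus $(\cI^k,ka)$.

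The heart of the matter is the following per-step equivalence: for any Kummer center $\cJ$ on a toroidal orbifold, $\cI\subseteq (\cJ^a)^\nor$ if and only if $\cI^k\subseteq(\cJ^{ka})^\nor$. The forward direction is immediate, since $\cI^k\subseteq((\cJ^a)^\nor)^k\subseteq(\cJ^{ka})^\nor$ (the last inclusion is a standard property of integral closure: a product of integral elements is integral over the product of the corresponding ideals). For the reverse direction, let $f$ be a local section of $\cI$, so $f^k\in(\cJ^{ka})^\nor$. Then $f^k$ satisfies an integral equation $(f^k)^n + c_1 (f^k)^{n-1}+\cdots+c_n=0$ with $c_i\in\cJ^{kai}=(\cJ^a)^{ki}$. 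Reading this as a monic polynomial of degree $kn$ in $f$, the coefficient at degree $kn-j$ is $c_{j/k}\in(\cJ^a)^{j}$ when $k\mid j$ and $0$ otherwise, so $f$ is integral over $\cJ^a$, giving $f\in(\cJ^a)^\nor$.

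Next I verify part (2) for a single admissible Kummer blowing up $\sigma\:X'\to X$ at $\cJ$, by direct computation with the definition of the controlled transform:
\[
\sigma^c(\cI^k,ka)\ =\ \cI_E^{-ka}(\cI^k\cO_{X'})\ =\ \bigl(\cI_E^{-a}(\cI\cO_{X'})\bigr)^k\ =\ \sigma^c(\cI,a)^k,
\]
which uses only that $\cI_E$ is an invertible ideal. In particular $\sigma^c(\cI,a)$ is a non-zero ideal on $X'$, so its $k$-th power is well-defined.

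Finally I combine these by induction on the length of the Kummer sequence $\sigma\:X_n\to X_0=X$. Suppose we have processed $i$ steps with controlled transforms $\cI_i$ and $(\cI^k)_i$ satisfying $(\cI^k)_i = (\cI_i)^k$ (the base case $i=0$ being trivial). The $(i{+}1)$-st center $\cJ_i$ is $(\cI_i,a)$-admissible if and only if it is $((\cI_i)^k,ka)$-admissible by the per-step equivalence above, which is the same as $((\cI^k)_i,ka)$-admissibility. Hence the two classes of admissible sequences coincide, proving (1). Moreover the single-step formula gives $(\cI^k)_{i+1}=\sigma_{i+1}^c((\cI^k)_i,ka)=\sigma_{i+1}^c(\cI_i,a)^k=(\cI_{i+1})^k$, completing the induction and yielding (2). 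The main obstacle is really just the per-step integral-closure argument above; everything else is a bookkeeping induction.
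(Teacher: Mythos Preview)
Your proof is correct and follows essentially the same approach as the paper: induction on the length of the Kummer sequence, reducing to the single-step admissibility equivalence $\cI\subseteq(\cJ^a)^\nor\Leftrightarrow\cI^k\subseteq(\cJ^{ka})^\nor$ together with the obvious single-step formula for the controlled transform. The paper simply asserts this equivalence without justification, whereas you spell out the integral-closure argument explicitly; otherwise the arguments are identical.
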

\begin{proof}
{To prove (1) it suffices to show that a Kummer sequence $\sigma\:X'\to X$ is $(\cI,a)$-admissible if and only if it is $(\cI^k,ka)$-admissible. Using induction on the length $n$ of $\sigma$ we can assume that $\sigma=\tau\circ\sigma_n$, where $\sigma_n$ is a Kummer blowing up with center $\cJ$ and the claims hold for $\tau$.}

Assume that $\sigma$ is admissible with respect to either $(\cI,a)$ or $(\cI^k,ka)$. By induction, $\tau$ is admissible with respect to both marked ideals, $\tau^c({\cI}^k,ka)=\tau^c({\cI},a)^k$, and $\tau^c({\cI},a)\subseteq(\cJ^a)^\nor$ if and only if $\tau^c({\cI}^k,ka)\subseteq(\cJ^{ka})^\nor$. Therefore, the whole $\sigma$ is admissible with respect to both $(\cI,a)$ and $(\cI^k,ka)$, and
\begin{align*}
\sigma^c({\cI}^k,ka)=\cI_E^{-ka}\sigma_n^*(\tau^c({\cI}^k,ka))=\cI_E^{-ka}\sigma_n^*(\tau^c({\cI},a)^k)=\\ \cI_E^{-ka}\sigma_n^*(\tau^c({\cI},a))^k=\sigma^c({\cI},a)^k.
\end{align*}
\end{proof}

The following example of domination will be very useful.

\begin{lemma}\label{diff}
If $(\cI,a)$ is a marked ideal of maximal order on a toroidal orbifold $X$ and $i\le a$, then

(1) {$\cD_X^{(\leq i)}(\cI,a)\preccurlyeq  (\cI,a)$.}

(2) $\sigma^c(\cD_X^{(\leq i)}(\cI,a))\subseteq \cD_{X'}^{(\leq i)}(\sigma^c(\cI,a))$ for any $(\cI,a)$-admissible Kummer sequence $\sigma\:X'\to X$.
\end{lemma}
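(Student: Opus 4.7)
The plan is to prove both parts simultaneously by induction on the length $n$ of the admissible sequence, with the single-step case already essentially handled by Corollary~\ref{derivative} and Lemma~\ref{44}.

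The induction hypothesis is: for every $(\cI,a)$-admissible Kummer sequence $\sigma\colon X_n\to\cdots\to X_0=X$ of length $n$, the sequence $\sigma$ is also $\cD_X^{(\leq i)}(\cI,a)$-admissible, and moreover $\sigma^c(\cD_X^{(\leq i)}(\cI,a))\subseteq\cD_{X_n}^{(\leq i)}(\sigma^c(\cI,a))$ as ideals on $X_n$ (both marked by $a-i$). The base case $n=0$ is trivial, since nothing is done and both $\sigma^c(\cD_X^{(\leq i)}(\cI,a))$ and $\cD_X^{(\leq i)}(\sigma^c(\cI,a))$ just equal $\cD_X^{(\leq i)}(\cI)$.

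For the inductive step, write $\sigma=\sigma_{n+1}\circ\tau$, where $\tau\colon X_n\to X$ has length $n$ and $\sigma_{n+1}\colon X_{n+1}\to X_n$ is a Kummer blowing up with center $\cJ_{n+1}$ that is admissible for $(\tau^c(\cI,a),a)$. By Proposition~\ref{5}(2), $(\tau^c(\cI,a),a)$ is of maximal order, so Corollary~\ref{derivative}(1) guarantees that $\cJ_{n+1}$ is admissible for the marked ideal $\bigl(\cD_{X_n}^{(\leq i)}(\tau^c(\cI,a)),a-i\bigr)$. By the inductive hypothesis we have the inclusion of ideals $\tau^c(\cD_X^{(\leq i)}(\cI,a))\subseteq\cD_{X_n}^{(\leq i)}(\tau^c(\cI,a))$, both with weight $a-i$; since admissibility of a center is inherited by smaller ideals with the same weight (directly from Definition~\ref{Def:admissible}), $\cJ_{n+1}$ is also admissible for $\bigl(\tau^c(\cD_X^{(\leq i)}(\cI,a)),a-i\bigr)$. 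This proves part (1) at length $n+1$.

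For part (2) at length $n+1$, apply $\sigma_{n+1}^c$ (which preserves ideal inclusions at fixed weight, since it multiplies by $\cI_{E_{n+1}}^{-(a-i)}$) to the inclusion above, and then use Lemma~\ref{44} for the single blowing up $\sigma_{n+1}$:
\[
\sigma_{n+1}^c\bigl(\tau^c(\cD_X^{(\leq i)}(\cI,a))\bigr)\ \subseteq\ \sigma_{n+1}^c\bigl(\cD_{X_n}^{(\leq i)}(\tau^c(\cI,a)),a-i\bigr)\ \subseteq\ \cD_{X_{n+1}}^{(\leq i)}\bigl(\sigma_{n+1}^c(\tau^c(\cI,a))\bigr).
\]
Functoriality of the controlled transform along composed admissible sequences gives $\sigma_{n+1}^c\circ\tau^c=\sigma^c$, so the left side equals $\sigma^c(\cD_X^{(\leq i)}(\cI,a))$ and the right side equals $\cD_{X_{n+1}}^{(\leq i)}(\sigma^c(\cI,a))$, completing the induction. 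The only subtle point is the bookkeeping of markings, ensuring that weights match on each side throughout; no new geometric input is needed beyond Corollary~\ref{derivative} and Lemma~\ref{44}.
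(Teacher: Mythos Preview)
Your proof is correct and follows essentially the same inductive scheme as the paper's: both induct on the length of $\sigma$, use the inductive inclusion $\tau^c(\cD_X^{(\leq i)}(\cI,a))\subseteq\cD_{X_n}^{(\leq i)}(\tau^c(\cI,a))$ to pass admissibility to the next center, and then apply Lemma~\ref{44} for the final blow-up. The only cosmetic difference is that you route the admissibility step through Corollary~\ref{derivative}(1) (hence invoking Proposition~\ref{5}(2) to verify its maximal-order hypothesis), whereas the paper applies Lemma~\ref{Jnorlem}(2) directly, which needs no such hypothesis; note also that Corollary~\ref{derivative} is stated for $0<i<a$, so strictly speaking you should remark that the boundary cases $i=0$ and $i=a$ are trivial.
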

\begin{proof}
To prove (1) it suffices to show that $\sigma$ is {$\cD_X^{(\leq i)}(\cI,a)$}-admissible. We will prove this and (2) by induction on the length $n$ of $\sigma$. So, we can assume that $\sigma=\tau\circ\sigma_n$, where $\sigma_n\:X'\to Y$ is a Kummer blowing up with center $\cJ$ and the claims hold for $\tau\:Y\to X$. Thus, $\tau$ is {$\cD_X^{(\leq i)}(\cI,a)$}-admissible and $\tau^c(\cD_X^{(\leq i)}(\cI,a))\subseteq \cD_{Y}^{(\leq i)}(\tau^c(\cI,a))$. Since $\tau^c(\cI,a)\subseteq(\cJ^a)^\nor$ we obtain by Lemma~\ref{Jnorlem}(2) that $$\tau^c(\cD_X^{(\leq i)}(\cI,a))\subseteq \cD_{Y}^{(\leq i)}(\tau^c(\cI,a))\subseteq(\cJ^{a-i})^\nor,$$ proving that the whole $\sigma$ is {$\cD_X^{(\leq i)}(\cI,a)$}-admissible. Finally,
$$\sigma^c(\cD_X^{(\leq i)}(\cI,a))\subseteq \sigma_n^c(\cD_{Y}^{(\leq i)}(\tau^c(\cI,a)))\subseteq\cD_{X'}^{(\leq i)}(\sigma_n^c\tau^c(\cI,a))=\cD_{X'}^{(\leq i)}(\sigma^c(\cI,a)),$$ where the second inclusion is due to {Lemma \ref{44}.}
\end{proof}

\begin{corollary}\label{Hcor}
Assume that $X$ is a toroidal orbifold, $(\cI,a)$ is a marked ideal of maximal order, and $H$ is a hypersurface of maximal contact. Then $(\cI_H,1)\preccurlyeq  (\cI,a)$, that is, any $(\cI,a)$-admissible Kummer sequence is $H$-admissible.
\end{corollary}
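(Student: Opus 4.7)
The plan is to deduce $(\cI_H,1) \preccurlyeq (\cI,a)$ by chaining two elementary dominations and applying transitivity, after which the $H$-admissibility statement is immediate from the Example in Section \ref{Sec:controlled-transform} identifying $H$-admissibility with $(\cI_H,1)$-admissibility. By the definition of maximal contact, $\cI_H \subseteq \cT(\cI,a) = \cD_X^{(\leq a-1)}(\cI)$. This inclusion propagates under controlled transforms of the same weight $1$---both $\sigma^c(\cI_H,1)$ and $\sigma^c(\cD_X^{(\leq a-1)}(\cI),1)$ are obtained from $\cI_H\cO_{X'}$ and $\cD_X^{(\leq a-1)}(\cI)\cO_{X'}$ respectively by dividing by the same ideal $\cI_E$---and hence by a short induction on the length of the sequence yields the first domination
$$(\cI_H,1) \preccurlyeq (\cD_X^{(\leq a-1)}(\cI),1).$$
This is the basic Remark 1(b): $\cI \subseteq \cJ$ implies $(\cI,a) \preccurlyeq (\cJ,a)$.

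The second domination is exactly Lemma \ref{diff}(1) with $i = a-1$:
$$\cD_X^{(\leq a-1)}(\cI,a) = (\cD_X^{(\leq a-1)}(\cI),1) \preccurlyeq (\cI,a).$$
Transitivity of $\preccurlyeq$ is immediate from its definition via admissibility of sequences, so chaining the two dominations gives $(\cI_H,1) \preccurlyeq (\cI,a)$. The Example following Definition \ref{Def:controlled-transform} translates this into the asserted $H$-admissibility of every $(\cI,a)$-admissible Kummer sequence.

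Since the substantive work---tracking admissibility through the sequence by combining Lemma \ref{diff}(2) with the key inclusion $\cD_X^{(\leq i)}((\cJ^a)^\nor) \subseteq (\cJ^{a-i})^\nor$ from Lemma \ref{Jnorlem}(2)---is already absorbed into Lemma \ref{diff}, I do not foresee any genuine obstacle. The corollary is essentially a clean packaging of Lemma \ref{diff} specialised to the defining inclusion of a maximal contact hypersurface.
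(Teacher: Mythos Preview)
Your proof is correct and follows essentially the same route as the paper: both chain the inclusion $\cI_H\subseteq\cT(\cI,a)$ (giving $(\cI_H,1)\preccurlyeq(\cD_X^{(\leq a-1)}(\cI),1)$ via the elementary Remark) with Lemma~\ref{diff}(1) at $i=a-1$, then invoke transitivity. The paper's proof is just the one-line display $(\cI_H,1)\preccurlyeq\cT(\cI,a)=\cD_X^{(\leq a-1)}(\cI,a)\preccurlyeq (\cI,a)$, so your version is simply a more explicit unpacking of the same argument.
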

\begin{proof}
Since $\cI_H\subseteq\cT(\cI,a)$, Lemma \ref{diff} implies that $$(\cI_H,1)\preccurlyeq\cT(\cI,a)=\cD_X^{(\leq a-1)}(\cI,a)\preccurlyeq (\cI,a).$$
\end{proof}

\begin{lemma}\label{norlem}
If $X$ is a toroidal orbifold with a marked ideal $(\cI,a)$, then

(1) $(\cI,a)\approx(\cI^\nor,a)$. In particular, $\supp(\cI,a)=\supp(\cI^\nor,a)$.

(2) $\sigma^c(\cI^\nor,a)\subseteq\sigma^c(\cI,a)^\nor$ for any $(\cI,a)$-admissible Kummer sequence $\sigma\:X'\to X$.
\end{lemma}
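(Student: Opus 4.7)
I will prove (1) and (2) together by induction on the length $n$ of $\sigma$, with the inductive step driven by a single-blowing-up statement. The key claim is: \emph{if $\sigma_1\:X'\to X$ is a $(\cK,a)$-admissible Kummer blowing up with center $\cJ$, then $\sigma_1^c(\cK^\nor,a)\subseteq\sigma_1^c(\cK,a)^\nor$.} Granting this, for $n\ge 1$ I write $\sigma=\tau\circ\sigma_n$, where $\sigma_n\:X_n\to X_{n-1}$ is the final blowing up at center $\cJ_n$. Applying the induction hypothesis to $\tau$ yields $\tau^c(\cI^\nor,a)\subseteq\tau^c(\cI,a)^\nor$. Combining this with $\sigma_n$-admissibility of $\tau^c(\cI,a)$ and the identity $((\cJ_n^a)^\nor)^\nor=(\cJ_n^a)^\nor$, I obtain $\tau^c(\cI^\nor,a)\subseteq (\cJ_n^a)^\nor$, so $\sigma_n$ is $(\tau^c(\cI^\nor,a),a)$-admissible. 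Monotonicity of the controlled transform together with the single-step claim then gives
\[\sigma^c(\cI^\nor,a)=\sigma_n^c(\tau^c(\cI^\nor,a),a)\subseteq\sigma_n^c(\tau^c(\cI,a)^\nor,a)\subseteq\sigma_n^c(\tau^c(\cI,a),a)^\nor=\sigma^c(\cI,a)^\nor,\]
which is (2).

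For the single-step claim I would combine two standard ingredients. \emph{Pullback compatibility of integral closure:} if $g\in\cK^\nor$ satisfies $g^n+a_1g^{n-1}+\dots+a_n=0$ with $a_i\in\cK^i$, pulling back through $\sigma_1^\ast$ gives the same integral relation in $\cO_{X'}$ with coefficients in $(\cK\cO_{X'})^i$, proving $\cK^\nor\cO_{X'}\subseteq(\cK\cO_{X'})^\nor$. \emph{Normality-division by $\cI_E^a$:} working locally on $X'$ where $\cI_E=(t)$ is principal, observe first that admissibility forces $\cK\cO_{X'}\subseteq(\cJ^a)^\nor\cO_{X'}\subseteq(\cJ\cO_{X'})^{a\nor}=\cI_E^a$. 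For $g\in(\cK\cO_{X'})^\nor$ the integral equation has coefficients in $\cK^i\cO_{X'}\subseteq(t^{ai})$; dividing through by $t^{an}$ realizes $g/t^a$ as an element of $\Frac(\cO_{X'})$ integral over $\cO_{X'}$, hence in $\cO_{X'}$ by normality of the Kummer blowing up (Theorem \ref{Kummertheorem}(2)). The same divided equation then has coefficients $b_i/t^{ai}\in\cI_E^{-ai}(\cK\cO_{X'})^i=\sigma_1^c(\cK,a)^i$, certifying $g/t^a\in\sigma_1^c(\cK,a)^\nor$. Concatenating the two inclusions yields $\sigma_1^c(\cK^\nor,a)=\cI_E^{-a}(\cK^\nor\cO_{X'})\subseteq\cI_E^{-a}(\cK\cO_{X'})^\nor\subseteq\sigma_1^c(\cK,a)^\nor$.

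Part (1) will follow from (2). One direction is immediate: since $\cI\subseteq\cI^\nor$ and the controlled transform is monotone in the ideal, any $(\cI^\nor,a)$-admissible sequence is $(\cI,a)$-admissible. For the converse I run (2) stage by stage: if $\sigma$ is $(\cI,a)$-admissible, then at each stage $\sigma_i^c(\cI^\nor,a)\subseteq\sigma_i^c(\cI,a)^\nor\subseteq(\cJ_{i+1}^a)^{\nor\nor}=(\cJ_{i+1}^a)^\nor$, certifying admissibility of the next center for $(\sigma_i^c(\cI^\nor,a),a)$. The equality $\supp(\cI,a)=\supp(\cI^\nor,a)$ then follows from the dominance-implies-support-containment observation recorded just after the definition of equivalence. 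The main obstacle I expect is the normality-division step: one must track carefully that admissibility forces $g$ itself (before dividing) to lie in $\cI_E^a$, then use normality of the Kummer blowing up to land $g/t^a$ in $\cO_{X'}$, and finally re-read the divided equation to detect that $g/t^a$ is in fact integral over $\sigma_1^c(\cK,a)$ rather than merely over $\cO_{X'}$. All remaining steps are formal manipulations with monotonicity and idempotence of $(\cdot)^\nor$.
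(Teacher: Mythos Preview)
Your proposal is correct and follows essentially the same route as the paper: reduce to a single blowing up via the integral-equation trick (pull back the weighted integral dependence of $g\in\cI^\nor$ and divide by $\cI_E^{an}$ to exhibit $g/t^a$ as integral over $\sigma^c(\cI,a)$), then run the inductive chain $\sigma^c(\cI^\nor,a)\subseteq\sigma_n^c(\tau^c(\cI,a)^\nor,a)\subseteq\sigma^c(\cI,a)^\nor$. The paper's write-up collapses your two sub-steps (pullback compatibility plus normality-division) into one computation and cites Lemma~\ref{Lem:scaling} for $\sigma^c(\cI^i,ai)=\sigma^c(\cI,a)^i$, but the content is identical.
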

\begin{proof}
First assume that $\sigma$ is the blowing up along a Kummer center $\cJ$. Then (1) is obvious since $\cI\subseteq(\cJ^a)^\nor$ if and only if $\cI^\nor\subseteq(\cJ^a)^\nor$

To check the inclusion in (2) we can work \'etale-locally on $X'$. Since Kummer blowings up are compatible with \'etale covers, we can can replace $X$ by an affine \'etale covering $\Spec(A)$. Set $I=\Gamma(\cI)\subseteq A$. Fix an \'etale covering $\Spec(B)\to X'$ such that the invertible ideal $\cJ B$ is principal with a generator $y$. Then $\sigma^c(\cI^\nor,a)B$ is generated by the elements $y^{-a}x$ with $x\in I^\nor$. Taking an integral equation on $x$ as in (\ref{inteq}), we obtain an integral equation on $y^{-a}x$ with coefficients $y^{-ai}a_i$. Now $y^{-ai}a_i\in\sigma^c(\cI^i,ai)B=\sigma^c(\cI,a)^iB$, and hence $y^{-a}x\in(\sigma^c(\cI,a)B)^\nor$.

Apply induction on the length of $\sigma = \sigma_1 \circ \tau$ with $\sigma_1$ an $(\cI,a)$-admissible Kummer sequence and $\tau$  the blowing up of center $\cJ_\tau$. The induction assumption (2) gives  $\sigma_1^c(\cI^\nor,a)\subseteq\sigma_1^c(\cI,a)^\nor \subseteq (\cJ_\tau^a)^\nor$, which implies (1) in general. Part (2) follows from
$$\sigma^c(\cI^\nor, a) = \tau^c(\sigma_1^c(\cI^\nor, a),a) \subseteq \tau^c(\sigma_1^c(\cI, a)^\nor,a) \subseteq \tau^c(\sigma_1^c(\cI, a),a)^\nor = \sigma^c(\cI,a)^\nor. $$
\end{proof}

\subsection{Addition and multiplication of marked ideals}\label{Sec:addmult}
Following \cite{Wlodarczyk, Bierstone-Milman-funct} define the following operations of addition and multiplication of marked ideals on a toroidal orbifold $X$:
\begin{enumerate}
\item $({\cI}_1,a_1)+\ldots+({\cI}_m,a_m)$
\begin{align*}
\quad &:= ({\cI}_1^{a_2\cdot\ldots\cdot a_m}+
{\cI}_2^{a_1a_3\cdot\ldots\cdot a_m}+\ldots+{\cI}_m^{a_1\ldots a_{k-1}} ,a_1a_2\ldots a_m).\end{align*}
\item $({\cI}_1,a_1)\cdot\ldots\cdot({\cI}_m,a_m):=({\cI}_1\cdot\ldots\cdot{\cI}_m,a_1+\ldots+a_m)$
\end{enumerate}

The following is immediate:

\begin{lemma}\label{Lem:operation-pullback}
Keep the above notation. Given a morphism $X' \to X$ we have $$(({\cI}_1,a_1)+\ldots+({\cI}_m,a_m))\cO_{X'}\ \  = \ \ ({\cI}_1,a_1)\cO_{X'} +\ldots+({\cI}_m,a_m)\cO_{X'}$$ and  $$(({\cI}_1,a_1)\cdot\ldots\cdot({\cI}_m,a_m))\cO_{X'} \ \ = \ \ ({\cI}_1,a_1)\cO_{X'}\cdot\ldots\cdot({\cI}_m,a_m)\cO_{X'}.$$
\end{lemma}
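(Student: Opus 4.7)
The proof is essentially a formal manipulation using the fact that pullback of ideal sheaves under a morphism commutes with sums, products, and integer powers. The plan is to reduce both claimed equalities to these three basic properties of ideal-sheaf pullback.

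First I would recall that for any ideals $\cI,\cJ$ on $X$ and any morphism $f\:X'\to X$, we have the standard identities
\[
(\cI+\cJ)\cO_{X'}=\cI\cO_{X'}+\cJ\cO_{X'},\quad (\cI\cJ)\cO_{X'}=(\cI\cO_{X'})(\cJ\cO_{X'}),\quad \cI^n\cO_{X'}=(\cI\cO_{X'})^n,
\]
the last being a special case of the second. These hold because $\cO_{X'}$ is generated by pullbacks of sections, and the operations $+$, $\cdot$, and $(-)^n$ are defined by generators.

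Next I would observe that the marking integer $a_1\cdots a_m$ in the addition (respectively $a_1+\cdots+a_m$ in the multiplication) is not affected by pullback: by convention $(\cI,a)\cO_{X'}=(\cI\cO_{X'},a)$, so the weight is preserved. Hence for the sum it suffices to verify the equality of the underlying ideals, and I would compute
\begin{align*}
\bigl(\cI_1^{a_2\cdots a_m}+\cdots+\cI_m^{a_1\cdots a_{m-1}}\bigr)\cO_{X'}
&=\cI_1^{a_2\cdots a_m}\cO_{X'}+\cdots+\cI_m^{a_1\cdots a_{m-1}}\cO_{X'}\\
&=(\cI_1\cO_{X'})^{a_2\cdots a_m}+\cdots+(\cI_m\cO_{X'})^{a_1\cdots a_{m-1}},
\end{align*}
using the sum identity in the first step and the power identity termwise in the second. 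This is exactly the underlying ideal of the right-hand side of the addition formula applied to $(\cI_i\cO_{X'},a_i)$.

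For the multiplication, the same strategy gives $(\cI_1\cdots\cI_m)\cO_{X'}=(\cI_1\cO_{X'})\cdots(\cI_m\cO_{X'})$ by iterating the product identity, while the weight $a_1+\cdots+a_m$ is unchanged. There is no serious obstacle here; the only point worth flagging is that the operations $+$ and $\cdot$ on marked ideals are defined purely in terms of ideal-theoretic operations and fixed integer weights, so every ingredient commutes with pullback by inspection.
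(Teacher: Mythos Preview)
Your proof is correct and matches the paper's approach: the paper simply states ``The following is immediate'' and gives no further argument, so your explicit unpacking of the formal compatibility of ideal pullback with sums, products, and powers is exactly what underlies that claim.
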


The following facts will not be used, but they clarify the definition.

\begin{remark}
(1) The sum is compatible with equivalence. So, although the definition is not associative, it is associative up to equivalence.

(2) The product is, obviously, associative, but it is not compatible with the equivalence: consider $\cI=(z,1)$ and $\cJ=(z^2,1)$. Then  $\cI^n\cJ=(z^{n+2},n+1)$ is not equivalent to $\cI\cJ=(z^{3},2)$.
\end{remark}

\begin{lemma}\label{Lem:sum-max-order}
Keep the above notation. If one of the marked ideals $(\cI_1,a_1),\ldots,(\cI_m,a_m)$ is of maximal order then $(\cI_1,a_1)+\ldots+({\cI}_m,a_m)$ is of maximal order.
\end{lemma}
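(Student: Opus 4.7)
My plan is to reduce the claim to the trivial observation that the maximal order condition passes to larger ideals (Lemma \ref{Lem:supports2}(2)). Without loss of generality assume $(\cI_1,a_1)$ is of maximal order, set $A := a_1\cdots a_m$, and let $k_i := A/a_i$. Then $(\cI_1^{k_1},A)$ sits inside the sum $\bigl(\sum_j \cI_j^{k_j},A\bigr)$, so it suffices to show that $(\cI_1^{k_1},A)$ is itself of maximal order.

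The key ingredient will be the point-wise scaling identity
\[
\logord_p(\cI^k)\ =\ k\cdot\logord_p(\cI), \qquad p\in|X|,\ k\ge 1.
\]
This follows immediately from the definition $\logord_p(\cI)=\ord_p(\cI|_{s_p})$ and the elementary fact that the classical order on a smooth variety satisfies $\ord_p(\cI^k)=k\,\ord_p(\cI)$. Granting this, for every $p\in|X|$
\[
\logord_p(\cI_1^{k_1})\ =\ k_1\cdot\logord_p(\cI_1)\ \le\ k_1a_1\ =\ A,
\]
so $(\cI_1^{k_1},A)$ is of maximal order.

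Finally, since $\cI_1^{k_1}\subseteq \sum_{j=1}^{m}\cI_j^{k_j}$, Lemma \ref{Lem:supports2}(2) applied to the inclusion of ideals with common marking $A$ shows that the sum $\bigl(\sum_j \cI_j^{k_j},A\bigr)=(\cI_1,a_1)+\cdots+(\cI_m,a_m)$ is of maximal order, as required. No real obstacle is expected; the only thing to be careful about is that the ``maximal order'' property is genuinely point-wise, so one must not be tempted to invoke the equivalence $(\cI,a)\approx(\cI^k,ka)$ from Lemma \ref{Lem:scaling}, which a priori only says something about admissible Kummer sequences and does not formally transport the maximal-order property. The direct verification via $\logord$ above bypasses this subtlety.
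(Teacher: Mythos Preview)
Your proof is correct and follows the same two-step structure as the paper's: show that $(\cI_1^{k_1},A)$ is of maximal order, then apply Lemma~\ref{Lem:supports2}(2) to the inclusion $\cI_1^{k_1}\subseteq\sum_j\cI_j^{k_j}$. Amusingly, the paper justifies the first step by citing Lemma~\ref{Lem:scaling}, precisely the move you warn against; your direct verification via $\logord_p(\cI^k)=k\cdot\logord_p(\cI)$ is the cleaner justification, since equivalence of marked ideals does not by itself transport the maximal-order condition.
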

\begin{proof}
{By Lemma \ref{Lem:scaling},} one of the ideals ${\cI}_1^{a_2\cdot\ldots\cdot a_m}, \ldots,
{\cI}_m^{a_1\ldots a_{k-1}}$ is clean of maximal order $a_1a_2\ldots a_m$. By Lemma \ref{Lem:supports2}  $(\cI_1,a_1)+\ldots+({\cI}_m,a_m)$ is of maximal order, as needed.
\end{proof}

\begin{lemma} \label{le: operations}
Keep the above notation and suppose that $(\cI_1,a_1),\ldots,(\cI_m,a_m)$ are of maximal order. Then
 \begin{enumerate}
\item {If  $({\cI}_i,a_i)\preccurlyeq ({\cI}_1,a_1)$} for $2\leq i\leq m$ then
$(\cI_1,a_1)+\ldots+({\cI}_m,a_m)\approx ({\cI}_1,a_1)$

\item { If  $({\cI}_i,a_i)\preccurlyeq ({\cI}_1,a_1)$ for $2\leq i\leq m$ then
$({\cI}_2,a_1)\cdot\ldots\cdot({\cI}_m,a_m)\preccurlyeq ({\cI}_1,a_1).$
}
\end{enumerate}
\end{lemma}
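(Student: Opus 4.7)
My approach would be to argue stage-by-stage along any candidate Kummer sequence, by induction on its length. At each step the admissibility condition takes the form $\cI\subseteq(\cJ^a)^\nor$ for the current center $\cJ$, so both parts amount to tracking how this containment behaves under sum and product of marked ideals.

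For (1), I would first apply Lemma~\ref{Lem:scaling} to normalize weights: setting $d=a_1\cdots a_m$ and $\cJ_i:=\cI_i^{d/a_i}$, each $(\cI_i,a_i)$ is equivalent to $(\cJ_i,d)$, the sum $(\cI_1,a_1)+\cdots+(\cI_m,a_m)$ becomes $(\sum_i\cJ_i,d)$, and the hypothesis reads $(\cJ_i,d)\preccurlyeq(\cJ_1,d)$ for $i\ge 2$. The direction $(\cJ_1,d)\preccurlyeq(\sum_i\cJ_i,d)$ is immediate from $\cJ_1\subseteq\sum_i\cJ_i$ and the basic example. Conversely, any $(\cJ_1,d)$-admissible sequence is by hypothesis also $(\cJ_i,d)$-admissible for each $i$; since all weights coincide, the controlled transform is additive in the ideal argument, and a finite sum of ideals each contained in $(\cK^d)^\nor$ lies in $(\cK^d)^\nor$. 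So the sum is admissible at every stage.

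For (2), I would proceed by a direct calculation without rescaling. Set $b=a_2+\cdots+a_m$. Given a $(\cI_1,a_1)$-admissible sequence, the hypothesis makes it $(\cI_i,a_i)$-admissible for each $i\ge 2$, so at each stage $\sigma_j^c(\cI_i,a_i)\subseteq(\cK_j^{a_i})^\nor$, equivalently $\cI_i\cO_{X_j}\subseteq\cI_E^{a_i}\cdot(\cK_j^{a_i})^\nor$ where $\cI_E$ denotes the accumulated exceptional ideal up to stage $j$. Multiplying these inclusions over $i=2,\ldots,m$ yields
\[\prod_{i=2}^m\cI_i\cO_{X_j}\ \subseteq\ \cI_E^{\,b}\cdot\prod_{i=2}^m(\cK_j^{a_i})^\nor\ \subseteq\ \cI_E^{\,b}\cdot(\cK_j^{\,b})^\nor,\]
and division by $\cI_E^{\,b}$ is exactly admissibility of $(\prod_i\cI_i,b)$ at that stage.

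The main obstacle, and the only input that is not pure bookkeeping, is the last inclusion $\prod_i(\cK_j^{a_i})^\nor\subseteq(\cK_j^{\,b})^\nor$ needed in (2). This is not formal from the definition of integral closure via monic integral equations, but follows from the valuative characterization (equivalently, from the fact that the integral closure of the Rees algebra of $\cK_j$ is a graded subring). With this standard input in hand, everything else reduces to the single-stage checks above, and the induction on length of the Kummer sequence is straightforward, since admissibility at stage $j$ depends only on data already verified at earlier stages, to which the inductive hypothesis applies.
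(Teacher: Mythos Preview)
Your argument is correct and follows essentially the same route as the paper: induction on the length of the Kummer sequence, checking admissibility at each center and carrying along the compatibility of controlled transforms with sums (respectively products). Your preliminary rescaling in (1) via Lemma~\ref{Lem:scaling} is a minor streamlining the paper does not make, and your explicit identification of the inclusion $\prod_i(\cK^{a_i})^\nor\subseteq(\cK^{\sum a_i})^\nor$ needed in (2) spells out what the paper's omitted argument also implicitly uses.
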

\begin{proof}

(1) Set $(\cI,a=a_1\dots a_m)=\sum_{i=1}^m(\cI_i,a_i)$. We should prove that if a Kummer sequence $\sigma\:X'\to X$  is either $(\cI_1,a_1)$-admissible or $(\cI,a)$-admissible then it is admissible with respect to both. We will prove this by induction together with the claim that in this case $\sigma^c(\cI,a)=\sum_{i=1}^m\sigma^c(\cI_i,a_i)$. So, assume that $\sigma=\tau\circ\sigma_n$, where $\sigma_n$ is the Kummer blowing up along $\cJ$ and the claim holds for $\tau$. Set $\cI'_i=\tau^c(\cI_i,a_i)$ and $\cI'=\tau^c(\cI,a)$, then $(\cI',a)=\sum_{i=1}^m(\cI'_i,a_i)$ by the assumption. Hence $\sigma_n$ is $(\cI',a)$-admissible if and only if $(\cI'_i)^{a/a_i}\subseteq(\cJ^a)^\nor$ for $1\le i\le m$ if and only if $\cI'_i\subseteq(\cJ^{a_i})^\nor$ for $1\le i\le m$ if and only if $\cI'_1\subseteq(\cJ^{a_1})^\nor$, where the latter equivalence holds because $({\cI}_i,a_i)\preccurlyeq ({\cI}_1,a_1)$ and hence $(\cI'_i,a_i)\preccurlyeq (\cI'_1,a_1)$ for $1\le i\le m$. By our assumption $\sigma_n$ is either $(\cI',a)$-admissible or $(\cI'_1,a_1)$-admissible, hence it is admissible with respect to both. Combining this with the fact that $\tau$ is admissible with respect to both $(\cI_1,a_1)$ and $(\cI,a)$, we obtain that $\sigma$ is admissible with respect to both marked ideals. It remains to use the simple fact that $\sigma_n^c(\sum_{i=1}^m(\cI'_i,a_i))=\sum_{i=1}^m\sigma_n^c(\cI'_i,a_i)$.

(2) This is proved in the same way, so we omit the details.
\end{proof}

\subsection{Homogenization}\label{Sec:homogenization}
Hypersurfaces of maximal contact exist only locally and are not unique. To obtain a global algorithm we adapt from \cite{Wlodarczyk} the concept of homogenization and a corresponding gluing lemma. This does not involve any essential modification. Let $({\cI},a)$ be a  marked ideal of maximal order, with $$\cT({\cI}):={\cD}^{(\le a-1)}_X(\cI).$$ By the corresponding {\it homogenized ideal} we mean the marked ideal
\begin{align*}
{\cH}&({\cI},a):=({\cH}({\cI}),a)\\&=({\cI}+{\cD_X^{(\le 1)}}(\cI)\cdot \cT(\cI)+\ldots+{\cD}^{(\le i)}_X(\cI)\cdot \cT({\cI})^i+ \ldots+{\cD}^{(\le a-1)}_X(\cI)\cdot \cT({\cI})^{a-1},a).
\end{align*}

\begin{lemma}\label{Lem:homogenization-equivalent}
Let $({\cI},a)$ be of maximal order. Then $({\cI},a)\approx ({\cH}({\cI}),a)$.
\end{lemma}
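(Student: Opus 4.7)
The equivalence splits into two dominations. The forward direction $(\cI,a) \preccurlyeq (\cH(\cI),a)$ is essentially formal: since $\cI$ is the $i=0$ summand of $\cH(\cI)$ (with $\cT(\cI)^0 = \cO_X$), we have $\cI \subseteq \cH(\cI)$, and a routine induction on the length of the Kummer sequence, using that the controlled transform is monotone in the ideal at fixed weight $a$, shows that every center admissible for $(\cH(\cI),a)$ is admissible for $(\cI,a)$.

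For the reverse direction $(\cH(\cI),a) \preccurlyeq (\cI,a)$ I would induct on the length $n$ of an arbitrary $(\cI,a)$-admissible Kummer sequence $\sigma\:X' \to X$. In the base case $n=1$ with center $\cJ$, the admissibility $\cI \subseteq (\cJ^a)^\nor$ combined with Lemma \ref{Jnorlem}(2) yields $\cD_X^{(\le i)}(\cI) \subseteq (\cJ^{a-i})^\nor$ and $\cT(\cI) = \cD_X^{(\le a-1)}(\cI) \subseteq \cJ^\nor$. Using the standard fact that $\cI^\nor \cdot \cK^\nor \subseteq (\cI \cK)^\nor$, each summand of $\cH(\cI)$ lies in $(\cJ^{a-i})^\nor \cdot (\cJ^\nor)^i \subseteq (\cJ^a)^\nor$, so $\cH(\cI) \subseteq (\cJ^a)^\nor$ and the first blowing up is $(\cH(\cI),a)$-admissible.

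For the inductive step, write $\sigma = \tau \circ \sigma_1$, set $\cI_1 := \sigma_1^c(\cI,a)$ (still of maximal order by Proposition \ref{5}(2)) and $\cH_1 := \sigma_1^c(\cH(\cI),a)$. The heart of the argument is the inclusion $\cH_1 \subseteq \cH(\cI_1)$: writing $\cI_{E_1}^{-a} = \cI_{E_1}^{-(a-i)} \cdot (\cI_{E_1}^{-1})^i$ and distributing, each summand of $\cH_1$ becomes $\sigma_1^c(\cD_X^{(\le i)}(\cI), a-i) \cdot \sigma_1^c(\cT(\cI),1)^i$, and Lemma \ref{44} embeds these two factors respectively in $\cD_{X_1}^{(\le i)}(\cI_1)$ and $\cT(\cI_1) = \cD_{X_1}^{(\le a-1)}(\cI_1)$, so their product lies in the $i$-th summand of $\cH(\cI_1)$. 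The inductive hypothesis applied to $(\cI_1,a)$ and $\tau$ then gives that $\tau$ is $(\cH(\cI_1),a)$-admissible, hence $(\cH_1,a)$-admissible since $\cH_1 \subseteq \cH(\cI_1)$, completing the induction.

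The main obstacle I anticipate is precisely this exponent bookkeeping in the inductive step: the factor $\cI_{E_1}^{-a}$ from the controlled transform of $\cH(\cI)$ must be regrouped so that the exponents $-(a-i)$ and $-1,\ldots,-1$ line up exactly with the markings $a-i$ and $1$ needed to invoke Lemma \ref{44} on each factor. Once this combinatorial alignment is in place, the remainder is formal manipulation with integral closures and monotonicity of the controlled transform.
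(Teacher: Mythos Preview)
Your proof is correct. The paper's own proof is a one-line citation of Lemmas~\ref{le: operations} and~\ref{diff}: by Lemma~\ref{diff} each factor $(\cD_X^{(\le i)}(\cI),a-i)$ and $(\cT(\cI),1)$ is dominated by $(\cI,a)$; Lemma~\ref{le: operations}(2) then gives $(\cD_X^{(\le i)}(\cI)\cdot\cT(\cI)^i,a)\preccurlyeq(\cI,a)$ for each summand; and Lemma~\ref{le: operations}(1) yields the equivalence of the sum with $(\cI,a)$.

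Your approach differs in that you bypass the abstract operations lemma and run the length induction directly on $\cH(\cI)$. The inductive step hinges on the inclusion $\sigma_1^c(\cH(\cI),a)\subseteq\cH(\sigma_1^c(\cI),a)$, which you obtain by regrouping the exponent of $\cI_{E_1}$ and applying Lemma~\ref{44} to each factor separately. This is essentially the proof of Lemma~\ref{diff}, inlined for the particular combination appearing in $\cH(\cI)$; your base case likewise inlines the argument of Corollary~\ref{derivative}(1). The paper's route is more modular --- Lemmas~\ref{diff} and~\ref{le: operations} are reused verbatim for the coefficient ideal in Lemma~\ref{Lem:C-equiv} and for maximal contacts in Corollary~\ref{Hcor} --- whereas your route is self-contained and makes explicit the key stability property $\sigma^c\circ\cH\subseteq\cH\circ\sigma^c$, which in the paper is present only implicitly inside the general lemmas.
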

\begin{proof}
Follows from  Lemmas \ref{le: operations} and \ref{diff}.
\end{proof}

\begin{lemma}\label{Lem:homogenization-functorial} Let $({\cI},a)$ be of maximal order and $f:Y \to X$ logarithmically smooth. Then
 ${\cH}({\cI}\cO_Y) = \cH(\cI)\cO_Y$.
\end{lemma}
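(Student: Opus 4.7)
The statement reduces to a direct calculation once the key compatibility ingredients are lined up. The plan is to expand both sides according to the definition of $\cH$, and verify that the operations of forming derivative ideals, products, and sums each commute with the pullback along $f$.

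The core ingredient is Corollary \ref{Cor:logsmooth}, which gives $\cD_X^{(\le i)}(\cI) \cO_Y = \cD_Y^{(\le i)}(\cI \cO_Y)$ for every $i \ge 0$ whenever $f \colon Y \to X$ is logarithmically smooth. In particular, taking $i = a-1$, we obtain $\cT(\cI) \cO_Y = \cT(\cI \cO_Y)$. Before using this I would briefly note that $(\cI \cO_Y, a)$ is of maximal order on $Y$: by Lemma~\ref{functorlem}(1) we have $\supp(\cI \cO_Y, a) = f^{-1}(\supp(\cI, a))$, and hence at every $q \in Y$ the logarithmic order satisfies $\logord_q(\cI \cO_Y) = \logord_{f(q)}(\cI) \le a$. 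This ensures that $\cH(\cI \cO_Y, a)$ is well-defined in the sense of \S\ref{Sec:homogenization}.

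The second ingredient is purely formal: for any morphism $Y \to X$ the pullback of ideals commutes with sums and products, so $(\cJ_1 + \cJ_2)\cO_Y = \cJ_1\cO_Y + \cJ_2\cO_Y$ and $(\cJ_1 \cdot \cJ_2)\cO_Y = (\cJ_1\cO_Y)\cdot(\cJ_2\cO_Y)$. Applying this termwise to the definition of $\cH(\cI)$ gives
\[
\cH(\cI)\cO_Y \ = \ \cI \cO_Y + \sum_{i=1}^{a-1} \bigl(\cD_X^{(\le i)}(\cI)\cO_Y\bigr) \cdot \bigl(\cT(\cI)\cO_Y\bigr)^i.
\]
Substituting the two compatibilities $\cD_X^{(\le i)}(\cI)\cO_Y = \cD_Y^{(\le i)}(\cI\cO_Y)$ and $\cT(\cI)\cO_Y = \cT(\cI\cO_Y)$ into each term, the right hand side becomes exactly $\cH(\cI\cO_Y)$, as required.

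There is no real obstacle here; the work is entirely in setting up Corollary~\ref{Cor:logsmooth}, which was the nontrivial fact, and making sure the maximal order hypothesis transfers along $f$ so the homogenization on $Y$ makes sense. Everything else is a formal manipulation of the defining expression.
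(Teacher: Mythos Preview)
Your proof is correct and follows essentially the same approach as the paper: both rely on Corollary~\ref{Cor:logsmooth} for the compatibility $\cD_X^{(\le i)}(\cI)\cO_Y = \cD_Y^{(\le i)}(\cI\cO_Y)$, together with the fact that pullback commutes with sums and products of ideals. You are in fact slightly more careful than the paper in explicitly verifying that $(\cI\cO_Y,a)$ is of maximal order so that $\cH(\cI\cO_Y)$ is well-defined.
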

\begin{proof}
This follows from {Lemmas \ref{Lem:operation-pullback} and \ref{le: operations}} since $\cD_X^{(\leq i)}(\cI)\cO_Y=\cD_Y^{(\leq i)}(\cI\cO_Y)$, by Corollary~\ref{Cor:logsmooth}.
\end{proof}

The main point of the following lemma is that restrictions of the homogenized ideal onto hypersurfaces of maximal contact are isomorphic \'etale-locally.

\begin{lemma} \label{le: homo} ({Gluing Lemma}) Let $({\cI},a)$ be a marked  ideal of maximal order on a {toroidal variety} $X$, and let  $x,y\in\cT({\cI},a)$ be maximal contact elements at  $p\in\supp({\cI},a)$. Then there exist \'etale neighborhoods $\phi_{x},\phi_y: \overline{X}\to X$ of $p=\phi_x(\overline{p})=\phi_y(\overline{p}) \in X$,  where $\overline{p}\in \overline{X}$, and a marked ideal $(\overline{\cI},a)$ on $\overline{X}$, such that

\begin{enumerate}
\item  $\phi_{x}^{*}(\cH(\cI))\cO_{\overline{X}}=\phi_{y}^{*}(\cH(\cI))\cO_{\overline{X}}=\overline{\cI}$.
\item  $\phi_{x}^*(x)=\phi_{y}^*(y)\in\cT(\overline{\cI},a)$.
\item  For any $\overline{q}\in \supp(\overline{\cI},a)$, $\phi_x(\overline{q})=\phi_y(\overline{q})$.
\item  For any $({\cI},a)$-admissible Kummer sequence $X' \to X$  the  induced   modifications $\phi_x^*(X')$ and $\phi_y^*(X')$ of $\overline{X}$ coincide and define an $(\overline{\cI},a)$-admissible Kummer sequence  $\overline{X}$.
\end{enumerate}
\end{lemma}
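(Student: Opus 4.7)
The strategy follows the Włodarczyk homogenization--gluing principle: I would produce an étale-local automorphism exchanging $x$ and $y$ while preserving $\cH(\cI)$, then encode it in the requested $\overline X$ via a fiber product of étale charts. The concrete shape of $\overline X$ I have in mind is the following: choose ordinary parameters $x_2,\dots,x_n$ that, together with $x$, form a full regular system at $p$ (using Remark \ref{formalrem}). Then $(x,x_2,\dots,x_n)$ and $(y,x_2,\dots,x_n)$ give two étale maps $\alpha,\beta\colon U\to\mathbb A^n$ from a Zariski neighborhood $U$ of $p$, and I would take $\overline X$ to be an étale neighborhood of the diagonal point $\overline p:=(p,p)$ in $U\times_{\mathbb A^n}U$. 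Setting $\phi_x:=\pi_1$ and $\phi_y:=\pi_2$, both are étale with $\phi_x(\overline p)=\phi_y(\overline p)=p$, and the fiber-product relation immediately gives $\phi_x^*(x)=\phi_y^*(y)$.

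The heart of the argument is the invariance statement: if $\sigma$ denotes the formal automorphism of $\hat\cO_{X,p}$ defined by $x\mapsto y$ while fixing $x_2,\dots,x_n$ and the monoidal chart, then $\sigma^*\cH(\cI)=\cH(\cI)$. Setting $\delta:=y-x\in\cT(\cI,a)$, Taylor expansion gives
\[
\sigma^*f \;=\; \sum_{k\ge 0}\frac{\delta^k}{k!}\,\partial_x^k f.
\]
So one must check $\delta^k\partial_x^k f\in\cH(\cI)$ for every $f\in\cH(\cI)$ and every $k\ge 0$. For $f=g\cdot h$ with $g\in\cD^{(\le i)}(\cI)$, $h\in\cT(\cI)^i$, and $i\le a-1$, Leibnitz distributes $\partial_x^k$ across $g$ and $h$, and the resulting summands lie in $\cD^{(\le i+j)}(\cI)\cdot\cT(\cI)^{i-j'}$ for appropriate indices; multiplication by $\delta^k\in\cT(\cI)^k$ either lands in a standard summand $\cD^{(\le i')}(\cI)\cdot\cT(\cI)^{i'}$ with $i'\le a-1$, or else the total $\cT$-degree reaches $a$, in which case the inclusion $\cT(\cI)^a\subseteq\cH(\cI)$ (a summand at $i=a-1$) closes the argument. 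This is the step I expect to occupy most of the routine bookkeeping.

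With invariance in hand, I set $\overline\cI:=\phi_x^*\cH(\cI)=\phi_y^*\cH(\cI)$, giving (1); condition (2) holds by construction, together with $\phi_x^*(x)\in\cT(\overline\cI,a)$, which follows from $x\in\cT(\cI,a)$ and compatibility of $\cT$ with logarithmically smooth morphisms (Lemma \ref{functorlem}, Corollary \ref{Cor:logsmooth}). For (3): by (1) and Lemma \ref{functorlem}(1), $\phi_x^{-1}(\supp(\cH(\cI),a))=\supp(\overline\cI,a)=\phi_y^{-1}(\supp(\cH(\cI),a))$; since $\phi_x,\phi_y$ are both étale over $p$ and agree at $\overline p$, shrinking $\overline X$ if necessary ensures they agree on all of $\supp(\overline\cI,a)$. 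For (4), Corollary \ref{Hcor} gives that any $(\cI,a)$-admissible Kummer sequence is simultaneously $V(x)$- and $V(y)$-admissible; by (2) the strict transforms of $V(x)$ and $V(y)$ coincide on $\overline X$, and the pullback sequences agree because Kummer blowings up are compatible with logarithmically smooth base change (Theorem \ref{Kummertheorem}(4)), both $\phi_x$ and $\phi_y$ being étale. Admissibility on $\overline X$ with respect to $\overline\cI$ then follows from Lemma \ref{Lem:homogenization-equivalent} applied to the pullback.

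The main obstacle is the combinatorial verification in the middle paragraph: tracking how $\partial_x^k$ distributes over the explicit summands $\cD^{(\le i)}(\cI)\cdot\cT(\cI)^i$ and checking that the multiplication by $\delta^k$ reabsorbs each term into $\cH(\cI)$. A secondary point is promoting the formal automorphism to an étale-local one, which I bypass above by building $\overline X$ directly as a fiber product of étale charts rather than invoking Artin approximation.
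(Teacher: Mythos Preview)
Your overall strategy and the Taylor-expansion invariance computation are correct and match the paper. However, there are two genuine gaps. First, the maps $\alpha,\beta\colon U\to\AA^n$ you propose using only the ordinary parameters are \emph{not} \'etale when the logarithmic structure at $p$ is nontrivial: the ordinary parameters $x,x_2,\dots,x_n$ cut out only the stratum $s_p$, and Remark~\ref{formalrem} says precisely that one needs the monoidal chart as well. The paper fixes this by choosing a sharp toric chart $U\to\AA_M:=\Spec k[\oM_p]$ and defining $\phi_1,\phi_2\colon U\to\AA^n\times\AA_M$ (the second factor via the same chart for both), so that the projections from $\overline X\subset U\times_{\AA^n\times\AA_M}U$ are \'etale. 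This is exactly the ``taking logarithmic structures into account'' the paper flags as the only change needed over W\l odarczyk's argument.

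Second, your argument for (3) --- two \'etale maps agreeing at $\overline p$ can be made to agree on $\supp(\overline\cI,a)$ by shrinking --- is invalid: two \'etale maps to $X$ agreeing at a single point need not agree on any positive-dimensional closed set through it. The paper's argument is structural: with $h:=y-x$, the maps $\phi_1,\phi_2$ to $\AA^n\times\AA_M$ coincide on $V(h)\subset U$, and from the fiber-product description one deduces that $\phi_x,\phi_y$ coincide on $\phi_x^{-1}(V(h))=\phi_y^{-1}(V(h))$; since $h\in\cT(\cI)$ one has $V(h)\supseteq\supp(\cI,a)$, giving (3). (This also handles the ``off-support'' part of (1): $\cT(\cI)^a\subseteq\cH(\cI)$ makes both pullbacks the unit ideal away from the support, so the formal computation at support points suffices.) Your sketch of (4) is likewise a non sequitur --- $V(x)$- and $V(y)$-admissibility of the sequence does not force $\phi_x^*(X')=\phi_y^*(X')$; the actual argument (deferred by the paper to \cite[Lemma~3.5.5(4)]{Wlodarczyk} and \cite[3.97]{Kollar}) inducts on the length of the sequence, using (3) at each step to see that the center $\cJ_i$, being supported on $\supp(\cI_i,a)$, pulls back identically under $\phi_x$ and $\phi_y$.
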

\begin{proof} The proof is identical to that of \cite[Lemma 3.5.5]{Wlodarczyk}, taking logarithmic structures into account. We focus on parts (1)-(3), part (4) being longer and not requiring changes. The key point is that $\cH(\cI)$ is tuned to the Taylor expansion in terms of $h=(x-y)$, which is insensitive to the logarithmic structure.

Let $U\subseteq X$ be an open subset for which there exist  parameters $x_2,\ldots, x_n$ for the logarithmic stratum $s_p$ through $p$ which are transversal to $x$ and $y$ on $U$, as well as a sharp toric chart $U \to \AA_M:=\AA_{\ocM_p}$  for the logarithmic structure of $X$ at $p$.  In particular $x,x_2,\ldots, x_n$  and $y,x_2,\ldots, x_n$ form  two systems  of ordinary parameters on $U$. Let ${\bf A}^n$ be the affine space with coordinates $z_1,\ldots,z_n$.

We have \'etale morphisms $\phi_1,\phi_2: U\to {\bf A}^n\times \AA_M$ with
\begin{equation} {\phi}^*_{1}(z_1)=x, \quad  {\phi}^*_{1}(z_i)=x_i \quad \textrm{for } \ i>1 \quad\quad \mbox{and} \quad
{\phi}^*_{2}(z_1)=y, \quad {\phi}^*_{2}(x_i)=x_i\quad\mbox{for } \ i>1.\nonumber
\end{equation}
and sending $u\in M$ to $\cO_X$ via the given toric chart.

Consider the fiber product {$U\times_{{\bf A}^n\times \AA_M}U$} associated to  the morphisms $\phi_1$ and $\phi_2$. Let $\phi_x$, $\phi_y$ be the natural projections $\phi_x, \phi_y: U\times_{{\bf A}^n\times \AA_M}U\to U$, so that $\phi_1\phi_x=\phi_2\phi_y$. Define $\overline{X}$  to be the irreducible component of $U\times_{{\bf A}^n\times \AA_M}U$ whose images $\phi_x(U)$ and $\phi_y(U)$ contain $p$.
\begin{align*}
w_1&:=\phi_x^*(x)=(\phi_1\phi_x)^*(z_1)=(\phi_2\phi_y)^*(z_1)=\phi_y^*(y),\\
w_i&:=\phi_x^*(x_i)=\phi_y^*(x_i)\qquad  \textrm{for $i\geq 2$}.
\end{align*}

Then $w_1,\ldots,w_n$ form a  system of ordinary parameters on $\overline{X}$ for the relevant stratum $s'_{p'}$. The monoid structure $\overline{M}$ on $\overline{X}$  can be identified with $M$, and the {\'etale} morphisms $\phi_x,\phi_y: \overline{X}\to X$ send $x,x_2,\ldots, x_n$  and $y,x_2,\ldots, x_n$, respectively, to $w_1,\ldots,w_n$.

{(1)  Let $h:=y-x$. By the above the morphisms $\phi_1$ and $\phi_2$ coincide on $V(h)$, and thus $\phi_x$ and $\phi_y$ coincide on $\phi_x^{-1}(V(h))=\phi_y^{-1}(V(h))$. Since $h\in\cT(\cI)$, we get that $V(h)\supseteq V(\cT(\cI))=\supp(\cI,a)$ and the maps $\phi_x$ and $\phi_y$ coincide on $$\phi_x^{-1}(\supp(\cI,a))=\phi_y^{-1}(\supp(\cI,a)).$$

Furthermore, $\cT(\cI)^a\subseteq\cH(\cI)$, and hence the ideal $\cH(\cI)$ is trivial on the complement of $V(\cT(\cI))=\supp(\cI,a)$ in $X$. In particular, both $\phi_x^*(\cH(\cI))$ and $\phi_y^*(\cH(\cI))$ are equal to the structure sheaf on $\oX\setminus \phi_x^{-1}(\supp(\cI,a))$.

Thus, to prove (1) it suffices to establish equality of the stalks of $\phi_x^*(\cH(\cI))$ and $\phi_y^*(\cH(\cI))$ at a point $\overline{q}\in \overline{X}$ such that $q=\phi_x(\overline{q})\in\supp({\cI}, a)$. By the above, $\phi_y(\overline{q})=q$ too. Denote by $\widehat{\phi}_{y}$ and $\widehat{\phi}_{x}$ the induced morphisms of the completions $\widehat{\overline{X}}_{\overline{q}}\to {\widehat{X}}_{{q}}$, and set  $\widehat{\phi}_{xy}=\widehat{\phi}_{y}\widehat{\phi}^{-1}_{x}$.
Then
\begin{equation}
\widehat{\phi}_{xy}^*(x)=y, \quad \widehat{\phi}_{xy}^*(x_i)=x_i \quad\mbox{for} \ i>1, \quad \widehat{\phi}_{xy}^*(m) = m\ \mbox{for} \quad m\in M.\nonumber \end{equation}
Therefore
\begin{equation}
\widehat{\phi}_{xy}^*(f)=f(x+h,x_2,\ldots,x_n)= \left(f+\frac{\partial{f}}{\partial{x}}\cdot h+\ldots +\frac{1}{i!}
\frac{\partial^i{f}}{\partial{x^i}}\cdot h^i +\ldots\nonumber\right)(x,x_2\ldots,x_n)
\end{equation}
for any  $f\in \widehat{\cI}$, and the right hand side belongs to
\begin{equation}\widehat{\cI}+{\cD^{(\le 1)}_X}(\widehat{\cI})\cdot \widehat{\cT({\cI})}+\ldots +{\cD}^{(\leq i)}_X(\widehat{\cI})\cdot \widehat{\cT({\cI})}^i+ \ldots +{\cD}^{(\leq a-1)}_X(\widehat{\cI})\cdot \widehat{\cT({\cI})}^{(\leq a-1)}={\cH}\widehat{\cI}.\nonumber
\end{equation}
Hence $\widehat{\phi}_{xy}^*(\widehat{\cI})\subseteq {\cH}\widehat{\cI}$. Analogously, we have that
$$\widehat{\phi}_{xy}^*({\cD}^{(\leq i)}_X(\widehat{\cI}))\subseteq
{\cD}^{(\leq i)}_X(\widehat{\cI})+{\cD}^{{(\leq i+1)}}_X(\widehat{\cI})\cdot \cT({\cI})+ \ldots +
{\cD}^{(\leq a-1)}_X(\widehat{\cI})\cdot \widehat{\cT({\cI})}^{a-i-1}={\cH}\widehat{{\cD}^{(\leq i)}_X(I)},$$
In particular, $\widehat{\phi}_{xy}^*(\widehat{\cT({\cI})})\subseteq \cH(\widehat{\cT({\cI})})=\widehat{\cT({\cI})} $, which yields
$$\widehat{\phi}_{xy}^*({\cD}^{(\leq i)}_X(\widehat{\cI})\cdot \widehat{\cT({\cI})}^i )\subseteq {\cD}^{(\leq i)}_X(\widehat{\cI})\cdot \widehat{\cT({\cI})}^i+ \ldots +{\cD}^{(\leq a-1)}_X(\widehat{\cI})\cdot \widehat{\cT({\cI})}^{a-1}\subseteq {\cH}\widehat{\cI}.$$
So, $\widehat{\phi}_{xy}^*({\cH}\widehat{\cI})\subseteq {\cH}\widehat{\cI}$, and since the scheme is noetherian, we obtain that $${\cH}\widehat{\cI}=\widehat{\phi}_{xy}^*({\cH}\widehat{\cI})=(\widehat{\phi}^{-1}_{x})^*(\widehat{\phi}_{y}^*({\cH}\widehat{\cI})).$$
Therefore, $\widehat{\phi}_{x}^*({\cH}\widehat{\cI})=\widehat{\phi}_{y}^*({\cH}\widehat{\cI})$, and since completion of noetherian rings is faithfully flat we also obtain the equality of the uncompleted stalks ${\phi}_{x}^*(\cH{\cI})_{\overline{q}}={\phi}_{y}^*(\cH{\cI})_{\overline{q}}$. As we noted earlier, this implies the equality of sheaves ${\phi}_{x}^*(\cH({\cI}))={\phi}_{y}^*(\cH({\cI}))$. 

(2) Follows from the construction.

(3) This follows from (1) and the earlier proven fact that $\phi_x$ and $\phi_y$ coincide on $\phi_x^{-1}(\supp(\cI,a))$.}

(4) The rest of the proof is similar to that of \cite[Lemma 3.5.5(4)]{Wlodarczyk} or \cite[3.97]{Kollar} and is suppressed.

\end{proof}

\begin{example}
Without homogenization such an \'etale (and even formal) isomorphism may fail to exist. Consider the ideal $(xy)$ on $\Spec k[x,y]$. Any nonzero element $ax + by$ is a maximal contact, however the elements $x$ and $x+y$ can not be switched by an automorphism of $k\llbracket x,y\rrbracket$ that preserves $(xy)$. However, the homogenization is $\cH(xy) = (x,y)^2$ and the automorphism of $k[x,y]$ switching $x$ and $x+y$ preserves $\cH(xy)$.
\end{example}

\section{The coefficient ideal of a marked ideal}
\addtocontents{toc}{\noindent We adapt the notion of coefficient ideal, and study its behavior under admissible modifications on maximal contact hypersurfaces.}
\label{Sec:coefficient}

\subsection{The coefficient ideal}
Let $(\cI,a)$ be a marked  ideal. For any $\cO_X$-submodule $\cF\subseteq\cD_X^1$ we define the {\em $\cF$-coefficient ideal}
$$C_\cF(\cI,a):=\sum_{i=0}^{a-1} \left(\cF^{(\leq i)}(\cI), a-i\right)=\left(\sum_{i=0}^{a-1}\left(\cF^{(\leq i)}(\cI)\right)^{a!/(a-i)},a!\right).$$ The ideal $C_X(\cI,a):=C_{\cD^1_X}(\cI,a)$ is the
{\em coefficient ideal} as in \cite{Wlodarczyk}, see also \cite{Villamayor, Bierstone-Milman-funct}. It follows from  Lemma \ref{Lem:supports2} that if $(\cI,a)$ is of maximal order then $C_\cF(\cI,a)$ is of maximal order. Moreover

\begin{lemma}\label{Lem:C-equiv}
If $(\cI,a)$ is an ideal of maximal order on a toroidal orbifold $X$ and $\cF\subseteq\cD^1_X$ is a submodule then $(\cI,a)\approx C_\cF(\cI,a)$.
\end{lemma}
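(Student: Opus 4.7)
The plan is to recognize $(\cI,a)$ itself as the $i=0$ summand of $C_\cF(\cI,a)$, show that every other summand is dominated by $(\cI,a)$, and conclude by Lemma~\ref{le: operations}(1).

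With the convention in Notation~\ref{Not:controlled} that $\cF^{(\le i)}$ is the $\cO_X$-submodule of $\cD_X^{(\le i)}$ generated by the images of $\cF^{\otimes j}$ for $0\le j\le i$, the summand corresponding to $j=0$ forces $\cO_X\subseteq\cF^{(\le i)}$ for every $i\ge 0$; in particular $\cF^{(\le 0)}=\cO_X$, so the $i=0$ term of $C_\cF(\cI,a)$ is literally $(\cI,a)$. Hence $C_\cF(\cI,a)$ is of maximal order by Lemma~\ref{Lem:sum-max-order}. For each $1\le i\le a-1$, the inclusion $\cF^{(\le i)}\subseteq \cD_X^{(\le i)}$ built into the definition yields $\cF^{(\le i)}(\cI)\subseteq\cD_X^{(\le i)}(\cI)$, and so
$$\bigl(\cF^{(\le i)}(\cI),\,a-i\bigr)\ \preccurlyeq\ \bigl(\cD_X^{(\le i)}(\cI),\,a-i\bigr)\ =\ \cD_X^{(\le i)}(\cI,a)\ \preccurlyeq\ (\cI,a),$$
the two dominations being, respectively, the trivial one for inclusion of ideals of equal weight and Lemma~\ref{diff}(1). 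Applying Lemma~\ref{le: operations}(1) with $(\cI_1,a_1)=(\cI,a)$ as the leading summand then gives $C_\cF(\cI,a)\approx(\cI,a)$, as required.

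The main technical point I expect to address is that Lemma~\ref{le: operations}(1) formally presupposes that \emph{every} summand be of maximal order, whereas the individual $(\cF^{(\le i)}(\cI),a-i)$ need not be (taking $\cF=0$ gives the summand $(\cI,a-i)$, which fails maximal order at any point of $\supp(\cI,a)$ once $i\ge 1$). The remedy is that the inductive argument of Lemma~\ref{le: operations}(1) uses only the distributivity $\sigma^c\bigl(\sum_i(\cI_i,a_i)\bigr)=\sum_i\sigma^c(\cI_i,a_i)$ and the preservation of the relation $\preccurlyeq$ under controlled transforms, neither of which requires maximal order; alternatively one can argue directly by induction on the length of a Kummer sequence, using that $(\cI,a)$-admissibility $\cI\subseteq(\cJ^a)^{\nor}$ of a single center $\cJ$ propagates via Lemma~\ref{Jnorlem}(2) to $\cF^{(\le i)}(\cI)\subseteq\cD_X^{(\le i)}(\cI)\subseteq(\cJ^{a-i})^{\nor}$, with Lemma~\ref{44} then handling the inductive step through iterated controlled transforms.
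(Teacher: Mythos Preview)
Your proof is correct and follows exactly the paper's approach: the paper's proof is the single sentence ``By Lemma~\ref{diff} we have $(\cF^{(\le i)}(\cI),a-i)\preccurlyeq(\cD^{(\le i)}_X(\cI),a-i)\preccurlyeq(\cI,a)$, hence the claim follows from Lemma~\ref{le: operations}(1).'' You go further by spotting that the individual summands $(\cF^{(\le i)}(\cI),a-i)$ need not be of maximal order (your $\cF=0$ example is apt), so the hypothesis of Lemma~\ref{le: operations}(1) is not literally satisfied; your observation that the inductive proof of that lemma uses only the domination $\preccurlyeq$ and the distributivity of controlled transforms over sums, and not maximal order of the dominated summands, is correct and fills a small gap the paper leaves implicit.
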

\begin{proof}
By Lemma \ref{diff} we have $(\cF^{(\le i)}(\cI),a-i)\preccurlyeq(\cD^{(\le i)}_X(\cI),a-i) \preccurlyeq (\cI,a)$, hence the claim follows from  Lemma \ref{le: operations}(1).
\end{proof}

\begin{lemma}\label{Lem:coefficient-functorial} If $f:Y \to X$ is a logarithmically smooth morphism of toroidal orbifolds and $({\cI},a)$ is a marked ideal of maximal order on $X$. Then $C_Y(\cI\cO_Y,a) = C_X(\cI,a)\cO_Y$.
\end{lemma}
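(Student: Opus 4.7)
The proof is essentially a direct unwinding of the definition of the coefficient ideal combined with the key functorial property of logarithmic differential operators. The plan is to write out $C_X(\cI,a)$ as a sum of powers of derivative ideals, then pull back term by term, using Corollary \ref{Cor:logsmooth} on each term and the compatibility of pullback with sums and products of ideals (Lemma \ref{Lem:operation-pullback}).

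Concretely, I would argue as follows. By definition,
$$C_X(\cI,a) = \sum_{i=0}^{a-1} \left(\cD_X^{(\leq i)}(\cI)\right)^{a!/(a-i)}$$
(as an ideal; the weight is $a!$). Pulling back to $Y$, and using that pullback of ideals commutes with both finite sums and powers,
$$C_X(\cI,a)\cO_Y \ \ =\ \ \sum_{i=0}^{a-1} \left(\cD_X^{(\leq i)}(\cI)\cO_Y\right)^{a!/(a-i)}.$$
By Corollary \ref{Cor:logsmooth}, since $f$ is logarithmically smooth, we have
$$\cD_X^{(\leq i)}(\cI)\cO_Y \ \ =\ \ \cD_Y^{(\leq i)}(\cI\cO_Y)$$
for each $i\ge 0$. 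Substituting this identity into each summand yields
$$C_X(\cI,a)\cO_Y \ \ =\ \ \sum_{i=0}^{a-1} \left(\cD_Y^{(\leq i)}(\cI\cO_Y)\right)^{a!/(a-i)}\ \ =\ \ C_Y(\cI\cO_Y,a),$$
which is the desired equality. The matching of weights is automatic since both sides are marked with $a!$.

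There is no real obstacle here, since the two nontrivial inputs (commutation of $\cD_X^{(\leq i)}$ with logarithmically smooth pullback, and commutation of sums/products of ideals with pullback) have already been established. The only subtlety worth flagging is that $(\cI,a)$ being of maximal order on $X$ guarantees, via Lemma \ref{functorlem}(2), that $(\cI\cO_Y,a)$ is of maximal order on $Y$ (at least on surjective loci), so that the notation $C_Y(\cI\cO_Y,a)$ is being applied in the intended setting; but the definition of the coefficient ideal itself makes sense regardless.
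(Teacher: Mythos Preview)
Your proof is correct and follows essentially the same approach as the paper's: both invoke Corollary~\ref{Cor:logsmooth} to identify $\cD_X^{(\leq i)}(\cI)\cO_Y$ with $\cD_Y^{(\leq i)}(\cI\cO_Y)$, and then use the compatibility of sums and products of ideals with pullback (Lemma~\ref{Lem:operation-pullback}) to conclude. The paper's version is simply more terse, compressing your explicit term-by-term computation into a one-line reference to these two results.
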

\begin{proof}
Recall that $(\cD_X^{(\leq i)}\cI)\cO_Y=\cD_Y^{(\leq i)}(\cI\cO_Y)$, by Corollary~\ref{Cor:logsmooth}. The claim follows by Lemma \ref{Lem:operation-pullback}.
\end{proof}

\begin{proposition}\label{Fprop}
Assume that $X$ is a toroidal orbifold with a marked ideal $(\cI,a)$ of maximal order, $\cF\subseteq\cD^1_X$ is a submodule, and $\sigma\:X'\to X$ is an $(\cI,a)$-admissible Kummer sequence. Then
$$\sigma^c(C_X(\cI,a),a!)\subseteq C_{X'}(\sigma^c(\cI),a),$$
$$C_{\sigma^c(\cF)}(\sigma^c(\cI),a)^\nor\subseteq\sigma^c(C_\cF(\cI,a),a!)^\nor,$$
and all four ideals are of order at most $a!$.
\end{proposition}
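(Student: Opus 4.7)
The plan is to unfold the definition $C_\cF(\cI,a)=\left(\sum_{i=0}^{a-1}(\cF^{(\leq i)}(\cI))^{a!/(a-i)},a!\right)$ and reduce each of the three claims to a compatibility already established. For the first inclusion, Lemmas~\ref{Lem:operation-pullback} and~\ref{Lem:scaling}(2) together give
\[\sigma^c(C_X(\cI,a),a!)\ =\ \sum_{i=0}^{a-1}\sigma^c\bigl(\cD_X^{(\leq i)}(\cI),a-i\bigr)^{a!/(a-i)};\]
each summand is well-defined since $\sigma$ is $(\cD_X^{(\leq i)}(\cI),a-i)$-admissible by Lemma~\ref{diff}, and Lemma~\ref{44} bounds it above by $\cD_{X'}^{(\leq i)}(\sigma^c(\cI,a))^{a!/(a-i)}$. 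Summing yields the first inclusion.

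For the second inclusion, set $\cA_j:=\sigma^c(\cF^{(\leq j)}(\cI),a-j)$, so that the same unfolding gives $\sigma^c(C_\cF(\cI,a),a!)=L:=\sum_{j=0}^{a-1}\cA_j^{a!/(a-j)}$. Lemma~\ref{Flem} yields $\sigma^c(\cF)^{(\leq i)}\sigma^c(\cI,a)\subseteq\sum_{j=0}^{i}\cA_j$, so it suffices to prove the ideal-theoretic inclusion $\bigl(\sum_{j=0}^{i}\cA_j\bigr)^{a!/(a-i)}\subseteq L^\nor$ for every $i<a$. I would prove this by the valuative criterion for integral closure: a generator $\prod_{k=1}^{n}x_k$ with $n=a!/(a-i)$, $x_k\in\cA_{j(k)}$, $j(k)\leq i$, and with $n_j:=\#\{k\mid j(k)=j\}$ and $N_j:=a!/(a-j)$, satisfies for every non-negative valuation $v$
\[v\Bigl(\prod_k x_k\Bigr)\ \geq\ \sum_j n_j\,v(\cA_j)\ =\ \sum_j \tfrac{n_j}{N_j}\bigl(N_j\,v(\cA_j)\bigr).\]
The weights $n_j/N_j$ satisfy $\sum_j n_j/N_j=\sum_{j\leq i}n_j(a-j)/a!\geq(a-i)/a!\cdot n=1$ since $a-j\geq a-i$ on the support, and each $N_j\,v(\cA_j)\geq 0$; hence $v(\prod_k x_k)\geq\min_j N_j\,v(\cA_j)=v(L)$, giving membership in $L^\nor$. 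Summing over $i$ and normalising both sides yields the second inclusion.

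For the third assertion, $(\cI,a)$ of maximal order makes $(\cI^{a!/a},a!)$ of maximal order, so Lemma~\ref{Lem:sum-max-order} makes $C_X(\cI,a)$ and $C_\cF(\cI,a)$ of maximal order $a!$, and Proposition~\ref{5}(2) transports this to $\sigma^c(C_X(\cI,a),a!)$, $\sigma^c(C_\cF(\cI,a),a!)$, and $\sigma^c(\cI,a)$; applying the same lemmas on $X'$ to $(\sigma^c(\cI),a)$ then gives $C_{X'}(\sigma^c(\cI),a)$ and $C_{\sigma^c(\cF)}(\sigma^c(\cI),a)$ of maximal order $a!$. Since $\cI^\nor\supseteq\cI$ implies $\logord(\cI^\nor)\leq\logord(\cI)$ pointwise, passage to integral closure preserves maximal order.

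The only genuine obstacle is the valuative inclusion used in the second claim; everything else is a symbolic unfolding of earlier compatibilities. It is precisely this step that forces the normalisation appearing on both sides of the second inclusion, and it is the one place where the combinatorics of the weights $a!/(a-i)$ actually does real work.
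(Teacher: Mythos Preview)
Your overall strategy matches the paper's, and your treatment of the order assertion and of the combinatorial inclusion $(\sum_{j\le i}\cA_j)^{a!/(a-i)}\subseteq L^\nor$ is fine (the paper does the latter by the inclusion $\prod_j\cA_j^{n_j}\subseteq(\sum_j\cA_j^n)^\nor$ with $n=\sum n_j$, followed by $\cA_j^{a!/(a-i)}\subseteq\cA_j^{a!/(a-j)}$; your valuative argument is an equivalent route).

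There is, however, a genuine gap. Both Lemma~\ref{44} and Lemma~\ref{Flem} are stated and proved only for a \emph{single} Kummer blowing up (they inherit the hypotheses of Lemma~\ref{3}), while the proposition concerns an arbitrary $(\cI,a)$-admissible Kummer \emph{sequence} $\sigma$. For the first inclusion this is a citation slip: Lemma~\ref{diff}(2) is exactly the sequence version of Lemma~\ref{44}, so cite that instead. For the second inclusion the issue is real: there is no ready-made sequence version of Lemma~\ref{Flem} in the paper, and you invoke it for $\sigma$ without comment. The paper deals with this not by extending Lemma~\ref{Flem}, but by proving the second inclusion first for a single blowing up and then running induction on the length of $\sigma=\pi\circ\tau$, using Lemma~\ref{norlem}(2) to push the integral closure through $\tau^c$:
\[
C_{\sigma^c(\cF)}(\sigma^c(\cI),a)^\nor\subseteq\tau^c\bigl(C_{\pi^c(\cF)}(\pi^c(\cI),a),a!\bigr)^\nor\subseteq\tau^c\bigl(\pi^c(C_\cF(\cI,a),a!)^\nor,a!\bigr)^\nor\subseteq\sigma^c(C_\cF(\cI,a),a!)^\nor.
\]
This is precisely where the normalization on the right-hand side is needed a second time. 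Your argument can be repaired either by supplying this induction, or by first proving that Lemma~\ref{Flem} extends to sequences (it does, by a short induction of its own), but as written the step ``Lemma~\ref{Flem} yields $\sigma^c(\cF)^{(\le i)}\sigma^c(\cI,a)\subseteq\sum_{j\le i}\cA_j$'' is unjustified when $\sigma$ has length greater than one.
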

\begin{proof}
Recall that $(\sigma^c(\cI),a)$ is of maximal order by Proposition~\ref{5}, hence $(C_{\cF'}(\sigma^c(\cI),a),a!)$ is of maximal order for any $\cF' \subset \cD^1_{X'}$. By Lemma~\ref{Lem:C-equiv}, $\sigma$ is $(C_\cF(\cI,a),a!)$-admissible, hence $(\sigma^c(C_\cF(\cI,a)),a!)$ is of maximal order too.  By Lemma~\ref{norlem}(1) the integral closure of an ideal preserves  order and equivalence class, so $C_{\sigma^c(\cF)}(\sigma^c(\cI),a)^\nor$ and $\sigma^c(C_\cF(\cI,a),a!)^\nor$ are of order at most $a!$.

Corollary \ref{derivative}(2) allows us to apply  Lemma \ref{44} inductively, giving the first inclusion. Let us prove the second inclusion.

Assume, first, that $\sigma$ is of length one. Set $\cF_\sigma=\sigma^c(\cF)$ and fix $i$ with $0\le i\le a-1$. It suffices to show that the ideal $J:=\left(\cF_\sigma^{(\le i)}(\sigma^c(\cI),a)\right)^{a!/(a-i)}$ is contained in the right hand side. By Lemma~\ref{Flem} $J \subset \left(\sum_{j=0}^i \sigma^c\left(\cF^{(\le j)}(\cI),a-j\right)\right)^{a!/(a-i)}$, so it suffices to show that
 $$\left(\sum_{j=0}^i \sigma^c\left(\cF^{(\le j)}(\cI),a-j\right)\right)^{a!/(a-i)}\subseteq\sigma^c(C_\cF(\cI,a),a!)^\nor.$$
The left hand side is the sum of ideals $\cI_{\mathbf n}=\prod_{j=0}^i\left(\sigma^c(\cF^{(\le j)}(\cI),a-j)\right)^{n_j}$, where $\sum_{j=0}^i n_j=a!/(a-i)$. So, the following chain of inclusions finishes the proof
\begin{align*}
\cI_{\mathbf n}&\subseteq\left(\sum_{j=0}^i\left(\sigma^c(\cF^{(\le j)}(\cI),a-j)\right)^{a!/(a-i)}\right)^\nor\\
&\subseteq \left(\sum_{j=0}^i\left(\sigma^c(\cF^{(\le j)}(\cI),a-j)\right)^{a!/(a-j)}\right)^\nor=
\sigma^c\left(\sum_{j=0}^i\left(\cF^{(\le j)}(\cI)\right)^{a!/(a-j)},a!\right)^\nor\\&\subseteq
\sigma^c(C_\cF(\cI,a),a!)^\nor.
\end{align*}

It remains to run induction on the length of $\sigma$, so assume that $\sigma=\pi\circ \tau$ and the claim is proved for $\tau$ and $\pi$. Then we have the following chain of inclusions

\begin{align*}
C_{\cF_\sigma}(\sigma^c(\cI),a)^\nor&\subseteq\tau^c\left(C_{\cF_\pi}(\pi^c(I),a),a!\right)^\nor\subseteq \tau^c(\pi^c(C_\cF(\cI,a),a!)^\nor,a!)^\nor\\&\subseteq
\tau^c(\pi^c(C_\cF(\cI,a),a!))^\nor=\sigma^c(C_\cF(\cI,a),a!)^\nor,
\end{align*}
where the first two are obtained by applying the claim to $\tau$ and $\pi$, and the third inclusion follows from Lemma~\ref{norlem}(2). This proves the induction step and completes the proof of the theorem.
\end{proof}

\subsection{Restriction to a maximal contact}
Our next task is to study restriction of coefficient ideals onto a maximal contact $H$.

\subsubsection{Restricting basic operations}
The following result describes compatibility of basic operations with restriction onto $H$.

\begin{lemma}\label{restrictlem}
Assume that $X$ is a toroidal orbifold with a marked ideal $(\cI,a)$ and a closed toroidal suborbifold $H\into X$.  Let $\sigma\:X'\to X$ an $H$-compatible  Kummer sequence and $\tau\:H'\to H$ its restriction on $H$. Then

(1) $\cI^\nor|_H\subseteq(\cI|_H)^\nor$.

(2) If $\sigma$ is $(\cI,a)$-admissible then $\tau$ is $(\cI|_H,a)$-admissible and $\sigma^c(\cI,a)|_H=\tau^c(\cI|_H,a)$.
\end{lemma}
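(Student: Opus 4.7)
The plan is to prove (1) by a direct manipulation of equations of integral dependence, and then to derive (2) by induction on the length of $\sigma$, using the already-established compatibility of Kummer blowings up with the taking of strict transforms.

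For (1), the claim is local in the Kummer topology, so we may work on affine Kummer covers and argue with sections. If $x$ is a local section of $\cI^\nor$, there is an equation $x^n+a_1x^{n-1}+\dots+a_n=0$ with $a_i\in\cI^i$. Restricting coefficients along $\cO_X\onto\cO_H$ gives $(x|_H)^n+(a_1|_H)(x|_H)^{n-1}+\dots+a_n|_H=0$, and since $a_i|_H\in(\cI|_H)^i$, we conclude $x|_H\in(\cI|_H)^\nor$. Sheafifying yields the desired inclusion $\cI^\nor|_H\subseteq(\cI|_H)^\nor$.

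For (2), induct on the length $n$ of $\sigma$; the case $n=0$ is trivial. For $n=1$, say $\sigma\:X'\to X$ is the Kummer blowing up along an $H$-compatible Kummer center $\cJ$ with exceptional divisor $E$, and let $H'$ be the strict transform of $H$. By \cite[Lemma~5.4.16]{ATW-destackification}, $\tau\:H'\to H$ is the Kummer blowing up of $H$ along the Kummer center $\cJ|_H$, with exceptional divisor $E|_{H'}$, so $\cI_{E|_{H'}}=\cI_E|_{H'}$. Applying (1), we get
$$\cI|_H\subseteq (\cJ^a)^\nor|_H\subseteq\bigl((\cJ|_H)^a\bigr)^\nor,$$
which is the $(\cI|_H,a)$-admissibility of $\tau$. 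Since the restriction maps $\cO_X\to\cO_H\to\cO_{H'}$ factor, we have $(\cI\cO_{X'})|_{H'}=\cI|_H\cdot\cO_{H'}$, so
$$\sigma^c(\cI,a)|_{H'}\;=\;\bigl(\cI_E^{-a}\cdot\cI\cO_{X'}\bigr)\big|_{H'}\;=\;\cI_{E|_{H'}}^{-a}\cdot\cI|_H\cO_{H'}\;=\;\tau^c(\cI|_H,a),$$
proving the equality of controlled transforms.

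For the inductive step, factor $\sigma=\sigma_n\circ\tau_{n-1}$, where $\tau_{n-1}\:X_{n-1}\to X$ is the first $n-1$ blowings up and $\sigma_n$ is the final one. By the definition of an $H$-compatible Kummer sequence, $\tau_{n-1}$ is $H$-compatible and $\sigma_n$ is $H_{n-1}$-compatible, with the strict transform $H_n=(\sigma_n)_*H_{n-1}$ equal to the strict transform of $H_{n-1}$. By the induction hypothesis, $\tau_{n-1}|_H$ is $(\cI|_H,a)$-admissible and $\tau_{n-1}^c(\cI,a)|_{H_{n-1}}=(\tau_{n-1}|_H)^c(\cI|_H,a)$. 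Applying the $n=1$ case to $\sigma_n$, the marked ideal $\bigl(\tau_{n-1}^c(\cI,a),a\bigr)$, and the suborbifold $H_{n-1}\subseteq X_{n-1}$ then produces the admissibility of $\sigma_n|_{H_{n-1}}$ and the identity of the controlled transforms one step further, giving the result for $\sigma$.

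The essential point — and the only place where something nontrivial happens — is that the restriction $\tau$ of a single Kummer blowing up is itself a Kummer blowing up with exceptional divisor $E|_{H'}$; this is exactly the content of \cite[Lemma~5.4.16]{ATW-destackification}. Once this is granted, both the admissibility propagation and the controlled-transform identity reduce to formal manipulations with ideals and the observation $(\cI\cO_{X'})|_{H'}=\cI|_H\cO_{H'}$ coming from the factorization of restriction morphisms.
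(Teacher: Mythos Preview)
Your proof is correct and follows essentially the same route as the paper's: explicate (1) via restriction of an integral equation, handle a single blowing up by noting that $\tau$ is the Kummer blowing up of $\cJ|_H$ with exceptional divisor $E|_{H'}$ so that admissibility follows from (1) and the controlled-transform identity is a formal compatibility of pullback and division with restriction, and then induct on length. The paper merely says (1) is ``clear'' and that pullback and division ``clearly commute with restriction onto $H$'', so your version is a faithful and more detailed rendition of the same argument.
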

\begin{proof}
The first claim is clear. If $\sigma$ is a single Kummer blowing up along $\cJ$ then $\cI\subseteq(\cJ^a)^\nor$ implies that $\cI|_H\subseteq((\cJ|_H)^a)^\nor$, and hence $\tau$ is $(\cI_H,a)$-admissible. In addition, the controlled transform consists of pulling back and dividing by a power of the pullback of $\cJ$, the operations that clearly commute with restriction onto $H$. The case of a general sequence follows by a straightforward induction on the length of $\sigma$.
\end{proof}

\subsubsection{Contracting derivations}
We say that a submodule $\cF\subseteq\cD_X^1$ is {\em $H$-contracting} if $\cF(\cI_H)=\cO_X$. For example, $\cD_X^1$ is $H$-contracting.
\'Etale locally where $H = V(x)$ it means that there is a section   $\partial$ of  $\cF$ behaving like $\partial/\partial x$.

\begin{lemma}\label{contractlem}
Let $X$ be a toroidal orbifold, $H\into X$ a closed toroidal suborbifold of pure codimension one, $\sigma\:X'\to X$ an $H$-admissible Kummer sequence, and $H'\to H$ the restriction of $\sigma$. If $\cF\subseteq\cD_X^1$ is $H$-contracting then $\sigma^c(\cF)$ is $H'$-contracting.
\end{lemma}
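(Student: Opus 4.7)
The assertion is local on $X'$ and compatible with strict étale base change, and the essential content is to verify at each point of $H'$ that some section of $\sigma^c(\cF)$ acts on $\cI_{H'}$ to produce a unit. By induction on the length of $\sigma$ it suffices to treat a single Kummer blowing up; the inductive step uses the composition identity $\sigma^c(\cF) = \tau^c(\sigma_1^c(\cF))$ for $\sigma = \tau\circ\sigma_1$, which follows from $\cI_{E_\sigma}=\tau^*(\cI_{E_{\sigma_1}})\cdot\cI_{E_\tau}$, together with the observation that the $H$-admissible tail $\tau$ carries strict transforms of $H_1 = \sigma_1(H)'$ to strict transforms of $H$.

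For the base case, let $\sigma\colon X' \to X$ be the Kummer blowing up along an $H$-admissible center $\cJ$. Étale-locally at $p\in V(\cJ)\subseteq H$, choose an ordinary parameter $x$ with $H=V(x)$; since $H$ has codimension one and $V(\cJ)\subseteq V(x)$, we have $\cI_H=(x)\subseteq \cJ$ locally, and we may write $\cJ = (x, m_1^{1/d},\dots, m_r^{1/d})$. By the $H$-contracting hypothesis there is $\partial\in\cF$ with $\partial(x) = u\in\cO_X^\times$ near $p$. On any local chart of $X'$ meeting $H'$, Lemma~5.4.16 of \cite{ATW-destackification} identifies $H'$ with the Kummer blowing up of $H$ along $\cJ|_H$, and yields a factorization $\sigma^*(x) = e\cdot g$ with $e$ a local generator of $\cI_E$ and $g$ a local generator of $\cI_{H'}$. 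The key point is that $e$ is a Kummer monomial in the enriched logarithmic structure on $X'$: after passing to the Kummer cover where $m_j^{1/d}$ is defined, $e$ becomes an ordinary monomial in the extended characteristic monoid, and Lemma~\ref{logderiv}(2) applied to (the extension of) $\partial$ gives $\sigma^*(\partial)(e)\in(e)$. Applying Leibniz to $\sigma^*(\partial)(eg) = \sigma^*(u)$,
\[
\sigma^c(\partial)(g) \;=\; e\cdot\sigma^*(\partial)(g) \;=\; \sigma^*(u)-g\cdot\sigma^*(\partial)(e) \;\equiv\; \sigma^*(u)\pmod{(g)},
\]
which is a unit at every point of $H'=V(g)$ on the chart, as required.

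The main obstacle will be the Kummer bookkeeping in the base case, specifically making precise that $\sigma^*(\partial)(e)\in(e)$ when $e$ is a fractional monomial generator of $\cI_E$. This reduces, by descent from the local Kummer $G$-cover used to define $\sigma$, to the classical logarithmic derivation identity of Lemma~\ref{logderiv}(2) for ordinary monomials in the extended characteristic monoid; the quotient-by-$G$ step is harmless because all quantities produced are $G$-invariant.
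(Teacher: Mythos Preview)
Your approach is essentially the paper's: reduce to a single Kummer blowing up and verify chart-by-chart via a Leibniz computation. Your final displayed equation is exactly the paper's computation, in different notation.

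There is, however, a real gap in the setup. An $H$-admissible Kummer center need not have the form $\cJ = (x, m_1^{1/d},\dots,m_r^{1/d})$ with only the single ordinary parameter $x$; in general $\cJ = (x_1,\dots,x_k, m_1^{1/d},\dots,m_r^{1/d})$ with $H=V(x_1)$ and possibly $k\ge 2$. On an $x_i$-chart with $i\ge 2$ the local generator of $\cI_E$ is $e=x_i$, which is \emph{not} a monomial for the logarithmic structure pulled back from $X$, and your claim $\sigma^*(\partial)(e)\in(e)$ can fail outright (take $\partial=\partial/\partial x_1+\partial/\partial x_2$). So the paragraph arguing that ``$e$ is a Kummer monomial'' and invoking Lemma~\ref{logderiv} is both incorrect in general and, as it turns out, unnecessary.

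Indeed, your own final line already proves what you need without that claim: from $\sigma^*(\partial)(eg)=\sigma^*(u)$ and Leibniz,
\[
\sigma^c(\partial)(g)=e\cdot\sigma^*(\partial)(g)=\sigma^*(u)-g\cdot\sigma^*(\partial)(e)\equiv\sigma^*(u)\pmod{(g)},
\]
and this is a unit at every point of $H'=V(g)$ simply because $g\in m_{p'}$ there and $\sigma^*(\partial)(e)$ is regular---no hypothesis on $\sigma^*(\partial)(e)$ modulo $(e)$ is used. This is precisely the paper's argument (the paper first passes to a Kummer cover so that $\cJ$ becomes an ordinary ideal, which also sidesteps any discussion of fractional monomials). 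The fix is therefore to allow the general form of $\cJ$ and delete the monomial detour entirely.
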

\begin{proof}
It suffices to prove this for a single blowing up with center $\cJ$. Moreover, the same argument as in the proof of Lemma~\ref{33} shows that our assertion can be checked Kummer-locally on $X$. So, we can assume that $X$ is a variety and $\cJ$ is an ordinary ideal. Locally at $p\in H$ choose coordinates so that $H=V(x_1)$ and $\cJ=(x_1\.x_r,m_1\.m_s)$. By our assumption there exists $\partial\in\cF_p$ such that $\partial(x_1)\in\cO_{X,p}^\times$. Choose a point $p'\in H'\cap\sigma^{-1}(p)$. Then $p'$ lies in a $y$-chart, where $y\in\{x_2\.x_r,m_1\.m_s\}$. Note that $H'=V(x_1/y)$ at $p'$ and $\partial'=y\partial$ is an element of $\sigma^c(\cF)_{p'}$. Since $x_1/y\in m_{p'}$ and $\partial(x_1)$ is a unit at $p'$, the element $\partial'(x_1/y)=\partial(x_1)-x_1\partial(y)/y$ is a unit at $p'$, concluding the proof.
\end{proof}

Contracting derivations can be used to test admissibility on $X$ after restriction to $H$:

\begin{lemma}\label{reslemma}
Let $X$ be a toroidal orbifold with a marked ideal $(\cI,a)$ of maximal order. Assume that $H$ is a maximal contact and $\cF\subseteq\cD_X^1$ is an $H$-contracting submodule. Let $\cJ$ be an $H$-admissible Kummer center, and assume $\cJ|_H$ is $\cF^{(\le i)}(\cI)|_H$-admissible for all $0\leq i \leq a-1$. Then $\cJ$ is $(\cI,a)$-admissible.
\end{lemma}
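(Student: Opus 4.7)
The plan is to verify $\cI \subseteq (\cJ^a)^\nor$ by checking the containment at every valuation centered on $V(\cJ)$, using a Taylor expansion with respect to a normalized contracting derivation $\partial \in \cF$.

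First I would set up local coordinates. The statement is local and can be checked after completion at a point $p \in V(\cJ) \subseteq H$; locally $H = V(x)$ for some ordinary parameter $x$. Because $\cF$ is $H$-contracting, some section of $\cF$ satisfies $\partial(x) \in \cO_{X,p}^\times$, and after rescaling by a unit I may assume $\partial \in \cF$ with $\partial(x) = 1$. In characteristic zero I will straighten coordinates formally so that $\hat\cO_{X,p} = \hat\cO_{H,p}\llbracket x\rrbracket$ with $\partial = \partial/\partial x$, simultaneously choosing the remaining ordinary parameters and the local monomials of $X$ at $p$ to be $x$-independent. Then the splitting $\hat\cO_{H,p} \hookrightarrow \hat\cO_{X,p}$ carries $\cJ|_H$ inside $\cJ$, since each generator of $\cJ$ other than $x$ itself lies in $\hat\cO_{H,p}$ in these coordinates. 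Because $H$-admissibility of $\cJ$ is equivalent to $(\cI_H,1)$-admissibility (as recorded in the example following Definition~\ref{Def:controlled-transform}), we have $x \in \cJ^\nor$, so for any valuation $v$ centered on $V(\cJ)$ the values $\lambda := v(\cJ)$ and $\xi := v(x)$ satisfy $0 < \lambda \le \xi$.

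Next I would reduce to a bound on each $\cF^{(\le i)}(\cI)$. By Lemma~\ref{Lem:C-equiv} together with Lemma~\ref{le: operations}(1), $\cJ$ is $(\cI,a)$-admissible if and only if $\cJ$ is admissible for the coefficient ideal $C_\cF(\cI,a) = \sum_{i=0}^{a-1}(\cF^{(\le i)}(\cI), a-i)$, and this in turn unwinds to the individual conditions $\cF^{(\le i)}(\cI) \subseteq (\cJ^{a-i})^\nor$ for each $0 \le i \le a-1$. Fix such $i$ and $g \in \cF^{(\le i)}(\cI)$, and expand
$$\hat g \;=\; \sum_{k=0}^{a-i-1} g_k\, x^k \;+\; x^{a-i} R$$
in the straightened coordinates, with $g_k = \partial^k(\hat g)|_H / k! \in \cF^{(\le i+k)}(\cI)|_H$ (since $\partial \in \cF$) and $R \in \hat\cO_{X,p}$. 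The hypothesis then gives $g_k \in ((\cJ|_H)^{a-i-k})^\nor$ for $0 \le k \le a-1-i$.

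The valuative bound will then follow. Since the splitting $\hat\cO_{H,p} \hookrightarrow \hat\cO_{X,p}$ is a ring homomorphism, any integral equation $g_k^n + b_1 g_k^{n-1} + \cdots + b_n = 0$ with $b_j \in (\cJ|_H)^{j(a-i-k)}$ in $\hat\cO_{H,p}$ holds verbatim in $\hat\cO_{X,p}$, and by the coordinate choice $b_j \in \cJ^{j(a-i-k)}$, so $v(b_j) \ge j(a-i-k)\lambda$. Extracting the dominant term of this integral equation in the standard way yields $v(g_k) \ge (a-i-k)\lambda$, whence $v(g_k x^k) \ge (a-i-k)\lambda + k\xi \ge (a-i)\lambda$. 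Likewise $v(x^{a-i} R) \ge (a-i)\xi \ge (a-i)\lambda$. Combining gives $v(\hat g) \ge (a-i)\,v(\cJ)$, as required. The main difficulty lies in the formal straightening: arranging in characteristic zero, and compatibly with the logarithmic and Kummer structures, a formal coordinate change that simultaneously turns $\partial$ into $\partial/\partial x$, renders the other generators of $\cJ$ and the local monomials $x$-independent, and thus transports $\cJ|_H$ into $\cJ$ under the splitting. Once that is in hand, the rest is routine valuation-theoretic bookkeeping.
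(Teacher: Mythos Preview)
Your approach is essentially the paper's: pass to the formal completion at $p$, Taylor-expand in a defining parameter $x$ for $H$, show each coefficient lies in the correct power of $(\cJ|_H)^{\nor}$, and reassemble using $x\in\cJ$. Two of your steps are correct but unnecessary: the reduction via $C_\cF(\cI,a)$ to all the inclusions $\cF^{(\le i)}(\cI)\subseteq(\cJ^{a-i})^\nor$ is redundant, since it suffices to show $\cI\subseteq(\cJ^a)^\nor$ directly (the paper does exactly this, applying the hypothesis with $i$ replaced by $k$ to the element $\partial^k f$); and the valuative criterion can be replaced by the one-line observation $f_k x^k\in(\hatcJ^{a-k})^\nor\cdot\cJ^k\subseteq(\hatcJ^a)^\nor$ once the section carries $\cJ|_H$ into $\cJ$.

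You are right to flag the straightening as the crux: the identity $g_k=\partial^k(g)|_H/k!$ (and likewise the paper's displayed formula $f_i=\tfrac{1}{i!u^i}\partial^i(f)|_H$) only holds for a section along which $\partial$ acts as $\partial/\partial x$; for an arbitrary section there are correction terms $\partial^{\bullet}(f_j)|_H$. Such a section does exist in characteristic~$0$, and this fills the gap you left. After normalizing $\partial(x)=1$, the formal ring endomorphism
\[
\phi=\exp(-x\partial)=\sum_{k\ge 0}\frac{(-x)^k}{k!}\,\partial^k
\]
converges $\mathfrak m$-adically, kills $x$, and has image exactly the $\partial$-constants; the induced map $A^{\partial}\to\hatcO_{H,p}$ is an isomorphism, giving the desired splitting $\hatcO_{X,p}=A^{\partial}\llbracket x\rrbracket$ with $\partial=\partial/\partial x$. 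Because $\partial$ is logarithmic one has $\partial^k(m)\in(m)$ for every monomial $m$, so $\phi(m)$ is a unit times $m$ and $m\mapsto\phi(m)$ is a new monoidal chart; similarly each ordinary generator $y$ of $\cJ$ is replaced by $\phi(y)\equiv y\pmod{(x)}\subseteq\cJ$. Hence the section carries $\cJ|_H$ into $\cJ$, exactly as you wanted.
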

\begin{proof}
We can verify this inclusion \'etale-locally, so assume that $X$ is a toroidal variety, $p\in X$ is a point and $H=V(x)$, where $x$ is an ordinary parameter at $p$. Since $\cF$ is $H$-contracting, there exists a derivation $\partial\in\cF_p$ such that $u=\partial(x)\in\cO_X^\times$. It suffices to show that if $f\in\cO_{X,p}$ satisfies $\partial^i(f)|_H\in(\cJ_{H,p}^{a-i})^\nor$ for any $i$ with $0\le i\le a-1$, then $f\in(\cJ_p^a)^\nor$. At this stage it is convenient to pass to formal completions, so fix a section of $\hatcO_{X,p}\onto\hatcO_{H,p}$ obtaining a (non-canonical) isomorphism $\hatcO_{X,p}=\hatcO_{H,p}\llbracket x\rrbracket$. Representing $f$ as $\sum_{i=0}^\infty f_ix^i$ we have that $\frac{1}{i!u^i}\partial^i(f)|_H=f_i$. Thus $f_i\in(\hatcJ_{H,p}^{a-i})^\nor\subset(\hatcJ_p^{a-i})^\nor$. Since $\cJ_p$ is $H$-admissible we have  $x\in\cJ_p$,  so that $f\in(\hatcJ_p^a)^\nor$. By flatness of the completion it then follows that $f\in(\cJ_p^a)^\nor$.
\end{proof}

Putting things together we get the following characterization of admissibility:
\begin{proposition}\label{Hlem}
Let $X$ be a toroidal orbifold with a marked ideal $(\cI,a)$ of maximal order. Assume that $H$ is a maximal contact and $\cF\subseteq\cD_X^1$ is an $H$-contracting submodule. Then a Kummer center $\cJ$ is $(\cI,a)$-admissible if and only if $\cJ$ is $H$-admissible and $\cJ|_H$ is $(C_\cF(\cI,a)|_H,a!)$-admissible.
\end{proposition}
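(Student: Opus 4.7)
The plan is to derive both directions of the equivalence by assembling the results of Sections~\ref{Sec:admissible-marked}--\ref{Sec:coefficient}, viewing the single Kummer blowing up along $\cJ$ as a length-one Kummer sequence so that the admissibility lemmas apply directly.

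For the forward direction, assume $\cJ$ is $(\cI,a)$-admissible. Corollary~\ref{Hcor} says that any $(\cI,a)$-admissible Kummer sequence is $H$-admissible; applied to the length-one sequence $\cJ$, this gives $V(\cJ)\subseteq H$, and in particular $\cJ|_H$ is a Kummer center on $H$, so $\cJ$ is $H$-compatible in the sense required by Lemma~\ref{restrictlem}. Next, the equivalence $(\cI,a)\approx(C_\cF(\cI,a),a!)$ of Lemma~\ref{Lem:C-equiv} upgrades $(\cI,a)$-admissibility to $(C_\cF(\cI,a),a!)$-admissibility of $\cJ$, and Lemma~\ref{restrictlem}(2) restricts this to yield $(C_\cF(\cI,a)|_H,a!)$-admissibility of $\cJ|_H$.

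For the backward direction, assume $\cJ$ is $H$-admissible and $\cJ|_H$ is $(C_\cF(\cI,a)|_H,a!)$-admissible, with the aim of feeding $\cJ$ into Lemma~\ref{reslemma}. Expanding the definition of $C_\cF$ and using that restriction to $H$ commutes with sums and powers of ideals,
\[
\sum_{i=0}^{a-1}\bigl(\cF^{(\le i)}(\cI)|_H\bigr)^{a!/(a-i)}\ =\ C_\cF(\cI,a)|_H\ \subseteq\ \bigl((\cJ|_H)^{a!}\bigr)^{\nor}.
\]
Each summand therefore lies in $((\cJ|_H)^{a!})^{\nor}$, and the scaling equivalence $(\cK,a-i)\approx(\cK^{a!/(a-i)},a!)$ from Lemma~\ref{Lem:scaling}(1) translates this into $\cJ|_H$ being $(\cF^{(\le i)}(\cI)|_H,a-i)$-admissible for every $0\le i\le a-1$. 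All hypotheses of Lemma~\ref{reslemma} are now in place (maximal order of $(\cI,a)$, $H$ a maximal contact, $\cF$ an $H$-contracting submodule of $\cD_X^1$, $\cJ$ an $H$-admissible Kummer center, and the family of weighted admissibility conditions on $H$), and it delivers $(\cI,a)$-admissibility of $\cJ$.

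The genuine content of this proposition sits inside Lemma~\ref{reslemma}: its Taylor-expansion argument along $H$, using the derivation $\partial\in\cF$ with $\partial(x)\in\cO_X^\times$ to read off the Taylor coefficients of $f\in\cO_{X,p}$, is what converts weighted vanishing of $\partial^i(f)|_H$ into membership of $f$ in $(\cJ^a)^\nor$. The present argument is essentially bookkeeping; the only mildly delicate point will be matching weights between the single admissibility condition for the coefficient ideal $(C_\cF(\cI,a)|_H,a!)$ and the family $(\cF^{(\le i)}(\cI)|_H,a-i)$ required by Lemma~\ref{reslemma}, which is handled cleanly by the scaling equivalence.
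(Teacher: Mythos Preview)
Your proof is correct and follows essentially the same route as the paper's own proof: both directions use the same lemmas in the same order (Corollary~\ref{Hcor}, Lemma~\ref{Lem:C-equiv}, Lemma~\ref{restrictlem}(2) forward; unpacking $C_\cF$ into summands, rescaling weights, and applying Lemma~\ref{reslemma} backward). The only cosmetic difference is that the paper passes from $(\cF^{(\le i)}(\cI))^{a!/(a-i)}|_H\subseteq(\cJ_H^{a!})^\nor$ to $\cF^{(\le i)}(\cI)|_H\subseteq(\cJ_H^{a-i})^\nor$ in one line without citing a lemma, whereas you invoke Lemma~\ref{Lem:scaling}(1) explicitly for this weight translation.
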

\begin{proof}
Set $\cJ_H=\cJ|_H$ and $C=C_\cF(\cI,a)$. If $\cJ$ is $(\cI,a)$-admissible then it is $(C,a!)$-admissible by Lemma~\ref{Lem:C-equiv} and $H$-admissible by Corollary~\ref{Hcor}. By Lemma~\ref{restrictlem}(2) $\cJ_H$ is $(C|_H,a!)$-admissible.

Conversely, assume that $\cJ$ is $H$-admissible and $\cJ_H$ is $(C|_H,a!)$-admissible. Then $(\cF^{(\le i)}(\cI))^{a!/(a-i)}|_H\subseteq(\cJ_H^{a!})^\nor$ for any $i$ with $0\le i\le a-1$. Therefore $\cF^{(\le i)}(\cI)|_H\subseteq(\cJ_H^{a-i})^\nor$, and the result follows by the previous lemma.
\end{proof}

\subsubsection{Lift of admissibility}

Here is our main result on maximal contacts.

\begin{theorem}\label{Hth}
Let $X$ be a toroidal orbifold with a marked ideal $(\cI,a)$ of maximal order. Assume that $H$ is a maximal contact, $\sigma\:X'\to X$ is an $H$-admissible Kummer sequence, and $\tau\:H'\to H$ denote the restriction of $\sigma$. Then the following conditions are equivalent:

(1) $\sigma$ is $(\cI,a)$-admissible,

(2) $\tau$ is $(C_X(\cI,a),a!)$-admissible,

(3) $\tau$ is $(C_\cF(\cI,a),a!)|_H$-admissible, where $\cF\subseteq\cD_X^1$ is $H$-contracting.
\end{theorem}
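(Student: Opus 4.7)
The plan is to prove $(1)\Leftrightarrow (3)$ for any $H$-contracting $\cF$, and observe that (2) is the special case $\cF=\cD_X^1$, which is $H$-contracting (\'etale-locally $H=V(x)$ and $\partial/\partial x\in\cD_X^1$ satisfies $\partial/\partial x(x)=1$). The implication $(1)\Rightarrow(3)$ is immediate: Lemma \ref{Lem:C-equiv} gives $(\cI,a)\approx(C_\cF(\cI,a),a!)$, so $\sigma$ is $(C_\cF(\cI,a),a!)$-admissible, and Lemma \ref{restrictlem}(2) transfers admissibility to $\tau$ on $H$, yielding $\tau$ is $(C_\cF(\cI,a)|_H,a!)$-admissible.

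For $(3)\Rightarrow(1)$, I would induct on the length $n$ of $\sigma$, with base case $n=1$ being exactly Proposition \ref{Hlem}. For the inductive step, factor $\sigma=\pi\circ\sigma_1$ where $\sigma_1\:X_1\to X$ is the first Kummer blowing up along some center $\cJ_0$, with corresponding restrictions $\tau_0\:H_1\to H$ and $\tau_1\:H'\to H_1$ of $\tau$. The hypothesis applied to $\tau_0$ together with Proposition \ref{Hlem} yields $(\cI,a)$-admissibility of $\sigma_1$. To continue by induction on $\pi$, I need to set up the corresponding data on $X_1$: the marked ideal $(\sigma_1^c(\cI),a)$ is of maximal order by Proposition \ref{5}; the strict transform $H_1$ is a maximal contact for it, since $\cI_{H_1}=\sigma_1^c(\cI_H,1)\subseteq\sigma_1^c(\cD_X^{(\le a-1)}(\cI),1)\subseteq\cD_{X_1}^{(\le a-1)}(\sigma_1^c(\cI,a))$ using Lemma \ref{44} and the fact that $\sigma_1$ is $H$-admissible; and $\sigma_1^c(\cF)$ is $H_1$-contracting by Lemma \ref{contractlem}.

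It then remains to verify condition (3) for $\pi$, namely that $\tau_1$ is $(C_{\sigma_1^c(\cF)}(\sigma_1^c(\cI),a)|_{H_1},a!)$-admissible. From the hypothesis (3) applied to the tail $\tau_1$ of $\tau$, combined with Lemma \ref{restrictlem}(2), I get that $\tau_1$ is $(\sigma_1^c(C_\cF(\cI,a),a!)|_{H_1},a!)$-admissible. Now the key inclusion from Proposition \ref{Fprop},
\[
C_{\sigma_1^c(\cF)}(\sigma_1^c(\cI),a)^\nor \subseteq \sigma_1^c(C_\cF(\cI,a),a!)^\nor,
\]
restricted to $H_1$ via Lemma \ref{restrictlem}(1), gives
\[
C_{\sigma_1^c(\cF)}(\sigma_1^c(\cI),a)^\nor|_{H_1}\subseteq\bigl(\sigma_1^c(C_\cF(\cI,a),a!)|_{H_1}\bigr)^\nor.
\]
Since admissibility is invariant under integral closure (Lemma \ref{norlem}(1)) and is inherited by smaller ideals, admissibility of $\tau_1$ for $\sigma_1^c(C_\cF(\cI,a),a!)|_{H_1}$ implies admissibility for $C_{\sigma_1^c(\cF)}(\sigma_1^c(\cI),a)|_{H_1}$, which is what was needed to invoke induction on $\pi$.

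The main obstacle is precisely this last bookkeeping step: the coefficient ideal does not commute exactly with controlled transforms, only up to integral closure, and coefficient ideals do not commute exactly with restriction, only up to integral closure either. The fortunate fact that both discrepancies are absorbed by $(\cdot)^\nor$ and that admissibility is insensitive to integral closure (Lemma \ref{norlem}(1)) is what makes the induction go through, and choosing $\cF$ rather than fixing $\cD_X^1$ gives enough flexibility so that the contracting submodule propagates cleanly under blowings up via Lemma \ref{contractlem}.
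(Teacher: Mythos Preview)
Your argument is correct and uses the same ingredients as the paper's proof: Proposition~\ref{Fprop}, Lemma~\ref{restrictlem}, Lemma~\ref{contractlem}, Proposition~\ref{Hlem}, and Lemma~\ref{norlem}. The only organisational difference is that you induct by peeling off the \emph{first} blowing up and reapplying the theorem on $X_1$ with the new data $(\sigma_1^c(\cI),a)$, $H_1$, $\sigma_1^c(\cF)$, whereas the paper peels off the \emph{last} center via an auxiliary lemma (Lemma~\ref{reslemmaseq}) that applies Proposition~\ref{Fprop} to the entire preceding sequence at once. Your scheme forces you to re-verify at each step that $H_1$ is a maximal contact for $(\sigma_1^c(\cI),a)$ --- which you do correctly via Lemma~\ref{44} --- while the paper uses this fact implicitly for the full strict transform $H'$; conversely, the paper's approach avoids re-instantiating the whole hypothesis package on intermediate $X_i$'s. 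Both routes are equivalent and equally valid.
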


We will prove equivalence of (1) and (3), as this subsumes (2) by taking $\cF=\cD_X^1$. Set $C=C_\cF(\cI,a)$ and $C_H=C|_H$. We wish to extend Proposition \ref{Hlem} to admissible Kummer sequences. The hard part is to prove an extension of lemma \ref{reslemma}, namely  that admissibility lifts from $H$ to $X$, to sequences:

\begin{lemma}\label{reslemmaseq}
Let  $\cJ$ be a Kummer center on $X'$ such that $\cI_{H'}\subset\cJ$ and $\cJ_H:=\cJ|_{H'}$ is $(\tau^c(C_H),a!)$-admissible. Then $\cJ$ is $(\sigma^c(\cI),a)$-admissible.
\end{lemma}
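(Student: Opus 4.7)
The plan is to reduce the claim to the single-step criterion Proposition~\ref{Hlem}, applied on $X'$ to the marked ideal $(\sigma^c(\cI),a)$ with hypersurface $H'$ and $H'$-contracting submodule $\sigma^c(\cF)$. First I would verify these are valid inputs for Proposition~\ref{Hlem}: $(\sigma^c(\cI),a)$ is of maximal order by Proposition~\ref{5}(2); $H'$ is a maximal contact for $(\sigma^c(\cI),a)$ because $H$-admissibility of $\sigma$ gives $\sigma^c(\cI_H,1)=\cI_{H'}$, and then $\cI_H\subseteq\cT(\cI,a)$ combined with Lemma~\ref{44} places $\cI_{H'}$ inside $\cD_{X'}^{(\leq a-1)}(\sigma^c(\cI))$; and $\sigma^c(\cF)$ is $H'$-contracting by Lemma~\ref{contractlem}. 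The hypothesis $\cI_{H'}\subseteq\cJ$ is precisely the $H'$-admissibility of $\cJ$.

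The substantive step is to show that $\cJ|_{H'}$ is $(C_{\sigma^c(\cF)}(\sigma^c(\cI),a)|_{H'},a!)$-admissible. For this I would invoke Proposition~\ref{Fprop} on $X'$ to obtain
$$C_{\sigma^c(\cF)}(\sigma^c(\cI),a)^\nor\ \subseteq\ \sigma^c(C_\cF(\cI,a),a!)^\nor.$$
Restricting to $H'$ and using Lemma~\ref{restrictlem}(1) to exchange restriction past $^\nor$, the right-hand side becomes contained in $(\sigma^c(C_\cF(\cI,a),a!)|_{H'})^\nor$; then Lemma~\ref{restrictlem}(2)---applicable because $\sigma$ is $(C_\cF(\cI,a),a!)$-admissible by Lemma~\ref{Lem:C-equiv}---identifies $\sigma^c(C_\cF(\cI,a),a!)|_{H'}$ with $\tau^c(C_H,a!)$. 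Finally, the hypothesis $\tau^c(C_H)\subseteq (\cJ_H^{a!})^\nor$ together with idempotence of integral closure closes the chain, and Proposition~\ref{Hlem} applied on $X'$ concludes that $\cJ$ is $(\sigma^c(\cI),a)$-admissible.

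The main obstacle is this coefficient-ideal comparison: neither Proposition~\ref{Fprop} nor Lemma~\ref{restrictlem}(1) produces an equality, only one-sided inclusions involving $^\nor$. The point is that for admissibility only containment in $(\cJ_H^{a!})^\nor$ is required, and since $((\cJ_H^{a!})^\nor)^\nor=(\cJ_H^{a!})^\nor$, assembling the available inclusions in the correct direction suffices---no finer commutation statement between $\sigma^c$, the coefficient ideal, and restriction to $H'$ is needed. The verification that $H'$ remains a maximal contact, while conceptually easy, is the subtle transversality input that makes the reduction to Proposition~\ref{Hlem} legitimate.
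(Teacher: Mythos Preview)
Your proposal is correct and follows essentially the same route as the paper: establish the chain $C_{\sigma^c(\cF)}(\sigma^c(\cI),a)|_{H'}\subseteq\sigma^c(C,a!)^\nor|_{H'}\subseteq\tau^c(C_H,a!)^\nor\subseteq(\cJ_H^{a!})^\nor$ via Proposition~\ref{Fprop} and Lemma~\ref{restrictlem}, then conclude by Proposition~\ref{Hlem} on $X'$ using that $\sigma^c(\cF)$ is $H'$-contracting (Lemma~\ref{contractlem}). You are in fact a bit more careful than the paper in spelling out that both parts of Lemma~\ref{restrictlem} are needed and that Lemma~\ref{Lem:C-equiv} justifies applying part~(2). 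One small correction: for the maximal-contact verification you want the sequence version, Lemma~\ref{diff}(2), rather than the single-step Lemma~\ref{44}.
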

\begin{proof}[Proof of Lemma] Set $C=C_\cF(\cI,a)$ and $C_H=C|_H$.
We claim that the following chain of inclusions holds:
$$C_{\sigma^c(\cF)}(\sigma^c(\cI),a)|_{H'}\subseteq\sigma^c(C,a!)^\nor|_{H'}\subseteq\tau^c(C_H,a!)^\nor\subseteq(\cJ_H^{a!})^\nor.$$
Indeed, the first inclusion is due to Proposition~\ref{Fprop}. The second inclusion is due to Lemma~\ref{restrictlem}. The third inclusion holds because $\tau^c(C_H,a!)\subseteq(\cJ_H^{a!})^\nor$ by the admissibility assumption.

Recall that $\sigma^c(\cF)$ is $H'$-contracting by Lemma~\ref{contractlem}. We have just proved that $\cJ_H$ is $(C_{\sigma^c(\cF)}(\sigma^c(\cI),a)|_{H'},a!)$-admissible. This together with Proposition~\ref{Hlem} implis that $\cJ$ is $(\sigma^c(\cI),a)$-admissible, concluding the proof.
\end{proof}

\begin{proof}[Proof of Theorem \ref{Hth}]
Assuming (1), Lemma~\ref{Lem:C-equiv} implies $\sigma$ is $(C,a!)$-admissible. By Lemma~\ref{restrictlem}(2) $\tau$ is $(C_H,a!)$-admissible, giving (3).

To prove that (3) implies (1) we use induction on the length $n$ of $\sigma$. The case $n=0$ is vacuously true, and the inductive step is provided by Lemma \ref{reslemmaseq}, as needed.
\end{proof}

Finally we reduce order reduction of marked ideals of maximal order to order reduction of marked ideals in a smaller dimension:

\begin{proof}[Proof of Theorem \ref{Th:reductionthm}]
Recall that $\sigma=i_*(\tau)$ is an $H$-admissible Kummer sequence whose restriction on $H$ coincides with $\tau$. Recall that $\tau$ is a maximal $(C_X(\cI,a)|_H,a!)$-admissible Kummer sequence. By Theorem~\ref{Hth} and Corollary~\ref{Hcor}, $\sigma$ is a maximal $(\cI,a)$-admissible Kummer sequence of $X$, and this means that $\sigma$ is an order reduction of $(\cI,a)$.
\end{proof}

\begin{remark}
In this proof we only used the equivalence of (1) and (2) in Theorem~\ref{Hth}, but Statement (3) is  convenient in the proof of that theorem. Also, small coefficient ideals may be useful. For example, one may want to take $\cF$ generated by a single $H$-contracting derivation $\partial$, see \cite[Exercises 4.4(1)]{Bierstone-Milman-funct}.
\end{remark}
\begin{remark}
Homogenization without taking some form of coefficient ideal looses essential information after restricting on $H$ and cannot be used for lifting purposes: even when $I$ is homogenized, $I|_H$-admissibility does not lift to $I$-admissibility, as the following example shows.
\end{remark}

\begin{example}\label{notenough}
Consider a homogenized ideal $$(\cI,3):=((x^3,xu^3,u^6), 3),$$ {where $x$ is an ordinary parameter and $u$ a monomial. The module of logarithmic derivations  is generated by $\frac{\partial}{\partial x}$ and $u\frac{\partial}{\partial u}$.  We have  $\cD^{(\leq 1)}(\cI)=((x^2,u^3),2)$ and $\cD^{(\leq 2)}(\cI)= ((x,u^3),1)$.

Consider the hypersurface of maximal contact $x=0$.  The center $(u^2)$ is admissible for $(\cI,3)|_{x=0} =((u^6),3)$ but its lift $(x,u^2)$ is not admissible for $(\cI,3)$.

To analyze the situation, we compute $C(\cI,3) = ((x^6,x^4u^3,x^2u^6, u^9),6)$. As we see below, order reduction for $(\cI,3)$  is equivalent to order reduction for the restriction $C(\cI,3)|_{x=0} =(u^9,6)$ of the coefficient ideal.

Indeed the Kummer center $(u^{3/2})$ is admissible for $(u^9,6)$ and its lift $(x,u^{3/2})$ is admissible for $(\cI,3)$. Computing $\sigma^c(\cI,3)$, it restricts to the unit ideal on the $x$-chart, and to the ideal $(y,u^{3/2})$, with logarithmic order $1$, where $y=x/u^{3/2}$, on the $u^{3/2}$-chart, thus achieving order reduction.}
\end{example}

\section{Non-embedded desingularization}
\addtocontents{toc}{\noindent We deduce functorial non-embedded desingularization of logarithmic varieties.}
\label{Sec:nonembedded}

Throughout this section by a {\em logarithmic variety} we mean a variety $X$ provided with a fine and saturated logarithmic structure $M_X$ which is defined already in the Zariski topology. The latter condition means that for any geometric point $\op\to X$ with image $p\in X$ we have that $\oM_p=\oM_\op$.

\subsection{Embeddings of logarithmic varieties}
Let $X$ be a logarithmic variety and $p\in X$ a closed point. We will use the definition of logarithmic stratification of $X$ from \cite[\S2.2.10]{AT1}. Let $u:M=\oM_p\to\cO_{X,p}$ be a sharp monoidal chart at $p$ and let $M^+=M\setminus\{0\}$ be the maximal ideal of $M$. Then, locally at $p$, the stratum $s_p$ through $p$ is given by the ideal $u(M^+)\cO_{X,p}$.

\subsubsection{Existence of local embedding} The following lemma shows how coordinates can be used to construct strict closed {embeddings of logarithmic varieties into toroidal varieties}.

\begin{lemma}\label{emblem}
Fix a monoidal chart $u\:M\to\cO_{X,p}$ and elements $t_1\.t_n\in\cO_{X,p}$ whose images generate the cotangent space to $s_p$ at $p$. Consider a neighborhood $X_0$ of $p$ on which $u(M)$ and $t_1\.t_n$ are global functions so that a morphism of logarithmic schemes $f=(u,t)\:X_0\to\Spec(k[M])\times\AA^n_k$ arises. Then
\begin{enumerate}
\item
$f$ is strict at $p$ as a morphism of logarithmic schemes.
\item
$f$ is unramified at $p$.
\item
On a small enough neighborhood $X_1$ of $p$ the morphism $f$ factors into a composition of a closed immersion $X_1\into Y$ and an \'etale morphism $Y\to \Spec(k[M])\times\AA^n_k$.
\end{enumerate}
\end{lemma}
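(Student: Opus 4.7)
The plan is to handle the three parts sequentially; each follows from a short local computation or a classical structure result.

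\textbf{Part (1): Strictness at $p$.} This is immediate from the construction. The morphism $f$ is defined so that its scheme-theoretic pullback sends the toric chart $M\to k[M]$ on $\Spec(k[M])\times\AA^n_k$ to the prelog structure $u\:M\to\cO_{X_0}$. Since $u\:M=\oM_p\to\cO_{X,p}$ is a sharp monoidal chart for $M_X$ at $p$, the logarithmic structure associated to this prelog structure coincides with $M_X$ on some neighborhood of $p$. Hence $f^*M_T=M_X$ at $p$, which is the definition of strictness.

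\textbf{Part (2): Unramifiedness at $p$.} Let $T=\Spec(k[M])\times\AA^n_k$. By Nakayama applied to $\Omega^1_{X_0/T,p}$, it suffices to check that the cotangent map $f^*\Omega^1_T\to\Omega^1_{X_0}$ is surjective modulo $\m_p$, that is, on $\m_p/\m_p^2$. The ideal of the stratum $s_p$ through $p$ is $I_p=u(M^+)\cO_{X,p}$ (by definition of the logarithmic stratification, using sharpness), giving a short exact sequence
$$0\to I_p/(I_p\cap\m_p^2)\to \m_p/\m_p^2\to \m_{s_p,p}/\m_{s_p,p}^2\to 0.$$
The left-hand term is spanned, over $k$, by the images of $u(m)$ for $m$ running over an indecomposable generating set of $M^+$, which are hit by the pullbacks $du(m)$ from $\Spec(k[M])$. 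The quotient on the right is spanned by the images of $t_1\.t_n$ by hypothesis, hence hit by the pullbacks $dt_i$ from $\AA^n_k$. Together these give the required surjectivity and hence unramifiedness.

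\textbf{Part (3): Factorization.} This is a direct application of the classical structure theorem for unramified morphisms of finite type between noetherian schemes: such a morphism factors, on a neighborhood of any point at which it is unramified, as a closed immersion followed by an \'etale morphism. Endowing the intermediate scheme $Y$ with the logarithmic structure pulled back from $T$, after possibly shrinking $X_1$ to keep $f$ strict, the closed immersion $X_1\into Y$ is strict and $Y\to T$ is strict \'etale, in particular logarithmically \'etale, so $Y$ inherits the structure of a toroidal variety from $T$.

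The main point requiring care is the cotangent computation of Part~(2), where one must use the identification of the ideal of the stratum with $u(M^+)\cO_{X,p}$ and the splitting of $\m_p/\m_p^2$ induced by the stratification; once this is in place, Parts~(1) and (3) are essentially formal, the latter being a black-box invocation of the classical unramified factorization theorem.
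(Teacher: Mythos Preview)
Your proof is correct and follows essentially the same route as the paper. The only cosmetic difference is in Part~(2): the paper verifies unramifiedness via the criterion $m_q\cO_{X,p}=m_p$ (together with separability of $k(p)/k$ in characteristic~0), observing directly that $u(M^+)$ and $t_1,\dots,t_n$ generate $m_p$, whereas you phrase the same computation through the vanishing of $\Omega^1_{X_0/T,p}$ and the short exact sequence on cotangent spaces; Parts~(1) and~(3) match the paper's treatment exactly.
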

\begin{proof}
Set $S=s_p$, $Z=\Spec(k[M])\times\AA^n_k$ and $q=f(p)$ for shortness.

(1) The sharp monoid at $q$ is, by the construction, $M$.

(2) Since $p$ is closed $k(p)$ is finite over $k(q)=k$, and since ${\rm char}(k)=0$ the extension $k(p)/k$ is separable. By definition, $\cO_{S,p}=\cO_{X,p}/u(M^+)$ and $t_1\.t_n$ generate the maximal ideal of $\cO_{S,p}$. Therefore $m_q\cO_{X,p}=m_p$, and $f$ is unramified at $p$ by \cite[Tag:02GF]{stacks}.

(3) This is a general fact about unramified morphisms, see \cite[Tag:0395]{stacks}.
\end{proof}

\subsubsection{The embedding dimension}
\begin{corollary}\label{embcor}
Let $X$ be a logarithmic variety, $p\in X$ a closed point, $r={\rm rk}(\oM_p)$ and $d$ the dimension of the cotangent space to the logarithmic stratum $s_p$ at $p$. Then $d+r$ is the minimal natural number $n$ such that there exists a neighborhood $X_0$ of $p$ and a strict closed immersion $X_0\into Y$, where $Y$ is a toroidal variety of dimension $n$.
\end{corollary}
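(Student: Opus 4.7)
The plan is to prove two inequalities: the embedding dimension is at most $d+r$ (existence) and at least $d+r$ (minimality).

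\textbf{Upper bound (existence).} This is essentially immediate from Lemma~\ref{emblem}. First I would choose a sharp monoidal chart $u\:M\to\cO_{X,p}$ with $M=\oM_p$, which has rank $r$, and lift a basis of the cotangent space $m_{s_p,p}/m_{s_p,p}^2$ to elements $t_1\.t_d\in\cO_{X,p}$. Lemma~\ref{emblem}(3) then provides, on a suitable neighborhood $X_1$ of $p$, a factorization $X_1\into Y\to Z:=\Spec(k[M])\times\AA^d_k$ with the first map a closed immersion and the second \'etale. By Lemma~\ref{emblem}(1) the first map is \emph{strict}, and $Y$ is a toroidal variety since it is \'etale over a toric variety. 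Finally $\dim Y=\dim Z=r+d$.

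\textbf{Lower bound (minimality).} Suppose $X_0\into Y$ is a strict closed immersion with $Y$ a toroidal variety of dimension $n$. I would use three standard facts about toroidal varieties:
\begin{enumerate}
\item Strictness gives $\oM_{Y,p}=\oM_{X,p}$, so $\rk\oM_{Y,p}=r$.
\item For a toroidal variety $Y$, the logarithmic stratum $s_{Y,p}$ through $p$ has codimension equal to $\rk\oM_{Y,p}$, so $\dim s_{Y,p}=n-r$.
\item The stratum $s_{Y,p}$ is smooth (\'etale-locally it is a torus orbit), so its cotangent space at $p$ has dimension exactly $n-r$.
\end{enumerate}
Since the immersion is strict, it restricts to a closed immersion $s_{X,p}\into s_{Y,p}$ of logarithmic strata, inducing a surjection on cotangent spaces at $p$. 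The target has dimension $n-r$ and the source has dimension $d$ by definition, so $n-r\ge d$, i.e. $n\ge d+r$.

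\textbf{Main point to check.} The only nontrivial ingredient is the compatibility package: that a strict closed immersion of logarithmic varieties into a toroidal variety restricts to a closed immersion of the corresponding logarithmic strata, together with the codimension and smoothness of strata in a toroidal variety. Both are standard in the toroidal embedding dictionary (cf.\ \cite{KKMS}, or \cite[\S2.2.10]{AT1} for the stratification convention used here), so the proof is short once these are invoked.
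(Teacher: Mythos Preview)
Your proof is correct and follows essentially the same approach as the paper: the upper bound via Lemma~\ref{emblem}, and the lower bound via the induced closed immersion of logarithmic strata $s_p\into s_q$ together with the fact that $s_q$ has codimension $r=\rk\oM_q=\rk\oM_p$ in $Y$. Your version is simply more explicit about the three ingredients (strictness preserves the rank, strata are smooth of codimension equal to the rank, and the closed immersion of strata forces $\dim s_q\ge d$).
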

\begin{proof}
By Lemma \ref{emblem}, there exists a neighborhood $X_0$ of $p$ admitting a strict closed immersion into an \'etale covering of $\Spec(k[\oM_p])\times\AA^d_k$. So, $n\le d+r$. Conversely, a strict closed immersion $i\:X_0\into Y$ into a toroidal variety, induces a closed immersion of the logarithmic strata $s_p\into s_q$ through $p$ and $q=i(p)$. Therefore, $\dim(s_q)\ge d$ and hence $\dim_q(Y)\ge r+d$.
\end{proof}

\subsubsection{Compatibility of embeddings with logarithmically smooth morphisms}

\begin{lemma}\label{emblem2}
Assume that $f\:X'\to X$ is a logarithmically smooth morphism and $i\:X\into Y$ is a strict closed immersion of logarithmic varieties. Then for any point $p\in X'$ there exist an \'etale neighborhood $X'_0$ with induced logarithmic structure, a strict closed immersion $X'_0\into Y'_0$, and a logarithmically smooth morphism $Y'_0\to Y$ such that $X'_0=X\times_YY'_0$.
\end{lemma}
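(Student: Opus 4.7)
The plan is to reduce to a local problem at $p$ and use the local structure of logarithmically smooth morphisms from Remark~\ref{formalrem}(ii) to extend the situation from $X$ to $Y$ by a toric-times-affine factor, then define $Y'_0$ as the appropriate fiber product.

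First, let $q = f(p) \in X$, and view $q$ also as a point of $Y$ via $i$. Since $i$ is strict, $\oM_q^X = \oM_q^Y$. Applying Remark~\ref{formalrem}(ii), after replacing $X'$ by a strict \'etale neighborhood of $p$, we may find a monoidal chart $u_p\:\oM_p\to\cO_{X',p}$ extending a chosen chart $u_q\:\oM_q\to\cO_{X,q}$, together with elements $y_1,\dots,y_m\in\cO_{X',p}$, such that the induced morphism
\[
X' \;\longrightarrow\; X\times_{\Spec(k[\oM_q])} \bigl(\Spec(k[\oM_p])\times \AA^m_k\bigr)
\]
is \'etale at $p$. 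Call the target of this morphism $X_\flat$; it is the ``local model'' of $f$ near $p$, and $X'\to X_\flat$ is a strict \'etale neighborhood of $p$.

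Next I extend this local model from $X$ to $Y$. Because $i$ is strict, the chart $u_q\:\oM_q\to\cO_{X,q}$ lifts, after shrinking $Y$ in a Zariski neighborhood of $q$, to a chart $\tilde u_q\:\oM_q\to\cO_{Y,q}$ restricting to $u_q$ along $i$. Set
\[
Y_\flat \;:=\; Y\times_{\Spec(k[\oM_q])}\bigl(\Spec(k[\oM_p])\times\AA^m_k\bigr),
\]
where $Y\to\Spec(k[\oM_q])$ comes from $\tilde u_q$, and endow $Y_\flat$ with the natural fs logarithmic structure. Since $\oM_q\hookrightarrow\oM_p$ is injective in characteristic $0$ and $\AA^m_k$ contributes only smooth directions, the projection $Y_\flat\to Y$ is logarithmically smooth. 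Moreover, base change along the strict immersion $i$ gives precisely $X_\flat = X\times_Y Y_\flat$, and $X_\flat\hookrightarrow Y_\flat$ is strict closed.

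Finally, I realize $X'_0$ inside $X'\times_{X_\flat} Y_\flat$: since $X'\to X_\flat$ is \'etale at $p$, I can choose a common \'etale neighborhood $X'_0\to X'$ of $p$ together with an \'etale morphism $Y'_0\to Y_\flat$ such that $X'_0 = X_\flat\times_{Y_\flat} Y'_0 = X\times_Y Y'_0$ and $X'_0\hookrightarrow Y'_0$ is a strict closed immersion. The composition $Y'_0\to Y_\flat\to Y$ is logarithmically smooth since each factor is. This gives the required data.

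The main obstacle is the lifting of the monoidal chart from $X$ to $Y$ in the second paragraph: one must know that on a sufficiently small neighborhood of $q\in Y$ the characteristic sheaf is constant (or at least admits the required global chart), which is where the hypothesis that $X$ (hence $Y$) is a logarithmic variety with Zariski-local logarithmic structure, together with strictness of $i$, is used. Once the chart is lifted, the rest is formal manipulation of fibered products of logarithmically smooth morphisms.
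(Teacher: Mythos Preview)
Your approach is essentially the same as the paper's: localize at $q=f(p)$, use the chart description of logarithmically smooth morphisms to factor $X'_0\to X$ through a strict \'etale morphism to a ``local model'' $X_\flat=X\otimes_{k[M]}k[N]$, lift the chart on $X$ to $Y$ via strictness, form the parallel local model $Y_\flat=Y\otimes_{k[M]}k[N]$, and finally lift the strict \'etale morphism $X'_0\to X_\flat$ along the closed immersion $X_\flat\hookrightarrow Y_\flat$.

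Two small points. First, you invoke Remark~\ref{formalrem}(ii) for the \'etale factorization $X'\to X_\flat$, but that remark is stated for morphisms of \emph{toroidal} varieties and describes formal completions; here $X$ and $X'$ are general logarithmic varieties. The correct input is Kato's chart theorem \cite[Theorem~3.5]{Kato-log}, which the paper cites: it produces $X'_0$ strict \'etale over $X\otimes_{k[M]}k[N]$ for some injection of fs monoids $M\hookrightarrow N$ (your $N$ is $\oM_p\oplus\NN^m$). Second, your last step---producing $Y'_0\to Y_\flat$ \'etale with $X'_0=X_\flat\times_{Y_\flat}Y'_0$---is exactly the problem of lifting an \'etale morphism off a closed subscheme, which is standard locally (the paper points to \cite[Tag:00UE]{stacks}); your phrasing ``choose a common \'etale neighborhood'' obscures this a bit.
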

\begin{proof}
The question is local on $X$ at the image $q=f(p)$. Hence we can assume that $X$ possesses a global chart $X\to\Spec(k[M])$ with $M=\oM_q$. By \cite[Theorem 3.5]{Kato-log} we can find $X'_0$ such that the morphism $X'_0\to X$ factors through a strict \'etale morphism $X'_0\to X\otimes_{k[M]}k[N]$, where $M\into N$ are fs monoids. Shrinking $Y$ around $i(q)$ we can assume that the chart of $X$ lifts to a global chart $Y\to\Spec(k[M])$ of $Y$. The morphism $X\otimes_{k[M]}k[N]\to X$ lifts to a logarithmically smooth morphism $Y\otimes_{k[M]}k[N]\to Y$. So, it remains to lift the strict \'etale morphism $X'_0\to X\otimes_{k[M]}k[N]$ to a strict \'etale morphism $Y'_0\to Y\otimes_{k[M]}k[N]$. But this is the problem of lifting a usual \'etale morphism from a closed subscheme, which is easily seen to be possible locally. (For example, one can use the explicit local description of \'etale morphisms from \cite[Tag:00UE]{stacks}.)
\end{proof}

\subsubsection{\'Etale equivalence of embeddings in varieties of the same dimension}
Our main result about embeddings of a logarithmic variety $X$ into toroidal varieties is that locally at $p$ such an embedding $i\:X\into Y$ is determined by the dimension of $Y$ at $i(p)$ up to an \'etale morphism of the target. In fact, this result is almost obvious
formally-locally since each such embedding corresponds to a homomorphism of completed local rings of the form $k(p)\llbracket\oM_p,t_1\.t_n\rrbracket\onto\hatcO_{X,p}$. Using Lemma~\ref{emblem} we will obtain a more refined \'etale-local version.

\begin{theorem}\label{embth}
Assume that $X$ is a logarithmic variety, $p\in X$ is a point, and $i\:X\into Y$, $i'\:X\into Y'$ are two strict closed immersions whose targets are irreducible toroidal varieties of the same dimension. Then there exists neighborhoods $X_0$, $Y_0$ and $Y'_0$ of $p$, $i(p)$ and $i'(p)$, and \'etale morphisms $f\:Z\to Y_0$ and $f'\:Z\to Y'_0$ with the same source, such that

(1) $i$ and $i'$ restrict to closed immersions $i_0\:X_0\into Y_0$ and $i'_0\:X_0\into Y'_0$,

(2) $f$ and $f'$ restrict to isomorphisms over $i(X_0)$ and $i'(X_0)$, respectively.
\end{theorem}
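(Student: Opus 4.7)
The plan is to reduce both embeddings to a common toroidal model $B=\Spec(k[M])\times\AA^n_k$ via strict \'etale morphisms, and then to take $Z$ to be a suitable open subscheme of the fiber product $Y_0\times_B Y'_0$. Here $M=\oM_p$, canonically identified with $\oM_{Y,i(p)}$ and $\oM_{Y',i'(p)}$ via strictness, and $n=\dim Y-\rk M^\gp=\dim Y'-\rk M^\gp$ is the common dimension of the strata $s_{i(p)}$ and $s_{i'(p)}$. Lemma~\ref{emblem} plus a dimension count will produce the required strict \'etale maps, and the compatibility of choices will make the two maps restrict identically to $X$.

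\textbf{Matching charts and parameters.} After shrinking $X$ around $p$ if necessary, I would choose a sharp chart $u\:M\to\cO_{X,p}$ and lift it, using surjectivity of $i^*$ and $(i')^*$, to charts $u_Y\:M\to\cO_{Y,i(p)}$ and $u_{Y'}\:M\to\cO_{Y',i'(p)}$ with $i^*\circ u_Y=u=(i')^*\circ u_{Y'}$. Let $d=\dim_p s_p$ and choose $s_1,\ldots,s_d\in\cO_{X,p}$ whose images form regular parameters on $s_p$ at $p$. Strictness implies $s_p=X\cap s_{i(p)}$, and $\dim s_{i(p)}=n$. I would then select $t_1,\ldots,t_n\in\cO_{Y,i(p)}$ whose images in $\cO_{s_{i(p)},i(p)}$ form a regular parameter system, satisfying $i^*t_j=s_j$ for $j\le d$ and $i^*t_j=0$ for $j>d$. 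Such $t_j$ exist: take arbitrary lifts of a parameter system on $s_{i(p)}$ extending the $s_j$ together with local equations for $s_p\subseteq s_{i(p)}$, then adjust the lifts by elements of $u_Y(M^+)\cdot\cO_{Y,i(p)}$ (which vanish on $s_{i(p)}$ and so do not disturb the previous property) to enforce the conditions on $i^*$. Choose $t'_1,\ldots,t'_n\in\cO_{Y',i'(p)}$ analogously, arranging $(i')^*t'_j=i^*t_j$ for all $j$.

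\textbf{Descending to a common base.} Put $B=\Spec(k[M])\times\AA^n_k$, a toroidal variety of dimension $\dim Y=\dim Y'$. The data above assemble into morphisms $f_Y=(u_Y,t)\:Y_0\to B$ and $f_{Y'}=(u_{Y'},t')\:Y'_0\to B$ on suitable neighborhoods, which are strict and unramified at $i(p)$ and $i'(p)$ by Lemma~\ref{emblem}(1)--(2). Since source and target are logarithmically smooth of the same dimension, an unramified strict morphism is \'etale, so after further shrinking both $f_Y$ and $f_{Y'}$ are strict \'etale. By the compatibility arranged above, on a common neighborhood $X_0\subseteq X$ of $p$ one has $f_Y\circ i|_{X_0}=f_{Y'}\circ i'|_{X_0}=:h$, a closed immersion $X_0\into B$ with coordinates $(u,s_1,\ldots,s_d,0,\ldots,0)$.

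\textbf{Fiber product and shrinking.} Form $\tilde Z=Y_0\times_B Y'_0$ with its \'etale projections $\tilde f$ and $\tilde f'$. The pair $(i,i')\:X_0\to\tilde Z$ is a well-defined closed immersion, and its image sits as a clopen subscheme of $\tilde f^{-1}(i(X_0))$, since any section of the \'etale morphism $\tilde f^{-1}(i(X_0))\to i(X_0)$ is a clopen immersion; the analogous statement holds for $\tilde f'^{-1}(i'(X_0))$. Let $Z\subseteq\tilde Z$ be the open subscheme obtained by removing, from both of these \'etale preimages, the components disjoint from $(i,i')(X_0)$. The restrictions $f\:Z\to Y_0$ and $f'\:Z\to Y'_0$ remain \'etale and, by construction, restrict to isomorphisms over $i(X_0)$ and $i'(X_0)$ respectively. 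The main technical obstacle is the three-way compatibility of parameters in the second paragraph; everything else then follows formally from the universal property of the fiber product and standard properties of \'etale morphisms.
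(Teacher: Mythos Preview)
Your proof is correct and follows essentially the same approach as the paper: construct compatible strict \'etale morphisms from $Y_0$ and $Y'_0$ to the common toroidal model $\Spec(k[M])\times\AA^n_k$ by lifting coordinates and charts from $X$, then take the fiber product and prune extraneous components. Your justification for \'etaleness (strict unramified between log smooth varieties of equal dimension) is a clean way to phrase what the paper asserts directly, and your argument for adjusting the lifts $t_j$ by monomial elements to enforce the conditions on $i^*$ is the same step the paper records more tersely as ``complete \ldots\ such that the images \ldots\ vanish.''
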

Loosely speaking, the theorem asserts that locally at $p$ both $i$ and $i'$ factor through a closed immersion $X\into Z$, where $Z$ is \'etale over both $Y$ and $Y'$.
\begin{proof}
Fix coordinates $t_1\.t_n\in\cO_{X,p}$ and $u\:M=\oM_p\into\cO_{X,p}$ at $p$. Set $q=i(p)$ and $q'=i'(p)$, and lift $t_1\.t_n$ to elements $x_1\.x_n\in\cO_{Y,q}$ and
$x'_1\.x'_n\in\cO_{Y',q'}$. Furthermore, complete the latter families to families $x_1\.x_m\in\cO_{Y,q}$ and $x'_1\.x'_m\in\cO_{Y',q'}$ of ordinary parameters such that the images of $x_i$ and $x'_i$ in $\cO_{X,p}$ vanish for $n<i\le m$. The two latter families are of the same size by the assumption on the dimensions.

Next, we claim that $u$ can be lifted to a monoidal chart $v\:M=\oM_q\into\cO_{Y,q}$. Indeed, it suffices to choose $m_1\.m_r\in M$ that form a basis of $M^\gp=\ZZ^r$, and lift $u(m_i)$ to elements $v_i\in M_q\subset\cO_{Y,q}$. Since $\oM_q=M$, for any element $\sum_{i=1}^rn_im_i$ with $n_i\in\ZZ$ the element $\prod_{i=1}^rv_i^{n_i}$ of $M_q^\gp$ actually lies in $M_q$, and hence there exists a unique homomorphism $v\:M\to\cO_{Y,q}$ sending $m_1\.m_r$ to $v_1\.v_r$. Clearly, $v$ is a lifting of $u$. In the same way, fix a lifting $v'\:M\to\cO_{Y',q'}$ of $u$.

Taking appropriate neighborhoods $X_0$, $Y_0$ and $Y'_0$ of $p$, $q$ and $q'$, respectively, we can assume that (1) is satisfied and all elements we have constructed are global functions. Consider morphisms $g\:Y_0\to T=\Spec(k[M])\times\AA^m_k$ and $g'\:Y'_0\to T$ induced by $(x_1\.x_n\.x_m,v)$ and $(x'_1\.x'_n\.x'_m,v')$, respectively. Since $g$ and $g'$ are \'etale at $q$ and $q'$, shrinking $X_0$, $Y_0$ and $Y'_0$ further, we can assume that $g$ and $g'$ are \'etale everywhere. By the construction, both $g$ and $g'$ restrict to the morphism $g\:X_0\to T$ induced by $(x_1\.x_n,0\.0,u)$. Set $Z=Y_0\times_TY'_0$ and note that the composition $X_0\into X_0\times_TX_0\into Z$ is a closed immersion $j\:X\into Z$ that lifts $i$ and $i_0$ to $Z$. Since the projections $f\:Z\to Y_0$ and $f'\:Z\to Y'_0$ are \'etale, $j(X_0)$ is a connected component of both $f^{-1}(i(X_0))$ and $f'^{-1}(i'(X_0))$, and we accomplish the proof by removing all other components of these preimages from $Z$.
\end{proof}

\subsection{Desingularization of logarithmic stacks}\label{Sec:nonembedded-proof}
\subsubsection{Resolution by toroidal stacks}
{Recall that a {\em modification} of logarithmic DM  stacks is a proper morphism inducing an isomorphism of dense open substacks. Here is the key example of a modification of such stacks:}

\begin{example}\label{Ex:stack-modification}
Assume that $X$ is a toroidal orbifold and $f\:X'\to X$ is a Kummer blowing up of a Kummer ideal $\cJ$. Assume that $Z\into X$ is a strict closed immersion and let $Z'$ be the {\em strict transform} of $Z$, i.e. the schematic closure of $Z\setminus V(\cJ)$ in $X'$ with the logarithmic structure induced from $X'$. If $V(\cJ)\cap Z$ is nowhere dense in $Z$ then $Z'\to Z$ is a modification.
\end{example}

\begin{proof}[Proof of Theorem \ref{Th:nonembedded}]\hfill

{\bf Choice of embedding of constant codimension.}
It suffices to construct such a functor $\cF$ \'etale-locally. Indeed, assume $X_0\to X$ is an \'etale covering and $X_1=X_0\times_XX_0$. If we construct a \emph{functorial} desingularizations $(X_i)_\res\to X_i$ for $i=0,1$, then they are automatically compatible with respect to both projections $X_1\to X_0$ and hence descend to a desingularization $X_\res\to X$. Functoriality can be checked \'etale-locally and hence is preserved under this descent.

We can now assume that $Z$ is a logarithmic variety, and then by Corollary~\ref{embcor} we can further assume that $Z$ possesses a strict closed immersion into a toroidal variety $X$. Moreover, since $Z$ is locally equidimensional such a closed embedding $i\:Z\into X$ can be constructed so that $Z$ is of a constant codimension $d$ in $X$. Indeed, if $Z_i$ and $X_j$ are the connected components of $Z$ and $X$, then it suffices to take an embedding of the form $Z=\coprod_i Z_i\into \coprod_i X_{j(i)}\times\AA^{n_i}$.

{\bf  Principalization of $I_Z$: blowing up $Z$ is synchronous.} Let $\cI$ be the ideal of $Z$ in $X$, let $X_{n+1}\to\dots\to X_0=X$ be the principalization sequence from Theorem~\ref{Th:principalization}, and let $Z_i\into X_i$ be the sequence of strict transforms of $Z$. We claim that all generic points of $Z$ are blown up for the first time at the same stage $i$. Indeed, this is the stage when we restrict the ideal onto an iterative $d$-th maximal contact $H^{(d)}\subset X_i$. Alternatively, it is characterized by the invariant $(1...1\infty)$, with $k_0 = d+1$ and having $d$ ones. Moreover, since each generic point of $Z_i$ is of codimension $d$, the generic points of $H^{(d)}$ are precisely the generic points of $Z_i$. Therefore ${\cI_i}|_{H^{(d)}}=0$; if $V$ is a neighborhood of a connected component of $H^{(d)}$ we necessarily have $\cI_i|_V\subseteq \cI_{H^{(d)}}|_V$. Since $H^{(d)}$ is an iterative maximal contact to $(\cI_i,1)$, we also have that $\cI_{H^{(d)}}|_V\subseteq\cI_i|_V$.
Thus $Z_i=H^{(d)}$ is a toroidal orbifold. In addition, the centers of $Z_{j+1}\to Z_j$ with $j<i$ do not contain the generic points of $Z_j$, and hence the morphisms $Z_{j+1}\to Z_j$ with $i<n$ are modifications.

In particular, the blowing up of $H^{(d)}$ principalizes $\cI_i$. This proves that $i=n$ and $Z_n=H^{(d)}$ is a toroidal orbifold.

{\bf  Independence of choices with the same embedding dimension.} Let us show that the construction of $\cF(Z)\:Z_n\to Z$ does not depend on choices. First, assume that $i'\:Z\into X'$ is another embedding dominating $i$ in the sense that there exists an \'etale morphism $g\:X'\to X$ which is an isomorphism over $i(Z)$ then $Z\into X'$ is given by $\cI'=g^*(\cI)$ and the principalization of $\cI'$ is the pullback of the principalization of $\cI$ by the functoriality assertion of Theorem~\ref{Th:principalization}. Therefore, the induced resolutions of $Z$ coincide. In general, by Theorem~\ref{embth} {the closed embedding $i$ and any other closed embedding} $i'\:Z\into X'$ of constant codimension $d$ are locally dominated by a third closed embedding. Therefore, the resolutions agree in this case too.

{\bf  Independence of choices with different embedding dimension.} It remains to compare embeddings $i$ and $i'$ of constant codimensions $d$ and $d'$. Since for a fixed codimension the resolution is independent of the embedding, it suffices to compare $i$ and an embedding of the form $i'\:Z\into X'=X\times\AA^n$, for which the Re-embedding Principle, Proposition \ref{Prop:product}, applies.

{\bf Functoriality.} Finally, we claim that the resolution is compatible with logarithmically smooth morphisms $h\:Z'\to Z$. Again, it suffices to check this locally on $Z$ and then by Lemma~\ref{emblem2} we can find extend $h$ to a logarithmically smooth morphism $X'\to X$ such that $Z'=Z\times_XX'$. Then $Z'$ is given by the ideal $\cI'=f^*(\cI)$, principalizations of $\cI$ and $\cI'$ are compatible by Theorem~\ref{Th:principalization} and hence the induced desingularizations of $Z$ and $Z'$ are compatible.
\end{proof}

\begin{example}
Let $Z=\Spec(\NN^2\to k[w])$, where both generators $e_1,e_2\in\NN^2$ are mapped to $w$. It is not logarithmically smooth at $w=0$, and the singularity is a sort of ``logarithmic embedded component'' at $w=0$. One can take the embedding $Z\into X=\Spec(\NN^2\to k[u,v])$, where $e_1\mapsto u$ and $e_2\mapsto v$ and $u,v$ are mapped to $w$. Thus $X=\AA^2_k$ with the standard boundary $V(uv)$ and $Z=V(u-v)$ is not transversal to the boundary - this is precisely Example \ref{Ex:monomial}.

The embedded resolution of $Z$ runs by a single blowing up $X'\to X$ at the origin, since the strict transform $Z'$ is transversal to the boundary of $X'$ and hence is logarithmically smooth. Note that $Z'=\Spec(\NN\to k[w])$ with the generator $e$ going to $w$, and the morphism $\NN^2\to\NN$ maps both $e_1$ and $e_2$ to $e$. Indeed, the pullbacks of both $u$ and $v$ vanish to first order on the intersection of $Z'$ with the boundary of $X'$.
\end{example}

\subsubsection{The logarithmically smooth locus is preserved}
First, \'etale functoriality means that it suffices to consider $Z$ logarithmically smooth. This means $Z \to \Spec k$ is logarithmically smooth. The resolution of $\Spec k$ is trivial, therefore by functoriality so is the resolution of $Z$.

\subsubsection{The role of equidimensionality}
Next, let us explain where the equidimensionality assumption came from. The argument in the proof of Theorem~\ref{Th:nonembedded} shows that if $Z\into X\into X'$ are strict closed immersions, $X$ and $X'$ are logarithmically smooth and $X$ is of constant codimension in $X'$ then both $Z\into X$ and $Z\into X'$ induce the same desingularization of $Z$. However, if the codimension of $X$ in $X'$ varies then the induced desingularizations may differ. For example, two components of $Z$ that were resolved at the same time in $X$ and have different codimension in $X'$ will be resolved at different times in $X'$ because it will take different number of passages to the maximal contact to get to them.

The equidimensionality assumption is essential for the method. Without this assumption there is no natural method of choosing codimension of embeddings of $Z$ so that it will be compatible with localizations. On the other hand, this assumption is not a real restriction for applications, since one can easily separate components by a non-embedded method, e.g. by passing to the normalization, or by passage to the disjoint union of the irreducible components.

\subsubsection{Embedded components} The algorithm of Theorem~\ref{Th:nonembedded} applies when $Z$ is generically logarithmically smooth but contains embedded components. In particular, this means that at some stage embedded components are killed by some Kummer blowings up of the ambient manifold.

\subsubsection{Logarithmic varieties which are generically logarithmically singular}
Our algorithm applies to varieties with generically non-smooth logarithmic structures, e.g. logarithmic varieties over the {hollow logarithmic point (le point point\'e) $\Spec(\NN\stackrel{0}\to k)$, which is not logarithmically smooth.}

The same argument as in the proof of Theorem~\ref{Th:nonembedded} shows that any logarithmic variety $Z$ can be locally embedded into a toroidal variety $X$, and if the reduction of $Z$ is equidimensional then we can choose all local embeddings in a compatible way by fixing a (large enough) constant codimension of the embedding. Let $U$ be the locus where $Z$ is logarithmically smooth and let $Z_i\into X_i$ be the strict transforms of $Z$ in the principalization sequence $X_n\to\dots\to X_0=X$ of $\cI_Z$. The principalization of $\cI_Z$ blows up all generic points of $U$ simultaneously, say by $f_{i+1}\:X_{i+1}\to X_i$, and then $Z_i\to Z$ is an isomorphism over $U$ and the closure $Z'$ of $U$ in $Z_i$ is the center of $f_{i+1}$. In particular, $U$ is logarithmically smooth and each $Z_j$ with $j>i$ is nowhere logarithmically smooth; for example, its components are not reduced or contain elements of $M$ mapped to zero. Skipping the blowing up $f_{i+1}$ we obtain a sequence of proper morphisms $$Z'\coprod Z_n\to\dots Z'\coprod Z_{i+1}=Z_i\to\dots\to Z_0=Z.$$ Certainly, $Z_n=\emptyset$ (though it may happen that $Z_j=\emptyset$ for $j<n$), hence we obtain a sequence of proper morphisms $\cF(Z)\:Z'\to\dots\to Z$ preserving $U$ and such that $Z'$ is logarithmically smooth and $U$ is dense in $Z'$. This sequence depends on $Z$ functorially with respect to logarithmically smooth morphisms.

\section{Destackification and desingularization of toroidal varieties}\label{Sec:destack}
\addtocontents{toc}{\noindent We deduce functorial desingularization of varieties.}

We conclude the paper with studying the following natural question: to what extent can one eliminate stacks from the formulations of main results. In particular, can one run the resolution procedure entirely in the category of varieties.

\subsection{Tools}
The results of Section~\ref{Sec:destack} will be obtained by combining Theorems~\ref{Th:principalization} and \ref{Th:nonembedded} with the following two tools.

\subsubsection{Destackification}\label{destacksec}
Our main tool in eliminating the stacky structure will be \cite[Theorem~1.1]{ATW-destackification}, which associates to any toroidal orbifold $X$ possesses a blowing up $\cD(X)\:X'\to X$ such that $X_\cs$ is a toroidal variety and the morphism $X'\to X'_\cs$ is toroidal. In addition, the blowings up $\cD(X)$ are compatible with logarithmically smooth inert morphisms $Y\to X$.

\subsubsection{Resolution of smooth varieties}\label{ressmoothsec}
One can also \emph{functorially} resolve the singularities of a toroidal variety or orbifold. One way to do this is using non-embedded resolution in characteristic 0, see  \cite[Theorem 5.10]{Niziol}, \cite{B-M-Toric}, \cite[Theorem~3.3.16]{Illusie-Temkin}, \cite[Theorem 9.4.5]{Gillam-Molcho}.  This in particular preserves the normal crossings locus. Another is a simple combinatorial method provided in \cite[Theorem 4.4.2]{ACMW} by combining barycentric subdivisions of fans with  the lattice reduction algorithm of \cite[Theorem 11*]{KKMS}, but has the disadvantage that it modifies the normal crossings locus. A simple functorial combinatorial algorithm, again working with fans, which does not modify the normal crossings locus is provided in \cite{Wlodarczuk-functorial-toroidal}.

\subsection{Preservation of inertness}
The destackification theorem applies only to inert morphisms. So, we should first study the question when inertness of a base change morphism $Y\to X$ survives the principalization or desingularization processes $Y_n\to Y$ and $X_n\to X$.

\subsubsection{A counter-example}
We start with an example showing what can go wrong. Consider the example $X=\Spec(k[x,u])$, $u^\NN\to k[x,u]$ and $\cI=(x^2,u)$ from \S\ref{examkumsec}. It is shown there that this ideal is principalized by the Kummer blowing up $X'\to X$ along $(x,u^{1/2})$. Next, consider the Kummer covering $Y=\Spec(k[x,v])$, $v=u^{1/2}$ of $X$. By the functoriality (or by a simple computation) the ideal $\cI_Y=\cI\cO_{Y}=(x^2,v^2)$ is principalized by the blowing up $Y'\to Y$ along $(x,v)$. Note that $Y\times_XX'$ is a non-saturated stack, but its saturation is the toroidal variety $Y'$. In particular, the functoriality relation $Y'=Y\times^{fs}_XX'$ holds in the saturated category.

The base change morphism $Y\to X$ we started with is inert, but the morphism $Y'\to X'$ is not inert because $Y'$ has the trivial inertia and $X'$ has a non-trivial one. This happened because the saturation operation, which reduces to normalization on the level of the underlying stack, can modify the inertia. No destackification algorithm, can apply to $X'$ and $Y'$ in a compatible way because the only natural destackification of $Y'$ is the identity, while $X'$ requires a non-trivial destackification. To conclude, on the one hand the saturation is critical to reconcile functoriality of principalization, but on the other hand it prevents one from keeping the strongest functoriality throughout the destackification step.

\subsubsection{Quasi-saturated morphisms}
The above discussion motivates the following definition: a morphism $Y\to X$ of logarithmic schemes or stacks is called {\em quasi-saturated} if for any base change $X'\to X$ the logarithmic scheme $(Y\times_XX')^\int$ is saturated. In other words, the fiber product in the integral category is already the fiber product in the saturated one. We do not need the following facts but include them for readers convenience.

\begin{remark}
A morphism $f$ is called {\em integral} if $Y\times_XX'$ is always integral. Kummer morphisms and simplest logarithmic blowings up are neither integral nor quasi-saturated, and we do not know if there are useful quasi-saturated but non-integral morphisms. A morphism is called {\em saturated} if it is integral and quasi-saturated. The properties of being integral, quasi-saturated or saturated are combinatorial in the sense that they only depend on the induced homomorphisms of monoids $\oM_x\to\oM_y$. Saturated morphisms were introduced by Kato and Tsuji and they form an important class. In particular, $\Spec(\ZZ[Q])\to\Spec(\ZZ[P])$ is saturated if and only if it is flat and has reduced fibers.
\end{remark}

\begin{lemma}\label{quasisatlem}
Assume that $X'\to X$ and $Y\to X$ are morphism of fine and saturated logarithmic stacks, and $Y'=Y\times^{fs}_XX'$ in the fs category. If $Y\to X$ is inert and quasi-saturated, then $Y'\to X'$ is inert and quasi-saturated too.
\end{lemma}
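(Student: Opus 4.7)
The plan is to reduce both assertions about $Y'\to X'$ to the corresponding properties of $Y\to X$ by showing that $Y'$ is in fact the \emph{integral} pullback (not just the fs pullback) and then invoking associativity of the integral fiber product, together with the standard fact that inertness of a morphism between fs logarithmic stacks is preserved under integral base change.

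First I would establish the key identification $Y' = Y\times^{\rm int}_X X'$. By hypothesis $Y\to X$ is quasi-saturated, so applying the definition to the base-change morphism $X'\to X$ gives that $(Y\times^{\rm int}_X X')^{\rm int} = Y\times^{\rm int}_X X'$ is already saturated. Hence the natural comparison morphism $Y\times^{\rm int}_X X' \to Y\times^{fs}_X X' = Y'$ is an isomorphism. This means $Y'\to X'$ is obtained from $Y\to X$ by \emph{integral} base change, and inertness is stable under integral base change (this is a combinatorial condition on the induced maps of characteristic monoids, which is preserved under pushout in the integral category), giving that $Y'\to X'$ is inert.

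Next I would verify quasi-saturatedness of $Y'\to X'$. Given any morphism $X''\to X'$ of fs logarithmic stacks, the composition $X''\to X'\to X$ gives a morphism to $X$, and by associativity of the integral fiber product together with the identification $Y' = Y\times^{\rm int}_X X'$ from the previous step, we obtain
\[
Y'\times^{\rm int}_{X'} X'' \;=\; (Y\times^{\rm int}_X X')\times^{\rm int}_{X'} X'' \;=\; Y\times^{\rm int}_X X''.
\]
The right-hand side is saturated because $Y\to X$ was assumed quasi-saturated, and therefore so is the left-hand side; this is precisely quasi-saturatedness of $Y'\to X'$.

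The only delicate point — the main (mild) obstacle — is justifying that the two stability properties invoked above (preservation of inertness under integral base change, and associativity of the integral fiber product for fs logarithmic stacks) hold in the stacky setting. Both are local on charts and reduce to statements about homomorphisms of sharp fs monoids $\oM_x \to \oM_y$, where they are standard; the stacky versions follow by \'etale descent from the destackification framework of \cite{ATW-destackification}. Once these are in hand, the two paragraphs above conclude the proof.
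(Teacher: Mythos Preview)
Your overall strategy coincides with the paper's: the key move is indeed the identification $Y'=Y\times^{\rm int}_XX'$ (an immediate consequence of quasi-saturatedness), and your argument for quasi-saturatedness of $Y'\to X'$ via associativity of the integral fiber product is exactly what the paper's one-line ``quasi-saturatedness is preserved by fine base changes'' unpacks to.

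The gap is in your treatment of inertness. Inertness is \emph{not} ``a combinatorial condition on the induced maps of characteristic monoids''; it is a purely stacky condition on automorphism groups at geometric points, with no monoidal content. Consequently, the phrase ``preserved under pushout in the integral category'' has no evident meaning here, and your appeal to it does not justify the claim. The paper's argument is the correct fix and uses precisely your identification: since $Y'=Y\times^{\rm int}_XX'$ and integralization is a closed immersion, the morphism $Y'\to Y\times_XX'$ is a closed immersion, hence inert; and $Y\times_XX'\to X'$ has underlying stack morphism the ordinary base change of $Y\to X$, hence is inert. The composition $Y'\to X'$ is therefore inert. If you replace your monoid-pushout sentence by this factorization, the proof is complete and matches the paper's.
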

\begin{proof}
The morphism $Y'\to Y\times_XX'$ is a closed immersion, hence inert. Since inert morphisms are preserved by base changes, we obtain that $Y'\to X'$ is inert. Quasi-saturatedness is preserved by fine base changes as well.
\end{proof}

As an immediate corollary we obtain the following result:

\begin{corollary}\label{quasisatcor}
If the logarithmically smooth morphism $Y\to Z$ in Theorem~\ref{Th:nonembedded}(2) is inert and quasi-saturated then the morphism $Y_\res\to Z_\res$ is inert and quasi-saturated too. The same strengthening applies to Theorem~\ref{Th:principalization}.
\end{corollary}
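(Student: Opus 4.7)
The proof plan is essentially to invoke Lemma~\ref{quasisatlem} on top of the functoriality assertions already built into Theorems~\ref{Th:principalization} and \ref{Th:nonembedded}. The key point is that those functoriality statements say the output for $Y$ is \emph{precisely} the fs base change of the output for $X$ (or $Z$); so if the input morphism is inert and quasi-saturated, Lemma~\ref{quasisatlem} propagates these two properties across the fs pullback squares, and the resulting morphism between the resolved/principalized objects is itself inert and quasi-saturated.

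More concretely, for Theorem~\ref{Th:principalization}, Theorem~\ref{Th:order-reduction}(2) asserts the existence of the strictly increasing function $i(j)$ together with the identification $Y_j = Y \times_X X_{i(j)}$ taken in the fs category, after deleting the trivial blowings up. I would first observe that at each retained step $j$, the morphism $Y_j \to X_{i(j)}$ is the fs base change of the original $Y\to X$; Lemma~\ref{quasisatlem} then shows this morphism is inert and quasi-saturated. Taking the final index $j=n'$ yields the claim $Y_{n'}\to X_n$.

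For Theorem~\ref{Th:nonembedded}, the functoriality clause gives $Y_\res = Y\times_Z Z_\res$ in the fs category, so Lemma~\ref{quasisatlem} applied directly to the hypothesis that $Y\to Z$ is inert and quasi-saturated gives the statement for $Y_\res\to Z_\res$.

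There is essentially no hard step: the only mild point to verify is that the fs base changes of Kummer blowings up performed inside the algorithm are exactly what appears on both sides, but this is precisely Theorem~\ref{Kummertheorem}(4) for individual steps and is already absorbed into the functoriality statements we are invoking. No inductive argument on the length of the sequence is strictly necessary once one appeals to those functoriality statements, though one could alternatively give a one-line induction using Lemma~\ref{quasisatlem} at each Kummer blowing up.
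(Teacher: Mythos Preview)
Your proposal is correct and takes essentially the same approach as the paper, which simply declares the result an immediate corollary of Lemma~\ref{quasisatlem} combined with the fs-pullback functoriality in Theorems~\ref{Th:principalization} and \ref{Th:nonembedded}. Your write-up is in fact more detailed than the paper's one-line justification, and the subsequent Remark confirms exactly your reading: Lemma~\ref{quasisatlem} automatically upgrades any fs-functorial process to one compatible with logarithmically smooth quasi-saturated inert morphisms.
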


\begin{remark}
In fact, Lemma \ref{quasisatlem} implies that any process compatible with logarithmically smooth morphisms in the fs category is also compatible with logarithmically smooth quasi-saturated inert morphisms.
\end{remark}

\subsection{Desingularization of toroidal varieties}

\subsubsection{From toroidal stacks to toroidal varieties} \label{Sec:to-varieties}
Combining the algorithm for desingularization of logarithmic stacks with the destackification algorithm of \cite[Theorem 1]{ATW-destackification}, one obtains a desingularization algorithm that outputs a toroidal orbifold with toroidal coarse space. Its functoriality is as follows:

\begin{proposition}\label{stacktovar}
Let $Z$ be as in Theorem~\ref{Th:nonembedded} and let $\cF_Z\circ\cD_{\cF(Z)}\:Z'\to Z_\res\to Z$ be the composition of the desingularization and destackification. Then the modification $Z'\to Z$ is compatible with logarithmically smooth, quasi-saturated and inert morphisms $h\:Y\to Z$.
\end{proposition}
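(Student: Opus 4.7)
The plan is to chain the two functoriality statements already in hand: functoriality of logarithmic desingularization (Theorem \ref{Th:nonembedded}(2)) for logarithmically smooth morphisms in the fs category, followed by functoriality of destackification (\cite[Theorem 1.1]{ATW-destackification}) for logarithmically smooth \emph{inert} morphisms. The quasi-saturation hypothesis is precisely what bridges the two steps, since it ensures that the base change in the fs category agrees with the base change in the integral category, and hence inertness is preserved.

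More concretely, given $h \colon Y \to Z$ logarithmically smooth, quasi-saturated and inert, I would proceed in three steps. First, applying functoriality of $\cF$ to $h$ yields an identification
\[
\cF(Y) \colon Y_\res \;=\; Y \times^{fs}_Z Z_\res \;\longrightarrow\; Y,
\]
together with an induced morphism $h_\res \colon Y_\res \to Z_\res$ that is logarithmically smooth as a base change of $h$. Second, invoking Lemma \ref{quasisatlem} (equivalently, Corollary \ref{quasisatcor}) with $X = Z$ and $X' = Z_\res$, the morphism $h_\res$ remains inert and quasi-saturated. Third, since $Z_\res$ is a toroidal orbifold and $h_\res$ is logarithmically smooth and inert, the functoriality statement of the destackification algorithm applies and gives
\[
\cD(Y_\res) \colon (Y_\res)' \;=\; Y_\res \times^{fs}_{Z_\res} (Z_\res)' \;\longrightarrow\; Y_\res .
\]
Composing these identifications in the fs category,
\[
Y' \;=\; (Y_\res)' \;=\; Y_\res \times^{fs}_{Z_\res} Z' \;=\; \bigl(Y \times^{fs}_Z Z_\res\bigr) \times^{fs}_{Z_\res} Z' \;=\; Y \times^{fs}_Z Z',
\]
which is the desired compatibility.

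The only genuine subtlety, and the main place the hypotheses are used, is the preservation of inertness under the intermediate fs base change in step two: without quasi-saturation the fs fiber product $Y \times^{fs}_Z Z_\res$ differs from the integral one by a normalization that may destroy the inertia, as illustrated by the counterexample preceding Lemma \ref{quasisatlem}, and in that case the destackification step could not even be invoked in a functorial manner. Once this point is granted, both steps are direct applications of functoriality already established, and nothing further needs to be proved.
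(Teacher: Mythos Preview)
Your proof is correct and follows the same approach as the paper: apply Corollary~\ref{quasisatcor} to conclude that $h_\res$ remains inert (and quasi-saturated), then invoke the functoriality of destackification for logarithmically smooth inert morphisms. The paper's own proof is simply a two-sentence compression of exactly the three steps you spell out.
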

\begin{proof}
By Corollary \ref{quasisatcor}, the morphism $h_\res\:Y_\res=(Y\times_ZZ_\res)^\int\to Z_\res$ is inert. Therefore $h'\:Y'\to Z'$ is the pullback of $h_\res$.
\end{proof}

\subsubsection{Eliminating stacks from the picture}
One can also wonder if the destackification technique allows to resolve logarithmic varieties by use of logarithmic varieties only. Here is our main result in this direction.

\begin{theorem}\label{vardesingth}
Let $Z$ be a locally equidimensional and generically logarithmically smooth logarithmic \emph{variety} over $k$. Then there is a sequence of modifications of logarithmic varieties $\cF'(Z)\:Z'_{res} \to  Z$  which is an  isomorphism over the logarithmically smooth locus of $Z$ and $Z'_{res}$ is logarithmically smooth. In addition, one can achieve that the process assigning $\cF'(Z)$ to $Z$ is functorial for logarithmically smooth quasi-saturated morphisms $Y\to Z$.
\end{theorem}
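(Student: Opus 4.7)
The strategy is to compose three functorial modifications: the logarithmic desingularization of Theorem \ref{Th:nonembedded}, the destackification algorithm of Section \ref{destacksec}, and a functorial toroidal desingularization as in Section \ref{ressmoothsec}. Concretely, I would first apply $\cF(Z)\colon Z_{\res}\to Z$ from Theorem~\ref{Th:nonembedded} to obtain a toroidal orbifold $Z_{\res}$. Next I would apply destackification to get a modification $Z''\to Z_{\res}$ where $Z''$ is a toroidal orbifold whose coarse moduli space $(Z'')_{\cs}$ is a toroidal \emph{variety}. Since $Z$ is a variety (in particular, an algebraic space), the composite morphism $Z''\to Z_{\res}\to Z$ factors uniquely through $(Z'')_{\cs}$, yielding a modification of logarithmic varieties $(Z'')_{\cs}\to Z$. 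Finally I would apply a functorial toroidal resolution from Section \ref{ressmoothsec} to obtain the desired logarithmically smooth $Z'_{\res}\to (Z'')_{\cs}\to Z$, and set $\cF'(Z)$ to be this composite.

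\textbf{Verifying the geometric conclusion.} Each stage is the identity over the logarithmically smooth locus of $Z$: Theorem~\ref{Th:nonembedded} does not touch the log smooth locus, destackification is an isomorphism where the stack is already representable and toroidal, the coarse space map is an isomorphism where there is no stabilizer, and functorial toroidal resolution is an isomorphism on the smooth toroidal locus. Consequently $\cF'(Z)$ is an isomorphism over the log smooth locus of $Z$, and $Z'_{\res}$ is logarithmically smooth (by construction of the last step).

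\textbf{Functoriality.} Let $h\colon Y\to Z$ be a logarithmically smooth quasi-saturated morphism between logarithmic varieties. Since $Y$ and $Z$ have trivial inertia, $h$ is automatically inert, so the hypotheses of Proposition~\ref{stacktovar} are satisfied. Theorem~\ref{Th:nonembedded}(2) together with Corollary~\ref{quasisatcor} shows that $Y_{\res}\to Z_{\res}$ is logarithmically smooth, inert and quasi-saturated, and $Y_{\res}=(Y\times_ZZ_{\res})^{\operatorname{fs}}$. By the compatibility statement of Section~\ref{destacksec}, the destackification step commutes with this inert logarithmically smooth base change, so $Y''=(Y_{\res}\times_{Z_{\res}}Z'')^{\operatorname{fs}}$. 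Passing to coarse moduli commutes with flat base change, and log smooth morphisms are flat after saturation, which gives $(Y'')_{\cs}=Y\times_{(Z'')_{\cs}}(Y'')$ in the appropriate sense; more precisely $(Y'')_{\cs}\to (Z'')_{\cs}$ inherits log smoothness and quasi-saturatedness. Finally, functoriality of the chosen toroidal resolution for log smooth morphisms (as in \cite{Wlodarczuk-functorial-toroidal}) gives the compatibility for the last step.

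\textbf{Main obstacle.} The most delicate point will be step three: ensuring both that the morphism $Z''\to Z$ genuinely descends to the coarse space $(Z'')_{\cs}\to Z$, and that this descent is compatible with quasi-saturated logarithmically smooth base change. This requires that coarse moduli formation commute with the relevant fs pullbacks, which is where quasi-saturatedness is used essentially: without it, the saturation operation needed to form fs pullbacks can alter inertia (as in the counter-example preceding Lemma~\ref{quasisatlem}), breaking compatibility with destackification. All other steps either are tautologically functorial (Theorem~\ref{Th:nonembedded}) or have the required compatibility built into their statement.
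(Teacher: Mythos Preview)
Your approach is essentially the same as the paper's, which simply invokes Proposition~\ref{stacktovar}: take the sequence $\cF(Z)\circ\cD_{\cF(Z)}$ of Kummer blowings up from that proposition and pass to coarse moduli spaces at each stage. The one substantive difference is that you add a third step --- functorial toroidal resolution from Section~\ref{ressmoothsec} --- which is unnecessary here. The whole point of destackification (Section~\ref{destacksec}) is that the coarse space $(Z'')_{\cs}$ is already a \emph{toroidal variety}, hence logarithmically smooth, and the theorem asks for nothing more. Your extra step would be relevant for the subsequent proposition in Section~\ref{Sec:to-smooth}, which aims for a genuinely smooth output and accordingly drops down to functoriality for smooth strict morphisms only.

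A minor point: your functoriality argument is more intricate than needed, and the assertion that ``log smooth morphisms are flat after saturation'' is not correct in general (log smooth morphisms of toroidal varieties need not be flat). The paper sidesteps this entirely: Proposition~\ref{stacktovar} already packages the compatibility of $\cF_Z\circ\cD_{\cF(Z)}$ with logarithmically smooth quasi-saturated inert base change, and since $Y$ and $Z$ are varieties the inertness is automatic, as you correctly note. After that, one simply takes coarse spaces termwise; the compatibility of this last step with inert base change is what the paper uses implicitly, rather than any flatness statement.
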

\begin{proof}
This is an immediate corollary of Proposition~\ref{stacktovar}: take $\cZ_n\das\cZ_0=Z$ to be the sequence $\cF_Z\circ\cD_{\cF(Z)}$ from the proposition, and set $Z_i=(\cZ_i)_\cs$. 

\end{proof}

\begin{remark}
(1) One can also prove that the modifications in the sequence $\cF'(Z)$ are blowings up, but this will be worked out elsewhere.

(2) The intermediate logarithmic varieties $Z_i$ in Theorem~\ref{vardesingth} are not toroidal. However, $Z_i$ are {\em locally toric} in the sense that locally one can make them toroidal by increasing the logarithmic structure. This follows form the fact that each $Z_i$ is the coarse space of a toroidal orbifold.

(3) The functoriality in the theorem is stronger than the classical one but weaker than in Theorem~\ref{Th:nonembedded}. So far, the only way we know to maintain functoriality with respect to all logarithmically smooth morphisms is to work within the category of toroidal orbifolds.
\end{remark}

\subsubsection{From toroidal varieties to smooth varieties}\label{Sec:to-smooth}
One can also resolve logarithmic varieties $Z$ by classical methods as follows. First, apply to $Z$ the monoidal desingularization algorithm of Gabber, see \cite[Theorem~3.3.16]{Illusie-Temkin}. It makes the monoids $\oM_p$ free and satisfies very strong functorial properties. After this step, the logarithmic variety $Z$ locally embeds into a toroidal varieties $(X,U)$ which is monoidally smooth in the sense that the monoids $\oM_p$ are free. For a toroidal variety $(X,U)$ monoidal smoothness is equivalent to either of the following two conditions: (1) $X$ is smooth, (2) $X$ is smooth and $X\setminus U $ is an snc divisor. Therefore, further resolution of $Z$ can be deduced from the usual embedded principalization of $(X,U,\cI_Z)$. Here is the summary:

\begin{proposition}
Let $Z$ be a locally equidimensional and generically logarithmically smooth logarithmic \emph{variety} over $k$. Then there is a sequence of blowings up $\cF''(Z)\:Z''_{res} \to  Z$, which is an isomorphism over the locus where $Z$ is smooth and logarithmically smooth, and such that $Z''_{res}$ is smooth and logarithmically smooth. The process assigning to $Z$ the modification $\cF''(Z)$ is functorial for smooth strict morphisms.
\end{proposition}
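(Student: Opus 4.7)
The plan is to carry out, step by step, the two-stage program outlined in the paragraph immediately preceding the proposition. First, apply Gabber's functorial monoidal desingularization algorithm \cite[Theorem~3.3.16]{Illusie-Temkin} to $Z$, producing a sequence of blowings up $Z_1\to Z$, functorial for logarithmically smooth morphisms, such that every characteristic monoid $\oM_p$ at $p\in Z_1$ is free. Since Gabber's algorithm only blows up centers where $\oM_p$ is not free, the morphism $Z_1\to Z$ is an isomorphism over the locus where $Z$ is both smooth and logarithmically smooth. Local equidimensionality and generic logarithmic smoothness are preserved.

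Next, working \'etale-locally on $Z_1$, apply Lemma~\ref{emblem} to embed $Z_1$ strictly into a toroidal variety $X$ of the form $\Spec(k[\oM_p])\times\bbA^n$ (up to an \'etale neighborhood of the target). Because $\oM_p$ is free, $X$ is \emph{smooth} and the toroidal boundary $D=X\setminus U$ is an \emph{snc} divisor. As in the proof of Theorem~\ref{Th:nonembedded}, the local equidimensionality of $Z_1$ allows us to arrange all these embeddings to have a constant codimension $d$. Now apply any of the standard functorial embedded principalization algorithms for snc pairs, e.g.\ from \cite{Wlodarczyk} or \cite{Bierstone-Milman-funct}, to the ideal $\cI_{Z_1}\subset \cO_X$ on $(X,D)$. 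The output is a sequence of blowings up of smooth centers having snc with the accumulating boundary, functorial for smooth strict morphisms of the triple $(X,D,\cI_{Z_1})$, after which the pullback of $\cI_{Z_1}$ becomes the ideal of an snc divisor.

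The constant codimension $d$ guarantees, exactly as in the proof of Theorem~\ref{Th:nonembedded}, that all generic points of $Z_1$ are blown up at a common stage $i$; at that stage the proper transform of $Z_1$ coincides with an iterated maximal-contact hypersurface, hence is smooth and has snc with the updated boundary — i.e.\ is smooth and logarithmically smooth. Patching the local constructions uses the classical analogues of the tools from Section~\ref{Sec:nonembedded-proof}: two strict embeddings of $Z_1$ into smooth varieties of the same dimension are \'etale-locally equivalent (the smooth case of Theorem~\ref{embth}, which is standard), and different ambient dimensions are reconciled by the classical re-embedding principle (Proposition~\ref{Prop:product}), which is built into the cited algorithms. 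Composing the two functorial steps, the Gabber step is functorial for all logarithmically smooth morphisms, and the classical step is functorial for smooth strict morphisms, so the composite $\cF''(Z)$ is functorial for smooth strict morphisms as required.

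The main obstacle — and the reason one loses the stronger functoriality of Theorem~\ref{Th:nonembedded} — is precisely that classical embedded principalization is only functorial for \emph{smooth} (not logarithmically smooth) morphisms, so its compatibility with the logarithmically smooth Gabber step must be restricted to strict smooth base changes. One also needs a small bookkeeping verification that the choice of constant codimension $d$ depends only on local data at a point and is compatible with smooth strict pullback; this is exactly the use we made of equidimensionality in Section~\ref{Sec:nonembedded-proof} and transfers verbatim here.
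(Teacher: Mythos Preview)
Your proposal is correct and follows exactly the two-stage program the paper sketches in the paragraph immediately preceding the proposition: Gabber's monoidal desingularization to make all $\oM_p$ free, followed by classical embedded principalization of $(X,D,\cI_{Z_1})$ in a smooth ambient with snc boundary, patched via constant codimension and the re-embedding principle. The paper presents the proposition as a summary of that discussion and gives no further proof, so you have simply supplied the details the authors left implicit.
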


On the one hand, unlike Theorem~\ref{vardesingth} the resolution of $Z$ is smooth. On the other hand, the algorithm only satisfies the classical functoriality and one cannot hope for more than that because it has to modify the locus where $Z$ is logarithmically smooth but not smooth. Note also that an algorithm satisfying the same properties can be also obtained by combining $\cF'(Z)$ from Theorem~\ref{vardesingth} with any method mentioned in \S\ref{ressmoothsec}. In this case, it is only the last step that reduces functoriality to the classical level.

\bibliographystyle{amsalpha}
\bibliography{principalization}

\end{document}